\documentclass[11pt,reqno]{amsart}
\usepackage[utf8]{inputenc}
\usepackage[T1]{fontenc}
\usepackage{amsmath,amssymb,amsthm,amsfonts,mathrsfs}
\usepackage{hyperref}
\usepackage{geometry}
\usepackage{tikz}
\usepackage{pgfplots}
\usepackage{caption}
\usepackage{mathtools}
\pgfplotsset{compat=1.17}
\usetikzlibrary{calc}
\usepackage{pstricks}
\usepackage{pst-plot}
\usepackage{pst-node}
\usepackage{bm}
\usepackage{enumitem}
\usepackage{array}
\usepackage{makecell}
\usepackage{color}
\allowdisplaybreaks[4]
\def\blue{\textcolor{blue}}
\def\red{\textcolor{red}}
\def\green{\textcolor{green}}
\numberwithin{equation}{section}
\newtheorem{theorem}{Theorem}[section]
\newtheorem{lemma}[theorem]{Lemma}
\newtheorem{proposition}[theorem]{Proposition}
\newtheorem{corollary}[theorem]{Corollary}

\newtheorem{definition}{Definition}[section]
\newtheorem{example}[]{Example}

\newtheorem{remark}{Remark}[section]

\newcommand{\R}{\mathbb{R}}

\newcommand{\N}{\mathbb{N}}
\renewcommand{\S}{\mathfrak{S}}
\renewcommand{\L}{\mathcal{L}}
\mathchardef\mhyphen="2D

\def\lmival{\mathrm{lminval}}
\def\rmival{\mathrm{rminval}}
\def\lrmival{\mathrm{lrminval}}
\def\s{\sigma}
\def\a{\alpha}
\def\dis{\mathrm{D}}
\def\drop{\mathrm{drop}}
\def\asc{\mathrm{asc}}
\def\des{\mathrm{des}}
\def\val{\mathrm{val}}
\def\pk{\mathrm{pk}}
\def\da{\mathrm{da}}
\def\dd{\mathrm{dd}}
\def\dd{\mathrm{dd}}

\def\pmax{\mathrm{lmaxpk}}
\def\bmax{\mathrm{rmaxpk}}
\def\lmaxpk{\mathrm{lmaxpk}}
\def\lrmaxpk{\mathrm{lmaxpk}}
\def\lrmaxda{\mathrm{lmaxda}}
\def\rlmaxdd{\mathrm{rmaxdd}}
\def\lmax{\mathrm{lmax}}
\def\rmax{\mathrm{rmax}}
\def\cros{\mathrm{cros}}
\def\nest{\mathrm{nest}}
\def\fmax{\mathrm{fmax}}
\def\ndes{\mathrm{ndes}}
\def\PRW{\mathrm{PRW}}
\def\MAD{\mathrm{MAD}}
\def\inv{\mathrm{inv}}
\def\wex{\mathrm{wex}}
\def\exc{\mathrm{exc}}
\def\fix{\mathrm{fix}}
\def\NDdif{\mathrm{NDdif}}
\def\NDtop{\mathrm{NDtop}}
\def\NDbot{\mathrm{NDbot}}
\def\depth{\mathrm{depth}}
\def\cpk{\mathrm{cpk}}
\def\cval{\mathrm{cval}}
\def\lbr{(31\!-\!2)}
\def\rbr{(2\!-\!31)}
\def\A{\mathcal{A}}
\def\Web{\mathcal{CA}}

\def\Cda{\mathrm{Cda}}
\def\rlmin{\mathrm{rmin}}
\def\lrmin{\mathrm{lmin}}
\def\rlminda{\mathrm{rminda}}

\def\area{\mathrm{area}}
\def\Val{\mathrm{Val}}
\def\Pk{\mathrm{Pk}}
\def\Cdd{\mathrm{Cdd}}
\def\Cpk{\mathrm{Cpk}}
\def\Cval{\mathrm{Cval}}

\def\Fix{\mathrm{Fix}}
\def\cda{\mathrm{cda}}
\def\cdd{\mathrm{cdd}}

\def\rasc{\mathrm{2\!-\!{13}}}
\def\ldes{\mathrm{{31}\!-\!2}}
\def\wt{\mathbf{wt}}
\def\lrmax{\mathrm{lrmax}}

\usepackage{color}

\title{The $(p,q)$-Analogues of Bi-Stirling Eulerian Polynomials via continued fractions}
\author{Chao Xu}
\address{Universite Lyon 1,
UMR 5208 du CNRS, Institut Camille Jordan\\
F-69622, Villeurbanne Cedex, France}
\email{xu@math.univ-lyon1.fr}
\author{Jiang Zeng}
\address{College of Mathematics and Physics, Wenzhou University, Wenzhou 325035, China;\\
Universit\'e Claude Bernard Lyon 1, CNRS UMR 5208, Institut Camille Jordan, France}
\email{zeng@math.univ-lyon1.fr}

\date{\today}
\subjclass[2020]{05A05, 05A15, 05A19, 05A30}

\begin{document}
\begin{abstract}
We establish Jacobi-type continued fraction expansions for
\((p,q)\)-analogues of the bi-Stirling Eulerian polynomials,
which refine classical Eulerian statistics on permutations.
These polynomials admit equivalent definitions in terms of descents or excedances.
The parameters \(p\) and \(q\) encode refined permutation statistics,
including \((\ldes,\rasc)\) and \((\cros,\nest)\).
Our approach is based on weighted Motzkin path models arising from
variants of the bijections of Françon--Viennot and Foata--Zeilberger.
As applications, we obtain new results on total positivity and
\(\gamma\)-positivity for several families of enumerative polynomials.
Our results unify and extend a number of recent works in the literature.
\end{abstract}

\keywords{Eulerian polynomials, continued fractions,  Laguerre histories, permutation statistics, gamma-positivity, total positivity}
\maketitle

\section{Introduction}

The Eulerian polynomials are fundamental objects in enumerative combinatorics, as
they encode the distribution of ascents and descents over permutations. Over the
years, numerous refinements and generalizations have been introduced, notably the
bi-Stirling Eulerian polynomials, which incorporate record-type statistics,
and the \((p,q)\)-Eulerian polynomials, which refine Eulerian statistics via
generalized permutation patterns. The aim of the present paper is to combine these
two directions by introducing \((p,q)\)-analogues of the
bi-Stirling-Eulerian polynomials.

We now recall the necessary definitions and notation concerning permutation
statistics and Eulerian-type polynomials. For a finite subset \(E\) of \(\N\), let
\(\mathfrak{S}_E\) denote the set of all permutations of \(E\), and let
\(\mathfrak{S}_n\) be the symmetric group on \([n]:=\{1,2,\ldots,n\}\). For
\(\sigma\in\mathfrak{S}_n\), regarded as the word
\(\sigma=\sigma_1\sigma_2\cdots\sigma_n\), where \(\sigma_i=\sigma(i)\) for
\(i\in[n]\), an index \(1\le i\le n-1\) is called
\begin{itemize}
  \item a \emph{descent} (\textbf{des}) if \(\sigma_i>\sigma_{i+1}\);
  \item an \emph{ascent} (\textbf{asc}) if \(\sigma_i<\sigma_{i+1}\).
\end{itemize}
Furthermore, a letter \(\sigma_i\) is said to be
\begin{itemize}
  \item a \emph{left-to-right maximum} (\textbf{lmax}) if
        \(\sigma_i>\sigma_j\) for every \(j<i\);
  \item a \emph{right-to-left maximum} (\textbf{rmax}) if
        \(\sigma_i>\sigma_j\) for every \(j>i\).
\end{itemize}
Carlitz and Scoville~\cite{CS74} introduced the generalized Eulerian polynomials
as the enumerative polynomials of the symmetric group \(\mathfrak{S}_n\)
with respect to ascent, descent, and record statistics, namely
\begin{equation}
\label{eq:alpha-beta-eulerian}
A_n(x,y \mid \alpha,\beta)
= \sum_{\sigma \in \mathfrak{S}_{n+1}}
x^{\asc(\sigma)} y^{\des(\sigma)}
\alpha^{\lmax(\sigma)-1}
\beta^{\rmax(\sigma)-1}.
\end{equation}
Note that $\asc(\sigma)+\des(\sigma)=n$ for $\sigma\in \S_{n+1}$. 
Motivated by their combinatorial interpretation~\cite{XZ26}, we refer to these polynomials as the \emph{bi-Stirling Eulerian polynomials}, although they are termed the $(\alpha,\beta)$-Eulerian polynomials in \cite{Ji25}.

Recently, Ji~\cite{Ji25} introduced a refinement of
\(A_n(x,y \mid \alpha,\beta)\), defined by
\begin{equation}
\label{refine-CS-JI1}
A_n(u_1,u_2,u_3,u_4 \mid \alpha,\beta)
=
\sum_{\sigma\in \mathfrak{S}_{n+1}}
  (u_1u_2)^{\val(\sigma)}
  u_3^{\da(\sigma)}
  u_4^{\dd(\sigma)}
  \alpha^{\lmax(\sigma)-1}
  \beta^{\rmax(\sigma)-1}.
\end{equation}
In the refinement \eqref{refine-CS-JI1}, the four parameters
\(u_1,u_2,u_3,u_4\) distinguish valleys, peaks, double ascents, and double
descents, while the parameters \(\alpha\) and \(\beta\) record left-to-right
and right-to-left maxima.
This refined description will later be incorporated into a single weight
function \(\wt(\sigma)\), together with additional statistics, including a
D-statistic \(\dis(\sigma)\) naturally arising from the
Fran\c{c}on--Viennot bijection.

For \(\sigma=
\sigma_1\cdots\sigma_n\in\mathfrak{S}_n\), with the convention
\(\sigma_0=\sigma_{n+1}=0\), a letter \(\sigma_i\) with \(i\in[n]\) is called a
\begin{itemize}
  \item \emph{valley} (\textbf{val}) if \(\sigma_{i-1}>\sigma_i<\sigma_{i+1}\);
  \item \emph{peak} (\textbf{pk}) if \(\sigma_{i-1}<\sigma_i>\sigma_{i+1}\);
  \item \emph{double ascent} (\textbf{da}) if
        \(\sigma_{i-1}<\sigma_i<\sigma_{i+1}\);
  \item \emph{double descent} (\textbf{dd}) if
        \(\sigma_{i-1}>\sigma_i>\sigma_{i+1}\).
\end{itemize}
These statistics satisfy
\begin{equation}
\label{eq:refinement of asc-des}
\asc=\val+\da,\qquad
\des=\val+\dd,\qquad
\val=\pk-1.
\end{equation}
The exponential generating function (EGF) of $A_n(u_1,u_2,u_3,u_4 \mid \alpha,\beta)$ was obtained by Ji~\cite[Theorem~1.4]{Ji25}. As observed in \cite{XZ26},  Ji's EGF formula  can be written in the following form
\begin{align}
\sum_{n\geq 0} A_n(u_1,u_2,u_3,u_4\,|\,\alpha, \beta)\frac{z^n}{n!}&=e^{(\alpha u_3+\beta u_4)z}\left(\frac{x-y}{x e^{yz}-ye^{xz}}\right)^{\alpha+\beta}\label{Ji-eq1'},
\end{align}
where the parameters $x$ and $y$ are related by $x+y=u_3+u_4$ and $xy=u_1u_2$.
 The sequel~\cite{XZ24} established
a further generalization \eqref{Ji-eq1'} and derived a \(J\)-fraction expansion for the ordinary
generating function.  In particular, building on earlier work of Zeng~\cite{Zen93}, we obtained  the continued fraction formula
\begin{subequations}
\begin{equation}\label{equ:continued-fraction-XZ}
  \sum_{n=0}^\infty A_n(u_1,u_2,u_3, u_4\, |\, \alpha,\beta)\,z^n
 = \cfrac{1}{1-b_0 z-\cfrac{\lambda_1 z^2}{1-b_1 z-\cfrac{\lambda_2 z^2}{1-b_2 z-\cdots}}},
\end{equation}
where  $b_0=\alpha u_3+\beta u_4$, 
\begin{equation}
b_k = k(u_3+u_4) + (\alpha u_3+\beta u_4),
\qquad
\lambda_{k} = k(k-1+\alpha+\beta)u_1u_2
\quad (k\ge 1).
\end{equation}
\end{subequations}


In this paper, we will generalize the above continued fraction formula by 
using the
combinatorial theory of continued fractions developed by Flajolet~\cite{Fla80} and Viennot~\cite{Vie83},
together with variants of the bijections of Fran\c{c}on--Viennot~\cite{FV79} and
Foata--Zeilberger~\cite{FZ90} between permutations and weighted Motzkin paths.
This approach allows us to derive explicit Jacobi-type continued fraction
expansions and to obtain new consequences concerning  positivity properties of the
associated enumerative polynomials.

Our approach is based on continued fractions arising from weighted Motzkin path
models. These models provide transparent combinatorial interpretations and lead
naturally to explicit Jacobi-type continued fraction expansions for the ordinary
generating functions under consideration.

More generally, our work fits naturally into the broad framework connecting
permutation statistics, moment sequences, orthogonal polynomials, and continued
fractions. In many situations, continued fractions arising from permutation
statistics admit an interpretation as moment generating functions of suitable
measures, thereby providing a bridge between enumerative combinatorics and the
analytic theory of moments. This perspective has been developed, among others,
by Blitvi\'c and Steingr\'{\i}msson~\cite{BS21} in their study of permutations,
moments, and measures, and more recently by Sokal and Zeng~\cite{SoZ22} through
the introduction of multivariate master polynomials for permutations, set
partitions, and perfect matchings together with their continued-fraction
expansions. Building on the classical Flajolet--Viennot theory, P\'etr\'eolle,
Sokal, and Zhu~\cite{PSZ23} extended Jacobi-type continued fractions to lattice
paths and branched continued fractions, with applications to coefficientwise
Hankel-total positivity.

In this context, weighted Motzkin paths and Laguerre histories provide a
combinatorial realization of moment sequences, where the step weights encode
refined permutation statistics. The Foata--Zeilberger correspondence and its
variants therefore furnish a natural mechanism for translating permutation
statistics into continued-fraction expansions, and will play a central role in
the bijections and $J$-fraction formulas developed in this paper.
The present work offers further evidence for the relevance of this framework by
introducing new families of Eulerian-type polynomials whose generating
functions admit explicit Jacobi-type continued-fraction expansions.

\section{Main results}
\medskip
   
For  $\sigma=\sigma_1\ldots \sigma_n\in \S_n$ with 
\(\sigma_0=\sigma_{n+1}=0\), a letter $\s_i\in[n]$ is said to be a
\begin{itemize}
 \item \emph{left-to-right-maximum-peak} (\textbf{lmaxpk})  if 
 $\s_i$ is a left-to-right maximum and also a peak;
 \item  \emph{right-to-left-maximum-peak} (\textbf{rmaxpk})  if $\s_i$ is a  right-to-left maximum and also a peak;
 \item  \emph{left-to-right-maximum-double-ascent} (\textbf{lmaxda}) if $\s_i$ is a left-to-right  maximum and also  a double ascent;
 \item   \emph{right-to-left-maximum-double-descent} (\textbf{rmaxdd}) if $\s_i$ is a right-to-left maximum and also a double descent.  
\end{itemize}
\begin{example}
Let $\sigma = 2\,7\,1\,5\,9\,10\,8\,4\,3\,6 \in \mathfrak{S}_{10}$, we have  
\[
\pk(\sigma) = |\{6,7,10\}| = 3,\; 
\val(\sigma) = |\{1,3\}| = 2,\; 
\da(\sigma) = |\{2,5,9\}| = 3,\; 
\dd(\sigma) = |\{4,8\}| = 2,
\]
\[
\lmax(\sigma) = |\{2,7,9,10\}| = 4,\quad 
\rmax(\sigma) = |\{6,8,10\}| = 3,
\]
\[
\lrmaxda(\sigma) = |\{2,9\}| = 2,\quad 
\rlmaxdd(\sigma) = |\{8\}| = 1,
\]
\[
\pmax(\sigma) = |\{7,10\}| = 2, \text{ and }
\bmax(\sigma) = |\{6,10\}| = 2.
\]
\end{example}
 As a follow-up to~\cite{Ji25}
the authors~\cite{XZ24} generalized \eqref{refine-CS-JI1} with three additional variables as follows:
\begin{subequations}\label{def:An}
    \begin{equation}\label{XZ-polynomial}
    A_n(\mathbf{u},f,g,t\,|\, \alpha,\beta)
=\sum_{\sigma\in\S_{n+1}}\wt(\sigma),
\end{equation}
where $\mathbf{u}=(u_1,u_2,u_3,u_4)$ and
\begin{multline}\label{weight-functions}
\wt(\sigma):=(u_1u_2)^{{\val}(\sigma)}u_3^{\da(\sigma)}u_4^{\dd(\sigma)}
\alpha^{{\lmax}(\sigma)-1}{\beta}^{{\rmax}(\sigma)-1}\\
\times f^{\mathrm{lmaxpk(\sigma)-1}}g^{\bmax(\s)-1} t^{\lrmaxda(\s)+\rlmaxdd(\s)}.
\end{multline}
\end{subequations}

Let $\sigma=\sigma_1\ldots \sigma_n\in\S_n$ with $\sigma_0=\sigma_{n+1}=0$.  
Denote by $\Val(\sigma)$ and $\Pk(\sigma)$ the sets of valleys and peaks of $\sigma$, respectively.
We define the $\dis$ statistic  of $\sigma$ by
\begin{equation}\label{eq:distance}
\dis(\sigma)
:=\sum_{i\in \Pk(\sigma)\setminus\{n\}} i \;-\;\sum_{j\in \Val(\sigma)} j.
\end{equation}
This statistic corresponds  to the area under the associated Motzkin
path via the Fran\c{c}on--Viennot bijection, see Lemma~\ref{lem:dis}. 
Note  that    a shifted version of $\dis$ statistic appeared in~\cite{Fra92}.
 
\begin{example}
 Let $\sigma=42157368\in\S_8$. Then $\Val(\sigma)=\{1,3\}$, $\Pk(\sigma)=\{4,7,8\}$, and hence
    $\dis(\sigma)=4+7-1-3=7.$
\end{example}

The notion of generalized (dashed) permutation patterns was introduced by 
Babson and Steingr\'{\i}msson~\cite{BS00}. Recall that the pattern \(31\!-\!2\) (resp. \(2\!-\!13\)) occurs in a permutation \(\sigma\)
if there exist indices \(i<j\) such that the entries in positions \(i\) and
\(i+1\) (resp. \(j\) and \(j+1\)) are consecutive in \(\sigma\), and
\[
\sigma(i)>\sigma(j)>\sigma(i+1)
\quad
(\text{resp. }\sigma(j)<\sigma(i)<\sigma(j+1)).
\]
We denote by \((31\!-\!2)(\sigma)\) (resp. \((2\!-\!13)(\sigma)\)) the number of
occurrences of the pattern \(31\!-\!2\) (resp. \(2\!-\!13\)) in \(\sigma\); see
\cite{SZ12}. Accordingly, we set
\[
\ldes(\sigma):=(31\!-\!2)(\sigma)
\qquad\text{and}\qquad
\rasc(\sigma):=(2\!-\!13)(\sigma).
\]
Similarly we define the pattern  $2\!-\!31$ and the number $(2\!-\!31)(\sigma)$.
\begin{example}
Let \(\sigma=4736215\in\S_7\). Then
\begin{align*}
(31\!-\!2)(\sigma)&=|\{73\!-\!6,\,73\!-\!5,\, 62\!-\!5\}|=3,\\
(2\!-\!31)(\sigma)&=|\{4\!-\!73,\,4\!-\!62,\,3\!-\!62\}|=3,\\
(2\!-\!13)(\sigma)&=|\{4\!-\!36,\,4\!-\!15,\, 3\!-\!15,\,2\!-\!15\}|=4.
\end{align*}
\end{example}

Shin and one of us~\cite{SZ12} proved a continued fraction formula for the ordinary generating function of the following $(p,q)$-analogue of generalized Eulerian polynomials:
 \begin{equation}\label{(p,q)-eulerian}
     A_n(u_1, u_2, u_3, u_4;\,p,q) = \sum_{\sigma \in \S_n}(u_1u_2)^{\val(\sigma)}
  u_3^{\da(\sigma)}
  u_4^{\dd(\sigma)}p^{2\!-\!13( \sigma)} q^{31\!-\!2 (\sigma)}
 \end{equation} 
and derived the $\gamma$-positivity formula using combinatoris of continued fractions.
Combining \eqref{refine-CS-JI1} and \eqref{(p,q)-eulerian}, we define the 
the
$(p,q)$-analogue of the generalized Eulerian polynomials as follows: 
    \begin{equation}
    \label{(p,q)-Eulerian-poly}
A_n(\mathbf{u},f,g,h,t\,|\,\alpha, \beta\,;\, p,q):=\sum_{\sigma \in \mathfrak{S}_{n+1}}\wt(\sigma)\,h^{\dis(\sigma)}p^{\ldes(\sigma)}q^{\rasc(\sigma)}.
    \end{equation}
    
We define the generalized $(p,q)$-analogue of a positive integer $k\in \mathbb{N}^*$, 
with auxiliary parameters $\alpha$ and $\beta$, as follows:
\begin{subequations}
\begin{equation}
[\,k;\alpha,\beta\,]_{p,q}
:= (\alpha-1) p^{k-1} + (\beta-1) q^{k-1}
+ \frac{p^k - q^k}{p - q}.
\end{equation}
Note that
\[
[\,1;\alpha,\beta\,]_{p,q} = \alpha + \beta - 1,
\]
and
\begin{equation}
[\,k\,]_{p,q} := [\,k;1,1\,]_{p,q} = \frac{p^k - q^k}{p - q}.
\end{equation}
\end{subequations}

 Let $\mathbf{b}:=(b_k)_{k\ge0}$ and $\boldsymbol{\lambda}:=(\lambda_k)_{k\ge1}$ be elements of a commutative ring. We define the Jacobi-type continued fraction $\mathcal{J}(t;\mathbf{b}, \boldsymbol{\lambda})$ as a formal power series by 
\begin{equation}\label{J-frac}    
\mathcal{J}(t;\mathbf{b}, \boldsymbol{\lambda})
:= \cfrac{1}{1-b_0 t \;-\;
   \cfrac{\lambda_1 t^2}{1-b_1 t \;-\;
   \cfrac{\lambda_2 t^2}{1-b_2 t \;-\; \ddots}}}.
\end{equation}

   \begin{subequations}\label{equ;con-generalized-eulerian} 
\begin{theorem}\label{thm1}
We have 
    \begin{equation}
  \sum_{n=0}^\infty A_n(\textbf{u},f,g,h,t\,|\,\alpha, \beta\,;\,p,q)z^n
 = \mathcal{J}(z;\mathbf{b}, \boldsymbol{\lambda}),
\end{equation}
where
\begin{align}
b_k&=h^k\biggl(u_3\bigl[\,k+1;1, \,\alpha t\,\bigr]_{p,q}+u_4\bigl[\,k+1;\beta t,\,1 \,\bigr]_{p,q}\biggr),\\
\lambda_{k}&=u_1u_2h^{2k-1}\,[\,k\,]_{p,q}\,\bigl[\,k+1;\beta g,\,\alpha f\bigr]_{p,q}.
\end{align}
\end{theorem}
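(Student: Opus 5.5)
We follow the Flajolet--Viennot scheme. By Flajolet's combinatorial interpretation of $J$-fractions, $\mathcal{J}(z;\mathbf{b},\boldsymbol{\lambda})$ is the generating function of Motzkin paths in which a level step at height $k$ has weight $b_k$, and each up step from height $k-1$ followed by its matching down step from $k$ to $k-1$ contributes $\lambda_k$. Writing $\lambda_k=a_{k-1}c_k$, it therefore suffices to construct a weight-preserving bijection (a restricted Laguerre history) $\Phi$ from $\S_{n+1}$ to weighted Motzkin paths of length $n$. The factors of $\wt(\sigma)h^{\dis(\sigma)}p^{\ldes(\sigma)}q^{\rasc(\sigma)}$ must then distribute over the steps so that the level steps at height $k$ account for $b_k$ and the up/down pairs account for $\lambda_k$.

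\emph{Construction of $\Phi$.} We use the Françon--Viennot bijection, with the convention $\sigma_0=\sigma_{n+2}=0$ and values scanned $x=1,\dots,n$ (the value $n+1$ is always a peak and is dropped, which accounts for the length $n$). Each value $x$ is classified as a valley, peak, double ascent or double descent, and we assign to it respectively an up step, a down step, or a level step of type $u_3$ or $u_4$. The height $h_x$ before step $x$ equals the number of ``gaps'' (pairs of consecutive slots) available when $x$ is inserted in increasing order. Under this correspondence $\dis(\sigma)$ is the area, and by Lemma~\ref{lem:dis} a step starting at height $k$ contributes $h^k$ for level steps, while an up step from $k-1$ together with a down step from $k$ contributes $h^{2k-1}$. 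This explains the powers of $h$ in $b_k$ and $\lambda_k$.

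\emph{Choice of labels.} At height $k$, a letter $x$ to be inserted has $k+1$ possible slots, namely $k$ internal gaps plus the two boundary positions adjacent to $\sigma_0$ and $\sigma_{n+2}$. We choose a label $0\le i\le k$ recording the slot, ordered from left to right. The standard Shin--Zeng computation then shows that when $x$ is inserted into the $i$-th slot, the number of new $31\!-\!2$ occurrences with $x$ as the ``$2$'' is $i$ and the number of new $2\!-\!13$ occurrences is $k-i$. Summing over slots gives $\sum_i p^iq^{k-i}=[k+1]_{p,q}$.

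The next step is to identify the records. For a double ascent, inserting $x$ into the leftmost slot (label $i=0$, weight $q^k$) is exactly the case in which $x$ becomes a left-to-right maximum. Indeed, all larger letters lie to its right, which is also when it is an $\lrmaxda$, so this slot receives an extra factor $\alpha t$. This turns $\sum_i p^iq^{k-i}$ into $[k+1;1,\alpha t]_{p,q}$. Symmetrically, for a double descent the rightmost slot (weight $p^k$) gives the right-to-left maxima together with $\rlmaxdd$, producing $[k+1;\beta t,1]_{p,q}$.

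For valleys the up-step weight is $[k]_{p,q}$ with no records possible: a valley is never an lmax or rmax except in boundary situations, which are excluded. For peaks, the two extreme slots correspond to $x$ being an lmax-peak (factor $\alpha f$) or an rmax-peak (factor $\beta g$), and this yields $[k+1;\beta g,\alpha f]_{p,q}$.

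The exponents $\lmax-1$ and the like require some care. The maximum $n+1$ is simultaneously an lmax, an rmax, an lmaxpk and an rmaxpk, which is why the $-1$ shifts appear and why the value $n+1$ is discarded. Every other lmax must be either a peak or a double ascent, since an lmax cannot be preceded by a larger letter. Hence every other lmax is counted exactly once, either by $\alpha f$ or by $\alpha t$, and the same holds on the right side.

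\emph{Main obstacle.} The delicate step is proving that the slot index simultaneously controls three things: the pattern counts, the lmax/rmax status, and the area. In particular, we must show that the pattern contributions of the later letters do not interfere with those of earlier ones. Occurrences of $31\!-\!2$ and $2\!-\!13$ should be charged to their ``$2$'', counting the adjacent pairs $31$ or $13$ that straddle its eventual position among the slots open at the time of insertion. We would check this first by verifying that the slots are exactly the descents and ascents between partially built blocks, following the argument in \cite{SZ12}. We would then check that record status is decided irrevocably at insertion, because later insertions are of larger values placed into gaps that cannot lie to the left of an earlier extreme-slot letter without breaking the extremity. Once this is done, the path is weighted by $b_k$ and by $a_{k-1}c_k=\lambda_k$, and Flajolet's theorem yields the $J$-fraction.
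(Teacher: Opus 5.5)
Your proposal is correct and follows essentially the paper's route: encode $\sigma\in\S_{n+1}$ by its Fran\c{c}on--Viennot Laguerre history with label $p_x=(31\!-\!2)_x(\sigma)$ (your slot index), and use $(31\!-\!2)_x+(2\!-\!13)_x=h_x$ (Lemma~\ref{lem:height}) and $\dis=\area$. Then use that $x$ is a left-to-right (resp.\ right-to-left) maximum exactly when $p_x=0$ on an $\mathrm{S}$ or $\mathrm{E_2}$ step (resp.\ $p_x=h_x$ on an $\mathrm{S}$ or $\mathrm{E_1}$ step), sum over labels, and apply Flajolet's lemma. The one slip is your description of the slots: at height $k$ there are $k+1$ of them (the $\diamond$'s of the insertion algorithm), not ``$k$ internal gaps plus the two boundary positions,'' since a boundary can already be closed (e.g.\ by a double descent inserted in the last slot); your label range $0\le i\le k$ and everything downstream are nevertheless correct.
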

\end{subequations}
\begin{remark}
The special case $f=g=h=t=p=q=1$ of \eqref{equ;con-generalized-eulerian} is Eq. \eqref{equ:continued-fraction-XZ}.
    When $\alpha=\beta=h=1$, other generalizations of the above continued fraction were considered in \cite{SiS96,BS21,SoZ22}. 
\end{remark}

Let $\sigma = \sigma_1 \sigma_2 \cdots \sigma_n \in \S_n$.
An \emph{inversion} of $\sigma$ is a pair of indices $(i, j)$ such that 
$1 \le i < j \le n$ and $\sigma_i > \sigma_j$. 
The \emph{inversion number} of $\sigma$, denoted by $\operatorname{inv}(\sigma)$, 
is the total number of inversions in $\sigma$. 
Petersen and Tenner~\cite{PT15} introduced a statistic called \emph{depth} 
for Coxeter groups, defined in terms of factorizations of group elements into reflections. 
In the case of the symmetric group $\mathfrak{S}_n$, 
the reflections are the transpositions $(i\,j)$ with $1 \le i < j \le n$. 
The \emph{depth} of a permutation $\sigma \in \mathfrak{S}_n$, 
denoted by $\operatorname{depth}(\sigma)$, is defined as
\begin{equation}
  \operatorname{depth}(\sigma)
  = \min \left\{
      \sum_{r=1}^{k} (j_r - i_r) :
      \sigma = (i_1\,j_1)(i_2\,j_2)\cdots(i_k\,j_k)
    \right\}.
\end{equation}
They obtained an alternative  formula
\begin{equation}\label{def:depth}
  \operatorname{depth}(\sigma)
  = \sum_{\sigma(i) > i} \bigl(\sigma(i) - i\bigr).
\end{equation}
Thus, this statistic coincides with the statistic \emph{Edif} over permutations in \cite{CSZ97} and 
corresponds to the area under the Motzkin path via the bijection of Foata-Zeilberger, 
see Lemma~\ref{lem:dep}.

Corteel \cite{Cor07} defined
the \emph{crossing} and \emph{nesting} numbers of $\sigma\in\S_n$  as follows:
\begin{subequations}\label{def:cros-nest}
   \begin{align}
    \mathrm{cros}(\sigma)
= \#\bigl\{(i,j)\in [n]\times[n] :
(i<j \le \sigma_i < \sigma_j) \lor (i>j > \sigma_i > \sigma_j)
\bigr\};\\
\mathrm{nest}(\sigma)
= \#\bigl\{(i,j)\in [n]\times[n] :
(i<j \le \sigma_j < \sigma_i) \lor (i>j > \sigma_j > \sigma_i)
\bigr\}.
\end{align} 
\end{subequations}
These statistics are better illustrated by drawing an arc-diagram as follows: put the numbers from $1$ to $n$ on a line and draw an edge from $i$ to $\sigma_i$ above the line if $i$ is a weak excedance, and below the line otherwise. 
 
\begin{figure}[t]
\centering
\vspace*{4cm}
\begin{picture}(40,0)(100, -60)
\setlength{\unitlength}{2mm}
\linethickness{.5mm}
\put(-2,0){\line(1,0){54}}
\put(0,0){\circle*{1,3}}\put(0,0){\makebox(0,-6)[c]{\small 1}}
\put(5,0){\circle*{1,3}}\put(5,0){\makebox(0,-6)[c]{\small 2}}
\put(10,0){\circle*{1,3}}\put(10,0){\makebox(0,-6)[c]{\small 3}}
\put(15,0){\circle*{1,3}}\put(15,0){\makebox(0,-6)[c]{\small 4}}
\put(20,0){\circle*{1,3}}\put(20,0){\makebox(0,-6)[c]{\small 5}}
\put(25,0){\circle*{1,3}}\put(25,0){\makebox(0,-6)[c]{\small 6}}
\put(30,0){\circle*{1,3}}\put(30,0){\makebox(0,-6)[c]{\small 7}}
\put(35,0){ \circle*{1,3}}\put(35,0){\makebox(0,-6)[c]{\small 8}}
\put(40,0){\circle*{1,3}}\put(40,0){\makebox(0,-6)[c]{\small 9}}
\put(45,0){\circle*{1,3}}\put(45,0){\makebox(0,-6)[c]{\small 10}}
\put(50,0){\circle*{1,3}}\put(50,0){\makebox(0,-6)[c]{\small 11}}

\green{\qbezier(0,0)(15,16)(35.5,0)
\qbezier(35.5,0)(20,-15)(15,0)
\qbezier(15,0)(31,17)(45,0)
 \qbezier(45,0)(47.5,7)(50,0)
 \qbezier(50,0)(29,-22)(5,0)
\qbezier(5,0)(7.5,5)(10,0)
\qbezier(10,0)(5.5,-5)(0,0)
}
\red{\qbezier(19.3,0)(25.5,7)(29.5,0)
\qbezier(29.5,0)(27.3,-7)(24.4,0)
\qbezier(24.4,0)(22.3,-7)(19.3,0)
}
\blue{\qbezier(38.5,0)(40,2)(36,4)
\qbezier(38.5,0)(34,2)(36,4)}

\end{picture}
\caption{
   The  arc-diagram of the permutation $\sigma = 8\, 3\, 1\, 10\, 7\, 5\, 6\, 4\,9\, 11\,2$, or
           as product of  disjoint cycles $\sigma =(1,8,4,10,11,2,3)\,(5,7,6)\,(9)$.
   \label{fig.pictorial.1}
}
\end{figure}
\begin{example}
    Let $\sigma = 8\, 3\, 1\, 10\, 7\, 5\, 6\, 4\,9\, 11\,2\in\S_{11}$. Then we have 
    $\nest(\sigma)=9$ and $\cros(\sigma)=3$; see Fig.~\ref{fig.pictorial.1}.
\end{example}
For
$\sigma=\sigma(1)\ldots \sigma(n)\in\S_n$,  we say that an index  $i\in [n]$ is a
\begin{itemize}
\item \emph{fixed point} (\textbf{fix}) of $\sigma$ if $i=\sigma(i)$;
\item  \emph{cycle peak} (\textbf{cpk}) of $\s$ if $\sigma^{-1}(i)<i>\sigma(i)$; 
\item  \emph{cycle valley} (\textbf{cval}) of $\s$ if $\sigma^{-1}(i)>i<\sigma(i)$; 
\item  \emph{cycle double ascent}  (\textbf{cda}) of $\s$ if $\sigma^{-1}(i)<i<\sigma(i)$; 
\item  \emph{cycle double descent}  (\textbf{cdd}) of $\s$ if $\sigma^{-1}(i)>i>\sigma(i)$.
\end{itemize}
Note  that $\cpk(\s)=\cval(\s)$. A letter $\sigma_i:=\sigma(i)$ is said to be a right-to-left minimum (\textbf{rmin}) of $\sigma$ if $\s_j>\s_i$ and $n\ge j>i\ge 1$. Define the general cycle version of Eulerian polynomials
\begin{subequations}
\begin{multline}\label{generalization-C}   C_n(\mathbf{u},t,s,h\,|\,\alpha, \beta\,;\,p,q):=\sum_{\sigma\in\S_n}(u_1u_2)^{\cval(\sigma)}u_3^{\cda(\sigma)}u_4^{\cdd(\sigma)}t^{\fix(\sigma)}s^{\inv(\sigma)}\\\times h^{\depth(\sigma)}\alpha^{\lmax(\sigma)}\beta^{\rlmin(\sigma)}p^{\cros(\sigma)}q^{\nest(\sigma)}.
\end{multline}

\begin{theorem}\label{thm2}
     We have 
    \begin{equation}\label{equ:continud-fractions-depth-alpha-beta}
        1+\sum_{n\ge 1}C_{n}(\mathbf{u},t,s,h\,|\,\alpha, \beta\,;\,p,q)z^n=\mathcal{J}(z;\mathbf{b},\boldsymbol{\lambda}),
    \end{equation}
    where $b_0=t\alpha\beta$, and 
    \begin{align}   
    b_k&=(sh)^k\biggl(tq^ks^k+u_4[\,k;\beta,1\,]_{p,qs}+u_3p[\,k;\alpha,1\,]_{p,qs}\biggr),\\
    \lambda_{k}&=u_1u_2(sh)^{2k-1}[\,k;\alpha,1\,]_{p,qs}[\,k;\beta,1\,]_{p,qs}.
    \end{align}
\end{theorem}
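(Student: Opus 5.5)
We prove Theorem~\ref{thm2} with the Flajolet--Viennot machinery. By Flajolet's theorem, $\mathcal{J}(z;\mathbf{b},\boldsymbol{\lambda})$ is the generating function of Motzkin paths in which a level step at height $k$ has weight $b_k$ and a down step from height $k$ to $k-1$ has weight $\lambda_k$. So it suffices to build a weight-preserving bijection between $\S_n$ and weighted Motzkin paths of length $n$ (Laguerre histories), with the weights split as the formulas for $b_k,\lambda_k$ suggest.

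We use the variant of the Foata--Zeilberger bijection already underlying Lemma~\ref{lem:dep}. Scan $i=1,\dots,n$ and record one step per $i$ according to the cycle type of $i$. A cycle valley gives an up step, a cycle peak a down step, and a cycle double ascent, cycle double descent or fixed point a level step. Let $h_{i-1}$ be the height before step $i$; it equals the number of arcs $j\mapsto\sigma(j)$ with $j<i\le\sigma(j)$ that are still open. Each step carries a choice index recording how the arc is matched among the open ones. Because $\depth(\sigma)=\sum_{\sigma(i)>i}(\sigma(i)-i)$ is the area under the path, each step starting or ending at height $k$ picks up $h^k$ (Lemma~\ref{lem:dep}). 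Hence each $\lambda_k$ carries $h^{2k-1}$ and each $b_k$ carries $h^k$. I take the variable $u$-weights for granted: they are read directly from the step types, with $u_1$ on up steps and $u_2$ on down steps.

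Next we follow the remaining statistics through the choice indices. As in Corteel's and Sokal--Zeng's analyses, a step at height $k$ with choice index $j\in\{0,\dots,k-1\}$ (or $\{0,\dots,k\}$) contributes $p^{j}q^{k-1-j}$ to $\cros$ and $\nest$. It also contributes $s^{(\cdot)}$ to $\inv$ through the standard decomposition $\inv=\mathrm{exc}\text{-part}+\nest\text{-type terms}+\dots$; this is why the base becomes $qs$ and the global factors become $s^k$, $(sh)^{k}$ and $(sh)^{2k-1}$. I would write $\inv$ as $\sum(\text{arc lengths})$ contributions plus $\nest$-like and $\cros$-like counts, verify that identity separately first, and then check each step type.

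For the records, we show that $\sigma(i)$ is a left-to-right maximum (resp.\ a right-to-left minimum) exactly when the choice index is extremal: for the relevant arcs it takes the top value $j=k-1$, giving weight $p^{k-1}$, or the bottom value $j=0$, giving weight $(qs)^{k-1}$. Replacing the extremal term of $[k]_{p,qs}$ by $\alpha p^{k-1}$ (resp.\ $\beta (qs)^{k-1}$) produces exactly $[k;\alpha,1]_{p,qs}$ and $[k;\beta,1]_{p,qs}$.

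We then match the step types against the formulas. For up steps (cycle valleys) and down steps (cycle peaks), the product of the two bracket factors in $\lambda_k$ comes from the choice made at the valley and the one made at the peak, one carrying $\alpha$ and the other $\beta$. For level steps, cycle double descents give $u_4[k;\beta,1]_{p,qs}$ and cycle double ascents give $u_3 p[k;\alpha,1]_{p,qs}$; the extra $p$ comes from the arc crossing the current position. Fixed points give $tq^ks^k$: a fixed point is nested under all $k$ open arcs and creates $k$ inversions. At height $0$ we get $b_0=t\alpha\beta$, since a fixed point $i$ with no open arcs is both a left-to-right maximum and a right-to-left minimum.

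The main obstacle will be the record statistics. The statistic $\lmax$ is defined on the one-line word, while the bijection scans by values and cycles, so we must choose the ordering convention for the open arcs (for instance, order the pending slots by the values of their targets) so that being a record is exactly extremality of the choice index, while keeping $\cros$, $\nest$ and $\inv$ local. If this does not come out directly, I would first prove the case $\alpha=\beta=1$, which is essentially Sokal--Zeng's result, and then adjust the ordering convention to make the records extremal.
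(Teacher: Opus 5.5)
Your overall route is the paper's: the Foata--Zeilberger encoding with labels $p_i=\nest_i^{\pm}(\sigma)$, Corteel's relation $\cros_i+\nest_i\in\{h_i,h_i-1\}$, $\depth=\area$, and Flajolet's lemma. For the $s$-weight, the identity you need to state is $\inv=\depth+\nest$, or equivalently $\inv=\drop+\cros+2\nest$. It gives $s^{h_i+p_i}$ per step, which is exactly your factor $(sh)^{h_i}$ and base $qs$.

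\textbf{The record step.} This is the step you flag as the main obstacle, and it is wrong as stated. You attach right-to-left minima to the all-nesting choice, of weight $(qs)^{k-1}$, and claim this yields $[k;\beta,1]_{p,qs}$. By definition, however, $[k;\beta,1]_{p,qs}=(\beta-1)p^{k-1}+[k]_{p,qs}$, so $\beta$ sits on the all-\emph{crossing} term. Your rule would instead produce $[k;1,\beta]_{p,qs}$, and hence a wrong fraction once $k\ge 2$.

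Concretely, $3412,4312,3421,4321$ are exactly the permutations with path $\mathrm{NNSS}$. Their weights sum to $u_1^2u_2^2(sh)^4\alpha\beta(\alpha p+qs)(\beta p+qs)=\lambda_1\lambda_2$. For instance, $4312$ has two right-to-left minima, yet its lower arcs $3\to1$ and $4\to2$ cross. Since the target formula forces $\beta$ onto the $p^{k-1}$ term, no ordering convention for the open arcs can realize your characterization. So the fallback you describe would not close the gap.

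\textbf{The correct fact.} Directly from the definitions, with the standard labels and no re-ordering, \emph{both} kinds of record are the zero-nesting case.
\begin{itemize}
\item Maxima. If $\sigma(i)\ge i$, an $l<i$ with $\sigma(l)>\sigma(i)$ is exactly an upper nesting at $i$. So $i$ is a left-to-right maximum index iff $\nest_i^+(\sigma)=0$. An index with $\sigma(i)<i$ is never one, since $\sigma(1),\dots,\sigma(i)$ cannot all be at most $\sigma(i)<i$.
\item Minima. Symmetrically, $i$ is a right-to-left minimum index iff either $\sigma(i)<i$ and $\nest_i^-(\sigma)=\#\{l>i:\sigma(l)<\sigma(i)\}=0$, or $i$ is a fixed point with $h_i=0$. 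No index with $\sigma(i)>i$ qualifies.
\end{itemize}
Hence $\alpha$ marks $p_i=0$ on $\mathrm{N}$ and $\mathrm{E_2}$ steps, $\beta$ marks $p_i=0$ on $\mathrm{S}$ and $\mathrm{E_1}$ steps, and $\alpha\beta$ marks fixed points at height $0$.

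The crossing exponent is $h_i-p_i$ on $\mathrm{N}$ and $\mathrm{E_2}$, and $h_i-1-p_i$ on $\mathrm{S}$ and $\mathrm{E_1}$. On $\mathrm{E_2}$ one has $p_i\le h_i-1$, because the incoming arc is a forced crossing; this is your extra $p$. So the marked term is always the top power of $p$. Summing over $p_i$ gives $[h+1;\alpha,1]_{p,qs}$ on $\mathrm{N}$, $p[h;\alpha,1]_{p,qs}$ on $\mathrm{E_2}$, and $[h;\beta,1]_{p,qs}$ on both $\mathrm{S}$ and $\mathrm{E_1}$. This is exactly the stated $\mathbf b,\boldsymbol\lambda$.

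The paper's lemma on records lists $\mathrm{S}$ for maxima and $\mathrm{N}$ for minima. That swap is harmless only because the up-step and down-step factors multiply into $\lambda_k$, which is symmetric in them.
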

\end{subequations}

\begin{remark} 
Theorem~2.2 specializes to Theorem~6.5 of~\cite{MV94} when $\alpha=p=q=h=t=1$, after a suitable change of variables.
     When $u_1=u_4=s=h=\alpha=\beta=1$ and $u_2=u_3=t=x$,  Theorem~\ref{thm2} reduces to Eq. (1) in \cite{Cor07}.
\end{remark}

\begin{remark} Petersen~\cite{PetNote} conjectured the special case of the above
$J$-fraction expansion corresponding to the pair of statistics
$(\inv,\depth)$; this conjecture  was recently proved in~\cite{Eu24,US24}.
In fact, this $J$-fraction also follows from Theorem~10 or
Corollary~11 of~\cite{CSZ97} after the successive substitutions
$p\to q$ and then $q\to zq$.

\end{remark}

\begin{remark}
In Theorem~\ref{thm2}, applying  the following substitutions, respectively,
     \begin{itemize}
         \item $\mathbf{u}=(x,1,x,1)$ and $(\alpha,\beta,p,q)=(1,1,1,1)$,
         \item $\mathbf{u}=(x,1,x,1)$ and $(t,p,q,\beta)=(tx,1,1,1)$,
         \item $\mathbf{u}=(x,1,x,1)$ and   $(t,p,q,\alpha)=(1,1,1,1)$,
     \end{itemize}
      we recover Theorem~1.1, (i) and (ii) of Theorem~1.3 in \cite{Eu26}.
\end{remark}

\begin{definition}
    Let $\sigma = \sigma_1 \sigma_2 \cdots \sigma_n \in \S_n$. If  $i\in[n-1]$ is a descent, then
     $\sigma_i$ is called a descent-top, and $\sigma_{i+1}$ is called a 
    descent-bottom. For $i\in[n]$, the entry $\sigma_i$ is called  a nondescent or a nondescent-top (resp. nondescent-bottom) of $\sigma$,
if it is not a descent-top (resp. descent-bottom) of $\sigma$.
The number of nondescents of $\sigma$ is $\ndes(\sigma):=n-\des(\sigma)$.  
\end{definition}

 Let $\sigma\in \S_n$. 
 We denote the sum of the nondescent-tops (resp. nondescent-bottoms) of $\sigma$ by $\NDtop (\sigma)$ (resp. $\NDbot(\sigma)$) and define 
 the \emph{nondescent difference} of $\sigma$ (see \cite{SZ10}) by
\begin{equation}\label{def:nondif}
    \NDdif(\sigma):=\NDbot(\sigma)-\NDtop(\sigma).
\end{equation}
If we impose the boundary condition $\sigma_0=0$ and $\sigma_{n+1}=n+1$, then 
 each nondescent-top (resp. nondescent-bottom) of $\sigma\in\S_n$ is either a valley or a double ascent (resp. a peak or a double ascent), it follows  that
\begin{equation}\label{equ:dis-NDdif}
\NDdif(\sigma)=\sum_{i\in \Pk'(\sigma)} i \;-\;\sum_{j\in \Val'(\sigma)} j,
\end{equation}
where $\Pk'(\sigma)$ and $\Val'(\sigma)$ denote the sets of peaks and valleys of $\sigma$, respectively, with boundary condition $\sigma_0=0$ and $\sigma_{n+1}=n+1$.

\begin{example}
    Let $\sigma=42157368\in\S_8$.  We have 
    \begin{align*}
    \NDtop(\sigma)&=1+3+5+6+8=23,\\ \NDbot(\sigma)&=4+5+6+7+8=30.
    \end{align*}
    Then $\NDdif(\sigma)=7$. By \eqref{equ:dis-NDdif}, we have $\NDdif(\sigma)=(4+7)-(1+3)=7.$
\end{example}
\begin{definition}\label{Def:fmax-pmin}
 Let $\sigma = \sigma_1 \sigma_2 \cdots \sigma_n \in \S_n$. A letter $\sigma_i\in[n]$ is called a
 \begin{itemize}
     \item  foremaximum of~$\sigma$ if $\sigma_i$ is a nondescent top
and also a left-to-right maximum in~$\sigma$;
    \item postminimum of $\sigma$ if $\sigma_i$ is a nondescent bottom  
and also a right-to-left minimum in~$\sigma$.
 \end{itemize}
  The number of foremaxima (resp. postminima) of~$\sigma$ is denoted by $\fmax(\sigma)$ (resp. $\mathrm{pmin}(\sigma)$).
\end{definition}
\begin{example}
    For $\sigma=42157368\in\S_8$, we have 
     $\fmax(\sigma)=|\{5,8\}|=2$ and $\mathrm{pmin}(\sigma)=|\{6,8\}|=2$.
\end{example}

Recall \cite{SZ10} that for $\sigma \in \mathfrak{S}_n$ the statistics $\MAD$ and $\mathrm{MADL}$ are defined by
\begin{subequations}
    \begin{align}
  \MAD(\sigma)
  := \des(\sigma) + (31\!-\!2)(\sigma)  + 2(2\!-\!31)(\sigma); \label{def:Mad}\\
  \mathrm{MADL}(\sigma):=\des(\sigma)+(2\!-\!31)(\sigma)+2(31\!-\!2)(\sigma). \label{def:Madl}
\end{align}
\end{subequations}

For $\sigma\in\S_n$, an index $i\in[n]$ is called  
\begin{itemize}
\item   an \emph{excedance} (\textbf{exc}) of $\sigma$ if $i<\sigma(i)$,
\item a \emph{weak excedance} (\textbf{wex}) of $\sigma$ if $i\le\sigma(i)$,
\item a \emph{drop} ($\mathbf{drop}$) if $\sigma(i)<i$. 
\end{itemize}

Clarke, Steingrímsson and Zeng~\cite{CSZ97} constructed a bijection $\Phi_{CSZ}$ on $\S_n$ to characterize the composition $\Phi_{FZ}^{-1}\circ \Phi_{FV}$.
Later, Corteel~\cite{Cor07} and Shin--Zeng~\cite{SZ10} introduced two variants of the bijection $\Phi_{CSZ}$ on $\S_n$, denoted by $\Phi_C$ and $\Phi_{SZ}$, respectively.
These bijections are known to send
the generalized pattern statistic
\[
(31\!-\!2,\; 2\!-\!31)
\]
to the pairs $(\cros,\nest)$ and $(\nest,\cros)$ on $\S_n$.
Beyond this correspondence, they admit a natural interpretation in terms of the
weighted path model underlying our continued-fraction expansions.
In particular, the following result shows that $\Phi_C$ and $\Phi_{SZ}$ preserve
additional structural features compatible with the Motzkin-path encoding and
the associated Jacobi-type continued fractions.
Related aspects of these constructions have recently been investigated in
\cite{HMZ20,CF23,CFZ25}.

\begin{theorem}\label{thm3}
   The mappings $\Phi_C$ and $\Phi_{SZ}$ are  bijections on $\S_n$ such that for all $\sigma\in\S_n$,  we have 
    \begin{align}
   &(\ndes, \mathrm{pmin}, 31\!-\!2, 2\!-\!31, \mathrm{MADL},\NDdif)\,\sigma= 
(\wex, \fix, \nest, \cros,\inv,\depth)\,\Phi_{C}(\sigma);\label{equ:dis-Corteel}\\[6pt]
 &(\ndes, \fmax, 31\!-\!2, 2\!-\!31, \MAD,\NDdif)\,\sigma= 
(\wex, \fix, \cros, \nest, \inv,\depth)\,\Phi_{SZ}(\sigma).\label{equ:dis}
    \end{align}
Moreover, if $\Phi^*:=\Phi_{SZ}\circ\Phi_{C}^{-1}$, then
\begin{equation}\label{equ:p_i+q_i-2'}(\wex,\nest,\cros,\depth)\,\sigma=(\wex,\cros,\nest,\depth)\,
\Phi^*(\sigma).
    \end{equation}
    \end{theorem}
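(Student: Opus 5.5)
We prove the theorem by factoring each bijection through Laguerre histories. We write $\Phi_{C}=\Psi_{C}^{-1}\circ\Phi_{FV}'$ and $\Phi_{SZ}=\Psi_{SZ}^{-1}\circ\Phi_{FV}''$. Here $\Phi_{FV}',\Phi_{FV}''$ are the two variants of the Fran\c{c}on--Viennot bijection from $\S_n$ onto Laguerre histories of length $n$. These are Motzkin paths of length $n$ whose steps carry integer labels $0\le \xi_i\le h_{i-1}$, with $h$ the height. Likewise, $\Psi_C,\Psi_{SZ}$ are the corresponding Foata--Zeilberger-type bijections from $\S_n$ to the same histories, as used by Corteel and by Shin--Zeng. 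For each side we record a dictionary translating each statistic into a function of the history.

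On the linear side we use the convention $\sigma_0=0$, $\sigma_{n+1}=n+1$, and read the letters $1,\dots,n$ in increasing order. A valley gives an up step, a peak a down step, and a double ascent or double descent gives the two kinds of level step. Thus $\ndes$ is the number of up steps plus the number of level steps of the first kind. By \eqref{equ:dis-NDdif} and the analogue of Lemma~\ref{lem:dis} for this boundary condition, $\NDdif$ equals the area $\sum_i h_i$ of the path. The label at step $i$ counts occurrences of $31\!-\!2$ (resp.\ $2\!-\!31$) in which the letter $i$ plays the role of the ``2''. Hence $(31\!-\!2)(\sigma)=\sum\xi_i$ and $(2\!-\!31)(\sigma)=\sum(h_{i-1}-\xi_i)$, possibly with a shift by one at level steps depending on the variant. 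The variant is chosen so that $\mathrm{pmin}$ (resp.\ $\fmax$) corresponds to level steps of the first kind with extremal label, namely $\xi_i=0$ (resp.\ $\xi_i=h_{i-1}$). Combining these, $\mathrm{MADL}=\des+(2\!-\!31)+2(31\!-\!2)$ becomes an explicit linear function of the step types, heights and labels.

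On the cycle side we use the Foata--Zeilberger encoding. Cycle valleys, peaks, double ascents and double descents correspond to up, down and level steps, and fixed points give level steps of the first kind. Weak excedances correspond to up steps together with level steps of the first kind, and $\depth$ is the area by Lemma~\ref{lem:dep}. In Corteel's labelling, $\sum\xi_i$ is $\nest$ and the complementary sum is $\cros$; in the Shin--Zeng labelling these two roles are exchanged. The fixed-point condition is a level step of the first kind with a specific extremal label. Finally, $\inv$ has the known expression of Clarke--Steingr\'{\i}msson--Zeng: $\inv=\exc+\cros+2\nest$ up to the appropriate $\drop$/fixed-point correction. We match this term by term with the linear expression for $\mathrm{MADL}$, and likewise $\MAD$ with the Shin--Zeng version, which is $\inv=\exc+\nest+2\cros+\cdots$. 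With both dictionaries in place, composing the bijections transports each statistic to its partner, which proves \eqref{equ:dis-Corteel} and \eqref{equ:dis}.

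For \eqref{equ:p_i+q_i-2'}, $\Phi^*$ is $\Psi_{SZ}^{-1}\circ\mathcal{T}\circ\Psi_C$, where $\mathcal{T}$ is the transformation of histories induced by $\Phi_{FV}''\circ(\Phi_{FV}')^{-1}$. The map $\mathcal{T}$ keeps the path and, at each step, sends $\xi_i\mapsto h_{i-1}-\xi_i$ (with the level-step shift). Hence it preserves $\wex$ and area, so $\wex$ and $\depth$ are preserved. Since the two labellings assign $\nest$ and $\cros$ in opposite roles, $\nest\leftrightarrow\cros$.

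The main obstacle is bookkeeping. We must pin down the exact label ranges at level steps, which differ between the first and second kind and by $\pm1$ shifts. These ranges must be fixed so that $\mathrm{pmin}/\fmax$ match $\fix$ precisely and the $\inv$ formula aligns with $\mathrm{MADL}/\MAD$ exactly. I would first verify these conventions on small $n$, such as the examples in the paper, before fixing the general dictionaries.
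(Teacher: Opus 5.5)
Your overall route is the paper's. Both maps factor through one set of 2-restricted Laguerre histories. $\ndes\leftrightarrow\wex$ is read off the step types, $\NDdif\leftrightarrow\depth$ off the area, and the labels together with the height identities $(31\!-\!2)_k+(2\!-\!31)_k=\cros_k+\nest_k\in\{h_k,h_k-1\}$ transport the patterns. The label range is $[0,h_k]$ at up steps and at the level step carrying double ascents (resp.\ cycle double ascents together with fixed points). It is $[0,h_k-1]$ at down steps and at the double-descent level step, so the $-1$ is not confined to level steps. However, the bookkeeping you commit to is wrong in three places, and each breaks part of the theorem as written.

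First, you put the $\nest\leftrightarrow\cros$ exchange in two places: $\mathcal T$ complements the labels, \emph{and} $\Psi_{SZ}$ reads the label as $\cros$ where $\Psi_C$ reads it as $\nest$. The two complements cancel. Then $\Phi_{SZ}=\Psi_{SZ}^{-1}\circ\Phi_{FV}''$ would send $(31\!-\!2,2\!-\!31)$ to $(\nest,\cros)$, contradicting \eqref{equ:dis}, and $\Psi_{SZ}^{-1}\circ\mathcal T\circ\Psi_C$ would preserve $\nest$ and $\cros$. Exactly one complement is needed. The paper uses the single encoding $\widetilde\Psi_{FZ}$ (label $p_k=\nest_k$) for both maps and puts the complement $\rho$ on the linear side, so $\Phi^*=\widetilde\Psi_{FZ}^{-1}\circ\rho\circ\widetilde\Psi_{FZ}$. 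Moreover \eqref{equ:p_i+q_i-2'} needs no separate argument. With $\tau=\Phi_C(\sigma)$ and $\tau'=\Phi_{SZ}(\sigma)$, \eqref{equ:dis-Corteel} and \eqref{equ:dis} give $(\wex,\nest,\cros,\depth)\,\tau=(\ndes,31\!-\!2,2\!-\!31,\NDdif)\,\sigma=(\wex,\cros,\nest,\depth)\,\tau'$.

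Second, your extremal labels are reversed. With $\xi_k=(31\!-\!2)_k$, a postminimum $k$ is a double ascent with only larger letters to its right, so $(2\!-\!31)_k=0$ and $\xi_k=h_k$. A foremaximum has only smaller letters to its left, so $\xi_k=0$ and $(2\!-\!31)_k=h_k$. What matters is that the label fed into $\widetilde\Psi_{FZ}^{-1}$ equals $h_k$. At the merged level step, $\nest_k=h_k$ holds exactly for fixed points, since cycle double ascents have $\nest_k\le h_k-1$. This is why $\Phi_C$ uses $(31\!-\!2)_k$ as label and $\Phi_{SZ}$ uses its complement $(2\!-\!31)_k$.

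Third, there is only one inversion identity, $\inv=\drop+\cros+2\nest$, valid for every permutation. Using $\exc$ in place of $\drop$ already fails for $231$ ($\inv=2$, $\exc=2$, $\drop=1$, $\cros=1$, $\nest=0$). A separate ``Shin--Zeng version'' $\inv=\exc+\nest+2\cros+\cdots$ cannot coexist with it. Both $\mathrm{MADL}$ and $\MAD$ are handled by the same identity. We have $\des(\sigma)=n-\ndes(\sigma)=\drop(\tau)$. Since $\Phi_C$ sends $(31\!-\!2,2\!-\!31)$ to $(\nest,\cros)$ while $\Phi_{SZ}$ sends it to $(\cros,\nest)$, both $\mathrm{MADL}=\des+(2\!-\!31)+2(31\!-\!2)$ and $\MAD=\des+(31\!-\!2)+2(2\!-\!31)$ become $\drop+\cros+2\nest=\inv$ of the image.
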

    
Any polynomial with real coefficients
\(h(x) = \sum_{i=0}^{n} h_i x^i
\)
satisfying the symmetry condition $h_i = h_{n-i}$ can be written uniquely in the form
\[
h(x) = \sum_{k=0}^{\lfloor n/2 \rfloor} \gamma_k\, x^k (1+x)^{n-2k}.
\]
The coefficients $\gamma_k$ are called the \emph{$\gamma$-coefficients} of~$h(x)$.  
If all $\gamma_k$ are nonnegative, then $h(x)$ is said to be \emph{$\gamma$-positive}. We refer the reader to 
the general overview of $\gamma$-positivity phenomena in \cite{At18}.
We now consider two specializations of the polynomials
$A_n(\mathbf{u},f,g,h,t\,|\,\alpha,\beta\,;\,p,q)$:
\begin{subequations}\label{An-Bn}
    \begin{align} 
A_n(x,f,g,h\,|\,p,q):&=A_n((1,x,1,x),f,g,h,1\,|\,1,1\,;\,p,q);\\
B_n(x,f,g,h\,|\,t,\alpha):&=A_n((1,x,1,x),f,g,h,t\,|\,\alpha,\alpha\,;\, 1,1).
\end{align}
By~\eqref{(p,q)-Eulerian-poly}, we obtain
\begin{align}
A_n(x,f,g,h\,|\,p,q)&=\sum_{\sigma \in \mathfrak{S}_{n+1}}\mathbf{w}(\sigma)\;p^{\ldes(\sigma)}q^{\rasc(\sigma)};\\
B_n(x,f,g,h\,|\,t,\alpha)
&=\sum_{\sigma \in \mathfrak{S}_{n+1}}\mathbf{w}(\sigma)\;
 t^{\lrmaxda(\s)+\rlmaxdd(\s)}\alpha^{{\lmax}(\sigma)+{\rmax}(\sigma)-2},
\end{align} 
where the weight $\mathbf{w}(\sigma)$ is defined by 
\begin{equation}
\mathbf{w}(\sigma)=x^{\des(\sigma)}f^{\mathrm{lmaxpk(\sigma)-1}}g^{\bmax(\s)-1}h^{\dis(\sigma)}. 
\end{equation}
\end{subequations}

By the $J$-fraction expansion of Theorem~\ref{thm1}, we show that the polynomials
$A_n(x,f,g,h\,|\,p,q)$ and $B_n(x,f,g,h\,|\,t,\alpha)$ admit natural
$\gamma$-positive decompositions.
The next result provides an explicit combinatorial interpretation of the
$\gamma$-coefficients in terms of permutations without double descents,
refined by generalized pattern statistics.

\begin{theorem}\label{thm4}
For $n\ge 0$, the polynomial $A_n(x,f,g,h\,|\,p,q)$ is $\gamma$-positive and
can be expanded as
\begin{equation}\label{gamma-des-special2-intro}
A_n(x,f,g,h\,|\,p,q)
=\sum_{k=0}^{\lfloor n/2\rfloor}
\gamma_{n,k}^a(f)\,x^{k}(1+x)^{\,n-2k},
\end{equation}
where
\begin{align}\label{gamma2-intro}
\gamma_{n,k}^a(f)
=\sum_{\sigma\in
\S_{n+1,\des=k}^{\mathrm{dd}=0}}
f^{\lmaxpk(\sigma)-1}
g^{\bmax(\sigma)-1}
h^{\dis(\sigma)}
p^{\ldes(\sigma)}
q^{\rasc(\sigma)},
\end{align}
and
\[
\S_{n,\des=k}^{\mathrm{dd}=0}
=\{\sigma\in\S_n:\dd(\sigma)=0\ \text{and}\ \des(\sigma)=k\}.
\]
\end{theorem}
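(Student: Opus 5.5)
Since the theorem is supposed to follow from the $J$-fraction of Theorem~\ref{thm1}, the natural route is the continued-fraction proof of $\gamma$-positivity in the style of Shin--Zeng~\cite{SZ12}. First I specialize Theorem~\ref{thm1} to $\mathbf{u}=(1,x,1,x)$, $t=\alpha=\beta=1$. Since $[k+1;1,1]_{p,q}=[k+1]_{p,q}$, this gives
\[
b_k=h^k(1+x)[\,k+1\,]_{p,q},\qquad \lambda_k=x\,h^{2k-1}[\,k\,]_{p,q}[\,k+1;g,f\,]_{p,q}.
\]
Hence $\sum_n A_n(x,f,g,h\,|\,p,q)z^n$ is the generating function of Motzkin paths in which each level step at height $k$ has weight $(1+x)h^k[k+1]_{p,q}$ and each up/down pair from height $k-1$ to $k$ has weight $x\,h^{2k-1}[k]_{p,q}[k+1;g,f]_{p,q}$. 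A path of length $n$ with $k$ up steps, $k$ down steps and $n-2k$ level steps therefore has weight $x^k(1+x)^{n-2k}$ times a polynomial in $f,g,h,p,q$ with nonnegative coefficients. Collecting by $k$ yields
\[
A_n(x,f,g,h\,|\,p,q)=\sum_k \gamma_{n,k}\,x^k(1+x)^{n-2k},
\]
where $\gamma_{n,k}$ is the total weight of such paths after replacing each factor $(1+x)$ by $1$ and each $x$ by $1$. This already gives $\gamma$-positivity.

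Next I identify $\gamma_{n,k}$ combinatorially. Setting $x=1$ inside $(1+x)$ is the same as deleting the $u_4$-part of $b_k$, so $\gamma_{n,k}$ is the coefficient of $y^k$ in the $J$-fraction of Theorem~\ref{thm1} with
\[
u_1=1,\quad u_2=y,\quad u_3=1,\quad u_4=0,\quad \alpha=\beta=t=1.
\]
By the definition \eqref{(p,q)-Eulerian-poly}, $u_4=0$ kills every permutation with $\dd>0$. The remaining permutations contribute $y^{\val}$, and when $\dd=0$ we have $\des=\val+\dd=\val$. So the coefficient of $y^k$ is exactly the sum over $\S_{n+1,\des=k}^{\mathrm{dd}=0}$ of $f^{\lmaxpk-1}g^{\bmax-1}h^{\dis}p^{\ldes}q^{\rasc}$. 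Formally, both sides are the same $J$-fraction evaluated on weights, applied with Theorem~\ref{thm1} twice: once at $(1,x,1,x)$ and once at $(1,y,1,0)$. Comparing the path expansions term by term, substituting $x\mapsto y$ and $(1+x)\mapsto1$ in each path weight, gives \eqref{gamma2-intro}.

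The main obstacle is making the substitution rigorous rather than heuristic. The expansion in powers of $x^k(1+x)^{n-2k}$ must be read path by path, not coefficientwise in $x$. To handle this I will introduce independent indeterminates $a,c$ and observe that the path weights are $c^{\#\text{level}}a^{\#\text{up}}\times(\text{weights in } f,g,h,p,q)$. Theorem~\ref{thm1} with $\mathbf{u}=(1,a,c,0)$ (so $u_1u_2=a$ and $u_3+u_4$ replaced by $c$ in the $b_k$ with $t=\alpha=\beta=1$) shows that the resulting polynomial $P_n(a,c)$ equals $\sum_{\sigma:\dd=0}a^{\val}c^{\da}(\cdots)$. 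Then $A_n=P_n(x,1+x)$, because $b_k$ and $\lambda_k$ depend on $(u_3,u_4)$ only through $u_3+u_4$ when $t=\alpha=\beta=1$. This last point needs a check: it holds because $[k+1;1,1]_{p,q}=[k+1;\beta t,1]_{p,q}$ at $\beta=t=1$. Symmetry of $A_n$ follows from the same expansion.
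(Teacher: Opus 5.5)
Your proposal is correct and follows essentially the same route as the paper. Both specialize Theorem~\ref{thm1} with $\alpha=\beta=t=1$, so that the $J$-fraction depends on $(u_3,u_4)$ only through $u_3+u_4$ in $b_k$ and on $x$ only through the factor $x$ in $\lambda_k$; both then identify the coefficients via $u_3=1,\ u_4=0$, which kills permutations with $\dd>0$ and forces $\des=\val$. The paper extracts the coefficients by the rescaling $u_2\to u_2(u_3+xu_4)^2/x$, $z\to z/(u_3+xu_4)$ in the continued fraction, while you read them off from the Flajolet path weights; your identity $A_n=P_n(x,1+x)$, together with $\da=n-2\des$ on permutations with $\dd=0$, is a clean way to make that step rigorous.
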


\begin{remark}
When $f=g=h=1$, this expansion reduces to a classical $\gamma$-positive
decomposition of Eulerian-type polynomials. An equivalent formulation of
\eqref{gamma-des-special2-intro} was obtained by Brändén~\cite[Eq.~(5.1)]{Bra08}
via the modified Foata--Strehl action. The present approach derives the
$\gamma$-coefficients directly from the continued-fraction structure.
\end{remark}
\begin{theorem}\label{thm5}
For $n\ge 0$, we have
    \begin{equation}\label{gamma-des-special1}
     B_n(x,f,g,h\,|\,t,\alpha)=\sum_{k=0}^{\lfloor n/2\rfloor}\gamma_{n,k}^b(f)\,x^{k}(1+x)^{n-2k},
      \end{equation}
      where 
    \begin{equation}\label{gamma1-intro}   \gamma_{n,k}^b(f)=\sum_{\sigma\in\S_{n+1,\des=k}^{\mathrm{dd=0}}}f^{\mathrm{lmaxpk(\sigma)-1}}g^{\bmax(\s)-1}h^{\dis(\sigma)}
 t^{\lrmaxda(\s)}\alpha^{{\lmax}(\sigma)+{\rmax}(\sigma)-2},
\end{equation} 
with $\S_{n,\des=k}^{\mathrm{dd=0}}=\{\sigma\in\S_n: \dd(\sigma)=0\text{ and } \des(\sigma)=k\}$.
\end{theorem}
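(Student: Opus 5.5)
Setting $\mathbf{u}=(1,x,1,x)$, $p=q=1$ and $\beta=\alpha$ in Theorem~\ref{thm1} gives the $J$-fraction for $\sum_n B_n(x,f,g,h\,|\,t,\alpha)z^n$. Since $[k;a,b]_{1,1}=a+b+k-2$, its coefficients are
\[
b_k=h^k(1+x)(k+\alpha t),\qquad \lambda_k=x\,h^{2k-1}\,k\,(k-1+\alpha g+\alpha f).
\]
The key feature is that $b_k$ is a multiple of $(1+x)$ and $\lambda_k$ is a multiple of $x$. Moreover, the cofactors $h^k(k+\alpha t)$ and $h^{2k-1}k(k-1+\alpha(f+g))$ do not depend on $x$.

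By Flajolet's combinatorial interpretation, $B_n$ is the sum over Motzkin paths of length $n$ of the products of these weights. A path with $k$ up steps, $k$ down steps and $n-2k$ level steps therefore contributes $x^k(1+x)^{n-2k}$ times an $x$-free weight. This gives \eqref{gamma-des-special1} with
\[
\gamma^b_{n,k}(f)=\sum_{\text{Motzkin paths, }k\text{ up steps}}\ \prod_{\text{level at height }j}h^j(j+\alpha t)\prod_{\text{up/down pairs}}h^{2j-1}j(j-1+\alpha f+\alpha g).
\]
This is the same $J$-fraction specialization as in Theorem~\ref{thm1}, but with $u_4=0$ (so $x$ only enters through the valley weight) and with $\beta$ set to $\alpha$. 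Since the $b_k$ coefficient $h^k(k+\alpha t)$ has no $\beta$-part, the substitution $u_4=0$ also removes the $\beta t$ contribution.

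Next, compare with Theorem~\ref{thm1} itself under the substitution $\mathbf{u}=(1,1,1,0)$, $p=q=1$, $\beta=\alpha$, with $x$ recording valleys. Its coefficients are
\[
b_k=h^k[k+1;1,\alpha t]_{1,1}=h^k(k+\alpha t),\qquad \lambda_k=x\,h^{2k-1}k(k-1+\alpha g+\alpha f).
\]
Hence $\sum_k\gamma^b_{n,k}x^k$ is the generating function of the polynomials of Theorem~\ref{thm1} restricted to permutations with $\dd=0$, weighted by $x^{\val}$.

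For $\sigma\in\S_{n+1}$ with $\dd(\sigma)=0$, the relation $\des=\val+\dd$ from \eqref{eq:refinement of asc-des} gives $\val=\des$. Also $\rlmaxdd(\sigma)=0$, so the factor $t^{\lrmaxda+\rlmaxdd}$ reduces to $t^{\lrmaxda}$. The remaining factors $f^{\lmaxpk-1}g^{\bmax-1}h^{\dis}\alpha^{\lmax+\rmax-2}$ are carried through unchanged. This yields exactly \eqref{gamma1-intro}.

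The main point needing care is the justification that setting $u_4=0$ in Theorem~\ref{thm1} correctly gives the sum over permutations with $\dd=0$. This holds because $u_4$ appears in $\wt$ only through $u_4^{\dd}$, and Theorem~\ref{thm1} is an identity of formal power series, so the specialization $u_4=0$ is legitimate. The second point is the bookkeeping identity $[k+1;\beta t,1]_{1,1}=k+\beta t$. It appears multiplied by $u_4$ in $b_k$, so it vanishes under $u_4=0$, which is consistent with the matching above.

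Finally, the identification of the $\gamma$-coefficient with the $x$-free part of the path weights must be uniform in $k$. This is immediate because the factorization $b_k=(1+x)\,\tilde b_k$ and $\lambda_k=x\,\tilde\lambda_k$ holds for every $k$, with $\tilde b_k,\tilde\lambda_k$ independent of $x$. No further symmetry argument is needed: $\gamma$-positivity follows from this expansion, since all remaining weights are polynomials with nonnegative coefficients.
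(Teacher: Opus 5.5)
Your proof is correct and is essentially the paper's argument: both read off from Theorem~\ref{thm1} that $b_k=(1+x)\,h^k(k+\alpha t)$ and $\lambda_k=x\,h^{2k-1}k(k-1+\alpha f+\alpha g)$ have $x$-free cofactors, and both identify those cofactors with the $\dd=0$ sum by specializing $u_4=0$, where $\val=\des$ and $\rlmaxdd=0$. The paper phrases this as a rescaling $u_2\to u_2(u_3+xu_4)^2/x$, $z\to z/(u_3+xu_4)$ of the generating function, while you use Flajolet's path expansion directly; also, your comparison substitution should read $\mathbf{u}=(1,x,1,0)$ rather than $(1,1,1,0)$, as your formula for $\lambda_k$ already assumes.
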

\begin{remark}
    When $f=g=h=1$, we recover (3.28c) of Proposition~3.10 in \cite{XZ24}. 
\end{remark}
Postnikov, Reiner, and Williams~\cite{PRW08} proved that the $h$-polynomials of dual stellohedra
has the combinatorial interpretation
\[
\widetilde{A}_n(x)=\sum_{\sigma\in\mathrm{PRW}_{n+1}}x^{\des(\sigma)},
\]
where $\mathrm{PRW}_n$ is the set of permutations in $\S_n$ such that 
the first descent is at the letter  $n$.
  For example, 
\[
\PRW_2=\{12,21\}\quad \text{and}\quad \mathrm{PRW}_3=\{123,132,231,312,321\}.
\]
Clearly, a permutation $\sigma$ of $[n]$ is in $\PRW_{n}$ if and only if 
$n$ is simultaneously the unique left-to-right maximum and a peak of $\sigma$, namely,
$\pmax(\sigma) = 1$. 
By \eqref{An-Bn}, we have  
\begin{subequations}
    \begin{align} 
A_n(x,0,g,h\,|\,p,q)&=\sum_{\sigma\in\PRW_{n+1}}x^{\des(\sigma)}g^{\bmax(\s)-1}h^{\dis(\sigma)}p^{\ldes(\sigma)}q^{\rasc(\sigma)},\\
B_n(x,0,g,h\,|\,\,t,\alpha)&=\sum_{\sigma\in\PRW_{n+1}}x^{\des(\sigma)}g^{\bmax(\s)-1}h^{\dis(\sigma)}t^{\lrmaxda(\s)+\rlmaxdd(\s)}\alpha^{{\lmax}(\sigma)+{\rmax}(\sigma)-2}.
\end{align}

The following result is an immediate 
consequence of  Theorems~\ref{thm4} and~\ref{thm5}.
\begin{theorem}
    We have 
    \begin{align}\label{gamma-des-special2-PRW}
A_n(x,0,g,h\,|\,p,q)&=\sum_{k=0}^{\lfloor n/2\rfloor}\gamma_{n,k}^a(0)\,x^{k}(1+x)^{n-2k}, \\
  \label{gamma-des-special-PRW}
B_n(x,0,g,h\,|\,\,t,\alpha)&=\sum_{k=0}^{\lfloor n/2\rfloor}\gamma_{n,k}^b(0)\,x^{k}(1+x)^{n-2k},    
  \end{align}         
    where 
    \begin{align}
    \label{gamma2-PRW}
\gamma_{n,k}^a(0)=\sum_{\sigma\in\mathrm{PRW}_{n+1,k}^{\dd=0}}g^{\bmax(\s)-1}h^{\dis(\sigma)}p^{\ldes(\sigma)}q^{\rasc(\sigma)},\\  
\gamma_{n,k}^b(0)=\sum_{\sigma\in\PRW_{n+1,k}^{\dd=0}}g^{\bmax(\s)-1}h^{\dis(\sigma)}
 t^{\lrmaxda(\s)}\alpha^{{\lmax}(\sigma)+{\rmax}(\sigma)-2} \label{gamma1-PRW} 
\end{align}
with 
\begin{equation}
\PRW_{n,k}^{\dd=0}:=\{\sigma\in\PRW_n: \dd(\sigma)=0\text{ and } \des(\sigma)=k\}.
\end{equation}
\end{theorem}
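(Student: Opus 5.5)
The plan is to specialize Theorems~\ref{thm4} and~\ref{thm5} at $f=0$. Both identities \eqref{gamma-des-special2-intro} and \eqref{gamma-des-special1} are polynomial identities in $f$; note that the left and right sides are genuinely polynomials in $f$, since $\lmaxpk(\sigma)\ge1$ for every $\sigma\in\S_{n+1}$. Indeed, the letter $n+1$ is always a left-to-right maximum and, with the boundary convention $\sigma_0=\sigma_{n+2}=0$, always a peak. So the exponent $\lmaxpk(\sigma)-1$ is nonnegative and setting $f=0$ is legitimate on both sides. The same remark applies to $\bmax(\sigma)-1\ge 0$, although $g$ is not specialized here.

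On the left-hand side, substituting $f=0$ into the combinatorial expressions \eqref{An-Bn} kills every permutation with $\lmaxpk(\sigma)\ge 2$. Only those with $\lmaxpk(\sigma)=1$ survive. By the observation made just before the statement, these are exactly the permutations in $\PRW_{n+1}$. The reason is that $\lmaxpk(\sigma)=1$ means the only left-to-right maximum that is a peak is $n+1$. Equivalently, every left-to-right maximum preceding $n+1$ is a double ascent, which says that the first descent occurs at the letter $n+1$. This yields the two displayed formulas for $A_n(x,0,g,h\,|\,p,q)$ and $B_n(x,0,g,h\,|\,t,\alpha)$.

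On the right-hand side, the $\gamma$-coefficients $\gamma^a_{n,k}(f)$ and $\gamma^b_{n,k}(f)$ given in \eqref{gamma2-intro} and \eqref{gamma1-intro} are likewise polynomials in $f$. Evaluating them at $f=0$ restricts the sums over $\S^{\dd=0}_{n+1,\des=k}$ to those $\sigma$ with $\lmaxpk(\sigma)=1$, that is, to $\PRW^{\dd=0}_{n+1,k}$. This gives \eqref{gamma2-PRW} and \eqref{gamma1-PRW}. One small point needs checking for \eqref{gamma1-PRW}: the $t$-exponent there is $\lrmaxda$ alone rather than $\lrmaxda+\rlmaxdd$. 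This is consistent because $\rlmaxdd(\sigma)=0$ whenever $\dd(\sigma)=0$, and that matches the form already present in Theorem~\ref{thm5}.

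There is no real obstacle here. The only points needing care are the nonnegativity of the exponent of $f$, so that the specialization $f=0$ commutes with both expansions, and the characterization of $\PRW_{n+1}$ via $\lmaxpk=1$.
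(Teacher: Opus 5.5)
Your proposal is correct and is essentially the paper's argument: the paper states this result as an immediate consequence of Theorems~\ref{thm4} and~\ref{thm5}, obtained by setting $f=0$ and using the characterization $\PRW_{n+1}=\{\sigma\in\S_{n+1}:\lmaxpk(\sigma)=1\}$. The points you add make that specialization rigorous: the exponent $\lmaxpk(\sigma)-1$ is nonnegative, the equivalence of $\lmaxpk=1$ with ``first descent at $n+1$'' holds, and $\rlmaxdd$ vanishes on the $\dd=0$ sets.
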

\end{subequations}

\begin{remark} 
Special cases of \eqref{gamma-des-special-PRW} were obtained in 
\cite[Proposition 3.8]{XZ24} and \cite[Theorem~2.1]{JL23}.
\end{remark}

Consider the two special cases of the polynomials $C_n(\mathbf{u},t,s,h\,|\,\alpha,\beta\,;\,p,q)$, 
see~\eqref{generalization-C},
\begin{subequations}
    \begin{align}
C_n^1(x,t,s,h\,|\,\alpha,p,q):&=C_n((x,\,1,\,x/p,\,1),\,t(1+x),s,h\,|\,\alpha,\alpha\,;\, p,q)\nonumber\\
&=\sum_{\sigma\in\S_n}\mathbf{w^c}(\sigma)\,p^{\cros(\sigma)-\cda(\sigma)}q^{\nest(\sigma)},
\end{align}
and 
\begin{align}
  C_n^2(x,t,s,h\,|\,\alpha,p,q):&=C_n((x,\,p,\,x,\,p),\,t(1+x),s,h\,|\,\alpha,\alpha\,;\, p,q)\nonumber\\
&=\sum_{\sigma\in\S_n}\mathbf{w^c}(\sigma)\,p^{\cros(\sigma)+\drop(\sigma)}q^{\nest(\sigma)},  
\end{align}
where 
\begin{equation}
    \mathbf{w^c}(\sigma)=x^{\exc(\sigma)}(t(1+x))^{\fix(\sigma)}s^{\inv(\sigma)} h^{\depth(\sigma)}\alpha^{\rlmin(\sigma)+\lmax(\sigma)}.
\end{equation}
\end{subequations}

 By the $J$-fraction of Theorem~\ref{thm2}, we show that the polynomials $C_n^1(x,t,s,h\,|\,\alpha,p,q)$ and $C_n^2(x,t,s,h\,|\,\alpha,p,q)$ admit the following  $\gamma$-expansions.

\begin{theorem}\label{thm6}
For $n\ge 0$, we have
    \begin{equation}\label{equ:C_n-gamma}
C_n^i(x,t,s,h\,|\,\alpha, p,q)=\sum_{k=0}^{\lfloor n/2\rfloor}\gamma_{n,k}^i(t)\,x^k(1+x)^{n-2k}, \quad (i=1,2)
    \end{equation}   
    where 
    \begin{equation}\label{gamma1-cyclic-intro}   \gamma_{n,k}^1(t)=\sum_{\sigma\in\S_{n,\exc=k}^{\mathrm{cda=0}}}t^{\fix(\sigma)}s^{\inv(\sigma)}h^{\depth(\sigma)}\alpha^{\lmax(\sigma)+\rlmin(\sigma)}p^{\cros(\sigma)}q^{\nest(\sigma)}
\end{equation} 
\begin{equation}\label{gamma1-cyclic-2-intro}   \gamma_{n,k}^2(t)=\sum_{\sigma\in\S_{n,\exc=k}^{\mathrm{cda=0}}}t^{\fix(\sigma)}s^{\inv(\sigma)}h^{\depth(\sigma)}\alpha^{\lmax(\sigma)+\rlmin(\sigma)}p^{\cros(\sigma)+\drop(\sigma)}q^{\nest(\sigma)}
\end{equation} 
with 
\begin{equation}
\S_{n,\exc=k}^{\mathrm{cda=0}}:=\{\sigma\in\S_n: \cda(\sigma)=0 \text{ and } \exc(\sigma)=k\}.
\end{equation}
\end{theorem}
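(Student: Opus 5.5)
We derive both expansions from the $J$-fraction of Theorem~\ref{thm2} and combine it with a path-level ``$\gamma$-contraction''. For $i=1$ we substitute $\mathbf{u}=(x,1,x/p,1)$, $t\to t(1+x)$, $\beta=\alpha$. The coefficients of Theorem~\ref{thm2} become
\begin{align*}
b_k&=(sh)^k\bigl(t(1+x)q^ks^k+[\,k;\alpha,1\,]_{p,qs}+x[\,k;\alpha,1\,]_{p,qs}\bigr)=(1+x)\,\beta_k,\\
\lambda_k&=x\,(sh)^{2k-1}[\,k;\alpha,1\,]_{p,qs}^2,
\end{align*}
where $\beta_k:=(sh)^k\bigl(tq^ks^k+[\,k;\alpha,1\,]_{p,qs}\bigr)$ and $b_0=t(1+x)\alpha^2$. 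Here $u_4=1$ and $u_3p=x$. For $i=2$ we take $\mathbf{u}=(x,p,x,p)$. Then $u_4[\,k;\alpha,1\,]+u_3p[\,k;\alpha,1\,]=p(1+x)[\,k;\alpha,1\,]_{p,qs}$, so again $b_k=(1+x)\beta_k'$ and $\lambda_k=xp\,(\cdots)$. In both cases the $J$-fraction has the form $b_k=(1+x)\beta_k$ and $\lambda_k=x\mu_k$, with $\beta_k,\mu_k$ free of $x$.

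The key step is the standard lemma. Expand $\mathcal{J}(z;\mathbf b,\boldsymbol\lambda)$ via Flajolet's theorem as a sum over Motzkin paths of length $n$. A path with $k$ up steps, $k$ down steps and $n-2k$ level steps contributes $x^k(1+x)^{n-2k}\prod\mu\prod\beta$. Hence
\[
[z^n]\mathcal{J}=\sum_{k}x^k(1+x)^{n-2k}\,\gamma_{n,k},
\]
where $\gamma_{n,k}$ is the generating function of Motzkin paths with $k$ up steps, level weights $\beta_k$ and pairing weights $\mu_k$. By the same theorem, $\gamma_{n,k}$ equals the coefficient of $x^k$ in $[z^n]\mathcal{J}(z;(\beta_k),(x\mu_k))$.

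It remains to identify this contracted fraction combinatorially. We recognise $\mathcal J(z;(\beta_k),(x\mu_k))$ as the specialization of Theorem~\ref{thm2} with $u_3=0$, so that cycle double ascents are killed. For $i=1$ we take $u_1=x$, $u_2=1$, $u_4=1$ and keep $t$. Indeed, with $u_3=0$ Theorem~\ref{thm2} gives $b_k=(sh)^k(tq^ks^k+[\,k;\alpha,1\,])=\beta_k$ and $\lambda_k=x(sh)^{2k-1}[\,k;\alpha,1\,]^2=x\mu_k$. The specialized sum runs over permutations with $\cda=0$. For these $\exc=\cval$, which is the exponent of $x$, so we obtain exactly $\gamma^1_{n,k}(t)$. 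For $i=2$ we use $u_1=x$, $u_2=p$, $u_4=p$, $u_3=0$. The weight is then $p^{\cval+\cdd}=p^{\drop}$ (as $\cda=0$, drops are cycle peaks or cycle double descents), matching $\gamma^2_{n,k}(t)$. The $b_0$ term is handled separately: $b_0=(1+x)t\alpha^2$ versus $t\alpha^2$, consistent with the same rule.

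The main obstacle is bookkeeping. We must check that the original sums really equal the stated combinatorial forms of $C_n^1$ and $C_n^2$. For instance, $x^{\cval}(x/p)^{\cda}$ combined with $p^{\cros}$ gives $x^{\exc}p^{\cros-\cda}$, using $\exc=\cval+\cda$. Likewise $x^{\cval+\cda}p^{\cval+\cdd}=x^{\exc}p^{\drop}$. We also verify the $b_0$ and $k=0$ conventions: since $[\,0;\cdot\,]$ does not occur, $b_0=t\alpha\beta$ is given separately. Finally, we ensure that $\alpha^{\lmax+\rlmin}$ is correctly specialized from $\alpha^{\lmax}\beta^{\rlmin}$.
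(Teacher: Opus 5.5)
Your proposal is correct and follows essentially the paper's route. The paper first proves the refined Lemma~\ref{lem:C_n}, which tracks $\fix$ separately and keeps general $u_2,u_3,u_4$. Its rescaling $u_2\to u_2(u_4+xu_3p)^2/x$, $z\to z/(u_4+xu_3p)$, $t\to t(u_4+xu_3p)$ is exactly your path-level observation that every level weight carries a common linear factor and every $\lambda_k$ carries $x$, and the paper then identifies the resulting coefficients with the $u_3=0$ specialization of Theorem~\ref{thm2}. You make the final specializations ($u_3=x/p$, resp.\ $u_2=u_4=p$, together with $t\to t(1+x)$ and $\beta=\alpha$) at the outset, which skips the intermediate fixed-point refinement but changes nothing essential.
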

 \begin{remark} 
The special case  $\alpha=s=h=1$ of 
     Eq.~\eqref{equ:C_n-gamma} with $i=2$  is equivalent to Eq.~(4.14) of Theorem~4.8 in~\cite{LWZ19}.
 \end{remark}
The next corollary corresponds to the derangement specialization of the
$\gamma$-positive decomposition arising from the $J$-fraction in
Theorem~\ref{thm1}. By setting the fixed-point parameter to zero,
the continued-fraction structure naturally restricts to weighted
derangements, yielding an explicit combinatorial interpretation of the
$\gamma$-coefficients. Define 
\begin{align}
C_n^1(x,0,s,h\,|\,\alpha,p,q)
&=\sum_{\sigma\in\mathcal{D}_n}
x^{\exc(\sigma)}
s^{\inv(\sigma)}
h^{\depth(\sigma)}
\alpha^{\lmax(\sigma)+\rlmin(\sigma)}
p^{\cros(\sigma)-\cda(\sigma)}
q^{\nest(\sigma)},\\
C_n^2(x,0,s,h\,|\,\alpha,p,q)
&=\sum_{\sigma\in\mathcal{D}_n}
x^{\exc(\sigma)}
s^{\inv(\sigma)}
h^{\depth(\sigma)}
\alpha^{\lmax(\sigma)+\rlmin(\sigma)}
p^{\cros(\sigma)+\drop(\sigma)}
q^{\nest(\sigma)},
\end{align}
where  $\mathcal{D}_n:=\{\sigma\in \S_n: \fix(\sigma)=0\}$ is the set of derangements of $[n]$.
\begin{corollary}
For $n\ge0$, we have
\begin{equation}\label{cor:derangement-gamma}
C_n^i(x,0,s,h\,|\,\alpha,p,q)
=\sum_{k=0}^{\lfloor n/2\rfloor}
\gamma_{n,k}^i(0)\,x^{k}(1+x)^{\,n-2k}, \quad (i=1,2)
\end{equation}
where
\begin{align}
\gamma_{n,k}^1(0)
&=\sum_{\sigma\in
\mathcal{D}_{n,k}^{0}}
s^{\inv(\sigma)}
h^{\depth(\sigma)}
\alpha^{\rlmin(\sigma)+\lmax(\sigma)}
p^{\cros(\sigma)}
q^{\nest(\sigma)},\\
\gamma_{n,k}^2(0)
&=\sum_{\sigma\in
\mathcal{D}_{n,k}^{0}}
s^{\inv(\sigma)}
h^{\depth(\sigma)}
\alpha^{\rlmin(\sigma)+\lmax(\sigma)}
p^{\cros(\sigma)+\drop(\sigma)}
q^{\nest(\sigma)},
\end{align}
with
$\mathcal{D}_{n,k}^{0}
:=\{\sigma\in\mathcal{D}_n:\cda(\sigma)=0
\text{ and }\exc(\sigma)=k\}.$
\end{corollary}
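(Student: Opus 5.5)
This corollary is the specialization $t=0$ of Theorem~\ref{thm6}, so the plan is simply to substitute $t=0$ on both sides of \eqref{equ:C_n-gamma} and identify what survives.

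On the left, $C_n^i(x,t,s,h\mid\alpha,p,q)$ carries the factor $(t(1+x))^{\fix(\sigma)}$. Setting $t=0$ keeps exactly the permutations with $\fix(\sigma)=0$, with the convention $0^0=1$. These are the derangements $\mathcal{D}_n$, and since every other factor of the weight is unchanged, this gives exactly the displayed definitions of $C_n^i(x,0,s,h\mid\alpha,p,q)$. For $i=1$ the exponent $\cros(\sigma)-\cda(\sigma)$ comes straight from the definition of $C^1_n$. Although the exponent may look negative, the whole expression is a polynomial, because it arises from the substitution $u_3=x/p$ in the polynomial $C_n$.

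On the right, $\gamma^i_{n,k}(t)$ contains $t^{\fix(\sigma)}$. At $t=0$ the sum therefore reduces to $\sigma\in\S^{\mathrm{cda}=0}_{n,\exc=k}$ with $\fix(\sigma)=0$, which is precisely $\mathcal{D}^0_{n,k}$, and the surviving weights agree term by term with the stated $\gamma^i_{n,k}(0)$. For $i=1$ the factor $p^{-\cda}$ disappears because $\cda(\sigma)=0$ on this set, which is why the $\gamma$-coefficient shows $p^{\cros}$ rather than $p^{\cros-\cda}$.

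Both sides of \eqref{equ:C_n-gamma} are polynomials in $t$, so the specialization is legitimate and the identity follows. If one instead wanted to argue directly from the $J$-fraction, the only change is $b_0=0$ and the loss of the $tq^ks^k$ term in $b_k$. That route would amount to rerunning the proof of Theorem~\ref{thm6}, which is unnecessary here.

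The one point to check carefully is that the convention $0^0=1$ is applied consistently on both sides, so that fixed-point-free permutations keep weight $1$ from the factor $t^{\fix}$. There is no real obstacle beyond this.
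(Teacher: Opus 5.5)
Your argument is correct and follows the paper's route: the corollary is the $t=0$ specialization of Theorem~\ref{thm6}, where the factors $t^{\fix(\sigma)}$ and $(t(1+x))^{\fix(\sigma)}$ keep exactly the derangements on both sides. One aside: your justification that $p^{\cros(\sigma)-\cda(\sigma)}$ gives a polynomial is not valid as stated, because substituting $u_3=x/p$ into a polynomial can produce negative powers of $p$; the actual reason is $\cros(\sigma)\ge\cda(\sigma)$, since each cycle double ascent $k$ yields the crossing pair $(\sigma^{-1}(k),k)$, though this plays no role in the identity itself.
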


\begin{remark}
When  $h=\alpha=q=1$, Eq.~\eqref{cor:derangement-gamma} reduces to
Theorem~2 of~\cite{SZ16}.
\end{remark}
The remainder of this paper is organized as follows.
In Section~\ref{sec2}, we begin with a detailed review of the
Françon--Viennot bijection and the Foata--Zeilberger bijection between
permutations and Laguerre histories, a class of weighted Motzkin paths.
We then employ these bijections to prove Theorems~\ref{thm1} and
\ref{thm2} using Flajolet's fundamental theorem on $J$-continued
fractions. 

In Section~\ref{sec:SZ-C}, we recall the bijections of Shin--Zeng and
Corteel on permutations and use them to establish
Theorem~\ref{thm3}. 
Section~\ref{sec:gamma} is devoted to the $\gamma$-positive expansions
derived from the continued fractions of Theorems~\ref{thm1} and
\ref{thm2}. In particular, we obtain $\gamma$-expansions for the
$(p,q)$-analogues of the generalized Eulerian polynomials
(Theorems~\ref{thm4}, \ref{thm5}, and \ref{thm6}). 
We also present two distinct factorizations of the corresponding
$\gamma$-coefficients together with combinatorial interpretations in
terms of André permutations of types~1 and~2; see
Theorems~\ref{Thm: generalized PZ}, \ref{thm-andre-minimum}, and
\ref{thm-andre-maximum}.

In Section~\ref{sec:proof-andre}, we apply the Pan--Zeng group action to
prove Theorem~\ref{Thm: generalized PZ} and the Dong--Lin--Pan group
action to prove Theorem~\ref{thm-andre-minimum}. 
Section~\ref{sec:total-positivity} investigates the total positivity of
the $(p,q)$-analogues of generalized Eulerian polynomials via continued
fractions. 
Finally, in Section~\ref{sec:special-cases}, we answer a question of
Eu~et~al.\ by constructing a bijection that makes explicit the symmetry
between two variants of the $\nest$ and $\cros$ statistics. We also
derive several compact closed-form expressions obtained by specializing
one of the parameters in the associated continued fractions to $-1$.

\section{Encoding permutations by Laguerre histories}\label{sec2}

\subsection{Preliminaries}

A \emph{Motzkin path} of length $n$ is a lattice path
$\{(x_i,y_i)\}_{0\le i\le n}$ in the plane that
\begin{itemize}
  \item starts at $(x_0,y_0)=(0,0)$ and ends at $(x_n,y_n)=(n,0)$;
  \item uses only the three types of steps
  \[
     \mathrm{N}=(1,1)\ \text{(North-East)},\qquad
     \mathrm{S}=(1,-1)\ \text{(South-East)},\qquad
     \mathrm{E}=(1,0)\ \text{(East)};
  \]
  \item never goes below the $x$-axis, that is,
  the height $h_i:=y_{i-1}\ge0$ for all $i\in[n]$.
\end{itemize}
Let $\mathcal{M}_n$ denote the set of Motzkin paths of length $n$.

\medskip

Following the classical Flajolet--Viennot approach, we consider a
\emph{weighted Motzkin-path model}.  
Each East step at height $i$ is assigned a weight $a_i$, while each
North-East (resp.\ South-East) step at height $i$ receives a weight
$b_i$ (resp.\ $c_i$).  
The weight of a path $P\in\mathcal{M}_n$ is the product of the weights
of its steps and is denoted by $\omega(P)$.  
The generating function of weighted Motzkin paths admits a Jacobi-type
continued-fraction expansion, given by Flajolet's fundamental lemma
on continued fractions~\cite{Fla80}.

\begin{lemma}\label{lem:fla}
We have
\begin{equation}
\sum_{n\ge0}\;\sum_{P\in\mathcal{M}_n}\omega(P)\,z^n
=\cfrac{1}{1-b_0 z-\cfrac{a_0 c_1 z^2}{1-b_1 z-\cfrac{a_1 c_2 z^2}{\ddots}}}.
\end{equation}
\end{lemma}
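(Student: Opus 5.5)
This is Flajolet's lemma, and I will prove it by first-step (or first-return) decomposition of weighted Motzkin paths, carried out at the level of formal power series in $z$. Let $F_h(z)$ be the generating function of weighted paths that start and end at height $h$ and never go below height $h$. Here the step weights are those inherited at the actual heights: east steps at height $i\ge h$, and up/down steps between $i$ and $i+1$. The claim is that $F_0$ equals the displayed continued fraction.

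\emph{Step 1 (decomposition).} A nonempty path counted by $F_h$ begins either with an east step at height $h$, or with an up step from $h$ to $h+1$. In the second case I locate the first return to height $h$, which must be a down step from $h+1$ to $h$. Between these two steps sits a path counted by $F_{h+1}$, and after the return comes an arbitrary path counted by $F_h$. This decomposition is a bijection and the weights are multiplicative, so
\[
F_h = 1 + (\text{east weight at }h)\,z\,F_h + (\text{up weight at }h)(\text{down weight at }h+1)\,z^2\,F_{h+1}F_h.
\]
Solving for $F_h$ gives $F_h = 1/\bigl(1 - e_h z - \lambda_{h+1} z^2 F_{h+1}\bigr)$. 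Here $e_h$ is the level-step weight and $\lambda_{h+1}$ is the product of the up-step weight at $h$ and the down-step weight at $h+1$. With the lemma's notation this product is $a_h c_{h+1}$ and the level weight is $b_h$, so I will match the naming to the statement.

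\emph{Step 2 (convergence of iteration).} Iterating the relation $N$ times expresses $F_0$ as the $N$-th convergent of the continued fraction, with the tail $F_{N}$ still inside. Any path that reaches height $N$ has length at least $2N$. Consequently, replacing $F_N$ by $1$, or by any power series with constant term $1$, changes $F_0$ only in coefficients of $z^{n}$ with $n\ge 2N$. Therefore the convergents converge $z$-adically to $F_0$. This is exactly the meaning of the formal continued fraction $\mathcal{J}$, and it proves the identity.

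The only delicate point is this formal-convergence bookkeeping, together with checking that the step-weight naming matches the lemma. In the lemma, up steps at height $i$ carry $a_i$ and down steps at height $i$ carry $c_i$, where height is measured at the start of the step; the east-step weight is $b_i$. The combinatorial decomposition itself is routine.
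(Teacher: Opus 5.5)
Your proof is correct. It is the standard argument for Flajolet's lemma: the first-return decomposition gives $F_h = 1/(1-b_hz-a_hc_{h+1}z^2F_{h+1})$, and the convergents converge $z$-adically because paths of length $<2N$ never reach height $N$. The paper gives no proof of its own and simply cites Flajolet~\cite{Fla80}, whose argument is essentially this one.

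One remark on conventions. You took up steps to carry $a_i$, level steps $b_i$, and down steps $c_i$, with height measured at the start of the step. This is the only reading under which the displayed formula holds, and it is how the paper applies the lemma in the proofs of Theorems~\ref{thm1} and~\ref{thm2}. The sentence just before the lemma in the paper swaps the roles of $a_i$ and $b_i$, which is inconsistent with the formula.
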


For a permutation $\sigma=\sigma_1\cdots \sigma_n\in\mathfrak{S}_n$ and $k=\sigma_j$ with $j\in [n]$,
the coordinate  generalized  pattern statistics  are defined by
\begin{align}\label{patterns:refine}
(31\!-\!2)_k (\sigma)
&= \#\{\, i : i+1 < j \text{ and } 
\sigma_{i+1} < k < \sigma_i \,\};\nonumber\\
(2\!-\!13)_k (\sigma)
&= \#\{\, i : j < i-1 \text{ and } 
\sigma_{i-1} < k < \sigma_i \,\};\\
(2\!-\!31)_k (\sigma)
&= \#\{\, i : j < i-1 \text{ and } 
\sigma_i < k < \sigma_{i-1} \,\}.\nonumber
\end{align}
Clearly, for $\sigma \in \mathfrak{S}_n$, 
 the statistic associated with the generalized pattern $31\!-\!2$ decomposes as
\[
(31\!-\!2)(\sigma)
=\sum_{k=1}^{n} (31\!-\!2)_k(\sigma).
\]
Similarly,
\[
(2\!-\!13)(\sigma)=\sum_{k=1}^{n} (2\!-\!13)_k(\sigma),
\text{ and }
(2\!-\!31)(\sigma)=\sum_{k=1}^{n} (2\!-\!31)_k(\sigma).
\]

\begin{example}
    Let $\sigma=4723516\in\S_7$. We have 
    \[(31\!-\!2)_3(\sigma)=1,\; (2\!-\!13)_4(\sigma)=2 \text{ and } (2\!-\!31)_4(\sigma)=2.\]
\end{example}

 We can identify  a Motzkin path of length  $n$ by  a Motzkin word $w=w_1\ldots w_n$ such that 
$w_i=\mathrm{N}$ (resp. $\mathrm{S}$, $\mathrm{E}$) if and only if the $i$th step is $\mathrm{N}$ (resp. $\mathrm{S}$, $\mathrm{E}$).  Given a Motzkin path $w=w_1w_2\cdots w_n\in\mathcal{M}_n$, the height $h_i$ of the $i$th step is given by 
\begin{equation}\label{def:height}
    h_i:=h_i(w)=\#\{j<i\,:\,w_j=\mathrm{N}\}-\#\{j<i\,:\,w_j=\mathrm{S}\}.
\end{equation}
\subsubsection{Françon-Viennot bijection}
 A \emph{2-Motzkin path} is a Motzkin path whose east steps could come in two colors: red (\red{$\mathrm{E_1}$}) or blue (\blue{$\mathrm{E_2}$}).
\begin{definition}[Laguerre history]
    A Laguerre history of length $n$ is a couple  $(w, p)$, where $w=w_1\cdots w_n$ is a  2-Motzkin path of length $n$ starting at $(0,0)$ and ending at $(n,0)$, and 
$p=(p_1, \ldots, p_n)$ is a label sequence such that  $0 \leq p_i \leq h_i$ for $1\le i\le n$. 
Denote by $\mathcal{L}_n$   the set of Laguerre histories  of length $n$. 
\end{definition}

We describe  a variant of Françon-Viennot's bijection $\Psi_{FV}:\S_{n+1}\to \L_n$ and its inverse, see~\cite{SZ12,Vie83}.  For a permutation $\sigma\in\S_{n+1}$,  the Laguerre history  $\Psi_{FV}(\sigma) := (w_1\ldots w_n,(p_1,\ldots, p_n)) \in \mathcal{L}_{n}$ is defined by
\begin{equation}\label{def:corresponding}
    w_i =
\begin{cases}
\mathrm{N}\;(\textrm{resp.} \;\mathrm{S} ) & \text{if } i \textrm{ is a valley (resp. peak) of }\sigma; \\[2pt]
\mathrm{\red{E_1}} \; (\textrm{resp.}\; \blue{\mathrm{E_2}}) & \text{if } i \text{ is a double descent (resp. double ascent) of }\sigma, 
\end{cases}
\end{equation}
and $p_i = (31\!-\!2)_i(\sigma)$ for each $i \in [n]$.

\begin{figure} 
  \centering

\begin{tikzpicture}[scale=1, line cap=round, line join=round]


\draw[very thick,->] (0,-3) -- (0,3); 
\draw[thick] (1,1.0) -- (1,-3);
\draw[thick] (2,1.0) -- (2,-3);
\draw[thick] (3,2.0) -- (3,-3);
\draw[thick] (4,1.0) -- (4,-3);
\draw[thick] (5,0.0) -- (5,-3);
\draw[thick] (6,0.0) -- (6,-3);
\draw[thick] (7,1.0) -- (7,-3);
\draw[thick] (8,0.0) -- (8,-3);

\fill[color=black] (0,0) circle (1.5pt);
\fill[color=black] (1,1) circle (1.5pt);
\fill[color=black] (2,1) circle (1.5pt);
\fill[color=black] (3,2) circle (1.5pt);
\fill[color=black] (4,1) circle (1.5pt);
\fill[color=black] (5,0) circle (1.5pt);
\fill[color=black] (6,0) circle (1.5pt);
\fill[color=black] (7,1) circle (1.5pt);
\fill[color=black] (8,0) circle (1.5pt);

\draw[very thick,->] (-0.7,0) -- (8.8,0);

\draw[very thick]
  (0,0) -- (1,1) -- (2,1) -- (3,2) -- (4,1) -- (5,0)
  -- (6,0) -- (7,1) -- (8,0);
\node at (1.5,1.3) {\red{$\mathrm{E_1}$}};
\node at (5.5,0.3) {\blue{$\mathrm{E_2}$}};
\draw[very thick,red]  (1,1) -- (2,1);
\draw[very thick,blue] (5,0) -- (6,0);

\foreach \y in {-1,-2,-3} {
  \draw[thick] (-0.7,\y) -- (8,\y);
}

\node at (-0.5,-0.5) {$i$};
\node at (-0.5,-1.5) {$h_i$};
\node at (-0.5,-2.5) {$p_i$};

\foreach \k in {1,...,8} {
  \node at (\k-0.5,-0.5) {\k};
}
\foreach \k/\v in {1/0,2/1,3/1,4/2,5/1,6/0,7/0,8/1} {
  \node at (\k-0.5,-1.5) {\v};
}
\foreach \k/\v in {1/0,2/1,3/0,4/2,5/1,6/0,7/0,8/1} {
  \node at (\k-0.5,-2.5) {\v};
}

\end{tikzpicture}
\caption{The Laguerre history $\Psi_{FV}(\sigma)$, where $\sigma=697835142$.}
  \label{fig:motzkin}
\end{figure}

Inversely, starting from a Laguerre history $(w,p)\in\L_{n}$ one can recover the corresponding permutation $\sigma$
by the following algorithm:
\begin{itemize}
  \item Initialization: $\sigma=\diamond$;
  \item At the $i$th $(1\le i\le n)$ step of the algorithm, replace the $(p_i+1)$th $\diamond$ (from left to right) of $\sigma$ by
  \[
  \left\{
  \begin{array}{@{}l@{\quad}l}
    \diamond\, i\, \diamond & \text{if } w_i = \mathrm{N};\\[2pt]
    i\,\diamond              & \text{if } w_i = \mathrm{\blue{E_2}};\\[2pt]
    i                        & \text{if } w_i = \mathrm{S};\\[2pt]
    \diamond\, i             & \text{if } w_i = \mathrm{\red{E_1}}.\\
  \end{array}
  \right.
  \]
  \item The final permutation is obtained by putting $n+1$ at the last remaining $\diamond$.
\end{itemize}
As a consequence,  the cardinality of $\L_n$ is $(n+1)!$.
\begin{example}
Let $\sigma=697835142\in\S_{9}$.  We have $$\Psi_{FV}(\sigma)=(\mathrm{N\red{E_1}NSS\blue{E_2}NS},(0,1,0,2,1,0,0,1))\in\L_{8},$$ see Fig.~\ref{fig:motzkin}.

Inversely,    we have
\begin{align*}
   \sigma=\,&\diamond\to \diamond1\diamond  \;\to\; \diamond1\diamond2
  \;\to\; \diamond3\diamond1\diamond2 \;\to\; \diamond3\diamond 142
  \;\to\; \diamond35142
  \;\to\; 6\diamond35142\\
   &\to\; 6\diamond7\diamond35142
  \;\to\; 6\diamond7835142
  \;\to\; 697835142.
\end{align*}
\end{example}

There is another more visual description of  patterns $(31\!-\!2)_x\text{ and } (2\!-\!13)_x $ using the so-called $x$-decomposition;  see~\cite{FV79}.  For letter $x \in [n]$,  the \emph{$x$-decomposition} of $\sigma$ is the unique factorization  
\begin{equation}\label{x-decomposition}
   \sigma = u_1 v_1 \cdots u_k v_k u_{k+1}, \quad k \geq 1,
\end{equation}
in which the words $u_i$ and $v_j$, except possibly $u_1$ and $u_{k+1}$, are nonempty.   Moreover, each $u_i$ ($1 \leq i \leq k+1$) consists solely of letters $< x$,   while each $v_j$ ($1 \leq j \leq k$) consists solely of letters $\geq x$. Assume that $x$ lies inside $v_j$. Then every adjacent pair $(v_i,u_{i+1})$ with $i\le j-1$ contributes to a $(31\!-\!2)_x(\sigma)$, while every adjacent pair $(u_i,v_{i})$  with $i\ge j$ contributes to a $(2\!-\!13)_x(\sigma)$. In Fig~\ref{fig:x-decomposition}, we see that 
$$
(31\!-\!2)_x(\sigma)=j-1\;\text{ and }
(2\!-\!13)_x(\sigma)=k-j.
$$

\begin{example}
    If $\sigma = 6\,4\,1\,9\,5\,3\,8\,10\,2\,7\in\S_{10}$ and $x = 5$.
Then the $5$-decomposition is
\[
\sigma = u_1 v_1 u_2 v_2 u_3v_3u_4v_4,
\]
with
\[
(u_1,u_2,u_3,u_4)=(\varepsilon,\,41,\,3,\,2)\,\text{ and }\,
(v_1,v_2,v_3,v_4)=(6,\,9\,5,\,8\,10,\,7).
\]
Thus, we have $(31\!-\!2)_5(\sigma)=1$ and $(2\!-\!13)_5(\sigma)=2$.
\end{example} 

\begin{figure}
   \centering
\begin{tikzpicture}[scale=0.65]
\draw[very thick] (-3.4,1) -- (16,1);
\draw[very thick] (-3.4,0)   -- (16,0);
\draw[red, very thick](-3.1,0) .. controls (-2.6,-1) .. (-2.1,0);
\draw[blue, very thick](-1.9,1) .. controls (-1.4,2) .. (-0.9,1);
\draw[red, very thick](-0.7,0) .. controls (-0.2,-1) .. (0.3,0);
\draw[blue, very thick](0.5,1) .. controls (1.0,2) .. (1.5,1);
\draw[red, very thick](1.7,0) .. controls (2.2,-1) .. (2.7,0);

\draw[blue, very thick](3.6,1) .. controls (4.1,2) .. (4.6,1);
\draw[red,  very thick] (4.8,0) .. controls (5.3,-1) .. (5.8,0);
\draw[blue, very thick] (6.0,1) .. controls (6.5, 2) .. (7.0,1);
\draw[red,  very thick] (7.2,0) .. controls (7.7,-1) .. (8.2,0);
\draw[blue, very thick] (8.4,1) .. controls (8.9, 2) .. (9.4,1);

\draw[red,  very thick] (10.1,0) .. controls (10.6,-1) .. (11.1,0);
\draw[blue, very thick] (11.3,1) .. controls (11.8,2) .. (12.3,1);
\draw[red,  very thick] (12.5,0) .. controls (13.0,-1) .. (13.5,0);
\draw[blue, very thick] (13.7,1) .. controls (14.2,2) .. (14.7,1);
\draw[red, very thick] (14.9,0) .. controls (15.4,-1) .. (15.9,0);

\node at (-2.6,-1.1) {$u_1$};
\node at (-0.2,-1.1) {$u_2$};
\node at (2.2,-1.1) {$u_3$};
\node at (5.3,-1.1)   {$u_j$};
\node at (7.7,-1.1)   {$u_{j+1}$};
\node at (10.6,-1.1)  {$u_{k-1}$};
\node at (13.0,-1.1)  {$u_{k}$};
\node at (15.4,-1.1)  {$u_{k+1}$};

\node at (6.5,0.5)   {$x$};
\node at (3.1,0.5) {$\cdots$};
\node at (10,0.5) {$\cdots$};
\node at (-1.4,2.1) {$v_1$};
\node at (1,2.1) {$v_2$};
\node at (4.1,2.1) {$v_{j-1}$};
\node at (6.5,2.1)   {$v_j$};
\node at (8.9,2.1)   {$v_{j+1}$};
\node at (11.8,2.1)  {$v_{k-1}$};
\node at (14.2,2.1)  {$v_k$};

\draw[thick, decorate, decoration={snake, amplitude=1mm, segment length=5mm}]
      (-0.9,1) -- (-0.7,0);
\draw[thick, decorate, decoration={snake, amplitude=1mm, segment length=5mm}]
      (1.5,1) -- (1.7,0);
\draw[thick, decorate, decoration={snake, amplitude=1mm, segment length=5mm}]
      (4.6,1) -- (4.8,0);


\draw[dashed, thick] (8.2,0) -- (8.4,1);
\draw[dashed, thick] (11.1,0) -- (11.3,1);
\draw[dashed, thick] (13.5,0) -- (13.7,1);

\end{tikzpicture}
\caption{The $x$-decomposition of $\sigma$.}\label{fig:x-decomposition}
\end{figure}

\begin{lemma}\label{lem:height}
    Given a permutation $\sigma\in\S_{n+1}$ and the corresponding Laguerre history  $\Psi_{FV}(\sigma) = (w,p) \in \mathcal{L}_{n}$.  For any $x\in[n]$, if $h_x$ is the height of the $x$th step of $w$, then 
    \begin{equation}\label{equ:h_x}
        (31\!-\!2)_x(\sigma)+(2\!-\!13)_x(\sigma)=h_x.
    \end{equation}
\end{lemma}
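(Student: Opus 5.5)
We prove \eqref{equ:h_x} by comparing both sides with the number of blocks in the $x$-decomposition \eqref{x-decomposition} of $\sigma\in\S_{n+1}$. By the discussion around Fig.~\ref{fig:x-decomposition}, if $x$ lies in $v_j$ then $(31\!-\!2)_x(\sigma)=j-1$ and $(2\!-\!13)_x(\sigma)=k-j$. Hence the left-hand side equals $k-1$, where $k$ is the number of maximal factors of $\sigma$ consisting of letters $\ge x$. It therefore suffices to show that $h_x=k-1$.

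For this we use the definition \eqref{def:height} of the height together with the correspondence \eqref{def:corresponding}. We get
\[
h_x=\#\{j<x: j\in\Val(\sigma)\}-\#\{j<x: j\in\Pk(\sigma)\}.
\]
Here valleys and peaks are taken with the convention $\sigma_0=\sigma_{n+2}=0$, and only letters in $[n]$ are involved, since $j<x\le n$.

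Now count the maximal factors of $\sigma$ consisting of letters $\ge x$, i.e., the complement of the letters $<x$. Equivalently, count maximal factors of letters $<x$ that are flanked by the boundary zeros. Let $m_x$ be the number of maximal runs of letters in $\{0,1,\dots,x-1\}$ in the word $0\,\sigma_1\cdots\sigma_{n+1}\,0$. The two boundary zeros lie in runs of small letters, and $n+1\ge x$ guarantees at least one large block. Large and small runs alternate and the word begins and ends with small runs, so $k=m_x-1$.

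Next compute $m_x$ by adding letters $1,2,\dots,x-1$ one at a time, starting with only the two boundary zeros, i.e., two runs. Inserting a letter $j$ into the set of marked positions changes the number of runs according to its two neighbours, both of which are distinct from $j$:
\begin{itemize}
\item if both neighbours are $>j$ (so $j$ is a valley), they are not yet marked, and a new run is created ($+1$);
\item if both neighbours are $<j$ (so $j$ is a peak), they are already marked, and two runs merge ($-1$);
\item if exactly one neighbour is smaller (a double ascent or double descent), $j$ extends an existing run ($0$).
\end{itemize}
Thus $m_x=2+\#\{j<x:\text{valley}\}-\#\{j<x:\text{peak}\}=2+h_x$. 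Consequently $k-1=m_x-2=h_x$, as required.

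The only delicate point is the bookkeeping with the boundary convention: the zeros $\sigma_0,\sigma_{n+2}$ must be treated as permanent small letters. One must also check that $x\le n$ keeps $n+1$ in a large block, so that $k\ge1$ as required in \eqref{x-decomposition}. Finally, one should confirm that the paper's $x$-decomposition counts $(31\!-\!2)_x$ and $(2\!-\!13)_x$ exactly as $j-1$ and $k-j$, which was asserted above via Fig.~\ref{fig:x-decomposition}. Alternatively, this can be verified directly from \eqref{patterns:refine}: each descent pair $\sigma_i>x>\sigma_{i+1}$ to the left of $x$ corresponds to one $v_i u_{i+1}$ boundary, and each ascent pair $\sigma_{i-1}<x<\sigma_i$ to the right corresponds to one $u_i v_i$ boundary.
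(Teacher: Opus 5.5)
Your proof is correct and takes the paper's approach: you identify both sides with $k-1$ through the $x$-decomposition, using $(31\!-\!2)_x=j-1$ and $(2\!-\!13)_x=k-j$. For the step $h_x=k-1$, the paper's first argument counts valleys minus peaks inside each block $u_i$ (one for interior blocks, zero for $u_1$ and $u_{k+1}$), while your incremental run count (a valley opens a run, a peak merges two) is essentially a cleaner version of the paper's second, inductive argument with $\nu\in\{1,-1,0\}$.
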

\begin{proof}
Suppose that the $x$-decomposition of $\sigma$ is
\begin{equation}
   \sigma = u_1 v_1 \cdots u_k v_k u_{k+1}, \quad k \geq 1,
\end{equation}
where $x$ lies inside $v_j$. According to Definition~\ref{def:height} and the construction of $\Psi_{FV}$ in~\eqref{def:corresponding}, it follows that the height of the \(x\)th step of \(w\) is
\[
h_x=\sum_{j=1}^{k+1}\bigl(\val(u_j)-\pk(u_j)\bigr),
\]
where \(\val(u_j)\) (resp. \(\pk(u_j)\)) denotes the number of valleys (resp. peaks) in \(u_j\).
 It is straightforward  to verify that 
    \begin{itemize}
        \item $\val(u_j)=\pk(u_j)$ for $j=1,k+1$;
        \item $\val(u_j)=\pk(u_j)+1$ for $2\le j\le k$.
    \end{itemize}
Hence we obtain $h_x(\sigma)=k-1$.  Every adjacent pair $(v_i,u_{i+1})$ with $i\le j-1$ contributes to a $(31\!-\!2)_x(\sigma)$, while every adjacent pair $(u_i,v_{i})$  with $i\ge j$ contributes to a $(2\!-\!13)_x(\sigma)$.
Thus, $h_x$ also can be calculated by using the patterns $(31\!-\!2)_x(\sigma)$ and $(2\!-\!13)_x(\sigma)$, that is~\eqref{equ:h_x}.

    Now we prove~Eq.~\eqref{equ:h_x} by induction for all $x\in[n]$. For $x=1$, it is obvious that $(31\!-\!2)_1(\sigma)+(2\!-\!13)_1(\sigma)=0=h_1$.  If $
x>1$, it suffices to show that  \begin{equation}\label{equ:h_x-nu}
    (31\!-\!2)_x(\sigma)+(2\!-\!13)_x(\sigma)=(31\!-\!2)_{x-1}(\sigma)+(2\!-\!13)_{x-1}(\sigma)+\nu,
\end{equation}
    where $\nu\in \{0, 1,-1\}$. 
    Assume that the $(x\!-\!1)$-decomposition of $\sigma$ is  \[
\sigma = u_1 v_1 \cdots u_k v_k u_{k+1}, \quad k \geq 1,
\] and $(x\!-\!1)$ is in $v_j$. In view of this $(x-1)$-decomposition, we see that $(31\!-\!2)_{x-1}(\sigma)=j-1$,  $(2\!-\!13)_{x-1}(\sigma)=k-j$ and $(31\!-\!2)_{x-1}(\sigma)+(2\!-\!13)_{x-1}(\sigma)=k-1$.
\begin{itemize}
    \item 
    For case $\nu=1$. If $(x-1)$ is a valley of $\sigma$, and $x$ is also a valley, 
    \begin{itemize}
        \item  then  $x$ is in $v_l$ where $1\le l< j$ (resp. $j<l\le k$). In this case,  $(x-1)$ together with its immediate right (resp. left) neighbor contributes 1 to  $(2\!-\!13)_x(\sigma)$ (resp. $(31\!-\!2)_x(\sigma)$). Hence, we have
        \[(31\!-\!2)_{x}(\sigma)+(2\!-\!13)_{x}(\sigma)=(l-1)+(k-l)+1=k.\]
        \item  Or $x$ is in $v_j$.  Suppose that $v_j=v_j^l(x-1)v_j^r$ where $v_j^l$ and $v_j^r$ are (contiguous) words consisting of entries strictly larger than $x-1$. Both $v_j^l$ and $v_j^r$ are 
        nonempty.
If $x\in v_j^{l}$, then $(x\!-\!1)$ together with its immediate right neighbor contributes 1 to $(2\!-\!13)_x(\sigma)$; if $x\in v_j^{r}$, then $x-1$ together with its immediate left neighbor contributes 1 to $(31\!-\!2)_x(\sigma)$. Hence, we have
        \[(31\!-\!2)_{x}(\sigma)+(2\!-\!13)_{x}(\sigma)=(j-1)+(k-j)+1=k.\]
    \end{itemize}
    Analogous arguments hold in the cases when \(x\) is a peak, a double ascent, or a double descent of \(\sigma\). 
    
\item For case $\nu=-1$. If $(x-1)$ is a peak of $\sigma$, then from the $(x-1)$-decomposition we see that $v_j$ contains only $(x-1)$. Assume that $x$ is a valley of $\sigma$, and that $x$ is in $v_l$ where $1\le l<j$ (resp. $j<l\le k$). In the $x$-decomposition of $\sigma$, components $u_j$, $v_j=x-1$ and $u_{j+1}$ will form a new $u$-word. Therefore, compared with the 
$(x-1)$-decomposition, the $x$-decomposition has one fewer $u$-word.   Hence, we have
        \[(31\!-\!2)_{x}(\sigma)+(2\!-\!13)_{x}(\sigma)=(l-1)+(k-l)-1=k-2.\]
Analogous arguments hold in the cases when \(x\) is a peak, a double ascent, or a double descent of \(\sigma\).

\item For case $\nu=0$. If $(x-1)$ is a double descent (resp. double ascent) of $\sigma$, then $(x-1)$ is the last (resp. first) element of $v_j$. Assume that $x$ is a valley of $\sigma$ and that $x$ is in $v_l$ where $1\le l\le k$. In these cases, it is easy to check that there is no change in the total number of $v_i$ in both $(x-1)$-decomposition and $x$-decomposition of $\sigma$. Hence, we have 
\[(31\!-\!2)_{x}(\sigma)+(2\!-\!13)_{x}(\sigma)=(l-1)+(k-l)=k-1.\]
Analogous arguments hold in the cases when \(x\) is a peak, a double ascent, or a double descent of \(\sigma\). Combining these cases,  we obtain~\eqref{equ:h_x-nu}.
\end{itemize}
\end{proof}

For a Motzkin path $w$, we define $\area(w)$  to be  the geometric area of the region between  the $x$-axis and the Motzkin path $w$, denoted by $\area(w)$. We have the following characterization for $\area$.
\begin{lemma}\label{lem:area}
    Let a Motzkin path $w\in\mathcal{M}_n$. The area of $w$ is given by 
    \begin{equation}\label{area:sum-h}
        \mathrm{area}(w)=\sum_{i=1}^{n}h_i,
    \end{equation}
    where $h_i$ is the height of the $i$th step of $w$ defined in~\eqref{def:height}.
\end{lemma}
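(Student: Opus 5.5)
The plan is to decompose the region under $w$ into vertical strips of unit width, one for each step, and compute the area of each strip directly. The region between the $x$-axis and the path $w$ is the union, over $i\in[n]$, of the strips
\[
R_i=\{(x,y): i-1\le x\le i,\ 0\le y\le \ell_i(x)\},
\]
where $\ell_i$ is the affine function describing the $i$th step. These strips overlap only along vertical segments of measure zero, so $\area(w)=\sum_{i=1}^n \area(R_i)$. The path never goes below the $x$-axis, so each $R_i$ is a genuine trapezoid (possibly degenerate) lying above the axis.

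Next I would compute $\area(R_i)$ from the trapezoid formula: it is the average of the heights at the two endpoints of the step. By \eqref{def:height}, the $i$th step starts at height $h_i$ and ends at height $h_{i+1}$. There are three cases.
\begin{itemize}
\item If $w_i=\mathrm{E}$, then $h_{i+1}=h_i$ and $\area(R_i)=h_i$.
\item If $w_i=\mathrm{N}$, then $h_{i+1}=h_i+1$ and $\area(R_i)=h_i+\tfrac12$.
\item If $w_i=\mathrm{S}$, then $h_{i+1}=h_i-1$ and $\area(R_i)=h_i-\tfrac12$.
\end{itemize}

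Summing over $i$ gives
\[
\area(w)=\sum_{i=1}^n h_i+\tfrac12\bigl(\#\mathrm{N}-\#\mathrm{S}\bigr).
\]
Since $w$ starts and ends at height $0$, the numbers of N and S steps are equal. The correction term therefore vanishes, which is \eqref{area:sum-h}.

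No step here is a real obstacle. The only point to check is the cancellation of the half-unit corrections, and it follows immediately from the endpoint condition $(x_n,y_n)=(n,0)$.
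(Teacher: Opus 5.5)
Your proof is correct and follows the same route as the paper: both cut the region into unit-width trapezoids of area $h_i+\Delta_i/2$ and use $\sum_i \Delta_i=0$ (equivalently, equal numbers of N and S steps) to make the half-unit corrections cancel. One small point of notation: your $h_{n+1}$ is not an actual step height, so it should be read as the final ordinate $y_n=0$.
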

\begin{proof}
    We compute $\area(w)$ by the step heights of the Motzkin path $w\in\mathcal{M}_n$.
The contribution of the $i$th step equals the area of the trapezoid of width $1$ whose
vertical sides have lengths $h_i$ and $h_i+\Delta_i$, where $\Delta_i\in\{1,0,-1\}$ is the
vertical increment of that step. It is clear that  the $i$th step contributes
$h_i+\frac{\Delta_i}{2}$
to $\area(w)$, therefore 
\[
\area(w)=\sum_{i=1}^{n}\left(h_i+\frac{\Delta_i}{2}\right)
=\sum_{i=1}^{n} h_i+\frac12\sum_{i=1}^{n}\Delta_i.
\]
Since $w$ starts and ends on the $x$-axis, its total vertical increment is
$\sum_{i=1}^{n}\Delta_i=0$, and the above equation yields~\eqref{area:sum-h}. 
\end{proof}

\begin{lemma}\label{lem:dis}
For $\sigma\in\mathfrak{S}_{n+1}$, let $\Psi_{FV}(\sigma):=(w,p)\in\mathcal{L}_n$  be the corresponding Laguerre history.
Then 
    \begin{equation}\label{lemma-area}
        \dis(\sigma)=\area(w).
    \end{equation} 
\end{lemma}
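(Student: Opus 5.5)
By Lemma~\ref{lem:area}, $\area(w)=\sum_{i=1}^n h_i$. So it suffices to show that this sum of step heights equals $\dis(\sigma)=\sum_{i\in\Pk(\sigma)\setminus\{n+1\}} i-\sum_{j\in\Val(\sigma)} j$ for $\sigma\in\S_{n+1}$. Here peaks and valleys are taken with the convention $\sigma_0=\sigma_{n+2}=0$. As in the construction of $\Psi_{FV}$, we identify the $x$th step of $w$ with the \emph{letter} $x\in[n]$ of $\sigma$: the step is $\mathrm{N}$, $\mathrm{S}$, $\red{\mathrm{E_1}}$ or $\blue{\mathrm{E_2}}$ according as $x$ is a valley, peak, double descent or double ascent. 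Note that $n+1$ is always a peak of $\sigma$ and is exactly the letter excluded in the definition of $\dis$. Hence the peaks among $[n]$ correspond precisely to the $\mathrm{S}$ steps, and the valleys to the $\mathrm{N}$ steps. (The valley set lies in $[n]$ automatically, since $n+1$ cannot be a valley.)

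The main computation is a summation by parts. By \eqref{def:height}, $h_i=\#\{j<i: w_j=\mathrm{N}\}-\#\{j<i: w_j=\mathrm{S}\}$. Summing over $i\in[n]$ and exchanging the order of summation, each $\mathrm{N}$ step at position $j$ contributes $+1$ to $h_i$ for each $i=j+1,\dots,n$, that is, it contributes $n-j$ in total. Likewise, each $\mathrm{S}$ step at position $j$ contributes $-(n-j)$. Therefore
\[
\sum_{i=1}^n h_i=\sum_{j:\,w_j=\mathrm{N}}(n-j)-\sum_{j:\,w_j=\mathrm{S}}(n-j)
=\sum_{j:\,w_j=\mathrm{S}} j-\sum_{j:\,w_j=\mathrm{N}} j,
\]
where the last equality uses that $w$ is a Motzkin path. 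Indeed, $w$ starts and ends at height $0$, so the numbers of $\mathrm{N}$ and $\mathrm{S}$ steps are equal and the terms $n\cdot\#\mathrm{N}$ and $n\cdot\#\mathrm{S}$ cancel.

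Substituting the dictionary from the first paragraph, the right-hand side becomes $\sum_{i\in\Pk(\sigma)\setminus\{n+1\}} i-\sum_{j\in\Val(\sigma)}j=\dis(\sigma)$, which proves \eqref{lemma-area}.

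The only delicate point is bookkeeping. One must check that the index sets in \eqref{eq:distance} refer to letter values, matching how $\Psi_{FV}$ indexes steps by letters, and that the excluded peak is the maximal letter $n+1$, which has no step in $w$. As a sanity check, take $\sigma=697835142$ from the example: the $\mathrm{S}$ steps are at $\{4,5,8\}$ and the $\mathrm{N}$ steps at $\{1,3,7\}$, giving $17-11=6$. This agrees with the heights in Fig.~\ref{fig:motzkin}, which sum to $0+1+1+2+1+0+0+1=6$. Lemma~\ref{lem:height} is not needed for this argument.
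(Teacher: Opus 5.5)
Your proof is correct and has the same structure as the paper's: you use the step dictionary of $\Psi_{FV}$ to rewrite $\dis(\sigma)$ as $\sum_{w_i=\mathrm{S}} i-\sum_{w_j=\mathrm{N}} j$, and then identify this with $\area(w)$. The only difference is in that second step. The paper cuts the region under $w$ into horizontal strips of area $i-j$, one for each matched $\mathrm{N}$/$\mathrm{S}$ pair, whereas you obtain the same identity from Lemma~\ref{lem:area} by summation by parts, which is equally valid and slightly more self-contained.
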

    \begin{proof}
    Let $w=w_1\cdots w_n\in \mathcal{M}_n$.
   According to the construction of $\Psi_{FV}$ in~\eqref{def:corresponding},  we see that 
\begin{equation}\label{dep:S-N}   \dis(\sigma)=\sum_{w_i=\mathrm{S}}i-\sum_{w_j=\mathrm{N}}j.
\end{equation}

 For each South-East step $w_i=\mathrm{S}$, match it with the unique North-East step $w_j=\mathrm{N}$ with $j<i$ such that the path between steps $w_j$ and $w_i$ stays
strictly above height $h_j$, and $h_i=h_j+1$.
The matched pair of steps $w_i=\mathrm{S}$ and $w_j=\mathrm{N}$ determines a horizontal strip of area $i-j$ lying within  the Motzkin path. Summing over all matched pairs gives 
\begin{equation}\label{area:S-N}
    \area(w)=\sum_{w_i=\mathrm{S}}i-\sum_{w_j=\mathrm{N}}j.
\end{equation}
Combining this with Eq.~\eqref{dep:S-N} yields~\eqref{lemma-area}.
\end{proof}

\begin{example}
    Let $\sigma=697835142\in\S_{9}$.  The corresponding Motzkin path is $w=\mathrm{N\red{E_1}NSS\blue{E_2}NS}$; see Fig.~\ref{fig:motzkin}. We have 
    \begin{align*}
    \dis(\sigma)&=(8+5+4)-(7+3+1)=6,\\
       \area(w)&=\sum_{i=1}^8h_i=4\cdot1+2=6. 
    \end{align*}
\end{example}

\subsubsection{Foata-Zeilberger bijection}\label{sec-FZ}

A \emph{3-Motzkin path} is a Motzkin path whose east steps could come in three colors: red (\red{$\mathrm{E_1}$}), blue (\blue{$\mathrm{E_2}$}) and green (\green{$\mathrm{E_3}$}).
\begin{definition}[3-restricted Laguerre history]\label{3-Laguerre}
    A $3$-restricted Laguerre history of length $n$ is a couple  $(w, p)$, where $w=w_1\cdots w_n$ is a  3-Motzkin path of length $n$ starting at $(0,0)$ and ending at $(n,0)$, and 
$p=(p_1, \ldots, p_n)$ is a label sequence satisfying $1\le i\le n$,
\[
0 \leq p_i \leq 
\begin{cases}
h_i,   & \text{if } w_i \in \{\mathrm{N,\blue{\mathrm{E_2}}},\green{\mathrm{E_3}}\};\\[4pt]
h_i-1, & \text{if } w_i \in\{ \mathrm{S,\red{E_1}}\},
\end{cases}
\]
 where $h_i$ is the height of the $i$th step of $w$.  
 Denote by $\mathcal{L}_n^*$   the set of 3-restricted Laguerre histories of length $n$. 
\end{definition}

 For a permutation $\sigma\in\S_n$, we describe Foata--Zeilberger's
bijection $\Psi_{FZ}(\sigma):=(w,p)\in\mathcal{L}_n^*$ and its inverse.
The associated 3-restricted Laguerre history
\[
\Psi_{FZ}(\sigma)
   = \bigl((w_1,\dots,w_n),(p_1,\dots,p_n)\bigr)\in\mathcal{L}_n^*
\]
is defined as follows. The step $w_i$ is given by
\begin{equation}\label{equ:FZ-corresponding}
w_i=
\begin{cases}
\mathrm{N}\;(\text{resp.\ }\mathrm{S})
  & \text{if $i$ is a cycle valley (resp.\ cycle peak) of $\sigma$;}\\[2pt]
\textcolor{red}{\mathrm{E}_1}\;(\text{resp.\ }\textcolor{blue}{\mathrm{E}_2})
  & \text{if $i$ is a cycle double descent (resp.\ cycle double ascent) of $\sigma$;}\\[2pt]
\textcolor{green}{\mathrm{E}_3}
  & \text{if $i$ is a fixed point of $\sigma$,}
\end{cases}
\end{equation}
and the integer $p_i$ is defined by
\begin{equation}\label{eq:pi}
p_i :=
\begin{cases}
\#\{\, j<i : \sigma(j)>\sigma(i)\,\},
  & \text{if }\sigma(i)\ge i;\\[4pt]
\#\{\, j>i : \sigma(j)<\sigma(i)\,\},
  & \text{if }\sigma(i)< i.
\end{cases}
\end{equation}

For a permutation $\sigma=\sigma(1)\cdots\sigma(n)\in\S_n$, we say that $i$ (resp. $\sigma_i$) is the excedance bottom (resp. top) of $\sigma$ if $i<\sigma(i)$, and that $i$ (resp. $\sigma_i$) is the nonexcedance top (resp. bottom) of $\sigma$ if  $i\ge\sigma(i)$. Let $\mathrm{Etop}(\sigma)$, $\mathrm{Ebot}(\sigma)$, $\mathrm{NEtop}(\sigma)$ and $\mathrm{NEbot}(\sigma)$ denote the sets of the excedance tops, excedance bottoms, nonexcedance tops and nonexcedance bottoms of $\sigma$, respectively. Let $\mathrm{Cval}(\sigma)$, $\mathrm{Cpk}(\sigma)$, $\mathrm{Cda}(\sigma)$, $\mathrm{Cdd}(\sigma)$ and $\mathrm{Fix}(\sigma)$ be the sets of cycle valleys, cycle peaks, cycle double ascents, cycle double descents and fixed points of $\sigma$, respectively.

It is easy to check that the following identities hold: 
 \begin{subequations}\label{equ:clas}
      \begin{align}
    \mathrm{Cpk}(\sigma)&\cup\mathrm{Cda}(\sigma)=\mathrm{Etop}(\sigma),\quad
        \mathrm{Cval}(\sigma)\cup\mathrm{Cda}(\sigma)=\mathrm{Ebot}(\sigma),\label{exc-top-bottom}\\        
        &\mathrm{Cpk}(\sigma)\cup\mathrm{Cdd}(\sigma)\cup\mathrm{Fix}(\sigma)=\mathrm{NEtop}(\sigma),\\
        &\mathrm{Cval}(\sigma)\cup\mathrm{Cdd}(\sigma)\cup\mathrm{Fix}(\sigma)=\mathrm{NEbot}(\sigma).
    \end{align}
 \end{subequations}

\begin{definition}\label{inversion-table}
    The left-to-right inversion table (or Lehmer code) of a permutation $\sigma$ is the sequence
$(e_1,e_2,\dots,e_n)$, where $e_i$ denotes the number of entries to the left of
position $i$ that are bigger than $\sigma_i$, i.e.,  
\begin{equation}
    e_i=\#\{\,j<i:\ \sigma_j>\sigma_i\,\}.
\end{equation}
Dually, the right-to-left inversion table of $\sigma$ is the sequence
$(f_1,\dots,f_n)$, where $f_i$ denotes the number of  entries to the right of position $i$ that are smaller than $\sigma_i$, i.e., 
\begin{equation}
    f_i=\#\{\,j>i:\ \sigma_j<\sigma_i\,\}.
\end{equation}
\end{definition}

The inverse algorithm $\Psi_{FZ}^{-1}$ building a permutation $\sigma$ from a 3-restricted Laguerre history $(w,p)\in\L_{n}^*$ can be described as follows:

According to the classification of steps in 3-restricted Laugerre history and Eq.~\eqref{equ:clas}, we first obtain the four sets $\mathrm{Etop}(\sigma)$, $\mathrm{Ebot}(\sigma)$, $\mathrm{NEtop}(\sigma)$, and $\mathrm{NEbot}(\sigma)$, where $\sigma$ is the resulting permutation obtained at the end of the inverse bijection. We then construct two biwords,
\[
\sigma_{\exc}:=\left(\genfrac{}{}{0pt}{}{\eta}{\eta'}\right) \text{ and } \sigma_{\mathrm{nexc}}:=\left(\genfrac{}{}{0pt}{}{\zeta}{\zeta'}\right),
\]
where $\eta$ and $\zeta$ are the subwords, written in increasing order, of the sets $\mathrm{Ebot}(\sigma)$ and $\mathrm{NEtop}(\sigma)$, respectively, and $\eta'$ and $\zeta'$ are some subwords of $\mathrm{Etop}(\sigma)$ and $\mathrm{NEtop}(\sigma)$ obtained from the left-to-right and right-to-left tables with entries $p_i$, respectively. Then we recover the corresponding permutation $\Psi_{FZ}^{-1}((w,p))$.

As a consequence, the cardinality of $\L_n^*$ is $n!$.

\begin{figure}[t] 
  \centering

\begin{tikzpicture}[scale=1, line cap=round, line join=round]


\draw[very thick,->] (0,-3) -- (0,4); 
\draw[thick] (1,1.0) -- (1,-3);
\draw[thick] (2,2.0) -- (2,-3);
\draw[thick] (3,1.0) -- (3,-3);
\draw[thick] (4,2.0) -- (4,-3);
\draw[thick] (5,3.0) -- (5,-3);
\draw[thick] (6,3.0) -- (6,-3);
\draw[thick] (7,2.0) -- (7,-3);
\draw[thick] (8,1.0) -- (8,-3);
\draw[thick] (9,1.0) -- (9,-3);
\draw[thick] (10,1.0) -- (10,-3);
\draw[thick] (11,0.0) -- (11,-3);

\fill[color=black] (0,0) circle (1.5pt);
\fill[color=black] (1,1) circle (1.5pt);
\fill[color=black] (2,2) circle (1.5pt);
\fill[color=black] (3,1) circle (1.5pt);
\fill[color=black] (4,2) circle (1.5pt);
\fill[color=black] (5,3) circle (1.5pt);
\fill[color=black] (6,3) circle (1.5pt);
\fill[color=black] (7,2) circle (1.5pt);
\fill[color=black] (8,1) circle (1.5pt);
\fill[color=black] (9,1) circle (1.5pt);
\fill[color=black] (10,1) circle (1.5pt);
\fill[color=black] (11,0) circle (1.5pt);

\draw[very thick,->] (-0.7,0) -- (11.8,0);

\draw[very thick]
  (0,0) -- (1,1)--(2,2) -- (3,1) -- (4,2) -- (5,3) -- (6,3)
  -- (8,1) -- (9,1) -- (10,1)--(11,0);
 \node at (5.5,3.3) {\red{$\mathrm{E_1}$}};
 \node at (9.5,1.3) {\blue{$\mathrm{E_2}$}};
 \node at (8.5,1.3) {\green{$\mathrm{E_3}$}};
\draw[very thick,red]  (5,3) -- (6,3);
\draw[very thick,green] (8,1) -- (9,1);
\draw[very thick,blue] (9,1) -- (10,1);
\foreach \y in {-1,-2,-3} {
  \draw[thick] (-0.7,\y) -- (11,\y);
}

\node at (-0.5,-0.5) {$i$};
\node at (-0.5,-1.5) {$h_i$};
\node at (-0.5,-2.5) {$p_i$};

\foreach \k in {1,...,11} {
  \node at (\k-0.5,-0.5) {\k};
}
\foreach \k/\v in {1/0,2/1,3/2,4/1,5/2,6/3,7/3,8/2,9/1,10/1,11/1} {
  \node at (\k-0.5,-1.5) {\v};
}
\foreach \k/\v in {1/0,2/1,3/0,4/0,5/2,6/2,7/2,8/1,9/1,10/0,11/0} {
  \node at (\k-0.5,-2.5) {\v};
}

\end{tikzpicture}
\caption{The 3-restricted Laguerre history $\Psi_{FZ}(\sigma)$ with 
$\sigma = 8\, 3\, 1\, 10\, 7\, 5\, 6\, 4\,9\, 11\,2\, = (1,8,4,10,11,2,3)\,(5,7,6)\,(9).$}
  \label{fig2:motzkin}
\end{figure}
\begin{example}
    If $\sigma = 8\, 3\, 1\, 10\, 7\, 5\, 6\, 4\,9\, 11\,2\,
           = (1,8,4,10,11,2,3)\,(5,7,6)\,(9)\in \S_{11}$, then we have $$\Psi_{FZ}(\sigma)=(\mathrm{NNSNN\red{E_1}SS\green{E_3}\blue{E_2}S},(0,1,0,0,2,2,2,1,1,0,0))\in\L_{11}^*.$$ 
Inversely, from the 3-restricted Laguerre history, we have   
\begin{align*}
    \mathrm{Cval}(\sigma)=\{1,2,4,5\}, \mathrm{Cpk}(\sigma)=\{3,7,8,11\},\\\mathrm{Cdd}(\sigma)=\{6\}, \mathrm{Cda}(\sigma)=\{10\} \text{ and }\mathrm{Fix}(\sigma)=\{9\}.
\end{align*}
     According to Eq.~\eqref{equ:clas}, we obtain
    \begin{align*}
        &\mathrm{Ebot}(\sigma)=\{1,2,4,5,10\},\,
        \mathrm{Etop}(\sigma)=\{3,7,8,10,11\};\\
        &\mathrm{NEbot}(\sigma)=\{1,2,4,5,6,9\}\text{ and }\mathrm{NEtop}(\sigma)=\{3,6,7,8,9,11\}.
    \end{align*}
    We now construct the biwords $\sigma_{\exc}$ and $\sigma_{\mathrm{nexc}}$
    \[\sigma_{\exc}:=
\left(\genfrac{}{}{0pt}{}{\eta}{\eta'}\right)
= \left(
\genfrac{}{}{0pt}{}{1_0}{8}\,
\genfrac{}{}{0pt}{}{2_1}{3}\,
\genfrac{}{}{0pt}{}{4_0}{10}\,
\genfrac{}{}{0pt}{}{5_2}{7}\,
\genfrac{}{}{0pt}{}{10_0}{11}
\right),
\quad
\sigma_{\mathrm{nexc}}:=\left(\genfrac{}{}{0pt}{}{\zeta}{\zeta'}\right)
= \left(
\genfrac{}{}{0pt}{}{3_0}{1}\,
\genfrac{}{}{0pt}{}{6_2}{5}\,
\genfrac{}{}{0pt}{}{7_2}{6}\,
\genfrac{}{}{0pt}{}{8_1}{4}\,
\genfrac{}{}{0pt}{}{9_1}{9}\,
\genfrac{}{}{0pt}{}{11_0}{2}
\right).
\]
Combining these two biwords yields the  permutation $\sigma=8\, 3\, 1\, 10\, 7\, 5\, 6\, 4\,9\, 11\,2$; see Fig.~\ref{fig2:motzkin}.       
\end{example}

\begin{definition}[Indexed crossing]\label{def:crossing}
For \(k\in[n]\) and \(\sigma\in\mathfrak{S}_n\), the upper and lower crossings indexed at \(k\) of \(\sigma\) are defined, respectively, by
\[\cros_k^+(\sigma)
:= \#\bigl\{l \,|\,
    l < k \le \sigma_l<\sigma_k
\bigr\} \text{ and } \cros_{k}^-(\sigma)=\#\bigl\{ l\,|\, 
    \sigma_k<  \sigma_\ell<k < l\}.
\]
The number of crossings indexed at $k$ is $\cros_k(\sigma)=\cros_k^+(\sigma)+\cros_{k}^-(\sigma)$; see Fig~\ref{fig:cros}.
\end{definition}
\begin{figure}[htp]
\centering
\begin{picture}(250,70)(-30,0)
\setlength{\unitlength}{1.5mm}
\linethickness{.5mm}

\put(-23,10){\line(1,0){28}}
\put(-20,10){\circle*{1.3}}\put(-20,10){\makebox(0,-6)[c]{\small $l$}}
\put(-13,10){\circle*{1.3}}\put(-13,10){\makebox(0,-6)[c]{\small $k$}}
\put(-6,10){\circle*{1.3}}\put(-6,10){\makebox(0,-6)[c]{\small $\sigma_{l}$}}
\put(1,10){\circle*{1.3}}\put(1,10){\makebox(0,-6)[c]{\small $\sigma_k$}}
\red{\qbezier(-20,10)(-13,20)(-6,10)}
\blue{\qbezier(-14,10)(-7,20)(0,10)}
\put(10,2){\makebox(0,0)[c]{\small Upper crossing $k\leq \sigma_l$}}

\put(8,10){\line(1,0){28}}
\put(11,10){\circle*{1.3}}\put(11,10){\makebox(0,-6)[c]{\small $l$}}
\put(22,10){\circle*{1.3}}\put(22,10){\makebox(0,-6)[c]{\small $k=\sigma_{l}$}}
\put(32,10){\circle*{1.3}}\put(32,10){\makebox(0,-6)[c]{\small $\sigma_k$}}
\red{\qbezier(11,10)(16,20)(22,10)}
\blue{\qbezier(21.5,10)(25.5,20)(31,10)}

\put(39,10){\line(1,0){28}}
\put(43,10){\circle*{1.3}}\put(43,10){\makebox(0,-6)[c]{\small $\sigma_k$}}
\put(50,10){\circle*{1.3}}\put(50,10){\makebox(0,-6)[c]{\small $\sigma_{l}$}}
\put(57,10){\circle*{1.3}}\put(57,10){\makebox(0,-6)[c]{\small $k$}}
\put(64,10){\circle*{1.3}}\put(64,10){\makebox(0,-6)[c]{\small $l$}}
\red{\qbezier(43,10)(50,0)(57,10)}
\blue{\qbezier(49,10)(56,0)(63,10)}
\put(53,2){\makebox(0,0)[c]{\small Lower crossing}}

\end{picture}

\caption{Upper crossing and lower crossing indexed at $k$.}\label{fig:cros}
\end{figure}

\begin{definition}[Indexed nesting]\label{def:nesting}
    For \(k\in[n]\) and \(\sigma\in\mathfrak{S}_n\), the upper and lower nestings indexed at \(k\) of \(\sigma\) are defined, respectively, by
\[\nest_k^+(\sigma)
:= \#\bigl\{l \,|\,
    l < k \le \sigma_k < \sigma_l
\bigr\} \text{ and } \nest_{k}^-(\sigma)=\#\bigl\{ l\,|\, 
    \sigma_{l} < \sigma_{k} < k < l\}.
\]
The number of crossings indexed at $k$ is $\nest_k(\sigma)=\nest_k^+(\sigma)+\nest_{k}^-(\sigma)$; see Fig.~\ref{fig:nest}.
\end{definition}
\begin{figure}[htp]
\centering
\begin{picture}(400,80)(0,0)
\setlength{\unitlength}{1.5mm}
\linethickness{.5mm}

\put(2,10){\line(1,0){28}}
\put(5,10){\circle*{1.3}}\put(5,10){\makebox(0,-6)[c]{\small $l$}}
\put(12,10){\circle*{1.3}}\put(12,10){\makebox(0,-6)[c]{\small $k$}}
\put(19,10){\circle*{1.3}}\put(19,10){\makebox(0,-6)[c]{\small $\sigma_k$}}
\put(26,10){\circle*{1.3}}\put(26,10){\makebox(0,-6)[c]{\small $\sigma_l$}}
\red{\qbezier(5,10)(15.5,20)(26,10)}
\blue{\qbezier(11,10)(14.5,15)(18,10)}
\put(33,2){\makebox(0,0)[c]{\small Upper nesting $k\le \sigma_k$}}

\put(36,10){\line(1,0){28}}
\put(39,10){\circle*{1.3}}\put(39,10){\makebox(0,-6)[c]{\small $l$}}
\put(49,10){\circle*{1.3}}\put(49,10){\makebox(0,-6)[c]{\small $k=\sigma_k$}}
\put(60,10){\circle*{1.3}}\put(60,10){\makebox(0,-6)[c]{\small $\sigma_l$}}
\red{\qbezier(39,10)(49.5,20)(60,10)}
\blue{\qbezier(48.3,10)(50.5,13)(47.5,13)
      \qbezier(48.3,10)(46,11)(47.5,13)}

\put(70,10){\line(1,0){28}}
\put(74,10){\circle*{1.3}}\put(74,10){\makebox(0,-6)[c]{\small $\sigma_l$}}
\put(81,10){\circle*{1.3}}\put(81,10){\makebox(0,-6)[c]{\small $\sigma_k$}}
\put(88,10){\circle*{1.3}}\put(88,10){\makebox(0,-6)[c]{\small $k$}}
\put(95,10){\circle*{1.3}}\put(95,10){\makebox(0,-6)[c]{\small $l$}}
\red{\qbezier(74,10)(84,1)(95,10)}
\blue{\qbezier(80,10)(83.5,6)(87.5,10)}
\put(84,2){\makebox(0,0)[c]{\small Lower nesting}}

\end{picture}

\caption{Upper nesting and lower nesting indexed at $k$}\label{fig:nest}
\end{figure} 
Note that for any $\sigma\in\S_n$, we have \[\cros(\sigma)=\sum_{k=1}^{n}\cros_k(\sigma) \text{ and } \nest(\sigma)=\sum_{k=1}^{n}\nest_k(\sigma) \]
The definition of labels $p_i$ in \eqref{eq:pi} admits a straightforward interpretation in terms of the statistic nesting,
\[
p_i :=
\begin{cases}
\operatorname{nest}_i^+(\sigma),
& \text{if } \sigma(i) \ge i \quad \text{i.e. } i \in \mathrm{Cval}(\sigma)\cup \mathrm{Cda}(\sigma)\cup\mathrm{Fix}(\sigma);\\[4pt]
\operatorname{nest}_i^-(\sigma),
& \text{if } \sigma(i) < i \quad \text{i.e. } i \in \mathrm{Cpk}(\sigma)\cup \mathrm{Cdd}(\sigma).
\end{cases}
\]

The following Lemma is due to Corteel~\cite[Lemma~3]{Cor07}. Since the  original proof is brief,  we provide a visual proof based on arc diagrams of permutations.

\begin{lemma}[\cite{Cor07}]
\label{lem:height-FZ}
Let $\Psi_{FZ}(\sigma)=(w,p)\in\L_n^*$ for $\sigma\in\S_n$. For any $k\in[n]$,
    if $h_k$ is the height of the $k$th step of $w$, then
\begin{equation}\label{def:height-FZ}
    \cros_k(\sigma)+\nest_k(\sigma)=\begin{cases}
        h_k, &\text{ if }k\le \sigma(k);\\[4pt]
        h_k-1,&\text{ if } k> \sigma(k).
    \end{cases}
\end{equation}
\end{lemma}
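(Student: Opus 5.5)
We will compute the height $h_k$ of the Foata--Zeilberger path directly from the arc diagram and then compare it with the arcs counted by $\cros_k+\nest_k$. By \eqref{equ:FZ-corresponding} and \eqref{def:height}, $h_k$ equals the number of cycle valleys $j<k$ minus the number of cycle peaks $j<k$. By \eqref{equ:clas}, cycle valleys are exactly the excedance bottoms that are not excedance tops, and cycle peaks are exactly the excedance tops that are not excedance bottoms. Cycle double ascents are both, so they cancel. Hence
\[
h_k=\#\{j<k:\ j<\sigma(j)\}-\#\{j<k:\ \sigma^{-1}(j)<j\}.
\]
This is the number of upper arcs $j\to\sigma(j)$ with $j<k\le\sigma(j)$, that is, the upper arcs that start strictly left of $k$ and end at or to the right of $k$.

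\textbf{Lower arcs.} Run the same count on the non-excedance side. Every position $j$ is either an excedance bottom or a non-excedance top, so the numbers of upper arcs and lower arcs crossing a given vertical line between consecutive points agree. From this we get
\[
h_k=\#\{l:\ \sigma(l)<k\le l\}\quad\text{(with the convention adjusted at $k$ itself)}.
\]
The precise bookkeeping at the point $k$ is that $h_k$ counts the lower arcs $l\to\sigma(l)$ with $\sigma(l)<k<l$, plus one if $k$ is itself an endpoint of a lower arc coming from the right with $\sigma^{-1}(k)>k$. We will check this carefully with a short case analysis on the type of $k$ (cval, cpk, cda, cdd, fix).

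\textbf{Case $k\le\sigma(k)$.} By Definitions~\ref{def:crossing} and~\ref{def:nesting}, $\cros_k^+(\sigma)+\nest_k^+(\sigma)=\#\{l<k:\ \sigma_l\ge k\}$, with $\sigma_l\ne\sigma_k$ automatically. This is exactly the number of upper arcs passing over $k$, since each such arc either crosses or nests the arc from $k$ according to whether $\sigma_l<\sigma_k$ or $\sigma_l>\sigma_k$. Thus it equals $h_k$. In this case $\cros_k^-$ and $\nest_k^-$ should vanish by definition, and we must verify from the definitions that the lower counts only apply when $\sigma_k<k$.

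\textbf{Case $k>\sigma(k)$.} Here $\cros_k^-+\nest_k^-=\#\{l>k:\ \sigma_l<k,\ \sigma_l\ne\sigma_k\}$, which counts the lower arcs strictly covering the point $k$ other than the arc of $k$ itself. The lower arcs with $\sigma(l)<k<l$ number $h_k$ when $k$ is a cycle peak or a cycle double descent, by the balancing argument above applied at the line just left of $k$. Removing the arc of $k$ itself accounts for the ``$-1$''. This is the lower analogue of the upper-arc count.

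The main obstacle is the off-by-one bookkeeping at the point $k$: deciding whether the arcs incident to $k$ (incoming from $\sigma^{-1}(k)$, outgoing to $\sigma(k)$) are counted in $h_k$ and in the indexed statistics. We will handle this by a small table over the five step types. For each type we list the arcs through the line $x=k-\tfrac12$ and the arcs incident to $k$, and check the identity \eqref{def:height-FZ}; the arc diagram of Fig.~\ref{fig:cros} makes each case immediate.
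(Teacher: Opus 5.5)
Your route is essentially the paper's: you identify $h_k$ with the number of arcs covering $k$ on the side of the axis where the arc of $k$ lies, and each such arc other than $k$'s own either crosses or nests it. Your telescoping derivation of $h_k=\#\{l<k:\sigma_l\ge k\}$ from $\mathrm{Cval}=\mathrm{Ebot}\setminus\mathrm{Etop}$ and $\mathrm{Cpk}=\mathrm{Etop}\setminus\mathrm{Ebot}$ is correct, and it proves what the paper only calls easy to check. Reading $\cros_k^{\pm}+\nest_k^{\pm}$ directly off the definitions also removes the need for the paper's five-type case analysis, and your case $k\le\sigma(k)$ is complete as written.

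The lower-arc bookkeeping, however, is misstated twice, and it is exactly where the $-1$ comes from.

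\emph{First error.} The claim that $h_k$ counts the lower arcs with $\sigma(l)<k<l$, plus one if $\sigma^{-1}(k)>k$, is false. For $\sigma=21$ it gives $h_1=1$ and $h_2=0$, whereas the path is $\mathrm{NS}$ with $h_1=0$ and $h_2=1$.

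\emph{Second error.} In the case $k>\sigma(k)$, the lower arcs with $\sigma(l)<k<l$ number $h_k-1$, not $h_k$. Also, $k$'s own arc is not among them, so removing it from that set is meaningless. As written, you reach $h_k-1$ only because these two off-by-one errors cancel.

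\emph{The fix.} Use the balancing identity
$\#\{l<k:\sigma_l\ge k\}=(k-1)-\#\{l<k:\sigma_l<k\}=\#\{l\ge k:\sigma_l<k\}$, which holds because $\sigma$ is a bijection. It gives
$h_k=\#\{l>k:\sigma_l<k\}+\chi(\sigma_k<k)$. The extra term comes from $k$ being the right endpoint of its own lower arc, not the left endpoint of the arc from $\sigma^{-1}(k)$. Hence, when $\sigma_k<k$,
$\cros_k^-+\nest_k^-=\#\{l>k:\sigma_l<k\}=h_k-1$ directly, and no table over step types is needed.
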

\begin{proof}
    It is easy to check that the height of the $k$th step of $w$ is equal to the number of upper arcs (or lower arcs) that cover node $k$, that is, more precisely, arcs satisfying one of the following conditions:
\begin{itemize}
    \item the left endpoint lies to the left of $k$ and the right endpoint lies to the right of $k$;
\item  the left endpoint lies to the left of $k$ and the right endpoint is $k$;
\item  the left endpoint is $k$ and the right endpoint lies to the right of $k$;
\item  both the left and right endpoints are $k$.
\end{itemize}
In view of the upper and lower crossing (resp. nesting) indexed at $k$, see Definition~\ref{def:crossing} and~\ref{def:nesting}, it suffices to show that
    \begin{itemize} 
    \item[(1)] if $k\in\Fix(\sigma)$, then  $\cros_{k}^{+}(\sigma)=0$ and $\nest_{k}^{+}(\sigma)=h_k$;
    \item[(2)] if $k\in\Cval(\sigma)$, then \[\cros_{k}^{+}(\sigma)+\nest_{k}^{+}(\sigma)=h_k\;\text{ with }\; 0\le\nest_{k}^{+}(\sigma)\le h_k;\]
    \item[(3)] if $k\in\Cda(\sigma)$, then \[\cros_{k}^{+}(\sigma)+\nest_{k}^{+}(\sigma)=h_k\;\text{ with }\; 0\le\nest_{k}^{+}(\sigma)\le h_k-1;\]
    \item[(4)] and if $k\in\Cpk(\sigma)\cup\Cdd(\sigma)$, then \[\cros_{k}^{-}(\sigma)+\nest_{k}^{-}(\sigma)=h_k-1\;\text{ with }\; 0\le\nest_{k}^{-}(\sigma)\le h_k-1.\]
    \end{itemize}   
\begin{center}
    \begin{picture}(72,50)(145, 0)
    \setlength{\unitlength}{1.5mm}
\linethickness{.3mm}
\put(2,0){\line(1,0){28}}
\put(5,0){\circle*{1,3}}\put(5,0){\makebox(0,-6)[c]{\small $1$}}
\put(12,0){\circle*{1,3}}\put(12,0){\makebox(0,-6)[c]{\small $2$}}
\put(19,0){\circle*{1,3}}\put(19,0){\makebox(0,-6)[c]{\small $3$}}
\put(26,0){\circle*{1,3}}\put(26,0){\makebox(0,-6)[c]{\small $4$}}
\put(34.5,0){\makebox(0,0){\Large $\boldsymbol{\cdots}$}}
\put(46,8.4){\makebox(0,0){\Large $\boldsymbol{\cdots}$}}
\put(46,5.4){\makebox(0,0){\Large $\boldsymbol{\cdots}$}}
\put(48,-8.5){\makebox(0,0){\Large $\boldsymbol{\cdots}$}}

\put(39,0){\line(1,0){28}}
\put(43,0){\circle*{1,3}}\put(43,0){\makebox(0,-6)[c]{\small $k$}}
\qbezier(5,0)(12,10)(19,0)
\qbezier(5,0)(12,-16)(50,-10)
\qbezier(26,0)(30,5)(47,7)
\qbezier(12,0)(18,10)(50,10)
\qbezier(12,0)(15,-10)(49,-7)
\qbezier(19,0)(22,5,8)(26,0)
\red{\qbezier(43,0)(47,4)(50,5)}
\blue{\qbezier(42,0)(40,-4)(38,-5)}
\end{picture}
\vspace{2cm}
\end{center}
Case $(1)$ is obvious. If $k\in\Cval(\sigma)$, and there are $h_k=j$ arcs covering $k$, then the arc from $k$ to $\sigma(k)$ must intersect exactly $r$ of these arcs, for some $0\le r\le j$. We order these $j$ arcs from top to bottom and denote them by $a_1,a_2,\dots,a_j$. Suppose that the arc $(k \to \sigma(k))$ intersects exactly $i$ of the 
$j$ arcs above $k$, namely $a_{j-i+1}, a_{j-i}, \dots, a_{j}$, where $0 \le i \le j$. 
These intersected arcs together with the arc $(k \to \sigma(k))$ 
contribute \[
\cros_k^{+}(\sigma)=i,
\]
while the remaining $j-i$ arcs that do not intersect $(k \to \sigma(k))$ 
contribute 
\[
\nest_k^{+}(\sigma)=j-i,
\]
which proves~(2). 

If $k\in\Cda(\sigma)$ and there are $h_k=j$ arcs that cover $k$, exactly one of which has right endpoint $k$ (this arc together with the arc ($k\to \sigma(k)$ forms an upper crossing), then the arc from $k$ to $\sigma(k)$ must intersect exactly $r$ of the remaining $j-1$ arcs, for some $0\le r\le j-1$.
We order these $j-1$ arcs from top to bottom and denote them by $a_1,a_2,\dots,a_{j-1}$. Suppose that the arc $(k \to \sigma(k))$ intersects exactly $i$ of the 
$j-1$ arcs above $k$, namely $a_{j-i}, a_{j-i}, \dots, a_{j-1}$, where $0 \le i \le j-1$. 
These intersected arcs, together with the arc $(k \to \sigma(k))$, 
contribute \[
\cros_k^{+}(\sigma)=i,
\]
while the remaining $j-i-1$ arcs that do not intersect $(k \to \sigma(k))$ 
contribute 
\[
\nest_k^{+}(\sigma)=j-i-1,
\]
which proves~(3).

If $k\in\Cpk(\sigma)\cup\Cdd(\sigma)$ and there are $h_k=j$ arcs below $k$, denoted by $b_1,b_2,\dots,b_j$ from bottom to top,   exactly one of which has a right endpoint $k$, namely $b_i$ where $1\le i\le j$, then the arc ($k\to\sigma(k)$) must intersect exactly $b_{i+1},b_{i+2},\dots,b_j$.  These intersected arcs, together with the arc ($k\to \sigma(k)$), contribute 
\[\cros_k^-(\sigma)=j-i,\]
while the remaining $i-1$ arcs that do not intersect the arc ($k\to\sigma(k)$) contribute
\[\nest_{k}^-(\sigma)=i-1,\]
which proves~(4).
Eq.~\eqref{def:height-FZ} follows by induction on $k$
and can be proved analogously to Lemma~\ref{lem:height}; we omit the details.
\end{proof}

\begin{lemma}\label{lem:dep}
For $\sigma\in\mathfrak{S}_n$, let $\Psi_{FZ}(\sigma):=(w,p)\in\mathcal{L}_n^*$  be the corresponding 3-restricted Laguerre history.
For each $i\in[n]$, denote by $h_i$ the height of the $i$th step of $w$. Then 
    \begin{equation}\label{equ:depth}
        \depth(\sigma)=\area(w). 
    \end{equation} 
\end{lemma}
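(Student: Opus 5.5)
The plan is to mirror the proof of Lemma~\ref{lem:dis}. We first express both sides as a signed sum of positions of North and South steps, and then compare.

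\emph{Step 1: area in terms of matched steps.} By Lemma~\ref{lem:area} and the same horizontal-strip argument as in the proof of Lemma~\ref{lem:dis}, each South step $w_i=\mathrm{S}$ is matched with a North step $w_j=\mathrm{N}$, $j<i$. Each matched pair contributes a strip of area $i-j$. East steps of any colour ($\red{\mathrm{E_1}}$, $\blue{\mathrm{E_2}}$, $\green{\mathrm{E_3}}$) do not affect the matching. Hence
\[
\area(w)=\sum_{w_i=\mathrm{S}} i-\sum_{w_j=\mathrm{N}} j .
\]

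\emph{Step 2: transfer to cycle statistics.} By \eqref{equ:FZ-corresponding}, the North steps are exactly the cycle valleys and the South steps are exactly the cycle peaks. Therefore
\[
\area(w)=\sum_{i\in\Cpk(\sigma)} i-\sum_{j\in\Cval(\sigma)} j .
\]

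\emph{Step 3: depth in terms of excedance tops and bottoms.} From \eqref{def:depth},
\[
\depth(\sigma)=\sum_{\sigma(i)>i}\sigma(i)-\sum_{\sigma(i)>i} i=\sum_{k\in\mathrm{Etop}(\sigma)}k-\sum_{k\in\mathrm{Ebot}(\sigma)}k .
\]
Now apply \eqref{exc-top-bottom}: $\mathrm{Etop}=\Cpk\cup\Cda$ and $\mathrm{Ebot}=\Cval\cup\Cda$. Both unions are disjoint, since the classes of cycle peaks, valleys, double ascents, double descents and fixed points partition $[n]$. The cycle double ascents therefore appear in both sums with opposite signs and cancel. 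This leaves exactly the expression from Step~2, which proves \eqref{equ:depth}.

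The only point needing care is the identity \eqref{exc-top-bottom} itself. A value $k$ is an excedance top iff $\sigma^{-1}(k)<k$, and an excedance bottom iff $k<\sigma(k)$. Combining these two inequalities with the sign of $\sigma(k)-k$ (respectively $\sigma^{-1}(k)-k$) sorts $k$ into exactly one of $\Cpk$, $\Cda$, $\Cval$. I expect no real obstacle: the argument is a direct bookkeeping check once the area is written as a signed sum.
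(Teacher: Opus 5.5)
Your proof is correct and follows essentially the same route as the paper's: both write $\depth(\sigma)$ as the sum over $\mathrm{Etop}(\sigma)$ minus the sum over $\mathrm{Ebot}(\sigma)$, use \eqref{exc-top-bottom} to cancel the cycle double ascents and obtain the sum over $\Cpk(\sigma)$ minus the sum over $\Cval(\sigma)$, and then identify this with $\area(w)$ through the matched North/South strip argument of Lemma~\ref{lem:dis}. Your explicit verification of \eqref{exc-top-bottom} via the signs of $\sigma^{-1}(k)-k$ and $\sigma(k)-k$ fills in a detail the paper only calls easy to check.
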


\begin{proof}
     By Definition \eqref{def:depth} and \eqref{exc-top-bottom}, we see that
   \begin{equation}\label{depth-cpk-cval}      \depth(\sigma)=\mathrm{Etop}(\sigma)-\mathrm{Ebot}(\sigma)=\Cpk(\sigma)-\Cval(\sigma).
   \end{equation}
Translating~\eqref{depth-cpk-cval} through the $\Psi_{FZ}$ yields
\begin{equation}\label{dep2:S-N}   \depth(\sigma)=\sum_{w_i=\mathrm{S}}i-\sum_{w_j=\mathrm{N}}j.
\end{equation}
 By the same argument in the proof of  Lemma~\ref{lem:dis} we obtain \eqref{equ:depth}.
\end{proof}

\begin{example}      
If $\sigma = 8\, 3\, 1\, 10\, 7\, 5\, 6\, 4\,9\, 11\,2\in\S_{11}$ and the corresponding Motzkin path is $w=\mathrm{NNSNN\red{E_1}\mathrm{SS\green{E_3}\blue{E_2}S}}$, see Fig~\ref{fig2:motzkin}, then by Definition~\eqref{def:depth}, we have
$$\depth(\sigma)=(8-1)+(3-2)+(10-4)+(7-5)+(11-10)=17.$$
Moreover, we have 
\begin{align*}
    \area(w)=\sum_{i=1}^{11}h_i
    =5\cdot1+3\cdot 2+ 2\cdot 3=17.
\end{align*}
\end{example}

\subsection{Proof of Theorem~\ref{thm1} and Theorem~\ref{thm2}}\label{sec:Laguerre}

\begin{lemma}\label{lem:lmax-rmax}
Given a permutation $\sigma\in\S_{n+1}$ and $x\in[n]$, letter $x$ is a   
\begin{itemize}
    \item  left-to-right maximum of $\sigma$, if and only if $p_x$ is equal to $0$ and the $x$th step is $\mathrm{S}$ or $\blue{\mathrm{E_2}}$ in the Laguerre history $\Psi_{FV}(\sigma)$;
    \item  right-to-left maximum of $\sigma$, if and only if $p_x$ is equal to $h_x$ and the $x$th step is $\mathrm{S}$ or $\red{\mathrm{E_1}}$ in the Laguerre history $\Psi_{FV}(\sigma)$.
\end{itemize}
\end{lemma}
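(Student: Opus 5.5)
We work with the $x$-decomposition \eqref{x-decomposition} of $\sigma$, $\sigma=u_1v_1\cdots u_kv_ku_{k+1}$, and recall that $x\in v_j$. We will use three facts established in the proof of Lemma~\ref{lem:height}: $(31\!-\!2)_x(\sigma)=j-1$, $(2\!-\!13)_x(\sigma)=k-j$, and $h_x=k-1$. By the definition of $\Psi_{FV}$ we also have $p_x=(31\!-\!2)_x(\sigma)=j-1$. Hence $p_x=0$ exactly when $j=1$, and $p_x=h_x$ exactly when $j=k$. The lemma then reduces to a statement about $\sigma$ itself, namely that
\[
x\in\lmax(\sigma)\iff j=1 \text{ and } x \text{ is a peak or double ascent},
\]
and symmetrically for right-to-left maxima with $j=k$ and $x$ a peak or double descent. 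Both conditions use the convention $\sigma_0=\sigma_{n+2}=0$.

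For the left-to-right maximum case, first suppose $x$ is a left-to-right maximum. Every letter to the left of $x$ is then smaller than $x$, so it lies in $u_1$. Consequently $x$ is the first letter of $v_1$, which gives $j=1$. Moreover its left neighbour is either smaller than $x$ or is $\sigma_0=0$, so $x$ is not a valley or a double descent; it must be a peak or a double ascent. For the converse, assume $j=1$ and that $x$ is a peak or double ascent, so $\sigma_{i-1}<x$ where $x=\sigma_i$. Since $j=1$, the letters preceding $v_1$ all lie in $u_1$ and are smaller than $x$. The remaining case is a letter of $v_1$ that stands before $x$. If such a letter existed, the immediate left neighbour of $x$ would belong to $v_1$ and so would be $\ge x$. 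This contradicts $\sigma_{i-1}<x$. Therefore $x$ is the first letter of $v_1$, everything to its left is smaller, and $x$ is a left-to-right maximum.

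The right-to-left maximum case follows by the mirror argument. Here $j=k$ together with a smaller right neighbour (either a peak or a double descent) forces $x$ to be the last letter of $v_k$. Everything to its right then lies in $u_{k+1}$ and is smaller. Conversely, if $x$ is a right-to-left maximum, then $x$ is the last letter of $v_k$, which forces $j=k$, and its right neighbour is smaller, which excludes valleys and double ascents. Finally, the step colours are translated via \eqref{def:corresponding}: a peak corresponds to $\mathrm{S}$, a double ascent to $\blue{\mathrm{E_2}}$, and a double descent to $\red{\mathrm{E_1}}$.

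The only delicate point is an edge case. If $u_1$ is empty and $x=\sigma_1$, then $x$ is trivially a left-to-right maximum, and it is a peak or double ascent because $\sigma_0=0$. A similar edge case arises for the last letter of $\sigma$ on the right-to-left side. Both are handled correctly by the boundary convention.
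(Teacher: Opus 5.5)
Your proof is correct and follows essentially the paper's idea: a record forces the relevant straddling count to vanish ($(31\!-\!2)_x=0$, resp.\ $(2\!-\!13)_x=0$, hence $p_x=h_x$ by Lemma~\ref{lem:height}), and the smaller neighbour fixes the step type. Your formulation via the block index $j$ in the $x$-decomposition ($p_x=j-1$, $h_x=k-1$) is actually more complete than the paper's proof, which argues only the forward implications. You also prove the converses, correctly noting that $p_x=0$ (resp.\ $p_x=h_x$) alone does not suffice: it is the smaller left (resp.\ right) neighbour that forces $x$ to be the first letter of $v_1$ (resp.\ the last letter of $v_k$).
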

\begin{proof}It is easy to see that
if $x$ is a left-to-right (resp. right-to-left) maximum of $\sigma$, then $x$ is a peak or a double ascent (resp. a peak or a double descent) of $\sigma$.
    By the construction of $\Psi_{FV}$,  the $x$th step of $\Psi_{FV}(\sigma)$ is given by $\mathrm{S}$ or $\blue{\mathrm{E_2}}$   (resp. $\mathrm{S}$ or $\red{\mathrm{E_1}}$) if $x$ is a left-to-right (resp. right-to-left) maximum of $\sigma$. If $x$ is a left-to-right maximum of $\sigma$, i.e., there is no element to the left of $x$ that is greater than $x$, which means $p_x=(31\!-\!2)_x=0$. If $x$ is a right-to-left maximum of $\sigma$, i.e., there is no element to the right of $x$ that is greater than $x$, which is $(2\!-\!13)_x=0$. Then by Lemma~\ref{lem:height}, we  derive that $(31\!-\!2)_x=h_x$.  
\end{proof}

\begin{proof}[\textbf{Proof of Theorem~\ref{thm1}}]For $\sigma\in\S_{n+1}$,  let    $\Psi_{FV}(\sigma) =(w,p):= (w_1\ldots w_n,(p_1,\ldots, p_n)) \in \mathcal{L}_{n}$ be the corresponding Laguerre history. Recall \eqref{weight-functions} that 
\begin{align}
\wt(\sigma)=&(u_1u_2)^{{\val}(\sigma)}u_3^{\da(\sigma)}u_4^{\dd(\sigma)}
\alpha^{{\lmax}(\sigma)-1}{\beta}^{{\rmax}(\sigma)-1}\notag\\
&\times f^{\mathrm{lmaxpk(\sigma)-1}}g^{\bmax(\s)-1} t^{\lrmaxda(\s)+\rlmaxdd(\s)}.
\end{align}
Combining  Lemmas~\ref{lem:height}--\ref{lem:dis} and \ref{lem:lmax-rmax}, 
we have
   \begin{align}
\wt(\sigma)&\,h^{\dis(\sigma)}p^{\ldes(\sigma)}q^{\rasc(\sigma)}\notag\\
&=
\prod_{\substack{w_i=\mathrm{N}}} \!\bigl(u_1h^{h_i}p^{p_i}q^{h_i-p_i}\bigr)
  \prod_{\substack{w_i=\mathrm{S}}} \!\bigl(u_2h^{h_i}(\alpha f)^{\chi(p_i=0)}(\beta g)^{\chi(p_i=h_i)}p^{p_i}q^{h_i-p_i}\bigr) \notag\\[3pt]
&\quad\times
  \prod_{\substack{w_i=\textcolor{red}{\mathrm{E_1}}}} \!\bigl(u_4h^{h_i}\,(\beta t)^{\chi(p_i=h_i)}p^{p_i}q^{h_i-p_i}\bigr)
  \prod_{\substack{w_i=\textcolor{blue}{\mathrm{E_2}}}} \!\bigl(u_3h^{h_i}\,(\alpha t)^{\chi(p_i=0)}p^{p_i}q^{h_i-p_i}\bigr). 
\end{align}
 Summing over $p_i\in \{0,\ldots,h_i\}$ for $i\in[n]$  we obtain
    \begin{align}
A_n(\mathbf{u},f,g,h,t\,|\,&\alpha,\beta\,;\, p,q)\nonumber\\
&=\sum_{w\in \mathcal{M}^2_n}
  \prod_{\substack{w_i=\mathrm{N}}} u_1h^{h_i}[\,h_i+1\,]_{p,q}
  \prod_{\substack{w_i=\mathrm{S}}} u_2h^{h_i}\bigl[\,h_i+1;\beta g, \alpha f\,\bigr]_{p,q} \notag\\[3pt]
&\quad\times
  \prod_{\substack{w_i=\textcolor{red}{\mathrm{E_1}}}} u_4h^{h_i}[\,h_i+1;\beta  t,1\,]_{p,q}
  \prod_{\substack{w_i=\textcolor{blue}{\mathrm{E_2}}}} u_3h^{h_i}\bigl[\,h_i+1;1,\alpha t\,\bigr]_{p,q},
\end{align}
where $\mathcal{M}^2_n$ is the set of  Motzkin paths of length $n$ with two colors in step E. This completes the proof by an application of Flajolet’s fundamental Lemma~\ref{lem:fla}.
\end{proof}


    For $\sigma=\sigma_1\dots\sigma_n\in\S_n$,  we say that an integer $i\in[n]$ is a 
\begin{itemize}
    \item left-to-right maximum index  of $\sigma$, if $\sigma_j<\sigma_i$ for all $j<i$;
    \item right-to-left minimum index  of $\sigma$, if $\sigma_j>\sigma_i$ for all $j>i$.
\end{itemize}
Since $\sigma$ is a permutation, the map $i\mapsto\sigma(i)=\sigma_i$ is a bijection between positions and values. Consequently, counting left-to-right maxima (resp.\ right-to-left minima) by record positions or by record values
gives the same cardinality. In particular,
\begin{align*}
    \lmax(\sigma)&=\#\{i\in[n]\,:\sigma_j<\sigma_i \text{ for all }j<i\,\};\\
    \rlmin(\sigma)&=\#\{i\in[n]\,:\sigma_j>\sigma_i \text{ for all }j>i\,\}.
\end{align*}
The  following result provides a characterization of $\rlmin$ and $\lmax$ through the bijection $\Psi_{FZ}$.
\begin{lemma}\label{lem:FZ-lmax-rmin}
Given a permutation $\sigma\in\S_n$ and $x\in[n]$, an integer  $x$ is a
    \begin{itemize}
    \item  left-to-right maximum index of $\sigma$ if and only if $p_x$ is equal to $0$ and the $x$th step is $\mathrm{S}$, $\blue{\mathrm{E_2}}$, or $\green{\mathrm{E_3}}$ in the 3-restricted Laguerre history $\Psi_{FZ}(\sigma)$;
    \item right-to-left minimum index of $\sigma$ if and only if $p_x$ is equal to $0$ and the $x$th step is $\mathrm{N}$, $\red{\mathrm{E_1}}$, or $\green{\mathrm{E_3}}$ in the 3-restricted Laguerre history $\Psi_{FZ}(\sigma)$.
\end{itemize}
\end{lemma}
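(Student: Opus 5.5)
The plan is to read ``$x$ is a left-to-right maximum'' (resp.\ right-to-left minimum) as a statement about the \emph{letter} $x$, the value occupying position $i:=\sigma^{-1}(x)$. This matches the step types listed in the lemma, and since $\lmax$ and $\rlmin$ are equinumerous whether counted by positions or by values, it is the reading needed when the lemma is fed into the proof of Theorem~\ref{thm2}. There are two parts. First I identify the step type of $x$ from the relative order of $\sigma^{-1}(x)$ and $x$. Then I match the record condition with $p_x=0$ using the arc-diagram picture from the proof of Lemma~\ref{lem:height-FZ}.

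\emph{Step 1 (step type).} Suppose $x$ is a left-to-right maximum sitting at position $i$. Then the $i-1$ entries before it are all smaller than $x$. Since only $x-1$ values lie below $x$, this forces $i\le x$, that is, $\sigma^{-1}(x)\le x$. By the classification \eqref{equ:clas}, $\sigma^{-1}(x)\le x$ means $x\in\Cpk(\sigma)\cup\Cda(\sigma)\cup\Fix(\sigma)$. Through \eqref{equ:FZ-corresponding} these are exactly the steps $\mathrm{S}$, $\blue{\mathrm{E_2}}$, $\green{\mathrm{E_3}}$.

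Dually, suppose $x$ is a right-to-left minimum at position $i$. The $n-i$ later entries all exceed $x$, and only $n-x$ values exceed $x$, so $n-i\le n-x$, i.e.\ $\sigma^{-1}(x)\ge x$. This puts $x$ in $\Cval(\sigma)\cup\Cdd(\sigma)\cup\Fix(\sigma)$, i.e.\ the steps $\mathrm{N}$, $\red{\mathrm{E_1}}$, $\green{\mathrm{E_3}}$. The fixed-point case is immediate in both directions. If $\sigma(x)=x$, then $p_x=\#\{j<x:\sigma(j)>x\}$, which vanishes exactly when $x$ is a left-to-right maximum. A complementary count (values below $x$ must all sit before position $x$) gives the same conclusion for right-to-left minima.

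\emph{Step 2 (label condition).} In the arc diagram, $x$ is a left-to-right maximum exactly when no upper arc $(j\to\sigma_j)$ with $j<i$ and $\sigma_j>x$ exists, i.e.\ no upper arc strictly nests over the incoming arc $(i\to x)$. So I must translate ``no arc nesting over the incoming arc at $x$'' into $p_x=0$.

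For $x\in\Cda(\sigma)$ I use the ordering $a_1,\dots,a_{j-1}$ of covering arcs from case (3) of the proof of Lemma~\ref{lem:height-FZ}. There, $p_x=\nest_x^+$ counts the covering arcs lying above the outgoing arc. I would show these are precisely the arcs lying above the incoming arc, because an arc covering both endpoints and passing over $x$ either clears both arcs or neither. For $x\in\Cpk(\sigma)$ I use case (4) analogously with the lower arcs $b_1,\dots,b_j$. The right-to-left-minimum statement follows from the same argument with upper and lower arcs exchanged, or from the reverse-complement symmetry of the arc diagram.

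The main obstacle is Step 2, specifically the peak and double-ascent cases, where the label $p_x$ is defined through the outgoing arc but the record condition concerns the incoming arc. If the direct arc comparison fails, I would fall back on the positional reading, indexing the lemma by $i=\sigma^{-1}(x)$. I would then check that the induced statistic on Laguerre histories still has weight $\chi(p_x=0)$ on the stated steps, which is all the proof of Theorem~\ref{thm2} actually uses.
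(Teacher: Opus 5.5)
Your Step~1 is fine, but Step~2 has a genuine gap that cannot be repaired. Under your value reading, the equivalence ``the letter $x$ is a left-to-right maximum $\iff p_x=0$'' is false for both cycle double ascents and cycle peaks, so no arc argument can prove it.

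For the double-ascent case take $\sigma=35412$. Here $\sigma^{-1}(3)=1<3<4=\sigma(3)$, so $3\in\Cda(\sigma)$. The letter $3$ is $\sigma_1$, hence a left-to-right maximum. Yet by \eqref{eq:pi}, $p_3=\#\{j<3:\sigma(j)>4\}=1$, because of the arc $2\to5$. That arc starts strictly between $\sigma^{-1}(3)=1$ and $3$. It therefore crosses the incoming arc $1\to3$ while nesting over the outgoing arc $3\to4$, which is exactly the configuration your ``clears both arcs or neither'' claim excludes.

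For a cycle peak the two conditions live on opposite sides of the diagram. The label $p_x=\nest_x^-$ concerns lower arcs around the outgoing arc, while the record condition concerns upper arcs over the incoming arc. In $\sigma=3421$, the letter $3$ is a left-to-right maximum and $w_3=\mathrm{S}$, but $p_3=\#\{j>3:\sigma(j)<2\}=1$.

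In fact the lemma as printed fails under every reading. For $\sigma=3421$ one gets $\Psi_{FZ}(\sigma)=(\mathrm{NNSS},(0,0,1,0))$, so $\{x:p_x=0,\ w_x\in\{\mathrm{S},\mathrm{E_2},\mathrm{E_3}\}\}=\{4\}$, while $\lmax(\sigma)=2$.

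The paper's proof uses the positional reading, where the label condition is just the definition \eqref{eq:pi}:
\begin{itemize}
\item if $\sigma(i)\ge i$, then $p_i=\#\{j<i:\sigma(j)>\sigma(i)\}$, which vanishes iff $i$ is a left-to-right maximum index;
\item if $\sigma(i)<i$, then $p_i=\#\{j>i:\sigma(j)<\sigma(i)\}$, which vanishes iff $i$ is a right-to-left minimum index;
\item a fixed point has $p_i=0$ iff $\sigma([i-1])=[i-1]$, iff it is both.
\end{itemize}
However, $\sigma(i)\ge i$ places $i$ in $\Cval(\sigma)\cup\Cda(\sigma)\cup\Fix(\sigma)$, i.e.\ steps $\mathrm{N},\mathrm{E_2},\mathrm{E_3}$. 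Likewise right-to-left minimum indices get $\mathrm{S},\mathrm{E_1},\mathrm{E_3}$. So the statement, and the step assignment in the paper's own proof, has $\mathrm{N}$ and $\mathrm{S}$ interchanged.

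Your fallback, as phrased (``weight $\chi(p_x=0)$ on the stated steps''), would therefore not go through either. What does suffice for Theorem~\ref{thm2} is the corrected positional lemma plus one observation. In Flajolet's lemma each $\mathrm{N}$ step at height $k-1$ is paired with an $\mathrm{S}$ step at height $k$. The pair's combined weight $u_1u_2(sh)^{2k-1}[k;\alpha,1]_{p,qs}[k;\beta,1]_{p,qs}$ is symmetric in $\alpha,\beta$. Hence interchanging the roles of $\mathrm{N}$ and $\mathrm{S}$ leaves every $\lambda_k$, and the $J$-fraction, unchanged.
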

\begin{proof}
One can readily show that if $x$ is a left-to-right maximum  (resp. right-to-left minimum) index of $\sigma$, then $\sigma(x)\ge x$ (resp. $\sigma(x)\le x$). By the construction of $\Psi_{FZ}$, the $x$th step of $\Psi_{FZ}(\sigma)$ is given by $\mathrm{S}$, $\blue{\mathrm{E_2}}$ or $\green{\mathrm{E_3}}$ (resp. $\mathrm{N}$, $\red{\mathrm{E_1}}$ or $\green{\mathrm{E_3}}$) if $x$ is a left-to-right maximum  (resp. right-to-left minimum) index of $\sigma$. If $x$ is a left-to-right maximum index of $\sigma$, then there is no upper arc above the arc ($x\to\sigma(x)$) because $(\sigma(i)<\sigma(x))$ for all $i<x$; and  if $x$ is a right-to-left minimum index of $\sigma$, then there is no lower arc below the arc ($x\to\sigma(x)$) because $(\sigma(i)>\sigma(x))$ for all $i>x$.
\end{proof}

\begin{lemma}[\cite{MV94}]\label{lem:inv}
    Let $\sigma\in\S_n$ and $\Psi_{FZ}(\sigma)=(w,p)\in\mathcal{L}_n^*$. Then
    \begin{align}
        \inv(\sigma)=\,\mathrm{area}(w)+\nest(\sigma)=\,\sum_{i=1}^n(h_i+p_i),
        \end{align}   
        where $h_i$ denotes the height of the $i$th step of $w$ for $i\in[n]$.
\end{lemma}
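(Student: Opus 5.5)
By the remark after Definition~\ref{def:nesting}, $p_i=\nest_i^+(\sigma)$ when $\sigma(i)\ge i$ and $p_i=\nest_i^-(\sigma)$ when $\sigma(i)<i$. Moreover $\nest_i^-(\sigma)=0$ whenever $\sigma(i)\ge i$, and $\nest_i^+(\sigma)=0$ whenever $\sigma(i)<i$. Hence $\sum_i p_i=\sum_i\nest_i(\sigma)=\nest(\sigma)$. By Lemma~\ref{lem:area}, $\area(w)=\sum_i h_i$. So both equalities in the statement reduce to the single identity
\[
\inv(\sigma)=\sum_{i=1}^n h_i+\nest(\sigma).
\]
Summing Lemma~\ref{lem:height-FZ} over $k$ gives $\sum_k h_k=\cros(\sigma)+\nest(\sigma)+\drop(\sigma)$. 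The target is therefore the classical identity
\[
\inv(\sigma)=\cros(\sigma)+2\,\nest(\sigma)+\drop(\sigma),
\]
and I would prove it by a direct case analysis of inversion pairs $(i,j)$ with $i<j$ and $\sigma_i>\sigma_j$, sorted by whether $i$ and $j$ are weak excedances or drops.

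\textbf{Pure cases.} If $i$ and $j$ are both weak excedances, then $j\le\sigma_j<\sigma_i$. Indeed, $j>\sigma_i$ would give $\sigma_j\ge j>\sigma_i$, a contradiction. So the pair is exactly an upper nesting $i<j\le\sigma_j<\sigma_i$, and conversely every upper nesting is an inversion. If both are drops, then $\sigma_j<\sigma_i<i<j$, which is exactly a lower nesting, and conversely. Together these cases contribute exactly $\nest(\sigma)$.

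\textbf{Mixed cases.} These must account for $\cros+\nest+\drop$. Suppose $i$ is a weak excedance and $j$ is a drop (the reverse, $i$ a drop and $j$ a weak excedance with $\sigma_i>\sigma_j$, is impossible because $\sigma_i<i<j\le\sigma_j$). I would fix the drop $j$ and count the indices $i<j$ with $\sigma_i\ge i$ and $\sigma_i>\sigma_j$, using the arc diagram. Each such $i$ is the left end of an upper arc $(i\to\sigma_i)$ that starts before $j$ and ends above $\sigma_j$. I would split these arcs according to the relative position of $\sigma_i$ and $j$, and of $i$ and $\sigma_j$.

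I expect the counts to match as follows:
\begin{itemize}
\item the arc $(i\to\sigma_i)$ with $\sigma_i=j$ accounts for the $+1$ per drop;
\item arcs covering the segment $[\sigma_j,j]$ with $i<\sigma_j$ pair with the lower arc of $j$ as a nesting or crossing at the index $\sigma_j$ side;
\item the remaining arcs correspond to lower arcs covering $j$ from the right.
\end{itemize}

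The cleaner route is to show, for each drop $j$, that
\[
\#\{i<j:\ \sigma_i\ge i,\ \sigma_i>\sigma_j\}=h_j=\cros_j^-(\sigma)+\nest_j^-(\sigma)+1.
\]
I would obtain this from the arc-covering description of $h_j$ in the proof of Lemma~\ref{lem:height-FZ}: the number of upper arcs covering node $j$ equals the number of lower arcs covering it. Then I would check that an upper arc starting at $i<j$ covers $j$ exactly when $\sigma_i\ge j$, and handle the arcs with $\sigma_j<\sigma_i<j$ separately, showing they are matched with upper nestings or crossings indexed at weak excedances.

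The main obstacle is this bookkeeping in the mixed case: making sure that every upper arc with $\sigma_j<\sigma_i$ is counted exactly once, either in $h_j$ or in the $\cros^+$/$\nest^+$ terms at weak excedances, so that the totals come out to $\cros+\nest+\drop$. I would first verify this on the example $\sigma=8\,3\,1\,10\,7\,5\,6\,4\,9\,11\,2$ of Fig.~\ref{fig2:motzkin}.

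If the direct matching becomes messy, a fallback is induction on $n$: insert $n$ into $\sigma$ cycle-wise and track how each side of $\inv=\cros+2\nest+\drop$ changes, since the last step of $w$ is then an $\mathrm{S}$ or an $\green{\mathrm{E_3}}$ step.
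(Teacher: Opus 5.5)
Your reductions are correct. $p_i=\nest_i^{\pm}(\sigma)$ gives $\sum_i p_i=\nest(\sigma)$, and Lemmas~\ref{lem:area} and~\ref{lem:height-FZ} give $\area(w)=\sum_k h_k=\cros+\nest+\drop$. Inversions between two weak excedances or two drops are exactly the nestings, and a drop before a weak excedance is never an inversion. The paper itself gives no proof of this lemma: it cites \cite{MV94}, and later quotes $\inv=\drop+\cros+2\nest$ from \cite{SZ10}, so citing that identity would complete your argument.

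The gap is in the one step with real content, namely that the mixed pairs ($i<j$, $i$ a weak excedance, $j$ a drop, $\sigma_i>\sigma_j$) number $\cros+\nest+\drop$. The per-drop identity you propose for this is false. For $\sigma=231$ the path is $\mathrm{N}\,\mathrm{E_2}\,\mathrm{S}$, so $h_3=1$, while $\#\{i<3:\sigma_i\ge i,\ \sigma_i>\sigma_3\}=2$. In general this count equals $h_j=\#\{i<j:\sigma_i\ge j\}$ plus the number of weak excedances $i$ with $\sigma_j<\sigma_i<j$. Those extra arcs do not cover node $j$, so no arc-covering argument at $j$ can produce them, and your plan to ``handle them separately'' contradicts the displayed equation. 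Likewise, the $+1$ per drop is not an upper arc ending at $j$: when $j$ is a cycle double descent there is none (e.g.\ $j=2$ in $\sigma=312$). What you would actually need is the global identity $\sum_{j\in D}\#\{i\in W:\sigma_j<\sigma_i<j\}=\sum_{k\in W}h_k$, where $W$ and $D$ are the weak excedances and drops. This is true, but you have not proved it, and it does not hold index by index.

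The missing step is much easier if you compare the mixed pairs with $\depth(\sigma)$, which equals $\area(w)$ by Lemma~\ref{lem:dep}, rather than with $\cros+\nest+\drop$.
\begin{itemize}
\item If $i\in W$, $j\in D$ and $i>j$, then automatically $\sigma_i\ge i>j>\sigma_j$. Hence the number of mixed pairs is $\#\{(i,j)\in W\times D:\sigma_i>\sigma_j\}-\#\{(i,j)\in W\times D:i>j\}$.
\item Both $\{W,D\}$ and $\{\sigma(W),\sigma(D)\}$ partition $[n]$.
\item For every $A\subseteq[n]$ one has $\#\{(a,b)\in A\times([n]\setminus A):a>b\}=\sum_{a\in A}a-\binom{|A|+1}{2}$.
\end{itemize}
Applying the last fact with $A=\sigma(W)$ and with $A=W$, the difference equals $\sum_{i\in W}(\sigma_i-i)=\depth(\sigma)$. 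Combined with your pure case, this gives $\inv(\sigma)=\nest(\sigma)+\depth(\sigma)=\nest(\sigma)+\area(w)$ directly, without Lemma~\ref{lem:height-FZ}. The identity $\inv=\cros+2\nest+\drop$ then becomes a corollary rather than an input.
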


  \begin{proof}[\textbf{Proof of Theorem~\ref{thm2}}]
For $\sigma\in\S_n$, let $\Psi_{FZ}(\sigma):=(w_1\ldots w_n,(p_1,\ldots, p_n)) \in\L_n^*$ be the corresponding 3-restricted Laguerre history. Define the weight function
\begin{equation}\label{weight-c}
    \wt^c(\sigma):=(u_1u_2)^{\cval(\sigma)}u_3^{\cda(\sigma)}u_4^{\cdd(\sigma)}t^{\fix(\sigma)}s^{\inv(\sigma)} h^{\depth(\sigma)}\alpha^{\lmax(\sigma)}\beta^{\rlmin(\sigma)}p^{\cros(\sigma)}q^{\nest(\sigma)}.
\end{equation}
Combining Lemmas~\ref{lem:dep}, \ref{lem:FZ-lmax-rmin}, and~\ref{lem:inv}, we have
\begin{align}
\wt^c(\sigma)&=\prod_{\substack{w_i=\mathrm{N}}}\!\bigl(u_1\beta^{\chi(p_i=0)}p^{h_i-p_i}q^{p_i}s^{h_i+p_i}h^{h_i}\bigr)
   \prod_{\substack{w_i=\mathrm{S}}}\!\bigl(u_2\alpha^{\chi(p_i=0)}p^{h_i-p_i-1}q^{p_i}s^{h_i+p_i}h^{h_i}\bigr) \notag\\[2pt]
&\quad\times  
   \prod_{\substack{w_i=\textcolor{red}{\mathrm{E_1}}}}\!\bigl(u_4\beta^{\chi(p_i=0)}p^{h_i-p_i-1}q^{p_i}s^{h_i+p_i}h^{h_i}\bigr)
   \prod_{\substack{w_i=\textcolor{blue}{\mathrm{E_2}}}}\!\bigl(u_3\alpha^{\chi(p_i=0)}p^{h_i-p_i}q^{p_i}s^{h_i+p_i}h^{h_i}\bigr)\nonumber\\
   &\qquad\times \prod_{\substack{w_i=\textcolor{green}{\mathrm{E_3}}}}\!\bigl(t (\alpha\beta)^{\chi(h_i=0)}q^{h_i}s^{2h_i}h^{h_i}\bigr).
\end{align}
By Definition \ref{3-Laguerre}, summing over $p_i\in\{0,1,\dots,h_i\}$ if $w_i\in\{\mathrm{N}, \blue{\mathrm{E_2}}, \green{\mathrm{E_3}}\}$ and 
$p_i\in\{0,1,\dots,h_i-1\}$ if $w_i\in\{\mathrm{S},  \red{\mathrm{E_1}}\}$ for $i\in[n]$ we have
\begin{align*}
C_{n}(\mathbf{u},&t,s,h\,|\,\alpha,\beta\,;\, p,q)
=\sum_{w\in \mathcal{M}_n^3}
   \prod_{\substack{w_i=\mathrm{N}}} u_1 (sh)^{h_i}[h_i+1;\beta,1]_{p,qs}
   \prod_{\substack{w_i=\mathrm{S}}} u_2(sh)^{h_i}[h_i;\alpha,1]_{p,qs} \notag\\[4pt]
&\quad\times
   \prod_{\substack{w_i=\textcolor{red}{\mathrm{E_1}}}} u_4(sh)^{h_i}[h_i;\beta,1]_{p,qs}
   \prod_{\substack{w_i=\textcolor{blue}{\mathrm{E_2}}}} u_3p (sh)^{h_i}[h_i;\alpha,1]_{p,qs}\prod_{\substack{w_i=\textcolor{green}{\mathrm{E_3}}}} t(\alpha\beta)^{\chi(h_i=0)}(qs^2h)^{h_i},
\end{align*}
    where $\mathcal{M}_n^3$ denotes the set of all $3$-Motzkin paths of length $n$, that is, Motzkin paths whose horizontal step E is endowed with three possible colors: $\textcolor{red}{\mathrm{E_1}}$, $\textcolor{blue}{\mathrm{E_2}}$, and $\textcolor{green}{\mathrm{E_3}}$.  By applying Flajolet’s fundamental Lemma~\ref{lem:fla}, the proof is complete.
    \end{proof}



\section{Two bijections  over permutations}\label{sec:SZ-C}

The bijections $\Phi_{SZ}$ and $\Phi_{C}$ actually arise from the composition of a restricted Françon--Viennot bijection and a simplified  Foata--Zeilberger bijection. First, we need a  variant of  the bijection $\Psi_{FV}$ (resp.  $\Psi_{FZ}$)
defined in \eqref{def:corresponding} (resp. \eqref{equ:FZ-corresponding}) as follows.

\begin{definition}[2-restricted Laguerre history]
    A 2-restricted Laguerre history of length $n$ is a couple  $(w, p)$, 
    where $w=w_1\cdots w_n$ is a  2-Motzkin path of length $n$ starting at $(0,0)$ and ending at $(n,0)$, and 
$p=(p_1, \ldots, p_n)$ is an integer sequence satisfying $1\le i\le n$,
\[
0 \leq p_i \leq 
\begin{cases}
h_i,   & \text{if } w_i \in \{\mathrm{N,\blue{\mathrm{E_2}}}\};\\[4pt]
h_i-1, & \text{if } w_i \in\{ \mathrm{S,\red{E_1}}\},
\end{cases}
\]
 where $h_i$ is the height of the $i$th step of $w$.   Denote by $\widetilde{\mathfrak{L}}_n^*$  the set of 2-restricted Laguerre histories of length $n$. 
\end{definition}

We describe the  restricted version of  Françon-Viennot's bijection $\widetilde{\Psi}_{FV}:\S_{n}\to \widetilde{\L}_n^*$ as follows: For a permutation $\sigma\in\S_{n}$ with a boundary condition $\sigma_0=0$ and $\sigma_{n+1}=n+1$,  the 2-restricted Laguerre history  $\widetilde{\Psi}_{FV}(\sigma) := (w_1\ldots w_n,(p_1,\ldots, p_n)) \in \widetilde{\mathcal{L}}_{n}^*$ is defined by
\begin{equation}
    w_i =
\begin{cases}
\mathrm{N}\;(\textrm{resp.} \;\mathrm{S} ) & \text{if } i \textrm{ is a valley (resp. peak) of }\sigma; \\[2pt]
\mathrm{\red{E_1}} \; (\textrm{resp.}\; \blue{\mathrm{E_2}}) & \text{if } i \text{ is a double descent (resp. double ascent) of }\sigma, 
\end{cases}
\end{equation}
and $p_i = (31\!-\!2)_i(\sigma)$ for each $i \in [n]$.

\begin{lemma}\cite{Cor07}
Given a permutation $\sigma\in\S_n$ and the corresponding 2-restricted Laguerre history $\widetilde{\Psi}_{FV}(\sigma)=(w,p)\in\widetilde{\mathcal{L}}_n^*$, for any $x\in[n]$, we have 
    \begin{equation}\label{equ:h_X-restricted-1}
(31\!-\!2)_x(\sigma)+(2\!-\!31)_x(\sigma)=
\begin{cases}
h_x,   & \text{if } w_x \in \{\mathrm{N,\blue{\mathrm{E_2}}}\}; \\[4pt]
h_x-1, & \text{if } w_x \in\{ \mathrm{S,\red{E_1}}\}.
\end{cases}
\end{equation}
\end{lemma}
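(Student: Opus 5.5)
We work with the $x$-decomposition of $\sigma\in\S_n$, adapted to the boundary condition $\sigma_0=0$, $\sigma_{n+1}=n+1$, and compare it with the height $h_x$ exactly as in the proof of Lemma~\ref{lem:height}. Write
\[
\sigma = u_1 v_1 u_2 v_2\cdots u_k v_k u_{k+1},
\]
where the $u_i$ contain only letters $<x$ and the $v_i$ only letters $\ge x$. Because $\sigma_{n+1}=n+1\ge x$, it is natural to regard the final sentinel as a block $v_{k+1}$ attached on the right. Equivalently, we declare $u_{k+1}$ to be followed by a large letter, so that $u_{k+1}$ is now an interior $u$-block. Let $x$ lie in $v_j$.

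The first step is to read off the two patterns from this decomposition. A $(31\!-\!2)_x$ occurrence is a descent pair $(\text{last of }v_i,\text{first of }u_{i+1})$ with $i\le j-1$, so $(31\!-\!2)_x(\sigma)=j-1$, as before. A $(2\!-\!31)_x$ occurrence is a descent crossing the level $x$ strictly to the right of $x$, i.e.\ a pair $(v_i,u_{i+1})$ with $i\ge j$ and $u_{i+1}$ nonempty. This includes $i=j$ exactly when $x$ is not the last letter of $v_j$; otherwise the pair $(x,\text{first of }u_{j+1})$ has $x$ itself as the top and is excluded. Hence
\[
(2\!-\!31)_x(\sigma)=k-j+1-\chi(x \text{ is the last letter of } v_j),
\]
where the index $i=k$ contributes because $u_{k+1}$ is nonempty after the boundary convention; if $u_{k+1}$ is empty, the count simply drops the term $i=k$. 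The sum of the two patterns is therefore $k$ (or $k-1$ when $u_{k+1}$ is empty) minus the indicator that $x$ is the last letter of its $v$-block. By the boundary convention, $x$ is the last letter of $v_j$ iff $\sigma$ descends after $x$, i.e.\ iff $x$ is a peak or a double descent, i.e.\ $w_x\in\{\mathrm{S},\red{\mathrm{E_1}}\}$. This produces the case split in \eqref{equ:h_X-restricted-1}.

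The second step is to identify the height: $h_x$ equals the number of valleys minus peaks among letters $<x$. Each interior $u$-block (flanked on both sides by larger letters) has exactly one more valley than peak. The first block $u_1$ is preceded by $\sigma_0=0$, so it has valleys equal to peaks. The last block $u_{k+1}$, if nonempty, is followed by $n+1$ and is therefore effectively interior, contributing $+1$. Hence $h_x=k$ when $u_{k+1}\ne\varepsilon$ and $h_x=k-1$ otherwise. Combined with the first step, this gives $(31\!-\!2)_x+(2\!-\!31)_x=h_x-\chi(w_x\in\{\mathrm{S},\red{\mathrm{E_1}}\})$, which is \eqref{equ:h_X-restricted-1}.

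The main obstacle is the careful bookkeeping of the boundary blocks $u_1$ and $u_{k+1}$ under the convention $\sigma_0=0$, $\sigma_{n+1}=n+1$. One must check that these blocks contribute consistently to both the pattern counts and the valley/peak count, including the degenerate case where $x$ lies in the last $v$-block and $u_{k+1}$ is empty. I would verify this against small examples such as $\sigma=42157368$ with a few values of $x$. An alternative that avoids the block analysis is the inductive comparison of $x-1$ and $x$ used in the second half of the proof of Lemma~\ref{lem:height}, with $\nu\in\{0,\pm1\}$; I would keep that as a fallback if the direct counting becomes messy.
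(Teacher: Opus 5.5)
Your argument is correct, and it takes a slightly different route from the paper's. The paper also appends the sentinel, setting $\sigma'=\sigma\,(n+1)\in\S_{n+1}$, but it then \emph{reduces} to Lemma~\ref{lem:height}. It uses three facts:
\begin{itemize}
\item the letters $1,\dots,n$ have the same types, hence the same path $w$ and the same heights $h_x$, in $\sigma$ with boundary $(0,n+1)$ and in $\sigma'$ with boundary $(0,0)$;
\item $(31\!-\!2)_x(\sigma)=(31\!-\!2)_x(\sigma')$;
\item $(2\!-\!31)_x(\sigma)=(2\!-\!13)_x(\sigma')-\chi(x\text{ is a peak or double descent})$, which trades descents crossing level $x$ to the right of $x$ for ascents crossing level $x$ there.
\end{itemize}
It then quotes $(31\!-\!2)_x(\sigma')+(2\!-\!13)_x(\sigma')=h_x$.

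You never pass through $2\!-\!13$. You show directly that $(31\!-\!2)_x+(2\!-\!31)_x$ counts the descents crossing level $x$ other than one starting at $x$. You then show that $h_x$ equals the number of interior $u$-blocks, which is the number of all such crossing descents. In effect you redo the block count from the first half of the proof of Lemma~\ref{lem:height} under the new boundary condition. Your route is self-contained and makes the origin of the $-1$ transparent. The paper's route is shorter given Lemma~\ref{lem:height}, and it records that the 2-restricted history of $\sigma$ is just $\Psi_{FV}$ of $\sigma\,(n+1)$.

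The boundary bookkeeping you flagged does need tightening. Suppose ``last letter of $v_j$'' refers to the decomposition of $\sigma$ itself. Then for $x=\sigma_n$ (so $j=k$ and $u_{k+1}=\varepsilon$), your displayed formula combined with ``drop the term $i=k$'' gives $(2\!-\!31)_x=-1$; for example, take $\sigma=213$ and $x=3$. In the same case, the claim ``last letter of $v_j$ iff $\sigma$ descends after $x$'' also fails.

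Everything becomes consistent if you take the $x$-decomposition of the word $\sigma\,(n+1)$ itself. It always ends with a $v$-block, say $u_1v_1\cdots u_Kv_K$. Then
\begin{itemize}
\item $(31\!-\!2)_x=j-1$;
\item $(2\!-\!31)_x=K-j-\chi(x\text{ is followed by a smaller letter})$;
\item $h_x=K-1$, because $u_2,\dots,u_K$ are exactly the interior blocks and $u_1$ contributes $0$.
\end{itemize}
With this setup there is no case distinction on whether $u_{k+1}$ is empty.
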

\begin{proof}
    Let $\sigma\in\S_n$ and $\sigma'=\sigma\star\sigma_{n+1}\in\S_{n+1}$ with $\sigma_{n+1}=n+1$, where $\star$ denotes the concatenation operator of two words. 
Suppose that for $x\in[n]$,  the $x$-decomposition of $\sigma'$ is 
 \[\sigma' = u_1 v_1 \cdots u_k v_k, \quad k \geq 1,\]  
 where $x$ lies inside $v_j$, see \eqref{x-decomposition}.
    It is straightforward to verify the following:
    \begin{itemize}
        \item word $v_k$ has $n+1$ as its last element, possibly together with other entries preceding $n$.
        \item If $x$ is a peak or a double descent of $\sigma'$, then $x$ cannot lie in $v_k$.       
    \end{itemize}
When $x$ is a valley or a double ascent, the word $v_j$  must contain at least one element larger than $x$ on the right side. Consequently, each contribution to $(2\!-\!13)_x(\sigma')$  corresponds to a contribution to $(2\!-\!31)_x(\sigma)$.
    When $x$ is a peak or a double descent, and the word $v_j$ contains $x$ as the last element. In these two cases, one contribution to $(2\!-\!31)_x(\sigma)$ is lost. See the corresponding illustration below.
    
\begin{center}
    \begin{tikzpicture}[scale=0.65]
\draw[very thick] (-3.4,1) -- (15,1);
\draw[very thick] (-3.4,0)   -- (15,0);
\draw[red, very thick](-3.1,0) .. controls (-2.6,-1) .. (-2.1,0);
\draw[blue, very thick](-1.9,1) .. controls (-1.4,2) .. (-0.9,1);
\draw[red, very thick](-0.7,0) .. controls (-0.2,-1) .. (0.3,0);
\draw[blue, very thick](0.5,1) .. controls (1.0,2) .. (1.5,1);
\draw[red, very thick](1.7,0) .. controls (2.2,-1) .. (2.7,0);

\draw[blue, very thick](3.6,1) .. controls (4.1,2) .. (4.6,1);
\draw[red,  very thick] (4.8,0) .. controls (5.3,-1) .. (5.8,0);
\draw[blue, very thick] (6.0,1) .. controls (6.5, 2) .. (7.0,1);
\draw[red,  very thick] (7.2,0) .. controls (7.7,-1) .. (8.2,0);
\draw[blue, very thick] (8.4,1) .. controls (8.9, 2) .. (9.4,1);

\draw[red,  very thick] (10.1,0) .. controls (10.6,-1) .. (11.1,0);
\draw[blue, very thick] (11.3,1) .. controls (11.8,2) .. (12.3,1);
\draw[red,  very thick] (12.5,0) .. controls (13.0,-1) .. (13.5,0);
\draw[blue, very thick] (13.7,1) .. controls (14.2,2) .. (14.7,1);

\node at (-2.6,-1.1) {$u_1$};
\node at (-0.2,-1.1) {$u_2$};
\node at (2.2,-1.1) {$u_3$};
\node at (5.3,-1.1)   {$u_j$};
\node at (7.7,-1.1)   {$u_{j+1}$};
\node at (10.6,-1.1)  {$u_{k-1}$};
\node at (13.0,-1.1)  {$u_{k}$};

\node at (6.5,0.5)   {$x$};
\node at (3.1,0.5) {$\cdots$};
\node at (10,0.5) {$\cdots$};
\node at (-1.4,2.1) {$v_1$};
\node at (1,2.1) {$v_2$};
\node at (4.1,2.1) {$v_{j-1}$};
\node at (6.5,2.1)   {$v_j$};
\node at (8.9,2.1)   {$v_{j+1}$};
\node at (11.8,2.1)  {$v_{k-1}$};
\node at (14.2,2.1)  {$v_k$};

\draw[thick, decorate, decoration={snake, amplitude=1mm, segment length=5mm}]
      (-0.9,1) -- (-0.7,0);
\draw[thick, decorate, decoration={snake, amplitude=1mm, segment length=5mm}]
      (1.5,1) -- (1.7,0);
\draw[thick, decorate, decoration={snake, amplitude=1mm, segment length=5mm}]
      (4.6,1) -- (4.8,0);


\draw[dashed, thick] (8.2,0) -- (8.4,1);
\draw[dashed, thick] (11.1,0) -- (11.3,1);
\draw[dashed, thick] (13.5,0) -- (13.7,1);

\draw[dashed, thick] (7.0,1) -- (7.2,0);
\draw[dashed, thick] (12.3,1) -- (12.5,0);
\node at (7.7,0.5) {$\leftrightarrow$
};
\node at (13,0.5) {$\leftrightarrow$};
\end{tikzpicture}
\end{center}
Thus, for any $x\in[n]$, we have
\begin{subequations}\label{equ:1-2}
    \begin{equation}
        (31\!-\!2)_x(\sigma)=(31\!-\!2)_x(\sigma'), 
    \end{equation}
and 
    \begin{equation}
    (2\!-\!31)_x(\sigma)=\begin{cases}
        (2\!-\!13)_x(\sigma')\; &\text{if } x \text{ is a valley or a double ascent;}\\[5pt]
        (2\!-\!13)_x(\sigma')-1\; &\text{if } x \text{ is a peak or a double descent.} 
        \end{cases}
    \end{equation}
\end{subequations}
Combining Eqs.~\eqref{equ:1-2} and Lemma~\ref{lem:height}, we derive~\eqref{equ:h_X-restricted-1}. 
\end{proof}


We describe  a simplified version of Foata–Zeilberger’s bijection $\widetilde{\Psi}_{FZ}$ as follows: For a permutation $\sigma\in\S_{n}$,
  the 2-restricted Laguerre history $\widetilde{\Psi}_{FZ}(\sigma):=(w_1\dots w_n,(p_1,\dots,p_n))\in\widetilde{\mathcal{L}}_n^*$ is defined by
\begin{equation}\label{equ:corresponding-FZ*}
   w_i=
\begin{cases} 
\mathrm{N} & \text{if $i$ is a cycle valley of $\sigma$;}\\
\mathrm{S} & \text{if $i$ is a cycle peak of $\sigma$;}\\
\textcolor{red}{\mathrm{E_1}} & \text{if $i$ is a cycle double descent of $\sigma$;}\\
\textcolor{blue}{\mathrm{E_2}} & \text{if $i$ is a cycle double ascent or a fixed point of $\sigma$,}
\end{cases} 
\end{equation}
and with $p_i :=\nest_i(\sigma)$ for each $i\in[n]$.

The following property follows directly from Lemma~\ref{lem:height-FZ} and \eqref{equ:corresponding-FZ*}.
\begin{lemma}[\cite{Cor07}]
Given a permutation $\sigma\in\S_n$, and the corresponding 2-restricted Laguerre history $\widetilde{\Psi}_{FZ}(\sigma)=(w,p)\in\widetilde{\L}_n^*$ for $\sigma\in\S_n$. For any $k\in[n]$,
    if $h_k$ is the height of the $k$th step of $w$, then
\begin{equation}\label{equ:h_X-restricted-2}
\cros_k(\sigma)+\nest_k(\sigma)=\begin{cases}
        h_k, & \text{if } w_x \in \{\mathrm{N,\blue{\mathrm{E_2}}}\};\\[4pt]
        h_k-1,&\text{if } w_x \in\{ \mathrm{S,\red{E_1}}\}.
    \end{cases}
\end{equation}
In particular, if $k\in[n]$ is a fixed point of $\sigma$, we have
\begin{equation}\label{fix:height}
    \nest_k(\sigma)=h_k.
\end{equation}
\end{lemma}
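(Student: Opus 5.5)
The plan is to deduce the statement directly from Lemma~\ref{lem:height-FZ}, by comparing $\widetilde{\Psi}_{FZ}$ with $\Psi_{FZ}$. First I will check that the underlying Motzkin path is unchanged. For $\sigma\in\S_n$, the 2-Motzkin path $w$ of $\widetilde{\Psi}_{FZ}(\sigma)$ is obtained from the 3-Motzkin path of $\Psi_{FZ}(\sigma)$ simply by recoloring each green step $\green{\mathrm{E_3}}$ (fixed point) as a blue step $\blue{\mathrm{E_2}}$. This follows by comparing \eqref{equ:FZ-corresponding} with \eqref{equ:corresponding-FZ*}. Recoloring horizontal steps does not change any height, so the heights $h_k$ in the two histories coincide for every $k\in[n]$.

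Next I will translate the case distinction of \eqref{def:height-FZ} into step types. By the classification \eqref{equ:clas}, $k\le\sigma(k)$ holds exactly when $k$ is a cycle valley, a cycle double ascent, or a fixed point. Under $\widetilde{\Psi}_{FZ}$ these are precisely the indices with $w_k\in\{\mathrm{N},\blue{\mathrm{E_2}}\}$. Likewise, $k>\sigma(k)$ holds exactly when $k$ is a cycle peak or a cycle double descent, that is, when $w_k\in\{\mathrm{S},\red{\mathrm{E_1}}\}$. Substituting these equivalences into \eqref{def:height-FZ} gives \eqref{equ:h_X-restricted-2} immediately.

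For the fixed-point claim \eqref{fix:height}, I will show directly from Definition~\ref{def:crossing} that $\cros_k(\sigma)=0$ whenever $\sigma_k=k$.
\begin{itemize}
\item An upper crossing would require $l<k\le\sigma_l<\sigma_k=k$, which is impossible.
\item A lower crossing would require $\sigma_k<\sigma_l<k$, i.e.\ $k<\sigma_l<k$, which is also impossible.
\end{itemize}
Since a fixed point has $w_k=\blue{\mathrm{E_2}}$, \eqref{equ:h_X-restricted-2} then yields $\nest_k(\sigma)=h_k$.

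As a consistency check, I will also note that $p_k=\nest_k(\sigma)$ satisfies the bounds required of a 2-restricted Laguerre history. Since $\cros_k\ge0$, \eqref{equ:h_X-restricted-2} gives $0\le p_k\le h_k$ or $0\le p_k\le h_k-1$ according to the step type. So $\widetilde{\Psi}_{FZ}(\sigma)$ does lie in $\widetilde{\mathcal{L}}_n^*$.

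I expect no real obstacle. The only point requiring care is confirming that the recoloring $\green{\mathrm{E_3}}\to\blue{\mathrm{E_2}}$ leaves the heights unchanged and places fixed points in the ``$h_k$'' case, which is consistent because fixed points satisfy $k\le\sigma(k)$.
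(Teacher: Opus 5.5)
Your proposal is correct and follows essentially the paper's route. The paper also obtains this lemma directly from Lemma~\ref{lem:height-FZ} and the definition \eqref{equ:corresponding-FZ*}: recoloring fixed points from $\green{\mathrm{E_3}}$ to $\blue{\mathrm{E_2}}$ leaves all heights unchanged and turns the split $k\le\sigma(k)$ versus $k>\sigma(k)$ into the step-type split. Your direct check that $\cros_k(\sigma)=0$ at a fixed point is the same observation as case~(1) in the paper's proof of Lemma~\ref{lem:height-FZ}.
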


For a 2-restricted Laguerre history $(w,(p_1,\dots,p_n))\in\widetilde{\L}^*_n$, we define the complement operator $\rho$ by
\begin{equation}\label{def:rho}
    \rho\bigl((w,(p_1,\dots,p_n))\bigr)=(w,(p'_1,\dots,p'_n)),
\end{equation}
where the pairs $p_i$ and $p'_i$ are related by
\begin{equation}\label{def:height-FZ-2}
    p_i+p'_i=\begin{cases}
        h_k, & \text{if } w_x \in \{\mathrm{N,\blue{\mathrm{E_2}}}\};\\[4pt]
        h_k-1,&\text{if } w_x \in\{ \mathrm{S,\red{E_1}}\}.
    \end{cases}
\end{equation}

\subsection{Bijection \texorpdfstring{$\Phi_C$}{Phi\_C}}\label{section: corteel's bijection}
We recall Corteel's bijection $\Phi_C:=\widetilde{\Psi}_{FZ}^{-1}\circ\widetilde{\Psi}_{FV}$ in \cite[Sec.~5]{Cor07}, which is constructed  in such a way that
if $\tau = \Phi_{C}(\sigma)$, then
\begin{equation}
    (31\!-\!2)_k(\sigma) = \nest_k(\tau) \quad \forall k=1,\ldots,n.
\end{equation} 
For $k\in[n]$, the pattern $(2\!-\!31)_k(\sigma)$ (resp. $(31\!-\!2)_k(\sigma)$) indexed by $k$ is called the {\em right embracing number} (resp. {\em  left embracing number}) of $k$ in $\sigma$.
Given a permutation $\sigma$, one constructs two biwords,
$\left(\genfrac{}{}{0pt}{}{f}{f'}\right)$ and $\left(\genfrac{}{}{0pt}{}{g}{g'}\right)$, 
and then forms the biword
\[
\tau =
\left(
  \begin{array}{cc}
    f & g \\
    f' & g'
  \end{array}
\right)
\]
by concatenating $f$ and $g$, and $f'$ and $g'$, respectively. These four words are defined as follows:
\begin{itemize}
    \item The word $f$ (resp. $g$) is defined as the subword of nondescent (resp. descent) tops in $\sigma$,
ordered increasingly; 
    \item The word $f'$ (resp. $g'$) is the permutation of nondescent (resp. descent) bottoms in $\sigma$ obtained from the left-to-right (resp. right-to-left) inversion table (see Definition \ref{inversion-table}) whose  entries are the left embracing numbers of $\sigma$.
\end{itemize}
See the following illustration:
\begin{align}\label{construction2}
    &\left(\genfrac{}{}{0pt}{}{\text{Nondescent tops (with labels)}}{\text{Nondescent bottoms}}\right) \xRightarrow[\text{with entries }(31-2)_i]{\text{left-to-right inversion table}} \left(\genfrac{}{}{0pt}{}{f}{f'}\right);\\[6pt]
    &\left(\genfrac{}{}{0pt}{}{\text{Descent tops (with labels)}}{\text{Descent bottoms}}\right) \xRightarrow[\text{with entries }(31-2)_i]{\text{right-to-left inversion table}}\left(\genfrac{}{}{0pt}{}{g}{g'}\right).
\end{align}

 By rearranging the columns of $\tau$ so that the top row is in increasing order, we obtain the permutation $\Phi_C(\sigma)$ as the bottom row of the  rearranged biword.

 \begin{example}
    Let $\sigma=4\,1\,2\,7\,9\,6\,5\,8\,3\in\S_9$ with the left embracing numbers 
    $$(\lbr_1,\dots,\lbr_8)\,\sigma=(0,1,1,0,0,0,0,1,0).$$
Then
\[
\left(\genfrac{}{}{0pt}{}{f}{f'}\right)
= \left(
\genfrac{}{}{0pt}{}{1_0}{7}\,
\genfrac{}{}{0pt}{}{2_1}{2}\,
\genfrac{}{}{0pt}{}{3_1}{4}\,
\genfrac{}{}{0pt}{}{5_0}{8}\,
\genfrac{}{}{0pt}{}{7_0}{9}
\right),\quad
\left(\genfrac{}{}{0pt}{}{g}{g'}\right)
= \left(
\genfrac{}{}{0pt}{}{4_0}{1}\,
\genfrac{}{}{0pt}{}{6_0}{3}\,
\genfrac{}{}{0pt}{}{8_1}{6}\,
\genfrac{}{}{0pt}{}{9_0}{5}
\right),
\]

\[
\tau'
= \left( 
\genfrac{}{}{0pt}{}{f}{f'} \,
\genfrac{}{}{0pt}{}{g}{g'} 
\right)
= \left(
\genfrac{}{}{0pt}{}{1}{7}\,
\genfrac{}{}{0pt}{}{2}{2}\,
\genfrac{}{}{0pt}{}{3}{4}\,
\genfrac{}{}{0pt}{}{5}{8}\,
\genfrac{}{}{0pt}{}{7}{9}\,
\genfrac{}{}{0pt}{}{4}{1}\,
\genfrac{}{}{0pt}{}{6}{3}\,
\genfrac{}{}{0pt}{}{8}{6}\,
\genfrac{}{}{0pt}{}{9}{5}
\right)
\to
\left(
\genfrac{}{}{0pt}{}{1}{7}\,
\genfrac{}{}{0pt}{}{2}{2}\,
\genfrac{}{}{0pt}{}{3}{4}\,
\genfrac{}{}{0pt}{}{4}{1}\,
\genfrac{}{}{0pt}{}{5}{8}\,
\genfrac{}{}{0pt}{}{6}{3}\,
\genfrac{}{}{0pt}{}{7}{9}\,
\genfrac{}{}{0pt}{}{8}{6}\,
\genfrac{}{}{0pt}{}{9}{5}
\right).
\]

and thus $\Phi_C(\sigma)=\tau = 7\,2\,4\,1\,8\,3\,9\,6\,5$.  
We verify that 
 \begin{itemize}
     \item $\ndes(\sigma)=|\{1,2,3,5,7\}|=5$, $\mathrm{pmin }(\sigma)=|\{2\}|=1$, $(31\!-\!2)(\sigma)=|\{41\!-\!2,41\!-\!3,96\!-\!8\}|=3$, $(2\!-\!31)(\sigma)=|\{7\!-\!96,\;4\!-\!83,\;7\!-\!83,\;6\!-\!83,\;5\!-\!83\}|=5$, $\mathrm{MADL}(\sigma)=15$, and 
    $\NDdif(\sigma)=\NDbot(\sigma)-\NDtop(\sigma)=30-18=12$;
     \item $\wex(\tau)=|\{1,2,3,5,7\}|=5$, $\fix(\tau)=|\{2\}|=1$, $\cros(\tau)=5$, $\nest(\tau)=3$, $\inv(\tau)=15$ and $\depth(\tau)=(7-1)+(4-3)+(8-5)+(9-7)=12$.
 \end{itemize}
\end{example}

\subsection{Bijection \texorpdfstring{$\Phi_{SZ}$}{Phi\_{SZ}}}
We describe  a variant of the  mapping $\Phi_{SZ}:=\widetilde{\Psi}_{FZ}^{-1}\circ\rho\circ\widetilde{\Psi}_{FV}$ 
in \cite{SZ10} as in the following.

%
Define  $\widetilde{\tau} = \Phi_{SZ}(\sigma)$ for $\sigma\in \S_n$ such that
\begin{equation}
    (2\!-\!31)_k(\sigma) = \nest_k(\widetilde{\tau}) \quad \forall k=1,\ldots,n.
\end{equation}
Given a permutation $\sigma\in \S_n$, we first construct two biwords,
$\left(\genfrac{}{}{0pt}{}{f}{\widetilde{f}'}\right)$ and $\left(\genfrac{}{}{0pt}{}{g}{\widetilde{g}'}\right)$, 
and then form the biword
\[
\widetilde{\tau} =
\left(
  \begin{array}{cc}
    f & g \\
    \widetilde{f}' & \widetilde{g}'
  \end{array}
\right)
\]
by concatenating $f$ and $g$, and $\widetilde{f}'$ and $\widetilde{g}'$, respectively. These four words are defined as follows:
\begin{itemize}
    \item The word $f$ (resp. $g$) is defined as the subword of nondescent (resp. descent) tops in $\sigma$,
ordered increasingly;
\item  The word $\widetilde{f}'$ (resp. $\widetilde{g}'$) is the permutation of nondescent (resp. descent) bottoms in $\sigma$ obtained from the left-to-right (resp. right-to-left) inversion table (see Definition \ref{inversion-table}) whose entries are the right embracing numbers of $\sigma$.

\end{itemize}
See the following illustration:
\begin{align}\label{construction1}
    &\left(\genfrac{}{}{0pt}{}{\text{Nondescent tops (with labels)}}{\text{Nondescent bottoms}}\right) \xRightarrow[\text{with entries }(2-31)_i]{\text{left-to-right inversion table}} \left(\genfrac{}{}{0pt}{}{f}{\widetilde{f}'}\right);\\[6pt]
    &\left(\genfrac{}{}{0pt}{}{\text{Descent tops (with labels)}}{\text{Descent bottoms}}\right) \xRightarrow[\text{with entries }(2-31)_i]{\text{right-to-left inversion table}}\left(\genfrac{}{}{0pt}{}{g}{\widetilde{g}'}\right).
\end{align}
By rearranging the columns of $\widetilde{\tau}$ so that the top row is in increasing
order, we obtain the permutation $\Phi_{SZ}(\sigma)$ as the bottom row of the
rearranged biword.

\begin{example}
Let $\sigma=4\,1\,2\,7\,9\,6\,5\,8\,3\in\S_9$ with the right embracing numbers 
$$(\rbr_1,\dots,\rbr_9)\,\sigma=(0,0,0,1,1,1,2,0,0).$$
Then
\[
\left(\genfrac{}{}{0pt}{}{f}{\widetilde{f}'}\right)
= \left(
\genfrac{}{}{0pt}{}{1_0}{2}\,
\genfrac{}{}{0pt}{}{2_0}{4}\,
\genfrac{}{}{0pt}{}{3_0}{9}\,
\genfrac{}{}{0pt}{}{5_1}{8}\,
\genfrac{}{}{0pt}{}{7_2}{7}
\right)\quad
\left(\genfrac{}{}{0pt}{}{g}{\widetilde{g}'}\right)
= \left(
\genfrac{}{}{0pt}{}{4_1}{3}\,
\genfrac{}{}{0pt}{}{6_1}{5}\,
\genfrac{}{}{0pt}{}{8_0}{1}\,
\genfrac{}{}{0pt}{}{9_0}{6}
\right),
\]

\[
\widetilde{\tau}'
= \left( 
\genfrac{}{}{0pt}{}{f}{\widetilde{f}'} \,
\genfrac{}{}{0pt}{}{g}{\widetilde{g}'} 
\right)
= \left(
\genfrac{}{}{0pt}{}{1}{2}\,
\genfrac{}{}{0pt}{}{2}{4}\,
\genfrac{}{}{0pt}{}{3}{9}\,
\genfrac{}{}{0pt}{}{5}{8}\,
\genfrac{}{}{0pt}{}{7}{7}\,
\genfrac{}{}{0pt}{}{4}{3}\,
\genfrac{}{}{0pt}{}{6}{5}\,
\genfrac{}{}{0pt}{}{8}{1}\,
\genfrac{}{}{0pt}{}{9}{6}
\right)
\to
\left(
\genfrac{}{}{0pt}{}{1}{2}\,
\genfrac{}{}{0pt}{}{2}{4}\,
\genfrac{}{}{0pt}{}{3}{9}\,
\genfrac{}{}{0pt}{}{4}{3}\,
\genfrac{}{}{0pt}{}{5}{8}\,
\genfrac{}{}{0pt}{}{6}{5}\,
\genfrac{}{}{0pt}{}{7}{7}\,
\genfrac{}{}{0pt}{}{8}{1}\,
\genfrac{}{}{0pt}{}{9}{9}
\right).
\]

and thus $\Phi_{SZ}(\sigma)=\widetilde{\tau} = 2\,4\,9\,3\,8\,5\,7\,1\,6$. We verify that \begin{itemize}
    \item $\ndes(\sigma)=|\{1,2,3,5,7\}|=5$, $\fmax(\sigma)=|\{7\}|=1$, $(31\!-\!2)(\sigma)=|\{41\!-\!2,41\!-\!3,96\!-\!8\}|=3$, $(2\!-\!31)(\sigma)=|\{7\!-\!96,\;4\!-\!83,\;7\!-\!83,\;6\!-\!83,\;5\!-\!83\}|=5$, $\MAD(\sigma)=17$, and 
    $\NDdif(\sigma)=\NDbot(\sigma)-\NDtop(\sigma)=30-18=12$;
    \item $\wex(\widetilde{\tau})=|\{1,2,3,5,7\}|=5$, $\fix(\widetilde{\tau})=|\{7\}|=1$, $\cros(\widetilde{\tau})=3$, $\nest(\widetilde{\tau})=5$, $\inv(\widetilde{\tau})=17$ and $\depth(\widetilde{\tau})=(2-1)+(4-2)+(9-3)+(8-5)=12$.
\end{itemize}
\end{example}



\subsection{Proof of Theorem~\ref{thm3}} We prove only \eqref{equ:dis-Corteel}; the proof of \eqref{equ:dis} is analogous and omitted.

Given $\sigma\in \S_n$, let $\tau=\Phi_{C}(\sigma)$. 
It is easily checked that every column $\binom{i}{j}$ in $\binom{f}{f'}$
satisfies $i\le j$ and every column $\binom{i}{j}$ in $\binom{g}{g'}$ satisfies $i> j$. It follows that each letter $\sigma_i$ in $f$ corresponds to a nondescent $\sigma_i$ in $\pi$ and also a weak excedance $\sigma_i$ in $\tau$, that is, for all $1\le i \le n$,
\begin{equation}\label{eq:ndes-wex}
\sigma_i<\sigma_{i+1} \quad (1\leq i\leq n-1)\quad\text{or}\quad i=n\iff \sigma_i \le \tau(\sigma_i).
\end{equation}
Hence, we have
$\ndes(\pi) = \wex(\tau)$.
By definition, $(31\!-\!2)(\sigma)$ is the sum of the left embracing numbers in $\pi$ and the sum of the
inversions in the  words $f'$ and $g'$.
If a pair $(i,j)$ is an inversion in $f'$ (resp. $g'$), then we have $j<i\le \tau(i)<\tau(j)$ (resp. $\tau(j)<\tau(i)<j<i$) that is  a nesting in $\tau$. Thus, we have 
\begin{equation}\label{equ:31-2-nest}
    (31\!-\!2)_k(\sigma) = \nest_k(\tau), \quad \forall k\in [n].
\end{equation}

Combining \eqref{equ:h_X-restricted-1} and \eqref{equ:h_X-restricted-2}, we derive
\begin{equation}
    (2\!-\!31)_k(\sigma) = \cros_k(\tau), \quad  \forall k\in [n].
\end{equation}

By the definition of $\mathrm{pmin}$ (see Definition \ref{Def:fmax-pmin}), if $k$ is a postminimum in $\sigma$, then $k$ is a nondescent bottom. Moreover, since $k$ is also a right-to-left minimum, it follows that $k$ is a nondescent top. Consequently, $k$ belongs to both subwords $f$ and $f'$. One can check that $k$ is a postminimum of $\sigma$, if and only if $p_k=h_k$ and the $k$th step is $\blue{\mathrm{E}_2}$ in the 2-restricted Laguerre history $\widetilde{\Psi}_{FV}(\sigma)$. Then, combining \eqref{equ:31-2-nest} and  \eqref{fix:height}, we obtain $\tau(k)=k$.

As shown in \cite{SZ10}, the inversion number can be characterized  in terms of the crossing and nesting numbers by
\begin{equation}\label{def:inv-cros-nest}
    \inv(\sigma)=\drop(\sigma)+\cros(\sigma)+2\,\nest(\sigma).
\end{equation}

Combining the above discussion with Eqs.~\eqref{def:Madl} and ~\eqref{def:inv-cros-nest}, we see that
\begin{align}
    \mathrm{MADL}(\sigma)&=\des(\sigma)+(2\!-\!31)(\sigma)+2(31\!-\!2)(\sigma)\nonumber\\
    &=\drop(\tau)+\cros(\tau)+2\nest(\tau)=\inv(\tau).
\end{align}
If $i\in [n]$ is a weak-excedance of $\tau$, viz,  $\tau(i)\ge i$, we say that 
$\tau(i)$ is a weak-excedance top and $i$  a weak-excedance bottom. By~\eqref{eq:ndes-wex}, 
every nondescent top (resp.\ nondescent bottom) of $\sigma$ corresponds naturally to  a weak-excedance bottom (resp.\ weak-excedance top) of $\tau$.
Therefore, by Definitions~\eqref{def:depth} and~\eqref{def:nondif}, we have $\NDdif(\sigma)=\depth(\tau)$.  

\qed

\section{Applications  to gamma-positivity}\label{sec:gamma}
\subsection{Proof of Theorem~\ref{thm4} and Theorem~\ref{thm5}}
Consider the following specialization of  \eqref{(p,q)-Eulerian-poly}
\begin{equation}\label{equ:specailization}    
\mathbf{u}=(1,xu_2,u_3,xu_4)\quad\textrm{and}\quad \beta=\alpha,
\end{equation} 
we obtain
\begin{multline}\label{specialization-A}  A_n(\mathbf{u},f,g,h,t\,|\,\alpha,\alpha\,;\,p,q)
    =\sum_{\sigma \in \mathfrak{S}_{n+1}}x^{\des(\sigma)}u_2^{{\val}(\sigma)}u_3^{\da(\sigma)}u_4^{\dd(\sigma)}f^{\mathrm{lmaxpk(\sigma)-1}}g^{\bmax(\s)-1}h^{\dis(\sigma)}\\\times
 t^{\lrmaxda(\s)+\rlmaxdd(\s)}\alpha^{{\lmax}(\sigma)+{\rmax}(\sigma)-2}p^{\ldes(\sigma)}q^{\rasc(\sigma)}.
\end{multline}
For $0\le k\le\lfloor n/2\rfloor$, let $\gamma_{n,k}(x,u_3,u_4,f,g,h\,|\,t,\alpha\,|\,p,q)$ be the coefficient of ${u^k_2}$ in the expansion 
\begin{equation}\label{equ:A-coe-u_2}  A_n(\mathbf{u},f,g,h,t\,|\,\alpha,\alpha\,;\,p,q)=\sum_{k=0}^{\lfloor n/2\rfloor}\gamma_{n,k}(x,u_3,u_4,f,g,h\,|\,t,\alpha\,;\,p,q)u_2^{k}, 
\end{equation}
where $\mathbf{u}=(1,xu_2,u_3,xu_4)$.

Define the two special cases of \eqref{specialization-A}:
    \begin{align} 
A_n(x,u_2,u_3,u_4,f,g,h\,|\,p,q):&=A_n(\mathbf{u},f,g,h,1\,|\,1,1\,;\,p,q);\\
B_n(x,u_2,u_3,u_4,f,g,h\,|\,t,\alpha):&=A_n(\mathbf{u},f,g,h,t\,|\,\alpha,\alpha\,;\,1,1).
\end{align}

\begin{lemma}\label{lem:gamma-expansion}
The following expansion formulas hold:
 \begin{equation}\label{A-gamma-expansion2}  A_n(x,u_2,u_3,u_4,f,g,h\,|\,p,q)=\sum_{k=0}^{\lfloor n/2\rfloor}\gamma_{n,k}^a(f,g,h\,|\,p,q)(xu_2)^{k}(u_3+xu_4)^{n-2k}, 
\end{equation}
and
 \begin{equation}\label{A-gamma-expansion1}  B_n(x,u_2,u_3,u_4,f,g,h\,|\,t,\alpha)=\sum_{k=0}^{\lfloor n/2\rfloor}\gamma_{n,k}^b(f,g,h\,|\,t,\alpha)(xu_2)^{k}(u_3+xu_4)^{n-2k},
\end{equation}
where
\begin{equation}\label{gamma-a}
\gamma_{n,k}^a(f,g,h\,|\,p,q)=\sum_{\sigma\in\S_{n+1,\des=k}^{\mathrm{dd=0}}}f^{\mathrm{lmaxpk(\sigma)-1}}g^{\bmax(\s)-1}h^{\dis(\sigma)}p^{\ldes(\sigma)}q^{\rasc(\sigma)},
\end{equation}
and
\begin{equation}\label{gamma-b}   \gamma_{n,k}^b(f,g,h\,|\,t,\alpha)=\sum_{\sigma\in\S_{n+1,\des=k}^{\mathrm{dd=0}}}f^{\mathrm{lmaxpk(\sigma)-1}}g^{\bmax(\s)-1}h^{\dis(\sigma)}
 t^{\lrmaxda(\s)}\alpha^{{\lmax}(\sigma)+{\rmax}(\sigma)-2}
\end{equation} 
with $\S_{n,\des=k}^{\mathrm{dd=0}}=\{\sigma\in\S_n: \dd(\sigma)=0\text{ and } \des(\sigma)=k\}$.
\end{lemma}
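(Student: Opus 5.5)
We prove the lemma by a valley-hopping (modified Foata--Strehl) style argument realized through the Laguerre histories of $\Psi_{FV}$, using the continued fraction of Theorem~\ref{thm1} as a guide. With $\mathbf{u}=(1,xu_2,u_3,xu_4)$, the weight of a permutation is $x^{\des}u_2^{\val}u_3^{\da}u_4^{\dd}$ times the remaining statistics, because $\des=\val+\dd$. So it suffices to show the following. Fix all data except the colours of the east steps. Then the sum over the two colours $\red{\mathrm{E_1}}$ (double descent, weight $xu_4$) and $\blue{\mathrm{E_2}}$ (double ascent, weight $u_3$) of each east step produces exactly $u_3+xu_4$. Once this holds, the sum collapses to the paths with only $\blue{\mathrm{E_2}}$ east steps, that is, to permutations with $\dd=0$. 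For $\sigma\in\S_{n+1}$ with $k$ valleys there are $n-2k$ east steps, which yields the factor $(xu_2)^k(u_3+xu_4)^{n-2k}$.

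The key step is to compare, for each east step at height $h_i$ with label $p_i$, the local weights read off from the product formula in the proof of Theorem~\ref{thm1}.

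\textbf{Case (a): $p=q=1$, $t=\alpha=\beta=1$.} An $\red{\mathrm{E_1}}$ step carries $u_4 h^{h_i}p^{p_i}q^{h_i-p_i}$ and an $\blue{\mathrm{E_2}}$ step carries $u_3h^{h_i}p^{p_i}q^{h_i-p_i}$. The only difference is $u_4$ versus $u_3$, with the same label range $0\le p_i\le h_i$. Hence toggling the colour of a single east step while keeping $(w,p)$ otherwise fixed is a weight-preserving involution up to the factor $u_3\leftrightarrow xu_4$. The $h$, $f$, $g$ and pattern weights depend only on the heights, on the labels, and on N/S steps, and these are unchanged.

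\textbf{Case (b): $p=q=1$, general $t,\alpha=\beta$.} Here the local weights are $u_4h^{h_i}(\alpha t)^{\chi(p_i=h_i)}$ and $u_3h^{h_i}(\alpha t)^{\chi(p_i=0)}$. Since $p=q=1$, the labels enter the weight only through these indicators. We therefore pair label $p_i$ for $\red{\mathrm{E_1}}$ with label $h_i-p_i$ for $\blue{\mathrm{E_2}}$, which exchanges $\chi(p_i=h_i)$ and $\chi(p_i=0)$. The $\alpha\beta$ symmetry $\beta=\alpha$ is exactly what makes this pairing weight-preserving. After collapsing, every east step is $\blue{\mathrm{E_2}}$, so only $t^{\lrmaxda}$ survives; the remaining factors (from $\alpha$ through Lemma~\ref{lem:lmax-rmax}) recombine into $\alpha^{\lmax+\rmax-2}$ on permutations with $\dd=0$.

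The toggles act independently on each east step. Summing over the $2^{n-2k}$ colourings therefore gives the factorization, and reading the surviving histories back through $\Psi_{FV}^{-1}$ identifies the coefficient with the sums $\gamma^a_{n,k}$ and $\gamma^b_{n,k}$ over $\S_{n+1,\des=k}^{\mathrm{dd}=0}$. On the $\dd=0$ permutations one checks that $x^{\des}$ becomes $x^{k}$ and is absorbed into $(xu_2)^k$.

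The main obstacle is verifying that the factor $h^{\dis}$, the weights $f^{\lmaxpk-1}$ and $g^{\bmax-1}$, and in case (a) the pattern weights $p^{\ldes}q^{\rasc}$, really are unchanged under the toggle. For $h$, this follows from Lemma~\ref{lem:dis}, since the area depends only on the underlying uncoloured Motzkin path. For $f$ and $g$, the peaks correspond to S steps, which are untouched. For the pattern weights, $p_i=(31\!-\!2)_i$ and $h_i-p_i=(2\!-\!13)_i$ by Lemma~\ref{lem:height}. In case (b) one must also check that the relabelling $p_i\mapsto h_i-p_i$ is legitimate within $\L_n$. It is, since both colours allow the full range $0\le p_i\le h_i$. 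This is why the lemma is split into the two specializations, (a) with $t=\alpha=1$ and (b) with $p=q=1$, rather than stated in full generality.
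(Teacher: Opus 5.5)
Your proof is correct, but it takes a different route from the paper's. The paper argues entirely with generating functions:
\begin{itemize}
\item it specializes the $J$-fraction of Theorem~\ref{thm1} to $\mathbf{u}=(1,xu_2,u_3,xu_4)$, $\beta=\alpha$;
\item it rescales $u_2\to u_2(u_3+xu_4)^2/x$ and $z\to z/(u_3+xu_4)$;
\item it notes that in each of the two specializations $b_k$ equals $u_3+xu_4$ times $h^k[k+1]_{p,q}$, resp.\ $h^k(k+\alpha t)$, so the normalized coefficients $\gamma_{n,k}/\bigl(x^k(u_3+xu_4)^{n-2k}\bigr)$ do not depend on $x,u_3,u_4$;
\item it evaluates at $x=u_3=1$, $u_4=0$ to recognize sums over $\mathfrak{S}_{n+1}$ with $\mathrm{dd}=0$ and $\des=k$.
\end{itemize}

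You work one level lower. You use only the per-history product formula from the proof of Theorem~\ref{thm1}, and you realize the factor $u_3+xu_4$ by an explicit swap $\mathrm{E_1}\leftrightarrow\mathrm{E_2}$ on each east step. The label is kept in case (a) and replaced by $h_i-p_i$ in case (b); this is legitimate because both colours allow $0\le p_i\le h_i$ in $\mathcal{L}_n$. The result is a free $\mathbb{Z}_2^{\,n-2k}$-action on histories with $k$ valleys, hence on $\mathfrak{S}_{n+1}$ via $\Psi_{FV}$. Each orbit contains exactly one permutation without double descents, and each swap changes the weight only by $u_3\leftrightarrow xu_4$.

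The paper's route is shorter once Theorem~\ref{thm1} is available. Yours needs neither Flajolet's lemma nor the rescaling trick, gives a bijective proof of the $\gamma$-expansion, and shows directly where the two restrictions come from. The weighted label sums of an $\mathrm{E_1}$ and an $\mathrm{E_2}$ step at height $h$ are $[h+1;\alpha t,1]_{p,q}$ and $[h+1;1,\alpha t]_{p,q}$. They differ by $(\alpha t-1)(p^h-q^h)$, which is exactly the obstruction to $u_3+xu_4$ dividing $b_k$.

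Two minor points. First, the heading of your case (a) should read ``general $p,q$, $t=\alpha=\beta=1$''; your weights and your closing sentence show this is what you mean, and with $p=q=1$ case (a) would just be a special case of (b). Second, the ``modified Foata--Strehl'' framing is only heuristic for case (b). Take $2314$, whose history is $(\mathrm{N}\,\mathrm{E_2}\,\mathrm{S},(0,0,0))$:
\begin{itemize}
\item your case (a) swap gives $3214$, agreeing with the valley-hopping move $\varphi_2$;
\item this changes $\mathrm{lmax}+\mathrm{rmax}$ from $4$ to $3$;
\item your label-complementing case (b) swap gives $3142$ instead, which keeps $\mathrm{lmax}+\mathrm{rmax}=4$ and trades the $\mathrm{lmaxda}$ letter $2$ for an $\mathrm{rmaxdd}$ letter.
\end{itemize}
Nothing in your argument depends on this identification, so the proof stands as written.
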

\begin{proof}
    Combining~\eqref{equ:A-coe-u_2}, \eqref{specialization-A} and  Theorem~\ref{thm1} with ~\eqref{equ:specailization}, then 
substituting $u_2\to\frac{u_2(u_3+xu_4)^2}{x}$ and $z\to\frac{z}{(u_3+xu_4)}$,  we obtain
\begin{equation}\label{equ:con;gamma}
    \sum_{n\ge 0}\sum_{k=0}^{\lfloor n/2\rfloor}\frac{\gamma_{n,k}(x,u_3, u_4,f,g,h\,|\,t,\alpha\,;\,p,q)}{x^k(u_3+xu_4)^{n-2k}}u_2^{k}z^n=
   \mathcal{J}\left(z; \frac{b_k}{u_3+xu_4},\, \lambda_k\right),
\end{equation}
where
\begin{align}
b_k&=h^k\bigl( u_3\bigl[\,k+1;\alpha t , \,1\,\bigr]_{p,q}+xu_4\bigl[\,k+1;1,\,\alpha t \,\bigr]_{p,q}\bigr),\label{bk}\\
\lambda_{k+1}&=h^{2k+1}u_2\,[\,k+1\,]_{p,q}\,\bigl[\,k+2;\alpha g,\alpha f \,\bigr]_{p,q} \quad (k\geq 0).\label{lamk}
\end{align}

When $\alpha=t=1$, we see that for $k\ge 0$, 
\begin{equation}
    b_k=h^k(u_3+xu_4)[k+1]_{p,q}.
\end{equation}
Thus, the coefficient of $u_2^kz^{n}$ in the left-hand side of~\eqref{equ:con;gamma} is a polynomial in $f,g,h,p$ and $q$ with coefficients independent of $x, u_3$ and $u_4$. Setting 
$x=u_3=1$ and $u_4=0$ in \eqref{equ:con;gamma}, 
we derive that this coefficient is equal to 
$\gamma_{n,k}(1,1, 0,f,g,h\,|\,1,1\,;\,p,q) =\gamma_{n,k}^a(f,g,h\,|\,p,q)$ by  \eqref{gamma-a}. It follows from  \eqref{equ:con;gamma} that 

\begin{equation}
    \frac{\gamma_{n,k}(x,u_3, u_4,f,g,h\,|\,1,1\,;\,p,q)}{x^k(u_3+xu_4)^{n-2k}}=\gamma_{n,k}^a(f,g,h\,|\,p,q),
\end{equation}
which is equivalent to~\eqref{A-gamma-expansion2}. 

When $p=q=1$, we see that for $k\ge 0$,
\begin{equation}
{b_k}=h^k(k+\alpha t)(u_3+xu_4).
\end{equation}
Thus,  the coefficient of $u_2^kz^{n}$ in the left-hand side of~\eqref{equ:con;gamma} is a polynomial in $f,g,h,t$ and $\alpha$ 
with  coefficients independent of $x, u_3$ and $u_4$. 
Setting $x=u_3=1$ and $u_4=0$ in \eqref{equ:con;gamma}, 
we derive that this coefficient is equal to 
$\gamma_{n,k}(1,1, 0,f,g,h\,|\,t,\alpha\,;\,1,1) =\gamma_{n,k}^b(f,g,h\,|\,t,\alpha)$ by  \eqref{gamma-b}. It follows from  \eqref{equ:con;gamma} that 

\[
\frac{\gamma_{n,k}(x,u_3, u_4,f,g,h\,|\,t,\alpha\,;\,1,1)}{x^k(u_3+xu_4)^{n-2k}}
=\gamma_{n,k}^b(f,g,h\,|\,t,\alpha) ,
\]
 which is equivalent to~\eqref{A-gamma-expansion1}. 
\end{proof}
\begin{proof}[\textbf{Proof of Theorem~\ref{thm4}}]
     Setting $u_2=u_3=u_4=1$ in \eqref{A-gamma-expansion2} of Lemma \ref{lem:gamma-expansion} yields \eqref{gamma-des-special2-intro}.
\end{proof}

\begin{proof}[\textbf{Proof of Theorem~\ref{thm5}}]
Setting $u_2=u_3=u_4=1$ in \eqref{A-gamma-expansion1} of Lemma \ref{lem:gamma-expansion} yields \eqref{gamma-des-special1}.
\end{proof}


\subsection{Proof of Theorem~\ref{thm6}}
We consider the following special case  of  \eqref{generalization-C}
\begin{equation}\label{equ:specailization2}    
\mathbf{u}=(x,u_2,xu_3,u_4)\quad\textrm{and}\quad \beta=\alpha
\end{equation} 
that is,
\begin{multline}\label{specail-case-C}   C_n(\mathbf{u},t,s,h\,|\,\alpha, p,q):=\sum_{\sigma\in\S_n}x^{\exc(\sigma)}u_2^{\cval(\sigma)}u_3^{\cda(\sigma)}u_4^{\cdd(\sigma)}t^{\fix(\sigma)}s^{\inv(\sigma)}\\\times h^{\depth(\sigma)}\alpha^{\rlmin(\sigma)+\lmax(\sigma)}p^{\cros(\sigma)}q^{\nest(\sigma)},
\end{multline}
where $\mathbf{u}=(x,u_2,xu_3,u_4)$.
For $0\le k\le \lfloor(n-d)/2\rfloor$, let $c_{n,k,d}(x,u_3,u_4,s,h\,|\,\alpha,p,q)$ be defined by the expansion
\begin{equation}\label{coef-C}
    C_n(\mathbf{u},t,s,h\,|\,\alpha, p,q)=\sum_{d=0}^{n}t^d\sum_{k=0}^{\lfloor(n-d)/2\rfloor}c_{n,k,d}(x,u_3,u_4,s,h,\alpha\,|\,p,q)u_2^k.
\end{equation}

\begin{lemma}\label{lem:C_n}
Let $\mathbf{u}=(x,u_2,xu_3,u_4)$. We have
\begin{equation}\label{gammaa-expansion-cyclic}
    C_n(\mathbf{u},t,s,h\,|\,\alpha, p,q)=\sum_{d=0}^nt^d\sum_{k=0}^{\lfloor(n-d)/2\rfloor}\gamma_{n,k,d}^c(s,h\,|\,\alpha,p,q)(xu_2)^k(u_4+xu_3p)^{n-2k-d},
\end{equation} 
where 
\begin{equation}\label{gamma1-cyclic}   \gamma_{n,k,d}^c(s,h\,|\,\alpha,p,q)=\sum_{\sigma\in\S_{n,\exc=k}^{\mathrm{cda=0},\fix=d}}s^{\inv(\sigma)}h^{\depth(\sigma)}\alpha^{\rlmin(\sigma)+\lmax(\sigma)}p^{\cros(\sigma)}q^{\nest(\sigma)},
\end{equation} 
where $\S_{n,\exc=k}^{\mathrm{cda=0},\fix=d}=\{\sigma\in\S_n: \cda(\sigma)=0, \fix(\sigma)=d \text{ and } \exc(\sigma)=k\}.$
\end{lemma}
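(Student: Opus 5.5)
The plan is to follow the template of Lemma~\ref{lem:gamma-expansion}, replacing Theorem~\ref{thm1} by Theorem~\ref{thm2}. Substitute $\mathbf{u}=(x,u_2,xu_3,u_4)$ and $\beta=\alpha$ into Theorem~\ref{thm2}. This gives
\[
b_0=t\alpha^2,\qquad
b_k=(sh)^k\Bigl(tq^ks^k+(u_4+xu_3p)\,[\,k;\alpha,1\,]_{p,qs}\Bigr)\quad(k\ge1),
\]
since with $\beta=\alpha$ the two brackets $[\,k;\beta,1\,]_{p,qs}$ and $[\,k;\alpha,1\,]_{p,qs}$ coincide. The $\lambda$-coefficients become
\[
\lambda_k=xu_2\,(sh)^{2k-1}[\,k;\alpha,1\,]_{p,qs}^2 .
\]
The key observation is that $x$, $u_3$, $u_4$ enter only through the two combinations $y:=u_4+xu_3p$, which multiplies the non-fixed-point part of each $b_k$, and $xu_2$, which multiplies every $\lambda_k$. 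The fixed-point part $t(sq)^k s^k$ of $b_k$ (and $t\alpha^2$ for $k=0$) is free of these variables.

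Next I would rescale, substituting $z\to z/y$, $t\to ty$ and $u_2\to u_2y^2/x$. Under this substitution $b_kz$ becomes
\[
(sh)^k\bigl(tq^ks^k+[\,k;\alpha,1\,]_{p,qs}\bigr)z ,
\]
and $\lambda_kz^2$ becomes $u_2(sh)^{2k-1}[\,k;\alpha,1\,]^2_{p,qs}z^2$. Hence the transformed continued fraction is independent of $x,u_3,u_4$. On the left side, the monomial $t^du_2^kz^n$ picks up the factor $y^{d}y^{2k}x^{-k}y^{-n}$. Therefore
\[
\frac{c_{n,k,d}}{x^k\,y^{\,n-2k-d}}
\]
is independent of $x,u_3,u_4$; call it $\gamma^c_{n,k,d}$. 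Here $k$ ranges over $0\le k\le\lfloor (n-d)/2\rfloor$, because each cycle valley pairs with a cycle peak and fixed points use up $d$ of the $n$ steps.

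To identify $\gamma^c_{n,k,d}$, evaluate at $x=1$, $u_3=0$, $u_4=1$, so that $y=1$. Then $u_3=0$ kills every permutation with a cycle double ascent. The power of $u_2$ is $\cval=\cpk$, and $\exc=\cval+\cda=\cval=k$. So $c_{n,k,d}(1,0,1,\dots)$ is exactly the sum over $\sigma\in\S^{\cda=0,\fix=d}_{n,\exc=k}$ in~\eqref{gamma1-cyclic}, and this yields~\eqref{gammaa-expansion-cyclic}.

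The main obstacle is bookkeeping rather than anything conceptual. First, I need to check that, after specializing $\beta=\alpha$, the factor multiplying $u_4$ and the factor multiplying $xu_3p$ in $b_k$ are literally the same bracket, so that they combine into $y$. Second, I need the exponent of $x$ in $C_n$ to be $\exc$: it is contributed by $u_1=x$ at cycle valleys and by $xu_3$ at cycle double ascents, and $\cval+\cda=\exc$ by~\eqref{exc-top-bottom}. If the bracket check fails (for example, because the $p$ factor is attached differently), I would instead define $y$ as the exact combination appearing in $b_k$ and adjust the specialization used to identify $\gamma^c$.
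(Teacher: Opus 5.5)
Your proposal is correct and matches the paper's own proof. You specialize Theorem~\ref{thm2} at $\mathbf{u}=(x,u_2,xu_3,u_4)$, $\beta=\alpha$, and rescale by $u_2\to u_2y^2/x$, $z\to z/y$, $t\to ty$ with $y=u_4+xu_3p$, so that the $J$-fraction no longer depends on $x,u_3,u_4$; you then identify $\gamma^c_{n,k,d}$ by setting $x=u_4=1$, $u_3=0$. Your separate treatment of $b_0=t\alpha^2$ and the exponent $(sh)^{2k-1}$ in $\lambda_k$ are slightly more careful than the paper's displayed version, which misses both.
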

\begin{proof}
    Combining \eqref{specail-case-C}, \eqref{coef-C}, and Theorem~\ref{thm2} with  \eqref{equ:specailization2}, then substituting $u_2\to\frac{u_2(u_4+xu_3p)^2}{x}$, $z\to\frac{z}{(u_4+xu_3p)}$, and  $t\to t(u_4+xu_3p)$,  we obtain
    \begin{equation}\label{equ:gamma-frac}
        \sum_{n\ge 0}\sum_{d=0}^{n}\sum_{k=0}^{\lfloor(n-d)/2\rfloor}\frac{c_{n,k,d}(x,u_3,u_4,s,h\,|\,\alpha,p,q)u_2^k}{x^k(u_4+xu_3p)^{n-2k-d}}u_2^kt^dz^n=\mathcal{J}(z;\mathbf{b},\boldsymbol{\lambda}),
    \end{equation}
    where 
    \begin{align}
        b_k=(sh)^k(tq^ks^k+[k;\alpha,1]_{p,qs}),\quad 
        \lambda_{k}=u_2(sh)^{2k+1}[k;\alpha,1]_{p,qs}^2.
    \end{align}
    Thus, the coefficient of $u_2^kt^dz^n$ in the left-hand side of \eqref{equ:gamma-frac} is a polynomial in $s,h,\alpha,p$ and $q$ with coefficients independent of $x,u_3$ and $u_4$. Setting $x=u_4=1$ and $u_3=0$ in \eqref{equ:gamma-frac}, we derive that this coefficient is equal to $c_{n,k,d}(1,0,1,s,h\,|\,\alpha,p,q)=\gamma_{n,k,d}^c(s,h\,|\,\alpha,p,q)$ by \eqref{gamma1-cyclic}. It follows from~\eqref{equ:gamma-frac} that 
    \begin{equation}
        \frac{c_{n,k,d}(x,u_3,u_4,s,h\,|\,\alpha,p,q)u_2^k}{x^k(u_4+xu_3p)^{n-2k-d}}=\gamma_{n,k,d}^c(s,h\,|\,\alpha,p,q),
    \end{equation}
    which is equal to~\eqref{gammaa-expansion-cyclic}.
\end{proof}
\begin{proof}[\textbf{Proof of Theorem \ref{thm6}}]
    Setting $u_2=u_4=1$, $u_3=1/p$ and  $t=t(1+x)$ (resp. $u_2=u_4=p$, $u_3=1$ and $t=t(1+x)$) in   \eqref{gammaa-expansion-cyclic} of Lemma \ref{lem:C_n} yields Eq. \eqref{equ:C_n-gamma} with $i=1,2$.
\end{proof}


\begin{proposition}For $n\ge 1$, we have
    \begin{equation}\label{equ:derangement}
        \sum_{\sigma\in\mathcal{D}_n}(-1)^{\nest(\sigma)}x^{\exc(\sigma)}h^{\depth(\sigma)}=\sum_{j=1}^{\lfloor n/2\rfloor}\binom{n-j-1}{j-1}h^{n-j}x^{j}(1+x)^{n-2j}.
    \end{equation}
\end{proposition}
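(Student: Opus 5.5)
The plan is to specialize the $J$-fraction of Theorem~\ref{thm2} so that it truncates, and then read off coefficients. In Theorem~\ref{thm2} I will take $\mathbf{u}=(x,1,x,1)$, $t=0$, $s=1$, $\alpha=\beta=1$, $p=1$ and $q=-1$.

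First I check that this specialization gives the left-hand side of \eqref{equ:derangement}. We have $\exc=\cval+\cda$ by \eqref{exc-top-bottom}, so the weight $(u_1u_2)^{\cval}u_3^{\cda}u_4^{\cdd}$ becomes $x^{\exc(\sigma)}$. The factor $t^{\fix(\sigma)}$ with $t=0$ restricts the sum to $\mathcal{D}_n$. All remaining factors become $1$ except $h^{\depth(\sigma)}(-1)^{\nest(\sigma)}$. Hence
\[
1+\sum_{n\ge1}z^n\sum_{\sigma\in\mathcal{D}_n}(-1)^{\nest(\sigma)}x^{\exc(\sigma)}h^{\depth(\sigma)}
\]
equals $\mathcal{J}(z;\mathbf{b},\boldsymbol{\lambda})$ with the specialized coefficients.

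Next I compute those coefficients. From the definition,
\[
[\,k;1,1\,]_{1,-1}=\frac{1-(-1)^k}{1-(-1)}=\frac{1-(-1)^k}{2},
\]
which is $1$ for $k$ odd and $0$ for $k$ even. This gives $b_0=t\alpha\beta=0$. It also gives $\lambda_1=u_1u_2h\,[1]^2=xh$ and $b_1=h\bigl(0+u_4[1]+u_3p[1]\bigr)=h(1+x)$. Finally $\lambda_2=xh^3[2]^2=0$. This vanishing is the key point: the continued fraction truncates after the first level, so
\[
\mathcal{J}=\cfrac{1}{1-\cfrac{xh\,z^2}{1-h(1+x)z}}=\sum_{m\ge0}\frac{(xh z^2)^m}{\bigl(1-h(1+x)z\bigr)^m}.
\]

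Then I extract coefficients. Using $(1-y)^{-m}=\sum_{r\ge0}\binom{m+r-1}{m-1}y^r$ for $m\ge1$, the coefficient of $z^n$ in the $m$-th term (with $r=n-2m$) is
\[
\binom{n-m-1}{m-1}x^m h^{\,n-m}(1+x)^{n-2m}.
\]
For $n\ge1$ the $m=0$ term contributes nothing, and the sum over $1\le m\le\lfloor n/2\rfloor$, with $m$ renamed $j$, is exactly the right-hand side of \eqref{equ:derangement}.

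The only delicate points are checking that the specialization of Theorem~\ref{thm2} really kills $\lambda_2$, and that $u_4[\,k;\beta,1\,]$ and $u_3p[\,k;\alpha,1\,]$ both reduce to $[k]_{1,-1}$. The rest is routine. An alternative route is to set $q=-1$ in the derangement Corollary (case $i=1$), but the direct truncation argument above is cleaner.
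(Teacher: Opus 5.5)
Your proof is correct and follows the paper's argument. You use the same specialization of Theorem~\ref{thm2} (your $\mathbf{u}=(x,1,x,1)$, $t=0$, $s=p=\alpha=\beta=1$, $q=-1$ is the consistent form of the paper's somewhat garbled parameter list), obtain the same values $b_0=0$, $b_1=(1+x)h$, $\lambda_1=xh$, $\lambda_2=0$, and truncate the $J$-fraction in the same way; the only difference is that the paper writes the result as $\frac{1-(1+x)hz}{1-(1+x)hz-xhz^2}$ and merely asserts the coefficient formula, whereas your expansion $\sum_{m\ge0}(xhz^2)^m(1-h(1+x)z)^{-m}$ actually proves it.
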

    \begin{proof}
Specializing Theorem~\ref{thm2} with $u_2=u_3=u_4=\alpha=\beta=s=p=q=1$, $q=-1$, and $t=0$, 
we have 
\begin{equation}
    b_0=0,\;b_1=(1+x)h,\;\lambda_1=xh, \;\text{ and } \lambda_2=0.
\end{equation}
Thus
\begin{equation}\label{equ;genratingfun}  1+\sum_{n\ge1}\left(\sum_{\sigma\in\mathcal{D}_n}(-1)^{\mathrm{nest}(\sigma)}
x^{\mathrm{exc}(\sigma)}h^{\mathrm{depth}(\sigma)}\right)z^n=\frac{1}{1-\dfrac{\lambda_1 z^2}{1-b_1z}}=\frac{1-(1+x)hz}{1-(1+x)hz-xhz^2}.
\end{equation}
For $n\ge1$, one can check that
\begin{equation}\label{equ:coef}
    [z^n]\frac{1-az}{1-az-bz^2}
=\sum_{j=1}^{\lfloor n/2\rfloor}\binom{n-j-1}{j-1}a^{n-2j}b^{j}.
\end{equation}
Substituting $a=(1+x)h$ and $b=xh$ in~\eqref{equ:coef} and combining with~\eqref{equ;genratingfun} yields \eqref{equ:derangement}.
\end{proof}
\begin{remark} As $\inv=\nest+\depth$,
   Eq.~\eqref{equ:derangement} is equivalent to Theorem~5.4 (or Theorem~1.6) in~\cite{Eu26}.
\end{remark}

\subsection{Combinatorics of \texorpdfstring{$\gamma$}{gamma}-coefficient factorization}

\begin{proposition}\label{prop:divisi}
    For $0\le k\le \lfloor n/2\rfloor$, there exist polynomials 
    $E_{n,k}(h\,|\,p,q)\in\N[h,p,q]$ and $E_{n,k}(f,h\,|\,t,\alpha)\in\N[f,h,t,\alpha]$ such that
    \begin{align}\label{divi-gamma-2}
       \gamma^a_{n,k}(1,1,h\,|\,p,q) &= (p+q)^k E_{n,k}(h\,|\,p,q),\\
    \label{divi-gamma-1}         
         \gamma^b_{n,k}(f,f,h\,|\,t,\alpha)&=2^kE_{n,k}(f,h\,|\,t,\alpha).
    \end{align}
\end{proposition}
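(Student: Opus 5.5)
Taking the statement at face value, the natural route uses the continued fraction already derived in the proof of Lemma~\ref{lem:gamma-expansion}, specialised to $x=u_3=1$, $u_4=0$. By~\eqref{equ:con;gamma} the generating function $\sum_{n,k}\gamma_{n,k}u_2^kz^n$ of the $\gamma$-coefficients is the $J$-fraction with $b_k=h^k[k+1]_{p,q}$ (case $\alpha=t=1$) or $b_k=h^k(k+\alpha t)$ (case $p=q=1$). In both cases
\[
\lambda_{k+1}=u_2h^{2k+1}[k+1]_{p,q}\,[k+2;\alpha g,\alpha f]_{p,q}.
\]
By Flajolet's Lemma~\ref{lem:fla}, $\gamma_{n,k}$ is a sum over Motzkin paths of length $n$ with exactly $k$ up-steps (hence $k$ down-steps). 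Each path weight is a product of $k$ factors $\lambda_{j+1}$ times level weights $b_j$.

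So it suffices to show that each factor $\lambda_{j+1}/u_2$ is divisible, in $\N[\dots]$, by $(p+q)$ in case~\eqref{divi-gamma-2} and by $2$ in case~\eqref{divi-gamma-1}. The quotient $E_{n,k}$ is then the path sum with each $\lambda_{j+1}$ replaced by $\lambda_{j+1}/(p+q)$, resp.\ $\lambda_{j+1}/2$. Its nonnegativity is automatic provided these quotients lie in $\N[\dots]$.

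In case~\eqref{divi-gamma-2}, with $f=g=\alpha=1$, we get $\lambda_{j+1}=u_2h^{2j+1}[j+1]_{p,q}[j+2]_{p,q}$. One of $j+1,j+2$ is even, and for even $m$ we have $[m]_{p,q}=(p+q)(p^{m-2}+p^{m-4}q^2+\cdots+q^{m-2})$, which lies in $(p+q)\N[p,q]$. Hence $\lambda_{j+1}\in(p+q)\N[h,p,q]\,u_2$.

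In case~\eqref{divi-gamma-1}, with $p=q=1$, $g=f$ and $\beta=\alpha$, the bracket becomes $[m;\alpha f,\alpha f]_{1,1}=(\alpha f-1)+(\alpha f-1)+m=2\alpha f+m-2$. Thus $\lambda_{j+1}=u_2h^{2j+1}(j+1)(j+2+2\alpha f-2)=u_2h^{2j+1}(j+1)(j+2\alpha f)$. This is $2\alpha f(j+1)+j(j+1)$, and since $j(j+1)$ is even, $\lambda_{j+1}/2=\alpha f(j+1)+\binom{j+1}{2}$ lies in $\N[f,\alpha]$.

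The factor $\alpha^{2}$ coming from the first and last steps is already built into $b_0=\alpha t$ and $[\cdot;\alpha g,\alpha f]$, so nothing else is needed. Defining $E_{n,k}$ as the modified path sum and using that $\gamma_{n,k}$ is obtained by extracting $u_2^k$ (exactly $k$ up/down pairs) completes the proof.

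The main subtlety I would check is the translation step: that $\gamma^a_{n,k}(1,1,h\,|\,p,q)$, resp.\ $\gamma^b_{n,k}(f,f,h\,|\,t,\alpha)$, really equals the coefficient of $u_2^kz^n$ in the specialised $J$-fraction. This is exactly what Lemma~\ref{lem:gamma-expansion} provides. I would also check the sign-free form of $[m;\alpha,\beta]$: the terms $(\alpha-1)$ cancel against the leading $p^{m-1},q^{m-1}$ of $[m]_{p,q}$, so there is no hidden negativity.
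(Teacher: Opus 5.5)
Your proposal is correct and matches the paper's argument. The paper also reads off from the specialised $J$-fraction \eqref{equ:con;gamma} of Lemma~\ref{lem:gamma-expansion} that each $\lambda_k$ contains a factor $(p+q)$ (since $(p+q)\mid[k]_{p,q}[k+1]_{p,q}$), respectively a factor $2$ (since $\lambda_k=2h^{2k-1}\bigl(\binom{k}{2}+k\alpha f\bigr)u_2$), so every path with $k$ up-steps picks up $k$ such factors. Your check that the quotients stay in $\N[\dots]$, via $[m]_{p,q}=(p+q)(p^{m-2}+p^{m-4}q^2+\cdots+q^{m-2})$ for even $m$, simply spells out what the paper leaves implicit.
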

\begin{proof}
    For all $k\ge 1$,  as
 \begin{equation}\label{equ:p+q}
     (p+q)\mid[k]_{p,q}[k+1]_{p,q},
 \end{equation}
it follows from \eqref{lamk} that $\lambda_k$ must have a factor $(p+q)$ if  $\alpha=f=g=1$. Combining this and~\eqref{A-gamma-expansion2} with $f=g=1$, we prove \eqref{divi-gamma-2}.

    When $p=q=1$ and $g\to f$, it follows from~\eqref{lamk} that
\begin{equation}\label{gamma-divisibility}
    \lambda_k=h^{2k-1}k((k-1)+2\alpha f)u_2=2\cdot h^{2k-1}(k(k-1)/2+k\alpha f)u_2
\end{equation}
for all $k\ge 1$. Combining \eqref{gamma-divisibility} and \eqref{A-gamma-expansion1} with $g=f$, we prove \eqref{divi-gamma-1}.
\end{proof}
\begin{remark}
     Brändén~\cite{Bra08} conjectured  the  divisibility \eqref{divi-gamma-2} for $h=1$; this conjecture was subsequently confirmed in~\cite{SZ12}. 
\end{remark}

\begin{table}[ht]
\centering
\renewcommand{\arraystretch}{1.25}
\begin{tabular}{c|c|c|c}
\hline
$n$ & $k=0$ & $k=1$ & $k=2$\\
\hline
0 & $1$ &  &  \\
1 & $1$ &  &  \\
2 & $1$ & $h(p+q)$ &  \\
3 & $1$ & $h(p+q)\bigl(h(p+q)+2\bigr)$ &  \\
4 & $1$ & $h(p+q)\Bigl(h^{2}(p+q)^{2}+2h(p+q)+3\Bigr)$
  & $h^{2}(p+q)^{2}\Bigl(h^{2}(p^{2}+pq+q^{2})+1\Bigr)$ \\
\hline
\end{tabular}
\caption{The first values of $\gamma^{a}_{n,k}:=\gamma^{a}_{n,k}(1,1,h\,|\, p,q)$ for $0\le 2k\le n\le 4$.}
\end{table}


\begin{table}[ht]
\centering
\renewcommand{\arraystretch}{1.25}
\begin{tabular}{c|c|c|c}
\hline
$n$ & $k=0$ & $k=1$ & $k=2$\\
\hline
0 & $1$ &  &  \\
1 & $\alpha t$ &  &  \\
2 & $\alpha^{2}t^{2}$ & $2\alpha h f$ &  \\
3 & $\alpha^{3}t^{3}$ & $2\alpha h f\bigl(h+\alpha h t+2\alpha t\bigr)$ &  \\
4 & $\alpha^{4}t^{4}$ &
$2\alpha h f\Bigl(\bigl(h+\alpha ht+\alpha t\bigr)^2+2\alpha^2t^2\Bigr)$ &
$4\alpha f h^{2}\Bigl(h^{2}+\alpha f(2h^{2}+1)\Bigr)$ \\
\hline
\end{tabular}
\caption{The first values of $\gamma^{b}_{n,k}:=\gamma^{b}_{n,k}(f,f,h\,|\,t,\alpha)$ for $0\le 2k\le n\le 4$.}
\end{table}

For the rest of this section, we 
focus on the combinatorial interpretations of the  two quotients in Proposition~\ref{prop:divisi}. 
Following 
Foata-Strehl~\cite{FS74} and 
Hetyei-Reiner~\cite{HR98}, we define the André permutations in terms of \emph{$x$-factorization}. 
\begin{definition}\label{x-frac}
Let $\s=\s_1\s_2\dots\s_n\in\S_n$ and let $x=\s_i$ for $i\in [n]$. The $x$-factorization of $\s$ is given by $\s=u\,\lambda(x)\,x\,\rho(x)\,v$, where
\begin{itemize}
\item[(1)] $\lambda(x)=\s_j\cdots\s_{i-1}$ with $\s_l>x$ for $j\le l\le i-1$, $u=\s_1\cdots\s_{j-1}$, and $\s_{j-1}<x$.
\item[(2)] $\rho(x)=\s_{i+1}\cdots\s_{k}$ with $\s_l>x$ for $i+1\le l\le k$, $v=\s_{k+1}\cdots\s_{n}$, and $\s_{k+1}<x$.
\end{itemize}
Here,  any of the words $u,\lambda(x),\rho(x),v$ may be empty. 
\end{definition}

\begin{example} If $\s=8\,10\,1\,4\,5\,11\,3\,6\,12\,2\,7\,9\in \S_{12}$, then the $3$-factorization of $\s$ is 
$$\underbrace{8\,10\,1}_{u}\,\underbrace{4\,5\,11}_{\lambda(3)}\,3\,\underbrace{6\,12}_{\rho(3)
}\,\underbrace{2\,7\,9}_{v}.$$
\end{example}

\begin{definition}[\cite{HR98}]\label{def:x-factorization-André1-2'}
Let $\s=\s_1\cdots\s_n\in\S_n$. 
Say that $\s$ is an \textbf{André permutation of the first kind (resp. second kind)} 
if $\s$ has no double descents, i.e., $\s_{i-1}>\s_{i}>\s_{i+1}$, and
each $x$-factorisation $u\,\lambda(x)\,x\,\rho(x)\,v$ of $\s$ has property
\begin{itemize}
\item $\lambda(x)=\emptyset$ if $\rho(x)=\emptyset$,
\item $\max(\lambda(x))<\max(\rho(x))$  $($resp. $\min(\rho(x))<\min(\lambda(x))$$)$  if $\rho(x)\neq\emptyset$ and $\lambda(x)\neq\emptyset$.
\end{itemize}
\end{definition} 

Let $\A_n^1$ (resp. $\A_n^2$) be the set of André permutations of the first (resp. second) kind  in $\S_n$. It is known that \cite{FS74} the cardinality of 
$\A_n^1$ (resp. $\A_n^2$)  is the  \emph{Euler number} $E_{n}$, which is the $n$th coefficient in  the Taylor expansion of $\sec(z)+\tan(z)$, namely,
\begin{equation}
    \sum_{n\ge 0}E_n\frac{z^n}{n!}=\sec(z)+\tan(z).
\end{equation}

For convenience, we define the subsets of $\A_{n}^1$ and $\A_n^2$, respectively 
\begin{align}
    \A_{n,\des=k}^1:=\{\sigma\in\A_n^1: \des(\sigma)=k\};\\
    \A_{n,\des=k}^2:=\{\sigma\in\A_n^2: \des(\sigma)=k\}.
\end{align}

\begin{theorem}\label{Thm: generalized PZ} For $n\ge 0$ and $0\le k\le \lfloor n/2\rfloor$, we have 
    \begin{equation}\label{inter-d}
    E_{n,k}(h\,|\,p,q)=\sum_{\sigma\in\mathcal{A}_{n+1,\des=k}^2}h^{\dis(\sigma)}p^{\ldes(\sigma)-\des(\sigma)}q^{\rasc(\sigma)}.
\end{equation}
\end{theorem}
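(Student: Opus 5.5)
We will prove the identity by a group action on $\S^{\dd=0}_{n+1,\des=k}$, namely the Pan--Zeng variant of the modified Foata--Strehl action. This is the route announced for Section~\ref{sec:proof-andre}. By \eqref{gamma-a} with $f=g=1$,
\[
\gamma^a_{n,k}(1,1,h\,|\,p,q)=\sum_{\sigma\in\S_{n+1,\des=k}^{\dd=0}}h^{\dis(\sigma)}p^{\ldes(\sigma)}q^{\rasc(\sigma)}.
\]
So it suffices to partition $\S_{n+1,\des=k}^{\dd=0}$ into orbits, each containing exactly one André permutation of the second kind $\sigma_0\in\A^2_{n+1,\des=k}$. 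On each orbit we need
\[
\sum_{\sigma\in\mathrm{Orb}(\sigma_0)} h^{\dis(\sigma)}p^{\ldes(\sigma)}q^{\rasc(\sigma)}=(p+q)^k\,h^{\dis(\sigma_0)}p^{\ldes(\sigma_0)-\des(\sigma_0)}q^{\rasc(\sigma_0)}.
\]
Summing over orbits then gives \eqref{inter-d} by comparison with \eqref{divi-gamma-2}.

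\emph{Step 1: the involutions.} For a permutation without double descents, the $x$-factorization $u\,\lambda(x)\,x\,\rho(x)\,v$ of Definition~\ref{x-frac} lets us define, for each valley $x$ (equivalently, for each descent bottom), an involution $\varphi_x$. It exchanges the two "sides" attached to $x$: it moves $x$ together with one of $\lambda(x)$, $\rho(x)$ so that $\lambda$ and $\rho$ swap roles. The swap is designed so that the pairs $(\max\lambda,\max\rho)$, or in the second-kind version $(\min\rho,\min\lambda)$, get compared. The action must be chosen so that the $\varphi_x$ commute, preserve $\dd=0$ and $\des$, and generate $(\mathbb Z_2)^k$ acting on $\S^{\dd=0}_{n+1,\des=k}$. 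Each orbit should then have size $2^k$ and a unique representative satisfying the André second-kind condition $\min\rho(x)<\min\lambda(x)$ for all $x$. The uniqueness of the André representative follows as in Hetyei--Reiner and Foata--Strehl.

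\emph{Step 2: the statistics.} We must show that toggling $\varphi_x$ does three things.
\begin{itemize}
\item It leaves $\dis$ unchanged. The multiset of peak and valley positions is not obviously preserved, so we verify this through Lemma~\ref{lem:dis}: $\dis$ equals the area of the Françon--Viennot Motzkin path. We check that $\varphi_x$ does not change the step types of $\Psi_{FV}(\sigma)$, since it preserves the peak/valley/da/dd type of each letter. Hence the path $w$, and so the area, is invariant.
\item It changes only the label $p_y$ at one letter $y$, namely the peak letter associated with $x$ (the smaller of the two relevant neighbours), between the two extreme values.
\item By Lemma~\ref{lem:height}, $(31\!-\!2)_y+(2\!-\!13)_y=h_y$. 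The toggle moves a unit between $(31\!-\!2)_y$ and $(2\!-\!13)_y$, i.e.\ exactly one occurrence switches from $31\!-\!2$ to $2\!-\!13$.
\end{itemize}
Hence the generating function over an orbit factors as $\prod_x(p+q)$ times the weight of the representative with these $k$ units removed. Checking that in the André second-kind representative each of the $k$ toggled units sits in the $31\!-\!2$ count gives the factor $p^{\ldes-\des}$.

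\emph{Main obstacle.} The delicate point is Step 2. We must prove that $\varphi_x$ alters the pattern counts $(31\!-\!2)_y$, $(2\!-\!13)_y$ at exactly one letter and by exactly one unit, while all other $(31\!-\!2)_z$, $(2\!-\!13)_z$ stay fixed. This requires a careful case analysis using the $z$-decompositions \eqref{x-decomposition} before and after the move. If the direct analysis becomes unwieldy, we will instead work on Laguerre histories. The plan there is to show that, under $\Psi_{FV}$, $\varphi_x$ corresponds to replacing a label $p_i$ on a step $\mathrm{S}$ by $h_i-p_i$, with $p_i\in\{0,h_i\}$-type extremes in the distinguished step, and otherwise leaves the history unchanged. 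Under this description the invariance of $w$ (hence of $\dis$) and the exchange $p\leftrightarrow q$ become immediate from the weight computation in the proof of Theorem~\ref{thm1}.
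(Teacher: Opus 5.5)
Your overall plan is the paper's plan. You group $\S^{\dd=0}_{n+1,\des=k}$ into classes of $2^k$ permutations, each containing one element of $\A^2_{n+1,\des=k}$. You get invariance of $\dis$ from the fact that the moves preserve the type (peak, valley, double ascent) of every letter, so the Fran\c{c}on--Viennot path and its area are unchanged. (Note that $\Pk$ and $\Val$ in \eqref{eq:distance} are sets of \emph{letters}, not positions, so this invariance is immediate; it is the only new check the paper makes on top of Pan--Zeng.)

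The gap is in the step that carries all the $p,q$-information. You never specify a move that transfers exactly one occurrence from $(31\!-\!2)$ to $(2\!-\!13)$, and the move you describe does not do so. Exchanging the blocks $\lambda(x)$ and $\rho(x)$ is the Foata--Strehl swap. Take $\sigma=23145$: its only valley is $1$, with $\lambda(1)=23$ and $\rho(1)=45$, and the swap gives $45123$. The pair $((31\!-\!2),(2\!-\!13))$ goes from $(0,2)$ to $(2,0)$. So the orbit $\{23145,45123\}$ contributes $h^2(p^2+q^2)$, and the other orbit $\{31245,24513\}$ contributes $2h^2pq$. Neither is divisible by $p+q$, even though each orbit contains exactly one André permutation of the second kind. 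So uniqueness of representatives is not the issue; the weight factorization is.

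The missing idea is Pan--Zeng's refined transform. Set $y_1=\min\lambda(x)$, $y_2=\min\rho(x)$ and $y=\min(y_1,y_2)$. One moves only $y$, together with a suitable piece of its block, to the other side of $x$. There are three different rules, according to whether $y$ is a peak, a double ascent (type I) or a valley (type II). In the example this pairs $23145\leftrightarrow31245$ and $45123\leftrightarrow24513$, with orbit sums $h^2q(p+q)$ and $h^2p(p+q)$.

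Several of your stated properties must be corrected accordingly:
\begin{itemize}
\item $y$ need not be a peak. It is a double ascent in $23145\mapsto31245$, and a valley in type II, e.g.\ $51324\mapsto52314$, where the pair $((31\!-\!2),(2\!-\!13))$ goes from $(3,1)$ to $(2,2)$.
\item The pattern counts shift by exactly one unit. This is not a move between the extreme values $0$ and $h_y$ once $h_y>1$.
\item The type-II moves do not commute; they must be applied to the valleys in decreasing order. So one does not get a commuting $(\mathbb Z_2)^k$-action. What one proves instead is that $(\sigma,S)\mapsto\varphi(\sigma,S)$ is a bijection $\A^2_{n,\des=k}\times 2^{[k]}\to\S^{\dd=0}_{n,\des=k}$ that lowers $(31\!-\!2)$ and raises $(2\!-\!13)$ by $|S|$. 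This is the content of Pan--Zeng's Propositions~5 and~7, which the paper quotes.
\end{itemize}

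Your Laguerre-history fallback fails for a similar reason. Replacing $p_i$ by $h_i-p_i$ changes $(31\!-\!2)$ by $h_i-2p_i$, which is $\pm h_i$ at the extremes. That produces factors $p^{h_i}+q^{h_i}$ rather than $p+q$.
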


By using a group action on permutations without double descents, Pan and Zeng~\cite{PZ21} 
proved \eqref{divi-gamma-2}  and \eqref{inter-d} when $h=1$.  
We shall extend their proof to the general case in Section~\ref{Appendix-A}.

\begin{table}[t]
\centering
\renewcommand{\arraystretch}{1.25}
\begin{tabular}{c|c|c|c}
\hline
$n$ & $k=0$ & $k=1$ & $k=2$\\
\hline
0 & $1$ &  &  \\
1 & $1$ &  &  \\
2 & $1$ & $h$ &  \\
3 & $1$ & $2h+h^2p+h^2q$ &  \\
4 & $1$ & $3h+2h^2(p+q)+h^3(p+q)^2$ & $h^2+h^4(p^2+pq+q^2)$ \\
\hline
\end{tabular}
\caption{The first values of $E_{n,k}(h\,|\,p,q)$ for $0\le 2k\le n\le 4$.}
\end{table}

 For  $\sigma=\sigma_1\ldots \sigma_n\in \S_n$ with $\sigma_0=\sigma_{n+1}=+\infty$, a letter $\s_i\in[n]$ is said to be a
 \begin{itemize}
    \item \emph{left-to-right minimum} (\textbf{lmin}) if $\sigma_j>\sigma_i$ for every $j<i$;
    \item \emph{left-to-right-minimum-valley} (\textbf{lminval})  if 
 $\s_i$ is a left-to-right minimum and also a valley;
 \item  \emph{right-to-left-minimum-valley} (\textbf{rminval})  if $\s_i$ is a  right-to-left minimum and also a valley;
 \item  right-to-left-minimum-double-ascent (\textbf{rlminda}) if it is a double ascent and also a right-to-left minimum.
 \end{itemize}
 \begin{definition}\label{def:dis-variant}
Let $\sigma=\sigma_1\ldots \sigma_n$ be a permutation in $\S_n$ with $\sigma_0=\sigma_{n+1}=+\infty$.  
Denote by $\Val(\sigma)$ and $\Pk(\sigma)$ the sets of valleys and peaks of $\sigma$, respectively.
We define a variant of $\dis$ by
\begin{equation}
\widetilde{\dis}(\sigma)
:=\sum_{i\in \Pk(\sigma)} i \;-\;\sum_{j\in \Val(\sigma)\setminus\{1\}} j .
\end{equation}
\end{definition}

Let $\sigma = \sigma_1 \cdots \sigma_n \in \mathfrak{S}_n$. The \emph{complement} (resp. the \emph{reverse}) of $\sigma$ is defined by, respectively  
\begin{equation}\label{eq:complement}
    \sigma^c = (n+1-\sigma_1)(n+1-\sigma_2)\cdots(n+1-\sigma_n),
\end{equation}
and
\begin{equation}
    \sigma^r = \sigma_n \sigma_{n-1} \cdots \sigma_1.
\end{equation}
 
\begin{example}
   Let $\sigma = 71385624\in\S_8$, then we have 
\[
    \sigma^c = 28614375 \text{ and } \sigma^r=42658317.
\] 
\end{example}

For convenience, let 
\begin{subequations}
    \begin{align}
    \mathrm{lrmin}(\sigma):&=\lrmin(\sigma)+\rlmin(\sigma)-2;\\ \mathrm{lrmival}(\sigma):&=\lmival(\sigma)+\rmival(\sigma)-2;\\
\mathrm{lrmax}(\sigma):&=\lmax(\sigma)+\rmax(\sigma)-2;\\
    \mathrm{lrmaxpk}(\sigma):&=\pmax(\sigma)+\bmax(\sigma)-2.
\end{align}
\end{subequations}

\begin{lemma}\label{lem:mapping-eta}
    The mapping $\eta:=\sigma\mapsto(\sigma^c)^r$ is a bijection on $\S_{n,\des=k}^{dd=0}$ such that 
    \begin{equation}
        (\des,\lrmaxpk,\dis,\lrmaxda,\lrmax)\,\sigma=(\des,\lrmival,\widetilde{\dis},\rlminda,\mathrm{lrmin})\,\eta(\sigma).
    \end{equation}
\end{lemma}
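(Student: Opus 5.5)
The plan is to verify everything directly from the explicit formula for $\eta$; no path encoding is needed. Write $\tau=\eta(\sigma)=(\sigma^c)^r$, so that
\[
\tau_j=n+1-\sigma_{n+1-j}\qquad (1\le j\le n).
\]
Since $\eta$ is an involution on $\S_n$, it is a bijection. Everything then reduces to one dictionary: how the local type and the record type of a letter change under $\eta$. The only subtlety is the boundary convention. The statistics on the left of the identity ($\dis$, $\pk$, $\lmaxpk$, and so on) use $\sigma_0=\sigma_{n+1}=0$, while those on the right ($\widetilde{\dis}$, $\lmival$, $\rlminda$, and so on) use $\tau_0=\tau_{n+1}=+\infty$. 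Complementation turns $0$ into $+\infty$, so I would first note that extending $\tau_j=n+1-\sigma_{n+1-j}$ to $j=0,n+1$ with $n+1-0$ replaced by $+\infty$ is exactly the convention of Definition~\ref{def:dis-variant}.

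\textbf{Step 1 (local types).} Look at the triple $(\sigma_{i-1},\sigma_i,\sigma_{i+1})$. Reversal swaps its two neighbours and complementation flips every inequality. Hence a double descent at $\sigma_i$ in $\sigma$ becomes a double descent at $\tau_{n+1-i}$, and a double ascent stays a double ascent. A peak of $\sigma$ (with $0$ boundaries) with value $v$ becomes a valley of $\tau$ (with $+\infty$ boundaries) with value $n+1-v$, and a valley becomes a peak. Likewise a descent at position $i$ becomes a descent at position $n-i$. So $\des$ and $\dd$ are preserved, and $\eta$ maps $\S_{n,\des=k}^{\dd=0}$ onto itself.

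\textbf{Step 2 (records).} The letter $\sigma_i$ is a left-to-right maximum exactly when $n+1-\sigma_i$ is a right-to-left minimum of $\tau$. Similarly, right-to-left maxima of $\sigma$ correspond to left-to-right minima of $\tau$. This gives $\mathrm{lrmax}(\sigma)=\mathrm{lrmin}(\tau)$. Combining with Step 1:
\begin{itemize}
\item a left-to-right-maximum peak of $\sigma$ becomes a right-to-left-minimum valley of $\tau$, and a right-to-left-maximum peak becomes a left-to-right-minimum valley, so $\lrmaxpk(\sigma)=\lrmival(\tau)$;
\item a left-to-right-maximum double ascent becomes a right-to-left-minimum double ascent, so $\lrmaxda(\sigma)=\rlminda(\tau)$ (recall that the macro $\lrmaxda$ is $\mathrm{lmaxda}$ alone).
\end{itemize}

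\textbf{Step 3 ($\dis$, the main point).} Under the value map $v\mapsto n+1-v$ from Step 1:
\begin{itemize}
\item $\Pk(\sigma)\setminus\{n\}$ corresponds to $\Val(\tau)\setminus\{1\}$, since the peak $n$ of $\sigma$ goes to the valley $1$ of $\tau$;
\item $\Val(\sigma)$ corresponds to $\Pk(\tau)$.
\end{itemize}
With the $0$ boundary, $n$ is always a peak of $\sigma$ and $\pk(\sigma)=\val(\sigma)+1$. Hence $|\Pk(\sigma)\setminus\{n\}|=|\Val(\sigma)|$. Substituting into \eqref{eq:distance}, the constants $n+1$ appear equally often with opposite signs and cancel, leaving
\[
\dis(\sigma)=\sum_{u\in\Pk(\tau)}u-\sum_{u\in\Val(\tau)\setminus\{1\}}u=\widetilde{\dis}(\tau).
\]
This count of peaks versus valleys, and the cancellation it forces, is the only step needing care; the rest is bookkeeping of the dictionary above.
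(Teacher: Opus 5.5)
The paper states this lemma without proof, so there is no argument to compare against. Your direct verification via $\tau_j=n+1-\sigma_{n+1-j}$ is the natural one. Steps~2 and~3 are correct:
\begin{itemize}
\item you read the first left-hand coordinate, as one must, as $\mathrm{lmaxpk}+\mathrm{rmaxpk}-2$ (the quantity the text calls $\mathrm{lrmaxpk}$), and the fourth as $\mathrm{lmaxda}$ alone;
\item the cancellation of the $(n+1)$-terms in $\dis$, via $|\Pk(\sigma)\setminus\{n\}|=\val(\sigma)=|\Pk(\tau)|$, is right.
\end{itemize}

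The one step to fix is the conclusion of Step~1. Write $\dd_0$ and $\dd_\infty$ for double descents counted with boundary $0$ and $+\infty$. What you have shown is $\dd_0(\sigma)=\dd_\infty(\eta(\sigma))$. You have not shown that $\eta$ maps $\S^{\dd=0}_{n,\des=k}$ onto itself, and under any single convention that claim is false: $\dd_0=0$ forces $\sigma_{n-1}<\sigma_n$, whereas $\dd_\infty=0$ forces $\sigma_1<\sigma_2$. For example, with $0$-boundaries $\S^{\dd=0}_{3,\des=1}=\{213,312\}$. But $\eta(213)=132$ and $\eta(312)=231$, and both end in a double descent under that convention.

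So the lemma must be read as follows: $\eta$ is a bijection from $\{\sigma\in\S_n:\dd_0(\sigma)=0,\ \des(\sigma)=k\}$ onto $\{\tau\in\S_n:\dd_\infty(\tau)=0,\ \des(\tau)=k\}$. This is not cosmetic. The Proposition stated right after the lemma holds only with the $+\infty$ version of the index set: for $n=2$, $k=1$, the set $\{132,231\}$ gives $2\alpha hf$, while $\{213,312\}$ would give $2t\alpha^2$. The $+\infty$ convention is also the one used by the Dong--Lin--Pan action in the proof of Theorem~\ref{thm-andre-minimum}. With the target set stated this way, your proof is complete.
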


\begin{table}[t]
    \centering
    \renewcommand{\arraystretch}{1.25}
    \begin{tabular}{c|l|l|l}
    \hline
    $n$&$k=0$&$k=1$&$k=2$\\
    \hline
        0 &  $1$&&\\
        1 & $\alpha t$&&\\
        2 & $\alpha^{2}t^{2}$& $\alpha h f$&\\
        3 & $\alpha^{3}t^{3}$&
$\alpha h f\bigl(h+\alpha h t+2\alpha t\bigr)$&\\
        4 & $\alpha^{4}t^{4}$& $\alpha h f\Bigl(\bigl(h+\alpha ht+\alpha t\bigr)^2+2\alpha^2t^2\Bigr)$&$\alpha h^2 f(h^2+2\alpha h^2 f+\alpha  f)$\\
\hline
    \end{tabular}
    \caption{The first values of $E_{n,k}(f,h,t,\alpha)$ for $0\le 2k \le n\le 4$.}
    \label{tab:placeholder}
\end{table}
Lemma~\ref{lem:mapping-eta} yields 
another combinatorial interpretation for  $\gamma_{n,k}^b(f,f,h\,|\,t,\a)$ defined in \eqref{gamma-b}.
\begin{proposition}For $n\ge 0$ and $0\le k\le \lfloor n/2\rfloor$, we have
    \begin{equation}
\gamma_{n,k}^b(f,f,h\,|\,t,\a)=\sum_{\sigma\in\S_{n+1,\des=k}^{\dd=0}}f^{\lrmival(\sigma)}h^{\widetilde{\dis}(\sigma)}t^{\rlminda(\sigma)}\alpha^{\mathrm{lrmin}(\sigma)}
\end{equation}
\end{proposition}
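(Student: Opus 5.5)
This is a transfer argument. We start from the definition \eqref{gamma-b} of $\gamma^b_{n,k}(f,g,h\,|\,t,\alpha)$ and specialize $g=f$. This gives
\[
\gamma^b_{n,k}(f,f,h\,|\,t,\alpha)=\sum_{\sigma\in\S^{\dd=0}_{n+1,\des=k}} f^{\lrmaxpk(\sigma)}h^{\dis(\sigma)}t^{\lrmaxda(\sigma)}\alpha^{\lrmax(\sigma)},
\]
because $f^{\lmaxpk-1}f^{\bmax-1}=f^{\lmaxpk+\bmax-2}$ and $\alpha^{\lmax+\rmax-2}$ are exactly the combined statistics introduced before Lemma~\ref{lem:mapping-eta}. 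Here $\lrmaxpk$ (typeset by the macro for $\lmaxpk$) is intended to denote the sum $\lmaxpk+\bmax-2$. The exponent of $t$ in \eqref{gamma-b} is only $\lrmaxda$. This is consistent, since on $\dd=0$ permutations $\rlmaxdd=0$ anyway.

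The next step is to apply the bijection $\eta:\sigma\mapsto(\sigma^c)^r$ of Lemma~\ref{lem:mapping-eta} on $\S^{\dd=0}_{n+1,\des=k}$. It carries the quintuple $(\des,\lrmaxpk,\dis,\lrmaxda,\lrmax)$ to $(\des,\lrmival,\widetilde{\dis},\rlminda,\mathrm{lrmin})$. Reindexing the sum by $\tau=\eta(\sigma)$ and substituting these statistics term by term gives the stated formula directly.

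The only real content therefore sits inside Lemma~\ref{lem:mapping-eta}, which we may assume. The one point to check is that $\eta$ indeed maps $\S^{\dd=0}_{n+1,\des=k}$ to itself. Under $\sigma\mapsto(\sigma^c)^r$, a descent at position $i$ becomes a descent at position $n+1-i$, so $\des$ is preserved. A double descent of $\sigma$ is sent to a double descent, since complement swaps ascents and descents and reversal swaps them back. The boundary convention changes from $0$ to $+\infty$, which matches the definitions of $\lmival$, $\rmival$, $\rlminda$ and $\widetilde{\dis}$.

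The main obstacle I expect is bookkeeping rather than an idea. One must read the $t$-exponent consistently: $\lrmaxda$ is used in \eqref{gamma-b}, and $\lrmaxda+\rlmaxdd$ is used in Theorem~\ref{thm5}; these coincide on $\dd=0$. One must also match the $f$-exponent with $\lrmaxpk$ rather than $\lmaxpk-1$. Once these identifications are made, the proposition follows immediately from Lemma~\ref{lem:mapping-eta}.
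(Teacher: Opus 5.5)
Your proposal is exactly the paper's argument: set $g=f$ in \eqref{gamma-b} and reindex the sum through the bijection $\eta$ of Lemma~\ref{lem:mapping-eta}. The paper gives no proof beyond this.

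There is one correction, and it concerns the single point you chose to check. The paper's wording of Lemma~\ref{lem:mapping-eta} has the same imprecision. If $\dd$ is computed with boundary value $0$ on both sides, $\eta$ does \emph{not} map $\S^{\dd=0}_{n+1,\des=k}$ to itself. As your remark about the boundary switching to $+\infty$ shows, it maps this set onto the permutations with no double descent in the $+\infty$ convention, namely those with no interior double descent and $\tau_1<\tau_2$. For $n=2$, for example, it sends $\{123,213,312\}$ to $\{123,132,231\}$.

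The right-hand sum must therefore be read over that image set. Over the $0$-convention set with $n=2$ and $k=1$, the sum would be $2t\alpha^2$ instead of $\gamma^b_{2,1}(f,f,h\,|\,t,\alpha)=2\alpha hf$. With that reading, your argument is complete.
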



\begin{theorem}\label{thm-andre-minimum}
For $n\ge 0$ and $0\le k\le \lfloor n/2\rfloor$,
    we have
    \begin{equation}\label{equ:andre-minimum}
        E_{n,k}(f,h\,|\,t,\alpha)=\sum_{\sigma\in\mathcal{A}_{n+1,\des=k}^1}f^{\rmival(\sigma)-1}h^{\dis(\sigma)}t^{\rlminda(\sigma)}\alpha^{\mathrm{rmin}(\sigma)-1}.
    \end{equation}
\end{theorem}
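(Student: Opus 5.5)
We prove Theorem~\ref{thm-andre-minimum} with a sign-free group action of Foata--Strehl type, namely the modified action used by Dong, Lin and Pan. It acts on permutations without double descents, and we start from the second interpretation of $\gamma^b_{n,k}(f,f,h\,|\,t,\alpha)$ in the Proposition preceding the theorem. By Lemma~\ref{lem:mapping-eta}, $\gamma^b_{n,k}(f,f,h\,|\,t,\alpha)$ is the generating function of $\S_{n+1,\des=k}^{\dd=0}$ with respect to $(\lrmival,\widetilde{\dis},\rlminda,\mathrm{lrmin})$. These are minimum-type statistics with boundary $\sigma_0=\sigma_{n+1}=+\infty$. Under this boundary convention, $\dd=0$ together with the $x$-factorization of Definition~\ref{x-frac} is the natural setting for André permutations of the first kind.

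The key steps are as follows.

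\emph{Step 1.} For each letter $x$ that is a valley, i.e.\ both $\lambda(x)$ and $\rho(x)$ are nonempty, define an involution $\varphi_x$ on $\S_{n+1}^{\dd=0}$. It exchanges the roles of $\lambda(x)$ and $\rho(x)$ through the rearrangement of Hetyei--Reiner/Dong--Lin--Pan, so that among the two members of each $\varphi_x$-pair exactly one satisfies $\max\lambda(x)<\max\rho(x)$. The $\varphi_x$ commute, so they generate a $\mathbb{Z}_2^{m}$-action. Each orbit contains exactly one André permutation of the first kind. If $\sigma$ has $k$ descents and no double descents, it has exactly $k$ valleys among the interior letters relevant to the action, so each orbit has size $2^k$.

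\emph{Step 2.} Check that along an orbit the statistics $\des$, $\widetilde{\dis}$, $\rlminda$, $\mathrm{lrmin}$ and $\lrmival$ are constant. For $\widetilde{\dis}$ this is the main point: the map only swaps factors consisting of letters larger than $x$. The sets of peak and valley values are permuted, but the sum of the positions of peaks minus the sum of the positions of valleys must be shown invariant. The planned route is to pass through the Françon--Viennot path. Since $\dis$ is the area of the Motzkin path (Lemma~\ref{lem:dis}), and the action changes only the labels $p_i$ and not the step types, $\widetilde{\dis}$, being the analogous area for the reversed-complement encoding, is preserved. Left-to-right and right-to-left minima only involve letters $\le x$ relative to the factorization, so they are also preserved. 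Summing over orbits then gives
\[
\gamma^b_{n,k}(f,f,h\,|\,t,\alpha)=2^k\sum_{\sigma\in\A^1_{n+1,\des=k}}f^{\lrmival(\sigma)}h^{\widetilde{\dis}(\sigma)}t^{\rlminda(\sigma)}\alpha^{\mathrm{lrmin}(\sigma)}.
\]

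\emph{Step 3.} Compare this with Proposition~\ref{prop:divisi} to identify $E_{n,k}(f,h\,|\,t,\alpha)$. Then convert the statistics to those in \eqref{equ:andre-minimum}. For André permutations of the first kind the first letter is $1$ under this convention, so $\lmin=1$ and $\lmival=1$. Hence $\mathrm{lrmin}=\mathrm{rmin}-1$ and $\lrmival=\rmival-1$. It remains to see that $\widetilde{\dis}=\dis$ on $\A^1$. With $\sigma_1=1$, position $1$ is a valley under the $+\infty$ boundary, and it is excluded from $\widetilde{\dis}$. The final peak at position $n+1$ plays the role that the excluded last peak plays in $\dis$. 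The remaining positions give matching peak and valley sets, so the two sums agree.

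The main obstacle is Step 2, specifically the invariance of $\widetilde{\dis}$ (positions rather than values) under the action, together with the exact treatment of the boundary letters in the orbit-size claim $2^k$. I would first verify both on the small cases in the tables and then argue through the Laguerre-history picture.
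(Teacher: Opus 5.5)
Your plan has a genuine gap in Steps 1--2. No Foata--Strehl or Hetyei--Reiner swap on single valleys gives what you need here.

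\textbf{Record statistics are not invariant under the swap.} Your justification (records ``only involve letters $\le x$'') is wrong. The swap carries letters larger than $x$ across $x$, and when $x$ is itself a left-to-right or right-to-left minimum this changes which of those letters are records. For example, $14523$ has no double descent for the $+\infty$ boundary and one descent. Swapping $\lambda(2)=45$ with $\rho(2)=3$ gives $13245$, and $\mathrm{rmin}$ jumps from $3$ to $4$.

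\textbf{Orbits do not have size $2^k$.} The set you must work on is $\eta(\S^{\dd=0}_{n+1,\des=k})$, i.e.\ permutations with no double descents for $\sigma_0=\sigma_{n+1}=+\infty$. There $\sigma_1$, and possibly the last letter, are valleys with one side empty. For $n=2$, $k=1$ this set is $\{132,231\}$, and no letter has both $\lambda$ and $\rho$ nonempty. So your orbits have size $1$, not $2^k$.

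\textbf{Orbit representatives cannot be André permutations of the first kind.} These end with $n+1$ but need not begin with $1$: $\A^1_{3,\des=1}=\{213\}$, and the paper's example is $2\,1\,5\,3\,9\,6\,7\,10\,4\,8\,11$. Moreover $213$ has a double descent for the $+\infty$ boundary, so it does not even lie in the set you act on.

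\textbf{Your Step 2 identity already fails at $n=2$, $k=1$.} The left side is $2\alpha hf$, since $132$ and $231$ each contribute $fh\alpha$. Your right side is $2t\alpha^2$. The Step 3 conversions also fail on $213$: there $\mathrm{lmin}=2$, and $\widetilde{\dis}(213)=0\neq 1=\dis(213)$.

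\textbf{What is missing is the Dong--Lin--Pan block action that the paper uses.} It works on the bi-basic decomposition $\alpha_1\cdots\alpha_k\,1\,\beta_1\cdots\beta_l$.
\begin{enumerate}
\item A valley that is a record (the first letter of some $\alpha_i$, or the last letter of some $\beta_i$ of length $\ge 2$) is treated by moving its whole block. One applies $\varphi_S$ to the double ascents inside the block, reverses it, and reinserts it on the other side of $1$.
\item Only the non-record valleys $\neq 1$ get the plain swap.
\end{enumerate}
This is what keeps $\lrmival$, $\mathrm{lrmin}$ and $\rlminda$ fixed and makes every orbit have $2^{\des}$ elements.

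The canonical orbit element is then $\hat\sigma=1\beta_1\cdots\beta_l$, where each $\beta_i$ with its last letter removed lies in $\A^1$. This $\hat\sigma$ is not an André permutation. You then need a further bijection: $\hat\sigma\mapsto c(\beta_1)\cdots c(\beta_l)\in\Web_n$, followed by Hetyei's $\phi:\Web_n\to\A^1_{n+1}$. It is this map, not an identity on $\A^1$, that converts $(\rmival-1,\widetilde{\dis})$ of $\hat\sigma$ into $(\rmival,\dis)$ of an element of $\A^1_{n+1,\des=k}$.

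Incidentally, the point you flag as the main obstacle is easy. $\Pk$ and $\Val$ in $\dis$ and $\widetilde{\dis}$ are sets of letter values, not positions (see the example $42157368$). The swaps do not change which letters are peaks or valleys, so $\widetilde{\dis}$ is trivially invariant.
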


By using a group action on permutations without double descents, Dong et al.~\cite{DLP25}
proved \eqref{divi-gamma-1}  and \eqref{equ:andre-minimum} when $f=h=1$.  
We shall extend their proof to the general case in Section~\ref{Appendix-B}.

We now derive  a second combinatorial interpretation of $E_{n,k}(f,h\,|\,t,\alpha)$ 
from Theorem \ref{thm-andre-minimum}.
\begin{theorem}\label{thm-andre-maximum}
For $n\ge 0$ and $0\le k\le \lfloor n/2\rfloor$, we have 
\begin{equation}\label{def:dici-d}
    E_{n,k}(f,h\,|\,t,\alpha)=\sum_{\substack{\sigma\in\A_{n+1,\des=k}^1}}f^{\pmax(\sigma)-1}h^{\dis(\sigma)}t^{\lrmaxda(\sigma)}\alpha^{\lmax(\sigma)-1}.
\end{equation}
\end{theorem}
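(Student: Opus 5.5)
We derive Theorem~\ref{thm-andre-maximum} from Theorem~\ref{thm-andre-minimum} by a statistic-transporting bijection on André permutations of the first kind. The natural candidate is the map $\eta:\sigma\mapsto(\sigma^c)^r$ of Lemma~\ref{lem:mapping-eta}, or its inverse. Lemma~\ref{lem:mapping-eta} already carries a permutation without double descents to one without double descents, preserving $\des$, and it exchanges maxima-type statistics (with boundary $0$) for minima-type statistics (with boundary $+\infty$). The plan is therefore to show that $\eta$ restricts to a bijection $\A^1_{n+1,\des=k}\to\A^1_{n+1,\des=k}$. We then refine the lemma's correspondence one-sidedly: we want $(\pmax-1,\lrmaxda,\lmax-1,\dis)\,\sigma$ to correspond to $(\rmival-1,\rlminda,\rlmin-1,\dis)\,\eta(\sigma)$. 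Summing \eqref{equ:andre-minimum} over $\eta(\sigma)$ then gives \eqref{def:dici-d}.

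\textbf{Step 1: the one-sided statistics.} Under complement, a value $v$ becomes $n+2-v$. Left-to-right maxima become left-to-right minima, and reversal then turns these into right-to-left minima. Peaks with boundary $0$ become valleys with boundary $+\infty$, and double ascents remain double ascents after complement followed by reversal. Hence $\lmax(\sigma)=\rlmin(\eta\sigma)$, $\pmax(\sigma)=\rmival(\eta\sigma)$, and $\lrmaxda(\sigma)=\rlminda(\eta\sigma)$. These are exactly the one-sided halves of Lemma~\ref{lem:mapping-eta}, which we reprove directly in this form.

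\textbf{Step 2: the $\dis$ statistic.} Lemma~\ref{lem:mapping-eta} gives $\dis(\sigma)=\widetilde{\dis}(\eta\sigma)$, not $\dis(\eta\sigma)$. We would compare $\widetilde{\dis}$ and $\dis$ on the image. The two differ only in how the positions $1$ and $n+1$ are counted, so we would check whether $\widetilde{\dis}=\dis$ on $\A^1$. If the boundary terms do not cancel, we would abandon $\eta$ and use the reverse map or an André-specific involution instead.

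\textbf{Step 3: stability of $\A^1$ (the main obstacle).} Complement followed by reversal sends the $x$-factorization $u\,\lambda(x)\,x\,\rho(x)\,v$ to an analogous factorization built around letters larger than $x$. This does not obviously preserve the condition $\max\lambda<\max\rho$, and it may even map $\A^1$ onto some other class. If stability fails, the fallback is to avoid $\eta$ altogether. Instead we would rerun the Dong--Lin--Pan group-action argument of Section~\ref{Appendix-B} with maxima statistics, taking the orbit representatives in $\A^1$ to be the permutations whose valleys are arranged so that $\lmax$, $\pmax$ and $\lrmaxda$ are orbit-invariant. The orbit sums produce $2^k$ times the representative weight, and Proposition~\ref{prop:divisi}, equation \eqref{divi-gamma-1}, then identifies $E_{n,k}$ directly.

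For that fallback we must verify that the $\mathbb{Z}_2^k$-action on valleys, restricted to $\dd=0$, preserves $\dis$, $\pmax$, $\bmax$ (since $g=f$), $\lmax+\rmax$ and $\lrmaxda$. The hard point is that $\pmax$ and $\bmax$ may be exchanged by the action. The statement only tracks $\pmax$ and $\lmax$ on the representative, so we need the representative of each orbit to carry the whole weight, with the $g=f$ symmetry absorbing the exchange.
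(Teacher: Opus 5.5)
There is a genuine gap. The route through $\eta$ fails at Step~3 and also at Step~2, and the fallback does not supply the missing ingredient.

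\textbf{Why $\eta$ cannot work.} First, $\eta$ does not preserve $\A^1_{n+1}$. Every $\sigma\in\A^1_m$ ends with its maximum $m$. Indeed, otherwise let $x$ be the smallest letter to the right of $m$. Then $\lambda(x)$ contains $m$, so $\lambda(x)\neq\emptyset$ and $\max\lambda(x)=m$. Neither alternative in the definition can then hold: $\rho(x)=\emptyset$ would force $\lambda(x)=\emptyset$, and $\max\lambda(x)<\max\rho(x)$ is impossible.

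Hence for $\sigma\in\A^1_{n+1}$ the word $\eta(\sigma)=(\sigma^c)^r$ always begins with $1$. It can be André of the first kind only if it also ends with $n+1$, that is, only if $\sigma$ begins with $1$. Concretely, $\eta(213)=132\notin\A^1_3$. The statistic $\dis$ is not transported either: $\dis(213)=1$ but $\dis(132)=0$. Lemma~\ref{lem:mapping-eta} correctly gives $\widetilde{\dis}(132)=1$, but that is $\widetilde{\dis}$, not $\dis$. Your Step~1 identities are correct, but they land outside $\A^1$.

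\textbf{Why the fallback does not close the gap.} It postpones exactly the step that constitutes the theorem. Conjugating the Dong--Lin--Pan action by $\eta$ does give an action preserving the maxima-type weight, and it produces the factor $2^k$. However, its orbit representatives are the images $\eta(\hat\sigma)$ of the representatives $1\beta_1\cdots\beta_l$ of Section~\ref{Appendix-B}, and these are not André permutations. You would still need a weight-preserving bijection from them onto $\A^1_{n+1,\des=k}$. The remark that $g=f$ ``absorbs'' an exchange of $\pmax$ and $\bmax$ is not an argument and does not touch this difficulty.

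\textbf{The missing idea.} What is needed is a bijection of $\A^1_{n+1}$ to itself that turns right-to-left-minimum statistics into left-to-right-maximum statistics. The paper uses a fundamental-transformation type map:
\begin{itemize}
\item cut $\sigma$ after each right-to-left minimum, so every factor ends with one;
\item rotate each factor cyclically so that its maximum comes first;
\item concatenate the factors in increasing order of their maxima.
\end{itemize}
For example, $561\,7482\,3\,9\mapsto 3\,615\,8274\,9$.

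This map has the required properties:
\begin{itemize}
\item All descents lie inside factors, so $\des$ is preserved.
\item Every letter keeps its peak, valley or double-ascent type, so $\dis$ is preserved.
\item The image is again André of the first kind.
\item Right-to-left minima become left-to-right maxima. Those that are valleys become peaks, and double ascents stay double ascents.
\end{itemize}
Composing this map with Theorem~\ref{thm-andre-minimum} gives \eqref{def:dici-d} at once.
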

\begin{proof}
We define a bijection $\rho:\mathcal {A}_{n+1}^1\to\mathcal{A}_{n+1}^1$ satisfying, for all
 \(\sigma\in\mathcal A_{n+1}^1\) 
 \begin{equation}\label{equ:rho}
     (\des,\lrmival,\dis,\rlminda,\rlmin)\sigma
=
(\des,\lrmaxpk,\dis,\lrmaxda,\lmax)\rho(\sigma).
 \end{equation}

Given $\sigma\in\mathcal{A}_{n+1}^1$, we define $\rho(\sigma)$ as follows. First, we decompose \(\sigma\) into cycles 
according to its right-to-left minima. 
Rewrite each cycle by placing its maximum element in the first position, then
reorder the cycles from left to right in increasing order of their maximum elements.
Erasing the cycle notation and concatenating the reordered cycles yields a
permutation \(\rho(\sigma)\), which is readily verified to be an André permutation of the first kind. It is clear that \(\rho\) is a bijection.

In this cycle decomposition, all descents of $\sigma$ occur within cycles. Therefore, the cyclic rotation that moves the maximum element to the first position in each cycle does not change $\des(\sigma)$. Moreover, \(\rho\) maps right-to-left minima to left-to-right maxima and
right-to-left-minimum double ascents to left-to-right-maximum double ascents. This proves \eqref{equ:rho}.

The desired result now follows from~\eqref{equ:andre-minimum} and the bijection $\rho$. 
\end{proof}
\begin{example}
    Let $\sigma=561748239\in\A_{9}^1$.  According to its right-to-left minima, we decompose $\sigma$ into disjoint cycles as  $$\sigma=(561)(7482)(3)(9).$$ Then we have  
    \[\sigma':=\rho(\sigma)=(3)(615)(8274)(9)=361582749.\]
    Moreover, 
    \begin{itemize}
        \item $\des(\sigma)=|\{2,4,6\}|=3$, $\mathrm{rminda}(\sigma)=|\{3\}|=1$, and $\rlmin(\sigma)=|\{1,2,3,9\}|=4$;
        \item $\des(\sigma')=|\{2,5,7\}|=3$, $\mathrm{lmaxda}(\sigma')=|\{3\}|=1$, and $\lmax(\sigma')=|\{3,6,8,9\}|=4.$
    \end{itemize}
\end{example}


Define the following enumerative polynomials 
\begin{subequations}
    \begin{align}
   E_n(x,h\,|\,p,q):&=\sum_{k=0}^{\lfloor n/2\rfloor}E_{n,k}(h\,|\,p,q)x^k\nonumber\\
   &=\sum_{\sigma\in\A_{n+1}^2}x^{\des(\sigma)}h^{\dis(\sigma)}p^{\ldes(\sigma)-\des(\sigma)}q^{\rasc(\sigma)};\label{andre:poly1}\\
   E_n(x,f,h\,|\,t,\alpha):&=\sum_{k=0}^{\lfloor n/2\rfloor}E_{n,k}(f,h,t,\alpha)x^k\nonumber\\
   &=\sum_{\sigma\in\A_{n+1}^1}x^{\des(\sigma)}f^{\pmax(\sigma)-1}h^{\dis(\sigma)}t^{\lrmaxda(\sigma)}\alpha^{\lmax(\sigma)-1}.\label{andre:poly2}
\end{align}
\end{subequations}

\begin{theorem}We have
\begin{enumerate}
    \item 
    \begin{equation}\label{continued-andre}
  \sum_{n=0}^\infty E_n(x,h\,|\,p,q)z^n
 = \mathcal{J}(z;\mathbf{b},\boldsymbol{\lambda}),
\end{equation}
where $b_k=h^k[k+1]_{p,q}$  and 
$\lambda_{k+1}=xh^{2k+1}[k+1]_{p,q}[k]_{p,q}/(p+q)$ for $k\geq 0$.
\item
 \begin{equation}\label{continued-andre2}
  \sum_{n=0}^\infty E_n(x,f,h\,|\,t,\alpha)z^n
 = \mathcal{J}(z;\mathbf{b},\boldsymbol{\lambda}),
\end{equation}
where $b_k=h^{k}(k+\alpha t)$ and  $\lambda_{k+1}=xh^{2k+1}k(k-1+2\alpha f)/2$ for $k\geq 0$.
\end{enumerate}
\end{theorem}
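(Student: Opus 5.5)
The plan is to derive both expansions directly from the generating-function identities already proved: the $J$-fraction \eqref{equ:con;gamma} in Lemma~\ref{lem:gamma-expansion}, combined with the divisibility of Proposition~\ref{prop:divisi}. The key observation is that $E_n(x,h\,|\,p,q)=\sum_k \gamma^a_{n,k}(1,1,h\,|\,p,q)\,\bigl(x/(p+q)\bigr)^k$, by \eqref{divi-gamma-2} and the definition \eqref{andre:poly1}. By Theorem~\ref{Thm: generalized PZ} this agrees with the André-permutation sum, so we may use the algebraic definition $E_{n,k}=\gamma^a_{n,k}/(p+q)^k$. Likewise $E_n(x,f,h\,|\,t,\alpha)=\sum_k\gamma^b_{n,k}(f,f,h\,|\,t,\alpha)(x/2)^k$, and Theorem~\ref{thm-andre-maximum} identifies this with \eqref{andre:poly2}. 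The task therefore reduces to finding the $J$-fraction for the generating function $\sum_{n,k}\gamma_{n,k}\,y^k z^n$ and then substituting $y=x/(p+q)$ or $y=x/2$.

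In \eqref{equ:con;gamma} the left-hand side, after dividing by $x^k(u_3+xu_4)^{n-2k}$, is exactly $\sum_{n,k}\gamma^a_{n,k}u_2^kz^n$ in case (a) and $\sum_{n,k}\gamma^b_{n,k}u_2^kz^n$ in case (b). This is how the proof of Lemma~\ref{lem:gamma-expansion} identified those coefficients. The right-hand side is $\mathcal J$ with $b_k/(u_3+xu_4)$ and $\lambda_k$ as in \eqref{bk}--\eqref{lamk}.

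For case (a), set $\alpha=t=f=g=1$. Then $b_k/(u_3+xu_4)=h^k[k+1]_{p,q}$, and, using $[k+2;1,1]_{p,q}=[k+2]_{p,q}$,
\[
\lambda_{k+1}=h^{2k+1}u_2[k+1]_{p,q}[k+2]_{p,q}.
\]
Replacing $u_2$ by $x/(p+q)$ gives the claimed form, up to the index convention for $[k]_{p,q}[k+1]_{p,q}$. I will have to check carefully whether the paper's $\lambda_{k+1}$ pairs $[k+1][k]$ or $[k+1][k+2]$, and reconcile this against the table values; for example, $E_{2,1}=h$ forces $\lambda_1=xh$. This coefficient bookkeeping is the only delicate point.

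For case (b), set $p=q=1$ and $g=f$. Then $b_k/(u_3+xu_4)=h^k(k+\alpha t)$, since $[k+1;\alpha t,1]_{1,1}=k+\alpha t$. Likewise $[k+1]_{1,1}=k+1$ and $[k+2;\alpha f,\alpha f]_{1,1}=k+2\alpha f$, so
\[
\lambda_{k+1}=h^{2k+1}u_2(k+1)(k+2\alpha f)/2
\]
after substituting $u_2=x/2$. I will then match this with the stated $\lambda_{k+1}$ up to the same index shift, again sanity-checking against Table~\ref{tab:placeholder}: $E_{2,1}=\alpha hf$ requires $\lambda_1=x h\alpha f$. The substitution $u_2\mapsto y$ is legitimate because the $J$-fraction is a formal power series in $u_2$ and $z$, so coefficient extraction commutes with it.
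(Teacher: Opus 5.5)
Your proposal is correct and is essentially the paper's argument. The paper specializes Theorem~\ref{thm1} directly at $\mathbf{u}=(1,x,1,0)$, which kills double descents and gives $\des=\val$, so the left side is already $\sum_k\gamma_{n,k}x^k$; it then replaces $x$ by $x/(p+q)$, resp.\ $x/2$. That is the same computation you read off from \eqref{equ:con;gamma}.

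There is no index convention to reconcile, however. Your values $\lambda_{k+1}=xh^{2k+1}[k+1]_{p,q}[k+2]_{p,q}/(p+q)$ and $\lambda_{k+1}=xh^{2k+1}(k+1)(k+2\alpha f)/2$ are the right ones. The printed formulas mix two indexings and, read literally, give $\lambda_1=0$, which contradicts $E_2(x,h\,|\,p,q)=1+hx$. You should say explicitly that you are proving the corrected statement $\lambda_k=xh^{2k-1}[k]_{p,q}[k+1]_{p,q}/(p+q)$, resp.\ $\lambda_k=xh^{2k-1}k(k-1+2\alpha f)/2$, for $k\ge 1$.
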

\begin{proof}  
Combining Eqs.~\eqref{andre:poly1}, \eqref{divi-gamma-2}, \eqref{gamma-des-special2-intro} and  
Theorem~\ref{thm1} with
\[(\mathbf{u},f,g,h,t,\alpha,\beta,\,p,q)=(1,x,1,0,1,1,h,1,1,1,\,p,q),
\]
 we obtain the first $J$-fraction \eqref{continued-andre}.

 Similarly, combining Eqs.~\eqref{andre:poly2}, ~\eqref{divi-gamma-1}, \eqref{gamma-des-special1} and  
Theorem~\ref{thm1} with 
\[
(\mathbf{u},f,g,h,t,\alpha,\beta,\,p,q)=(1,x,1,0,f,f,h,t,\alpha,\alpha,\,1,1),
\]
we obtain the second $J$-fraction~\eqref{continued-andre2}. 
\end{proof}
\begin{remark}
Han~\cite{Han20} asked for a combinatorial interpretation of
\eqref{continued-andre} when  $x=h=p=1$, which is a $q$-analogue of 
 the remarkable  $J$-fraction for  Euler numbers \(E_{n+1}\):
\begin{equation}\label{CF:E_n}
\sum_{n=0}^{\infty} E_{n+1} z^n 
= \mathcal{J}\!\left(z; k+1,\binom{k+1}{2}\right).
\end{equation}
Pan and Zeng~\cite{PZ21} established the $h=1$ case
of \eqref{continued-andre} shortly. 
The  $f=h=1$ case of \eqref{continued-andre2} appeared in \cite{XZ24}.
\end{remark}

\section{Factorization of \texorpdfstring{$\gamma$}{gamma}-coefficients via group actions}
\label{sec:proof-andre}

\subsection{Proof of Theorem~\ref{Thm: generalized PZ}}\label{Appendix-A}

We  use Pan-Zeng's group action \cite{PZ21} to prove the following equivalent identity, 
for $0\le k\le \lfloor n-1/2\rfloor$,  
    \begin{equation}\label{p-equ:pz}
\sum_{\sigma\in\S_{n,\des=k}^{\dd=0}}h^{\dis(\sigma)}p^{\ldes(\sigma)}q^{\rasc(\sigma)}=(p+q)^k\sum_{\sigma\in\mathcal{A}_{n,\des=k}^2}h^{\dis(\sigma)}p^{\ldes(\sigma)-k}q^{\rasc(\sigma)}.
    \end{equation}

For convenience, define the subset of $\S_n$
\begin{equation}\label{dd=0-subset}
    \S_n^{\dd=0}=\{\sigma\in\S_n:\dd(\sigma)=0\}.
\end{equation}
For any permutation
$\sigma\in\S_n^{\dd=0}$ and $x\in[n]$, we shall identify $\sigma$ with its $x$-factorization\footnote{For convenience, we adopt a concise form that is equivalent to Definition \ref{x-frac}.}, i.e.,
\[
\sigma=(w_1,w_2,x,w_4,w_5)=w_1\,w_2\,x\,w_4\,w_5,
\]
where $w_2$ (resp., $w_4$) is the maximal contiguous interval (possibly empty) immediately to the left (resp., right) of $x$ whose letters are all greater than $x$. Let $y_1$ (resp. $y_2$) be the smallest element in $w_2$ (resp. $w_4$), i.e.,
 \[y_1:=\min(w_2)\,\text{ and }\, y_2:=\min(w_4).\]
\begin{definition}
A valley $x$ of $\sigma$ is said to be
\begin{itemize}
  \item \emph{good} (resp.\ \emph{bad}) if $y_1>y_2$ (resp.\ $y_1<y_2$);
  \item of \emph{type I} if $\min(y_1,y_2)$ is a peak or double ascent;
  \item of \emph{type II} if $\min(y_1,y_2)$ is a valley.
\end{itemize}
\end{definition}

\begin{proposition}
Let $\sigma\in\S_n^{\dd=0}$ and $x\in \mathrm{Val}(\sigma)$ with $y=\min(y_1,y_2)$.
\begin{itemize}
\item[(i)] If $x$ is of type I and $y$ is a peak, then $w_4=y$ (resp.\ $w_2=y$) if
      $y_1>y_2$ (resp.\ $y_1<y_2$).
\item[(ii)] If $x$ is of type I and $y$ is a double ascent, then $w_4=yw_4''$
      (resp.\ $w_2=yw_2''$) with $w_2'',w_4''\neq\epsilon$ if
      $y_1>y_2$ (resp.\ $y_1<y_2$).
\item[(iii)] If $x$ is of type II, then $w_4=w_4' y w_4''$ (resp.\ $w_2=w_2' y w_2''$)
      with $w_2',w_2'',w_4',w_4''\neq\epsilon$ if
      $y_1>y_2$ (resp.\ $y_1<y_2$).
\end{itemize}
\end{proposition}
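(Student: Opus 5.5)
The plan is a direct structural argument on the $x$-factorization $\sigma=w_1\,w_2\,x\,w_4\,w_5$. Two facts drive it. Since $\sigma\in\S_n^{\dd=0}$, no letter can sit between a larger left neighbour and a smaller right neighbour. Since $x$ is a valley, both $w_2$ and $w_4$ are nonempty (with the boundary convention $\sigma_0=\sigma_{n+1}=0$, a valley has larger neighbours on both sides). By the definition of $w_2,w_4$, every letter of $w_2w_4$ exceeds $x$. The letter immediately left of $w_2$, if any, is smaller than $x$, and so is the letter immediately right of $w_4$ (or a boundary $0$).

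We treat the case $y_1>y_2$, so $y=y_2=\min(w_4)$; the case $y_1<y_2$ is the mirror image. Write $w_4=w_4'\,y\,w_4''$.

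\textbf{Right neighbour of $y$.} If $w_4''$ is nonempty, its first letter exceeds $y$ because $y$ is the minimum of $w_4$. If $w_4''$ is empty, the right neighbour of $y$ is the letter after $w_4$, which is $<x<y$.

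\textbf{Left neighbour of $y$.} If $w_4'$ is nonempty, the left neighbour lies in $w_4$ and exceeds $y$. If $w_4'$ is empty, the left neighbour is $x<y$.

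This gives four possibilities.
\begin{itemize}
\item $w_4'=w_4''=\epsilon$: then $x<y>$ (next letter), so $y$ is a peak and $w_4=y$.
\item $w_4'=\epsilon$, $w_4''\neq\epsilon$: then $x<y<$ (next letter), so $y$ is a double ascent and $w_4=y\,w_4''$ with $w_4''\neq\epsilon$.
\item $w_4'\neq\epsilon$, $w_4''\neq\epsilon$: then $y$ is a valley and $w_4=w_4'\,y\,w_4''$ with both parts nonempty.
\item $w_4'\neq\epsilon$, $w_4''=\epsilon$: then $y$ would be a double descent, which is excluded since $\dd(\sigma)=0$.
\end{itemize}

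Reading these cases backwards gives (i)–(iii). If $y$ is a peak, only the first case applies. If $y$ is a double ascent, only the second. If $y$ is a valley (type II), only the third.

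For $y_1<y_2$, apply the same analysis to $w_2=w_2'\,y\,w_2''$ with $y=y_1$. The right neighbour of $y$ is either the first letter of $w_2''$ ($>y$) or $x$ ($<y$). The left neighbour is either the last letter of $w_2'$ ($>y$) or the letter before $w_2$ ($<x$).
\begin{itemize}
\item $w_2'=w_2''=\epsilon$: $y$ is a peak, so $w_2=y$.
\item $w_2'=\epsilon$, $w_2''\neq\epsilon$: the left neighbour is small and the right neighbour is larger, so $y$ is a double ascent, giving $w_2=y\,w_2''$.
\item Both nonempty: $y$ is a valley.
\item $w_2'\neq\epsilon$, $w_2''=\epsilon$: the left neighbour is larger and the right neighbour is $x<y$, so $y$ is a double descent, which is excluded.
\end{itemize}

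The only delicate point is boundary handling: we must check that the letters adjacent to $w_2$ and $w_4$ outside the factorization, including the sentinels $0$, are indeed smaller than $x$. This holds by the maximality of $w_2$ and $w_4$. No real obstacle is expected.
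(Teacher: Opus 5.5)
Your proof is correct and complete. Splitting according to whether $w_4'$ and $w_4''$ (resp.\ $w_2'$ and $w_2''$) are empty gives four exhaustive cases, and the neighbour comparisons are right in each one. They use that the letters just outside $w_2$ and $w_4$, including the sentinels $\sigma_0=\sigma_{n+1}=0$, are smaller than $x$. The double-descent case is correctly excluded by $\dd(\sigma)=0$. Since peak, double ascent and valley are mutually exclusive, reading the cases backwards yields (i)--(iii).

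The paper itself states this proposition without proof, taking it over from Pan--Zeng's construction. So there is no argument in the paper to compare against; your direct neighbour analysis supplies the verification the paper omits.

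One point is worth stating explicitly: $y_1\neq y_2$, because $w_2$ and $w_4$ are nonempty and have disjoint sets of letters. Hence the dichotomy $y_1>y_2$ versus $y_1<y_2$ covers all valleys $x$.
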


\begin{definition}
For $\sigma\in\S_n^{\dd=0}$ and each $x\in \mathrm{Val}(\sigma)$ with $y=\min(y_1,y_2)$,
we define its transform $\varphi(\sigma,x)$ as follows:
\begin{enumerate}
\item If $x$ is of type I and $y$ is a peak, then
\[
\varphi(\sigma,x)=
\begin{cases}
(w_1,y,x,w_2,w_5) & \text{if } y=y_2,\\
(w_1,w_4,x,y,w_5) & \text{if } y=y_1.
\end{cases}
\]
\item If $x$ is of type I and $y$ is a double ascent, then
\[
\varphi(\sigma,x)=
\begin{cases}
(w_1,yw_2,x,w'',w_5) & \text{if } y=y_2 \text{ and } w_4=yw'',\\
(w_1,w'',x,yw_4,w_5) & \text{if } y=y_1 \text{ and } w_2=yw''.
\end{cases}
\]
\item If $x$ is of type II, then
\[
\varphi(\sigma,x)=
\begin{cases}
(w_1,w_2yw',x,w'',w_5) & \text{if } y=y_2 \text{ and } w_4=w'yw'',\\
(w_1,w',x,w''yw_4,w_5) & \text{if } y=y_1 \text{ and } w_2=w'yw''.
\end{cases}
\]
with $w',w''\neq\epsilon$.
\end{enumerate}
\end{definition}

The three cases $(i), (ii), (iii)$ of the transformation with $y=y_2$ are
depicted in Fig.~\ref{fig:pz}.

\begin{figure}[htp]
    \centering
    \includegraphics[width=0.95\linewidth]{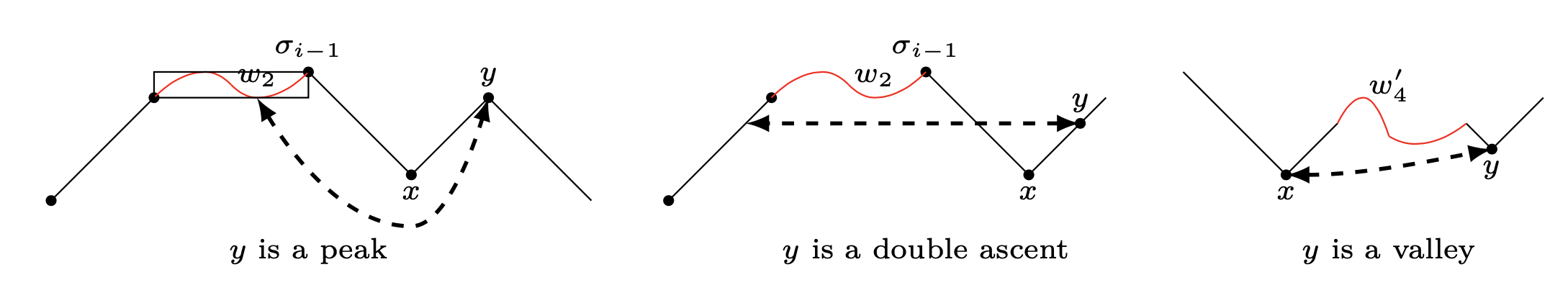}
    \caption{The transform $\varphi(\sigma,x)$ according to the type of $y$.
}
    \label{fig:pz}
\end{figure}
 This transformation satisfies the following proposition.
\begin{proposition}
If $\sigma\in \S_{n,\des=k}^{\dd=0}$ and $x\in \mathrm{Val}(\sigma)$, then $\varphi(\sigma,x)\in \S_{n,\des=k}^{\dd=0}$ and
\begin{align}
\dis(\varphi(\sigma,x)) &= \dis(\sigma),\label{eq:D}\\[4pt]
(2\!-\!13)\,\varphi(\sigma,x)
&=
\begin{cases}
(2\!-\!13)(\sigma)+1 & \text{if $x$ is good},\\
(2\!-\!13)(\sigma)-1 & \text{if $x$ is bad},
\end{cases}\label{2-13}\\[6pt]
(31\!-\!2)\,\varphi(\sigma,x)
&=
\begin{cases}
(31\!-\!2)(\sigma)-1 & \text{if $x$ is good},\\
(31\!-\!2)(\sigma)+1 & \text{if $x$ is bad}.
\end{cases}\label{31-2}
\end{align}

\end{proposition}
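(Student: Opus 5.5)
The plan is to read each claim off the Laguerre history $\Psi_{FV}$ and reduce it to the local behaviour of the single moved letter $y$; all other letters keep their relative order and their neighbours, apart from those adjacent to the moved block.

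\textbf{Membership and descents.} I would first check case by case that $\varphi(\sigma,x)\in\S_{n,\des=k}^{\dd=0}$.
\begin{itemize}
\item In case (i), the peak $y=y_2$ forms the whole word $w_4$. Exchanging it with $w_2$ leaves $x$ a valley, $y$ a peak, and the boundaries of $w_2$ and $w_4$ with the letters of $w_1$ and $w_5$ (all $<x$) of the same type.
\item In cases (ii) and (iii), $y$ is transported across $x$ together with the appropriate block. Since every letter of $w_2$ and $w_4$ exceeds $x$, and $y$ is the minimum of its side, each letter keeps its type (valley, peak, or double ascent), with no double descent created.
\end{itemize}
Hence the multiset of letter types is preserved. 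In particular $\des=\val+\dd$ is unchanged, and so are the sets $\Val$ and $\Pk$ as sets of values.

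\textbf{The statistic $\dis$.} By definition~\eqref{eq:distance}, $\dis$ depends only on the sets $\Val(\sigma)$ and $\Pk(\sigma)$. Preserving these letter types therefore yields \eqref{eq:D} directly. Equivalently, the Motzkin path $w$ of $\Psi_{FV}(\sigma)$ is unchanged, and one can invoke Lemma~\ref{lem:dis}.

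\textbf{The pattern counts.} Since the path and hence every height $h_z$ is unchanged, Lemma~\ref{lem:height} gives $(31\!-\!2)_z+(2\!-\!13)_z=h_z$ for every $z$. It therefore suffices to show two things:
\begin{itemize}
\item $(31\!-\!2)_z$ is unchanged for every $z\ne y$;
\item $(31\!-\!2)_y$ decreases by $1$ when $x$ is good and increases by $1$ when $x$ is bad.
\end{itemize}
Summing over $z$ then gives \eqref{31-2}, and \eqref{2-13} follows from the height identity.

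To handle these, I would use the $z$-decomposition~\eqref{x-decomposition}, in which $(31\!-\!2)_z$ equals $j-1$, the number of $v$-blocks to the left of the block containing $z$.
\begin{itemize}
\item For $z<x$, the letters $\ge z$ around $x$ merge into one block containing $w_2xw_4$, both before and after the move. Thus no block boundary changes.
\item For $z>x$ with $z\neq y$, the letter $x$ separates the $z$-blocks inside $w_2$ from those inside $w_4$. The moved segment ($y$, or $y$ with its adjacent block) is relocated so that the count of $z$-blocks to the left of $z$ is preserved. This uses minimality of $y$ on its side: any letter $z$ with $y<z<y_1$ sits in blocks that are merely re-labeled as they are transported.
\item For $z=y$ in the good case ($y=y_2<y_1$), $y$ moves to the left side of $x$. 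The block structure for $y$ then loses exactly one separating block to its left, namely the one formed by $x$ with the letters of $w_2$ counted previously. This gives $(31\!-\!2)_y$ decreasing by $1$. The bad case is symmetric.
\end{itemize}

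\textbf{Main obstacle.} The delicate part is the case analysis for $z>x$, $z\ne y$ in type (iii). There the moved block $w'$ or $w''$ can contain letters both larger and smaller than $z$, so I would verify carefully, using $y=\min$ of its side and $y_1>y_2$ or $y_1<y_2$, that the order of $z$-blocks relative to $z$ is preserved. If this bookkeeping becomes messy, my fallback is to follow Pan--Zeng~\cite{PZ21}, which already establishes the pattern identities \eqref{2-13} and \eqref{31-2}. The only genuinely new ingredient here is \eqref{eq:D}, which follows from type preservation alone.
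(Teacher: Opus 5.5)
Your proposal is correct. For \eqref{eq:D} it coincides with the paper: $\varphi(\sigma,x)$ preserves the type of every letter, so the value sets $\Val$ and $\Pk$, and hence $\dis$, are unchanged.

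For \eqref{2-13} and \eqref{31-2} the paper gives no argument of its own and simply cites \cite[Proposition~5]{PZ21}. You instead reduce both identities to a single label computation. The path of $\Psi_{FV}$ is unchanged, so by Lemma~\ref{lem:height} it suffices to show that $p_z=(31\!-\!2)_z$ is unchanged for $z\neq y$ and drops by one at $z=y$ when $x$ is good. This holds in all three cases:
\begin{itemize}
\item For $z\le x$, the stretch $w_2xw_4$ and its image lie inside a single $z$-block (the one containing $x$ when $z=x$), preceded by the unchanged $w_1$.
\item For $x<z<y$ (such $z$ lies in $w_1$ or $w_5$), the stretch contributes exactly two $z$-blocks, before and after.
\item For $z>y$, deleting $x$ and $y$ leaves the same word ($w_2$, $w_2w''$ or $w_2w'w''$ in cases (i)--(iii)) before and after. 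It is cut at the same places by letters $<z$, so the $z$-blocks are literally the same.
\item For $z=y$, the block $w_2$ ceases to lie strictly left of the block of $y$: it merges with it ($yw_2$ or $w_2yw'$), or moves to its right ($y\,x\,w_2$) in case (i).
\end{itemize}
Since $\varphi(\cdot,x)$ is an involution exchanging good and bad valleys at $x$, the bad case follows from the good one.

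What your route buys is self-containment within the paper's own Laguerre-history framework. It also gets \eqref{2-13} for free from \eqref{31-2}, and it gives the sharper statement that $\varphi(\cdot,x)$ changes only the single label $p_y$, by $\mp1$. The citation buys only brevity.

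In the write-up, make two repairs:
\begin{itemize}
\item Replace ``relocated so that the count is preserved'' by an explicit check like the one above. Include $z=x$, which your case list omits.
\item In the type-preservation step, use $y=\min(w_2w_4)$ rather than ``the minimum of its side''. The moved $y$ stays a double ascent in case (ii), resp.\ a valley in case (iii), only because $y<y_1$: there it becomes adjacent to a letter of $w_2$.
\end{itemize}
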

\begin{proof}
   Since the transformation $\varphi(\sigma,x)$ preserves the values at the valleys and peaks of $\sigma$, it follows directly from \eqref{eq:distance} that identity \eqref{eq:D} holds. The proofs of \eqref{2-13} and \eqref{31-2} can be found in \cite[Proposition~5]{PZ21}.
\end{proof}

Next, we define the transform $\varphi(\sigma,S)$ for any subset $S$ of $\mathrm{Val}(\sigma)$ with $\sigma\in\S_n^{\dd=0}$.

\begin{definition}
Let $\sigma\in\S_n^{\dd=0}$. For any $S\subseteq \mathrm{Val}(\sigma)$, let $\{S_1,S_2\}$ be the partition of $S$ such that
\begin{enumerate}
\item $S_1$ is the subset of $S$ consisting of valleys of type I, say $i_1,\ldots,i_\ell$;
\item $S_2$ is the subset of $S$ consisting of valleys of type II, say $j_k<\cdots<j_2<j_1$.
\end{enumerate}
Define the transforms
\[
\varphi(\sigma,S_1)=\varphi\bigl(i_\ell,\ldots,\varphi(i_2,\varphi(i_1,\sigma))\bigr),
\]
\[
\varphi(\sigma,S_2)=\varphi\bigl(j_k,\ldots,\varphi(j_2,\varphi(j_1,\sigma))\bigr),
\]
\[
\varphi(\sigma,S)=\varphi\bigl(\varphi(\sigma,S_1),S_2\bigr).
\]
\end{definition}
\begin{remark}
    Pan-Zeng prove that the image
$\varphi(\sigma,S_1)$ is independent of the order of $i_1,\ldots,i_\ell$, while
$\varphi(\sigma,S_2)$ is defined for the elements of $S_2$ in the decreasing order
$j_1>j_2>\cdots>j_k$.
\end{remark}

\begin{proposition}
If $\sigma \in \A_{n,\des=k}^2$ and $S \subseteq \Val(\sigma)$, then $\tau := \varphi(\sigma,S) \in \S_{n,\des=k}^{\dd=0}$ is well defined and
\begin{equation}\label{eq:2.4}
S=\{x\in \Val(\tau)\mid x \text{ is a bad valley}\}.
\end{equation}
\end{proposition}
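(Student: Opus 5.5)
We reduce the statement to Pan--Zeng's original analysis in \cite{PZ21}, noting that the new parameter $h$ plays no role here. The claim concerns only membership of $\tau$ in $\S_{n,\des=k}^{\dd=0}$, well-definedness of the iterated transform, and the identification of $S$ with the set of bad valleys of $\tau$. None of this involves $\dis$. The approach is therefore to check that each elementary step $\varphi(\cdot,x)$ behaves as in \cite[Section~3]{PZ21}.

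\textbf{Step 1: a single transform.} We first record that each single transform $\varphi(\sigma,x)$, for $x\in\Val(\sigma)$, has the following properties.
\begin{itemize}
\item It keeps $\sigma$ in $\S_{n,\des=k}^{\dd=0}$; this is the preceding proposition.
\item It fixes the set of valleys, peaks and double ascents as value sets, since only blocks of letters larger than $x$ are moved across $x$.
\item It toggles the good/bad status of $x$. In each of the cases (1)--(3), the block containing $y=\min(y_1,y_2)$ is moved to the opposite side of $x$, so the new minima $y_1',y_2'$ satisfy the reversed inequality.
\end{itemize}
We then check the key locality claim. The transform at $x$ changes only the factor $w_2xw_4$, and it preserves the multiset of letters greater than $x$ adjacent to $x$. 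Hence, for any other valley $x'$, the $x'$-factorization is affected in one of two ways. If $x'<x$, the factor $w_2xw_4$ lies inside one of $w_2(x'),w_4(x')$ and only its internal order changes, so the minima $y_1(x'),y_2(x')$ are unchanged. If $x'>x$, then $x'$ lies in $w_2$ or $w_4$, and the relevant maximal intervals around $x'$ are moved intact. Either way, the good/bad status of $x'$ is unchanged. The type I/II classification is preserved in the same way, since it depends only on the nature of $\min(y_1,y_2)$.

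\textbf{Step 2: ordering and well-definedness.} For type I valleys, the moved block is a single peak or the short piece $y$, which contains no other valley. The transforms at distinct type I valleys therefore commute, which justifies the order-independence of $\varphi(\sigma,S_1)$ quoted in the remark. For type II valleys, the block $w'y$ (or $yw''$) contains valleys smaller than $x$, possibly including elements of $S_2$. This is why we process $j_1>j_2>\cdots$ in decreasing order. After treating $j_1$, each smaller $j_i$ is still a type II valley with the same $y$, although it may have moved position. We verify by induction on $i$ that the hypotheses of case (3) still hold for $j_i$, namely $w',w''\neq\epsilon$, because the blocks around $j_i$ were moved intact.

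\textbf{Step 3: the identity \eqref{eq:2.4}.} Since $\sigma\in\A_n^2$, the second-kind condition $\min(\rho(x))<\min(\lambda(x))$ says exactly that every valley $x$ of $\sigma$ is good; boundary valleys with an empty side are treated as in \cite{PZ21}. By Steps 1 and 2, the valleys in $S$ are toggled exactly once and become bad, while valleys outside $S$ are never toggled and remain good. This gives $S=\{x\in\Val(\tau): x \text{ bad}\}$.

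\textbf{Main obstacle.} The main difficulty is the locality claim for type II transforms, namely that processing $j_1$ does not alter the status or type of a smaller $j_i$ lying inside the moved block. We will handle it by following \cite[Lemma~6]{PZ21} directly, since the argument there uses only the factorization structure and therefore applies verbatim.
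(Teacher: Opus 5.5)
\textbf{The paper's route.} The paper gives no argument for this proposition; it imports it from \cite{PZ21}. So deferring to Pan--Zeng is its route too. The sketch you add, however, breaks at the locality claim of Step~1, and the break cannot be repaired.

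\textbf{Where the locality claim fails.} Take case~(3) with $y=y_2$ and $w_4=w'yw''$. The pivot $y$ is a valley \emph{larger} than $x$, since every letter of $w_2$ and $w_4$ exceeds $x$; your Step~2 remark about valleys smaller than $x$ inside the moved block has the direction reversed. Moreover, the factorization of $y$ is not carried along intact. In $\sigma$, the letter $y$ has $w'$ on its left and $w''$ on its right. In $\varphi(\sigma,x)=w_1\,w_2\,y\,w'\,x\,w''\,w_5$, it has $w_2$ on its left and $w'$ on its right. So $y$ is good before the move iff $\min w'>\min w''$, and good after iff $\min w_2>\min w'$; its type can change as well. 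A type~II transform can therefore flip the status of its pivot. The mechanism of Step~3 (elements of $S$ toggled exactly once, all other valleys never) is false.

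\textbf{The identity itself fails.} With the definitions as stated, \eqref{eq:2.4} is false, so no argument can establish it.
\begin{itemize}
\item Take $\sigma=4\,1\,5\,2\,3\in\A^2_{5,\des=2}$, whose valleys $1$ and $2$ are both good, and $S=\{1\}$. The valley $1$ is of type~II with pivot $2$, $w'=5$, $w''=3$, so $\tau=\varphi(\sigma,S)=4\,2\,5\,1\,3$. In $\tau$ the valley $2$ has $w_2=4$ and $w_4=5$, hence is bad. The bad valleys of $\tau$ are $\{1,2\}\neq S$.
\item The paper's own example shows the same phenomenon. For $\tau=11\,2\,12\,13\,1\,6\,4\,5\,3\,8\,9\,7\,10$ (the printed $\tau$ omits the letter $12$), the inverse procedure returns $S=\{1,3,4,7\}$. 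The bad valleys of $\tau$, however, are $\{1,2,3,7\}$.
\end{itemize}
So deferring this step to \cite{PZ21} does not close the gap.

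\textbf{What your argument does prove.} Once the pivot exception is added, your Step~1 proves what Proposition~\ref{prop:PZ} actually needs. The only non-local effect of a transform at $x$ is on a pivot $y>x$, and only in case~(3). Hence processing $S_1$ first and then $S_2$ in decreasing order ensures that every element of $S$ is still a good valley of the same type when it is processed. It follows that $\tau$ is well defined and lies in $\S_{n,\des=k}^{\dd=0}$. It also follows that $2\!-\!13$ increases and $31\!-\!2$ decreases by exactly $|S|$. But $S$ must be recovered from $\tau$ by an iterative inverse procedure, as in the paper's example, not read off as the set of bad valleys of $\tau$.
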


For any set $S$, we denote by $2^{S}$ the set of all subsets of $S$. In what follows, for
$\sigma \in \A_{n,\des=k}^2$ we will identify $\Val(\sigma)$ with $[k]$ under the map
$a_i \mapsto i$ for $i \in [k]$ if $\Val(\sigma)$ consists of $a_1<a_2<\cdots<a_k$, and
identify any subset $S\in \Val(\sigma)$ with its image $S'\in 2^{[k]}$. Thus, we will use
$2^{[k]}$ instead of $2^{\Val(\sigma)}$.

\begin{proposition}\label{prop:PZ}
The map $\varphi:\A^2_{n,\des=k}\times 2^{[k]}\to \S_{n,\des=k}^{\dd=0}$ is a bijection such that
for $(\sigma,S')\in \A^2_{n,k}\times 2^{[k]}$ we have
\begin{align}
\dis(\sigma)&=\dis(\varphi(\sigma,S));\label{S-D}\\
(2\text{-}13)(\sigma) + |S'| &= (2\text{-}13)\,\varphi(\sigma,S);\label{2-13-D}\\
(31\text{-}2)(\sigma) - |S'| &= (31\text{-}2)\,\varphi(\sigma,S).\label{31-2-D}
\end{align}
\end{proposition}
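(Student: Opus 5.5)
The plan is to take the Pan--Zeng statement for granted, namely that $\varphi:\A^2_{n,\des=k}\times 2^{[k]}\to\S^{\dd=0}_{n,\des=k}$ is a bijection satisfying \eqref{2-13-D} and \eqref{31-2-D}; this is \cite[Theorem~6]{PZ21} together with the preceding proposition on well-definedness and \eqref{eq:2.4}. The only new ingredient is the invariance \eqref{S-D} of the $\dis$ statistic. So I would proceed in three steps.

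First, bijectivity. By the preceding proposition, $\tau=\varphi(\sigma,S)$ lies in $\S^{\dd=0}_{n,\des=k}$, and $S$ is recovered from $\tau$ as its set of bad valleys, by \eqref{eq:2.4}. To invert, given $\tau$, I would apply the transform $\varphi(\cdot,x)$ to all bad valleys of $\tau$. This uses the fact that $\varphi(\cdot,x)$ is an involution at a fixed valley: it swaps good and bad, and by Pan--Zeng it leaves the types of the other valleys unchanged. Processing type~I valleys in any order and type~II valleys in decreasing order then produces an André permutation of the second kind. In that permutation every valley is good: the condition $\min\rho(x)<\min\lambda(x)$ in Definition~\ref{def:x-factorization-André1-2'} is exactly $y_1>y_2$, and the convention $\lambda=\emptyset$ when $\rho=\emptyset$ handles the boundary cases. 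The resulting map is a two-sided inverse, which is the content of \cite{PZ21}. I expect this step to be the main obstacle, specifically the order-independence for type~I and the well-definedness of the type~II order. I will cite it rather than reprove it.

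Second, the pattern counts. Each elementary move $\varphi(\cdot,x)$ at a good valley changes $(2\!-\!13)$ by $+1$ and $(31\!-\!2)$ by $-1$, by \eqref{2-13} and \eqref{31-2}. Since in $\sigma\in\A^2$ every valley is good, and the elementary moves applied in sequence do not alter the good or bad status of the valleys not yet processed (again from \cite{PZ21}), exactly $|S'|$ moves are performed, each at a good valley. Summing the increments gives \eqref{2-13-D} and \eqref{31-2-D}.

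Third, $\dis$. Each elementary move only relocates blocks of letters greater than $x$ around $x$. It keeps every valley a valley and every peak a peak as values, so $\Pk(\sigma)$ and $\Val(\sigma)$ are preserved as sets of letters. The excluded letter $n$ in \eqref{eq:distance} also remains a peak. Hence \eqref{eq:D} holds for each move, and iterating it yields \eqref{S-D}.
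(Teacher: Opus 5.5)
Your argument follows the paper's route. The paper also takes bijectivity and the shifts in $(2\!-\!13)$ and $(31\!-\!2)$ from Pan--Zeng, and gets \eqref{S-D} from the one-step invariance \eqref{eq:D}, which holds because every move preserves the sets of peak values and valley values. Your Steps~2 and~3 are sound. In Step~2, note that a move at $x$ changes the factorization of at most one other valley, namely $y=\min w_4>x$ in the type~II case. Since type~I valleys are processed first and type~II valleys in decreasing order, $y$ is never a valley still waiting to be processed, so each of the $|S'|$ moves is indeed made at a good valley.

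The explicit inverse you sketch in Step~1 is wrong, and should be replaced by a plain citation, as in the paper. A type~II move at $x$ does not leave the other valleys alone: it changes $(\lambda(y),\rho(y))$ from $(w',w'')$ to $(w_2,w')$. For example, $\varphi(31524,1)=32514$ turns the good valley $2$ into a bad one. So $\varphi(31524,\{1\})$ has bad-valley set $\{1,2\}\neq\{1\}$, which means \eqref{eq:2.4}, with the definitions exactly as transcribed here, cannot be relied on to read off $S$.

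As a consequence, the moves must be undone in reverse order: type~II valleys first, in increasing order, then type~I. Here is a concrete failure of your order. Take $\sigma=31524\in\A^2_{5,\des=2}$ and $S=\{1,2\}$. Then $\varphi(\sigma,2)=31425$ and $\tau=\varphi(31425,1)=32415$, whose bad valleys are $1$ and $2$. Undoing the type~I valley first, as you propose, gives $42315$ and then $41325$. This is not an Andr\'e permutation of the second kind, since its valley $2$ has $\lambda=3$ and $\rho=5$. Undoing the type~II valley first instead gives $31425$ and then $31524$, which is correct.
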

\begin{proof}
    Eq. \eqref{S-D}  follows directly from \eqref{eq:D}. The proofs of \eqref{2-13-D} and \eqref{31-2-D} can be found in \cite[Proposition~7]{PZ21}.
\end{proof}
\begin{proof}[\textbf{Continuation of Proof of Eq.~\eqref{p-equ:pz}}]
Since $(p+q)^k = \sum_{S\in 2^{[k]}} p^{|S|} q^{k-|S|}$, it follows from Proposition~\ref{prop:PZ} that
\[
\sum_{(\sigma,S)\in \A^2_{n,\des=k}\times 2^{[k]}} h^{\dis(\sigma)}
p^{\ldes(\sigma)-|S|}q^{\rasc(\sigma)+|S|}
=
\sum_{\sigma\in \S_{n,\des=k}^{\dd=0}} h^{\dis(\sigma)} p^{\ldes(\sigma)}\,q^{\rasc(\sigma)},
\]
which is precisely equivalent to \eqref{prop:PZ}.
\end{proof}

\begin{example}
     Let $\tau=11\,\mathbf{2}\,13\,\mathbf{1}\,6\,\mathbf{4}\,5\,\mathbf{3}\,8\,9\,\mathbf{7}\,10\in\S_{13,\des=5}^{\dd=0}$.
\begin{enumerate}
\item[(i)] First, let $(\sigma,S):=(\tau,\emptyset)$. We have $S_2=\{1,3\}$ and $\min(S_2)=1$, so $S=\{1\}$ and
\[
\sigma:=\varphi(\sigma,1)=11\,\mathbf{1}\,12\,13\,\mathbf{2}\,6\,\mathbf{4}\,5\,\mathbf{3}\,8\,9\,\mathbf{7}\,10.
\]
As $S_2=\{3\}$, we have $\min S_2=3$ and $S:=\{1,3\}$, hence
\[
\sigma:=\varphi(\sigma,3)=11\,\mathbf{1}\,12\,13\,\mathbf{2}\,6\,\mathbf{3}\,5\,\mathbf{4}\,8\,9\,\mathbf{7}\,10.
\]

\item[(ii)] Now for $\sigma$, $S_2=\emptyset$ and $S_1=\{4,7\}$, then $S:=\{1,3,4,7\}$ and
\[
\sigma:=\varphi(\sigma,S_1)=11\,\mathbf{1}\,12\,13\,\mathbf{2}\,6\,\mathbf{3}\,10\,\mathbf{7}\,8\,9\,\mathbf{4}\,5\in\mathcal{A}_{13,5}^2.
\]
\end{enumerate}
\end{example}

\subsection{Proof of Theorem~\ref{thm-andre-minimum}}\label{Appendix-B}

We prove the following equivalent identity
\begin{align}\label{equ:DLP}
&\sum_{\sigma\in\S_{n,\des=k}^{\dd=0}}f^{\lrmival(\sigma)}h^{\widetilde{\dis}(\sigma)}t^{\rlminda(\sigma)}\alpha^{\mathrm{lrmin}(\sigma)}\nonumber\\
&=2^k\,\sum_{\sigma\in\mathcal{A}_{n,\des=k}^1}f^{\rmival(\sigma)-1}h^{\dis(\sigma)}t^{\rlminda(\sigma)}\alpha^{\mathrm{rmin}(\sigma)-1}.
\end{align}
Dong–Lin–Pan \cite{DLP25} proved the above identity  in the special case $f=h=1$. We show that their approach extends to  the general case. Recall that for any $x\in[n]$ and $\sigma\in\mathfrak{S}_n$, the $x$-factorization of $\sigma$ is defined by
\[
\sigma = w_1 w_2\, x\, w_4 w_5,
\]
where $w_2$ (resp., $w_4$) is the maximal contiguous interval (possibly empty) immediately to the left (resp., right) of $x$ whose letters are all greater than $x$.  Define the \emph{swap mapping}  $\varphi_x:\S_n\to\S_n$ by
\begin{equation}\label{varphi-x}
    \varphi_x(\s):= w_1w_4\,x\,w_2w_5.
\end{equation}
For instance, with $\sigma=348579162\in\mathfrak{S}_{9}$, in the $5$-factorization of $\sigma$, $w_2=8$ and $w_4=79$ and so $\varphi_5(\sigma)=347958162$.
It is plain to see that $\varphi_x$ is an involution acting on $\mathfrak{S}_n$, and for all $x,y\in[n]$, $\varphi_x$ and $\varphi_y$ commute.
Therefore, for any $S\subseteq[n]$ we can define the function
\[
\varphi_S:\mathfrak{S}_n\to\mathfrak{S}_n
\quad\text{by}\quad
\varphi_S=\prod_{x\in S}\varphi_x,
\]
where multiplication is the composition of mappings.
Hence the group $\mathbb{Z}_2^n$ acts on $\mathfrak{S}_n$ via the function $\varphi_S$.
This action is known as the \emph{Foata--Strehl action} (FS-action for short) on permutations~\cite{FS74}.

\begin{definition}[Bi-basic decomposition]
    For $\sigma\in\S_n$, with $\rlmin(\sigma)-1=k$ and $\lrmin(\sigma)-1=l$, there is a unique decomposition of $\sigma$ as
    \begin{equation}\label{bi-decomp}
        \sigma = \alpha_1\alpha_2\cdots\alpha_k\,1\,\beta_1\beta_2\cdots\beta_l,
    \end{equation}
    where \begin{itemize}
  \item the first letter of each $\alpha_i$ ($1 \le i \le k$), which is smallest inside $\alpha_i$,
  is a left-to-right minimum of $\sigma$;
  \item and the last letter of each $\beta_i$ ($1 \le i \le l$), which is smallest inside $\beta_i$,
  is a right-to-left minimum of $\sigma$.
\end{itemize}
This is the bi-basic decomposition of $\sigma$ and each $\alpha_i$ or $\beta_i$ is called a block of $\sigma$.
\end{definition}
\begin{example}
    Let $\sigma=7\,10\,2\,9\,5\,\mathbf{1}\,8\,11\,3\,12\,6\,4
  \in \mathfrak{S}_{12}$. Then we have 
  \begin{equation}
      \red{7\,10\,|\,2\,9\,5}\,|\,{\mathbf1}\,|\,\blue{8\,11\,3\,|\,12\,6\,4}.
  \end{equation}
\end{example}

For a word $w = w_1w_2\cdots w_n$, let $w^r := w_nw_{n-1}\cdots w_1$ be the reversal of $w$. 

For a given permutation $\sigma \in \S_n^{\dd=0}$  with bi-basic decomposition as in \eqref{bi-decomp} and $x \in [n]$, Dong-Lin-Pan defined the group action 
 $\psi_{x}(\sigma)$ as follows.

\begin{itemize}
    \item[(1)] If $x$ is the first letter of $\alpha_i$  for some $i$, then $x$ is a valley of $\sigma$. Let 
    \begin{equation}
        \widetilde{\alpha}_i:=\varphi_S(\alpha_i),
    \end{equation}
    where $S$ is the set of all double ascents of $\sigma$ inside $\alpha_i$. Define $\psi_{x}(\sigma)$ to be the permutation obtained from $\sigma$ by deleting the block $\alpha_i$ and then inserting $(\widetilde{\alpha}_i)^r$ in some proper gap between blocks so that $x$ becomes an additional right-to-left minimum;
    \item[(2)] If $x$ is the last letter of $\beta_i$ for some $i$, where $\beta_i$ has length at least two~\footnote{In case (1), each block $\alpha_i$ has length at least two, as $\sigma$ has no double descents.},
then $x$ is also a valley of $\sigma$. Let
\[
\widetilde{\beta}_i := \varphi_S(\beta_i),
\]
where $S$ is the set of all double ascents of $\sigma$ inside $\beta_i$. Define $\psi_x(\sigma)$ to be the permutation obtained from $\sigma$ by deleting the block $\beta_i$
and then inserting $(\widetilde{\beta}_i)^r$ in some proper gap between blocks so that $x$ becomes an additional
left-to-right minimum.
\item[(3)] If $x\neq 1$ is a valley of $\sigma$ not in the above two cases, then set $\psi_x(\sigma)=\varphi_x(\sigma)$ defined in \eqref{varphi-x};
\item[(4)] Otherwise, set $\psi_x(\sigma)=\sigma$.
\end{itemize}
As shown by Dong et al. \cite{DLP25}, for any $S \subseteq [n]$,  the restriction $\psi_S$ over $\S_n^{\dd=0}$ is stable and  defines a $\mathbb{Z}_2^n$-action on $\S_n^{\dd=0}$, which is called the \emph{block-Foata--Strehl action} (BFS-action for short).
Let $V(\sigma)$ be the set of valleys other than $1$ in $\sigma \in \mathfrak{S}_n^{\dd=0}$. It is clear that $\psi_x(\sigma) = \sigma$ if and only if
$x \notin V(\sigma)$.

\begin{example}
   If $\sigma=
{\color{blue}3\,4\,8\,5\,7\,10}\,|\,1\,|\,{\color{magenta}6}\,{\color{magenta}2}\,|\,{\color{magenta}9}
\in \S_{10}^{\dd=0}$,
then $V(\sigma)=\{2,3,5\}$. We compute that
\[
\begin{aligned}
\psi_{2}(\sigma) &= 3\,4\,8\,5\,7\,10\,|\,{\color{cyan}2\,6}\,|\,1\,|\,9,\\
\psi_{3}(\sigma) &= 1\,|\,6\,2\,|\,{\color{cyan}4\,7\,10\,5\,8\,3}\,|\,9,\\
\psi_{5}(\sigma) &= 3\,4\,{\color{cyan}7\,10\,5\,8}\,|\,1\,|\,6\,2\,|\,9,\\
\psi_{\{2,3\}}(\sigma) &= {\color{cyan}2\,6}\,|\,1\,|\,{\color{cyan}4\,7\,10\,5\,8\,3}\,|\,9,\\
\psi_{\{2,5\}}(\sigma) &= 3\,4\,{\color{cyan}7\,10\,5\,8}\,|\,{\color{cyan}2\,6}\,|\,1\,|\,9,\\
\psi_{\{3,5\}}(\sigma) &= 1\,|\,6\,2\,|\,{\color{cyan}4\,8\,5\,7\,10\,3}\,|\,9,\\
\psi_{V}(\sigma) &= {\color{cyan}2\,6}\,|\,1\,|\,{\color{cyan}4\,8\,5\,7\,10\,3}\,|\,9.
\end{aligned}
\]
\end{example}

\begin{lemma}\label{lem:BSF}
For any $\sigma\in\S_n^{\dd=0}$ and $x\in[n]$,  the mapping $\psi_x$ satisfies the following identity:
\begin{equation} \label{prop:psi'}(\des,\lrmival,\widetilde{\dis},\rlminda,\mathrm{lrmin})\,\sigma=(\des,\lrmival,\widetilde{\dis},\rlminda,\mathrm{lrmin})\,\psi_x(\sigma).
\end{equation}
\end{lemma}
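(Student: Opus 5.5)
The plan is to dispose of the statistics already known to be invariant by Dong--Lin--Pan and then concentrate on the two new ones, $\lrmival$ and $\widetilde{\dis}$. In the case $f=h=1$, Dong, Lin and Pan~\cite{DLP25} proved that $\psi_x$ preserves $\des$, $\rlminda$ and $\mathrm{lrmin}$ on $\S_n^{\dd=0}$, so I will quote that part. If it is not explicitly available there, I will verify it using the bi-basic decomposition, as follows. In case (3), $\psi_x=\varphi_x$ is the ordinary Foata--Strehl swap at a valley $x\neq 1$ lying strictly inside a block. It fixes the block structure and the set of records, and it only exchanges the roles of double ascents and double descents at $x$. Since $x$ is a valley, it is unaffected, so $\des$ and $\rlminda$ are preserved. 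In cases (1) and (2), one block moves from the left of $1$ to the right of $1$, or back. This trades one left-to-right minimum for one right-to-left minimum, so $\lrmin+\rlmin$ is unchanged.

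The first new statistic is $\lrmival$. Every left-to-right minimum and every right-to-left minimum other than $1$ is the first letter of some $\alpha_i$ or the last letter of some $\beta_i$. Since there are no double descents and the boundary is $\sigma_0=\sigma_{n+1}=+\infty$, such a letter is a valley when its block has length at least two, and a double ascent or double descent otherwise. Cases (1) and (2) move a block of length at least two carrying a valley record $x$ to the other side. This trades a left-to-right-minimum valley for a right-to-left-minimum valley, or the reverse. Meanwhile every other block keeps its first and last letters and its neighbours' relative sizes, so $\lmival+\rmival$ is unchanged. I will also need to check that the reversal $(\widetilde\alpha_i)^r$ keeps $x$ a valley at the new end of the block. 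This holds because the letters adjacent to the block on both sides are smaller records, or are $+\infty$ at the boundary.

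The main step is $\widetilde{\dis}$. The key observation is that $\widetilde{\dis}$ depends only on the sets of peak values and valley values, with the value $1$ removed from the valleys. So it suffices to show that $\psi_x$ preserves the set $\Pk(\sigma)$ and the set $\Val(\sigma)$. This mirrors the argument for Eq.~\eqref{eq:D}, but with boundary $+\infty$. In case (3), $\varphi_x$ moves the factors $w_2,w_4$ as wholes. The extremal letters of $w_2$ and $w_4$ end up next to $x$ or next to $w_1,w_5$, and every one of these neighbours is smaller than them, so peaks stay peaks, valleys stay valleys, and the interior letters are untouched. In cases (1) and (2), I must analyse the combined effect of $\varphi_S$ on the double ascents inside the block, followed by reversal. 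The map $\varphi_S$ applied to the double ascents of $\alpha_i$ turns them into double descents, and the reversal then turns them back into double ascents. Peaks and valleys of the block are preserved setwise by both operations. The gluing letters at the insertion gap are records smaller than everything in the block, so no new peak or valley appears at the junctions.

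I expect the obstacle to be exactly this junction analysis in cases (1) and (2): confirming that the value $1$ is never a peak, and that the first and last letters of the moved block change status consistently. I will handle it by splitting into subcases according to whether the insertion gap lies next to $1$, between two blocks, or at an end of the word.
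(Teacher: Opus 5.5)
Your route is the paper's own. Records other than $1$ are the first letters of the $\alpha$-blocks and the last letters of the $\beta$-blocks, cases (1)--(2) trade a left-to-right-minimum valley for a right-to-left-minimum valley, and $\widetilde{\dis}$ follows because the sets of peak and valley values are invariant. However, the justification you give at the step you yourself call the crux is false, and in fact self-defeating. The two letters flanking the insertion gap are \emph{not} both smaller records. If they were, $x$, which sits at an end of the reversed block next to one of them, would stop being a valley.

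In case (1), write $r_j$ for the last letter of $\beta_j$ and set $r_0=1$. The block $(\widetilde\alpha_i)^r$ ends with $x$ and is inserted between $\beta_j$ and $\beta_{j+1}$, where $r_j<x<r_{j+1}$. Its left neighbour $r_j$ is indeed a record below the whole block. Its right neighbour, adjacent to $x$, is the first letter of $\beta_{j+1}$, which is $\ge r_{j+1}>x$ (or is the boundary $+\infty$). That is exactly why $x$ remains a valley. Case (2) is the mirror image: the large neighbour of $x$ is the last letter of the preceding $\alpha$-block.

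With this corrected, the junction check works. The far end of the reversed block is the last letter of $\widetilde\alpha_i$, a peak or double descent because its old right neighbour was smaller; it becomes a peak or double ascent. The letters flanking the gap, and the two letters made adjacent at the removal site, compare with their new neighbours exactly as with their old ones. So the sets of peak and valley values do not change.

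Your description of case (3) is also wrong. At a valley $x$, $\varphi_x$ is not the double-ascent/double-descent toggle, and it does not leave $\sigma$ unchanged. Both $w_2$ and $w_4$ are nonempty and $\varphi_x$ exchanges them, as you state correctly later in your $\widetilde{\dis}$ paragraph. It is this exchange that preserves $\des$: the descents $w_2\to x$ and $w_4\to w_5$ become $w_4\to x$ and $w_2\to w_5$.

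Your peak/valley argument there silently uses $w_1,w_5\neq\emptyset$. This matters: with $w_5=\emptyset$ and $|w_2|=1$, the peak $w_2$ would become a double ascent. It holds because in case (3) $x$ is neither a left-to-right nor a right-to-left minimum; otherwise $x$ would be the first letter of an $\alpha$-block or the last letter of a $\beta$-block of length at least two.

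Finally, you need not cite Dong--Lin--Pan for $\des$ in cases (1)--(2). With $+\infty$ boundaries every $\sigma\in\S_n^{\dd=0}$ has $\des(\sigma)=\val(\sigma)-1$. So $\des$ is invariant as soon as the valley set is invariant and $\psi_x(\sigma)$ has no double descents, and your corrected block analysis gives both.
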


\begin{proof}

From the bi-basic decomposition of $\sigma$, it is easy to see that 
\begin{itemize}
    \item the first  element of each block $\alpha_i$ is a left-to-right-minimum-valley;
    \item the last element of each block $\beta_i$ of length at least two is a right-to-left-minimum-valley;
    \item the (unique) element in each block $\beta_i$ of length one is a right-to-left-minimum-double-ascent.
\end{itemize}
In the first two cases, we see that mapping $\psi_x$ transfers a left-to-right-minimum-valley to a right-to-left-minimum-valley, or a right-to-left-minimum-valley to a left-to-right-minimum-valley. In the third case, the right-to-left-minimum-double-ascent is fixed under mapping $\psi_x$. Thus, the mapping $\psi_x$ preserves the statistics $\lrmival$, $\mathrm{lrmin}$, and $\rlminda$. Moreover, since the mapping $\psi_x$ does not change the values of the valleys and peaks of $\sigma$, it keeps $\widetilde{\dis}$ unchanged.  The reverse operator and the mapping $\varphi_x$ keep $\des$ unchanged as well. Therefore, \eqref{prop:psi'} follows.
\end{proof}

\begin{definition}[\cite{HJO23}]\label{def:cycle-André'}
Let $A:=\{a_1,\dots, a_k\}$ be a set of $k$ positive integers. 
Let $C=(a_1\cdots a_k)$ be a cycle (cyclic permutation) of $A$  with $a_1=\min\{a_1,\dots, a_k\}$. Then,  cycle  $C$ is called an
\textbf{\emph{André cycle}} if the word 
 $a_2\cdots a_k$ is an André permutation of the first kind.
 We say that a  permutation is a \textbf{cycle André permutation}
if it is a product of disjoint André cycles. 
\end{definition}
Let $\Web_{n}$ be the set of cycle André   permutations  of $[n]$. For example, $(1\;5\;6)(2\;7\;4\;8)(3)$ is a cycle André   permutation.

\begin{lemma}[\cite{HJO23} Theorem~4.1]\label{lem:phi-bi}
    There exists a bijection $\phi: \Web_n\to \A_{n+1}^1$ as follows: let $\s=C_1C_2\ldots C_l\in \Web_n$, where 
  $C_1, \ldots, C_l$ are André cycles ordered from   left to right in increasing order of their smallest  letters, which are  at the end of each cycle;
erasing the parentheses   in $\s$ and appending letter $n+1$ to the end results in $\phi(\s)$. 
\end{lemma}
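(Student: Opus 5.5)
To prove Lemma~\ref{lem:phi-bi}, I would show that $\phi$ is well defined, i.e.\ that $\phi(\sigma)\in\A_{n+1}^1$, and then construct an explicit inverse. Write $\sigma=C_1\cdots C_l$ with each cycle written as $C_j=(c_{j,2}\cdots c_{j,k_j}\,m_j)$, so that its minimum $m_j$ sits at the end, and order the cycles so that $m_1<m_2<\cdots<m_l$. Then
\[
\phi(\sigma)=w_1m_1\,w_2m_2\cdots w_lm_l\,(n+1).
\]
Each word $w_j$ has all its letters larger than $m_j$, and by the André cycle condition $w_j$ is an André permutation of the first kind on its own letter set.

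The first observation is that the $m_j$ are exactly the right-to-left minima of $\phi(\sigma)$ other than $n+1$. Indeed, $m_j$ is smaller than everything in $w_{j'}$ and every $m_{j'}$ with $j'\ge j$. Conversely, every letter of $w_j$ exceeds $m_j$, so no letter of $w_j$ is a right-to-left minimum. This recovers the block decomposition from $\phi(\sigma)$ alone. The inverse map cuts $\tau\in\A_{n+1}^1$, after deleting the final letter $n+1$, at its right-to-left minima and closes each segment into a cycle ending at its minimum. For this to be well defined I need two facts: that $n+1$ is always the last letter of a first-kind André permutation (given a right-to-left maximal $x$-factorization, the condition $\lambda(x)=\emptyset$ if $\rho(x)=\emptyset$ applied at $x=n+1$ forces this), and that each segment $w_j$ of $\tau$ is itself André of the first kind.

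The main work is the local compatibility of $x$-factorizations. I would fix $x$ in block $j$, either $x\in w_j$ or $x=m_j$, and compare the $x$-factorization in $\phi(\sigma)$ with the one in $w_j$ or in the cycle word.

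If $x\in w_j$, the letters larger than $x$ adjacent to $x$ cannot extend past $m_j$ on the right or past $m_{j-1}$ on the left, since $m_{j-1}<m_j<x$. Hence $\lambda(x)$ and $\rho(x)$ computed in $\phi(\sigma)$ coincide with those computed inside $w_j$.

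If $x=m_j$, then $\rho(x)$ is the maximal run of letters larger than $x$ to its right. This run is nonempty, because either $w_{j+1}$ is nonempty or the next letter is $m_{j+1}>m_j$, or it is $n+1$. The left factor $\lambda(x)$ is contained in $w_j$. So I must check that $\max\lambda(m_j)<\max\rho(m_j)$. Here $\rho(m_j)$ contains everything up to the next right-to-left minimum smaller than $m_j$, and there is none until the end, so $\rho(m_j)$ runs to $n+1$ and contains $n+1$. The inequality is therefore automatic.

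The absence of double descents must be checked only at block junctions. At the pattern $\ldots a\,m_j\,b$ we have $b>m_j$, so $m_j$ is not a double descent. The last letter of $w_j$ is the last letter of an André permutation of the first kind, hence its maximum, so the triple ending at $m_j$ is fine as well. This last fact, that the final letter of a first-kind André word is its maximum, also lets the interior no-double-descent conditions transfer between $w_j$ and $\phi(\sigma)$.

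The converse direction, showing that each segment of $\tau\in\A_{n+1}^1$ is André, uses the same factorization-equality argument read backwards.

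I expect the main obstacle to be the bookkeeping at the boundary letters: the first letter of $w_{j+1}$, and the case where $w_j$ is empty, which corresponds to a fixed point. In both situations the $x$-factorization may reach into a neighbouring block, and I would verify carefully that the letters involved are larger than $x$ only in the harmless directions described above.
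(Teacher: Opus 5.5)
The paper does not prove this lemma; it quotes it from \cite{HJO23}. Your direct verification therefore stands on its own, and its structure is sound. For $x\in w_j$, the factors $\lambda(x)$ and $\rho(x)$ are trapped between $m_{j-1}$ and $m_j$, so they agree with the factors computed inside $w_j$. For $x=m_j$, $\lambda(m_j)=w_j$ and $\rho(m_j)$ is the whole suffix, which contains $n+1$. The blocks are recovered from the right-to-left minima. The boundary cases you flag as the main obstacle are already covered: the first letter of $w_{j+1}$ exceeds $m_{j+1}>m_j$, so its factorization stops there, and an empty $w_j$ just gives $\lambda(m_j)=\emptyset$.

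The one step that fails as written is the fact that the largest letter of a first-kind André word is its last letter. You justify it by applying the condition ``$\lambda(x)=\emptyset$ if $\rho(x)=\emptyset$'' at $x=n+1$. But no letter exceeds $n+1$, so $\lambda(n+1)=\rho(n+1)=\emptyset$ always, and the condition says nothing there. Applying it instead to the last letter only shows that the final step is an ascent. That is not enough: $1423$ has no double descents and ends in an ascent, yet $4$ is not last.

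You use this fact twice. First, you need every $\tau\in\A_{n+1}^1$ to end in $n+1$, so that your inverse is defined. Second, you need the last letter of $w_j$ to be $\max w_j$ for your no-double-descent check at the junction just before $m_j$. So it needs a correct proof.

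Here is one. Suppose the maximum $M$ of an André word of the first kind is not last, and let $y$ be the smallest letter to the right of $M$.
\begin{itemize}
\item Every letter strictly between $M$ and $y$ exceeds $y$, and so does $M$, hence $M\in\lambda(y)$.
\item Every letter after $y$ exceeds $y$, so $\rho(y)$ is the entire remaining suffix. That suffix lies to the right of $M$, so $\max\rho(y)<M$.
\end{itemize}
Thus either $\rho(y)=\emptyset\neq\lambda(y)$, or $\max\lambda(y)=M>\max\rho(y)$. Both contradict the definition. With this lemma inserted, applied both to $\tau$ and to each standalone $w_j$, your argument goes through.
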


\begin{proof}[\textbf{Continuation of Proof of Eq.~\eqref{equ:DLP}}]
    For $\sigma\in\S_n^{\dd=0}$, let $\mathrm{Orb}(\sigma)$ be the orbit of $\sigma$ under the BSF-action. As $\psi_S(\sigma)=\sigma$ if and only if $S\cap \mathrm{V}(\sigma)=\emptyset$, we have $|\mathrm{Orb}(\sigma)|=2^{\mathrm{V}(\sigma)}$. Assuming that $\lrmin(\sigma)=l$, then by Definition \ref{def:x-factorization-André1-2'} of André permutations of the first kind, there exists a unique $\hat{\sigma}\in\mathrm{Orb}(\sigma)$ with bi-basic decomposition 
    \begin{equation}\label{equ:form}
        \hat{\sigma}=1\beta_1\beta_2\cdots\beta_l
    \end{equation}
    satisfying that 
    \begin{itemize}
        \item [$(\star)$] each block $\beta_i$ when removing its last letter becomes an André permutation of the first kind.
    \end{itemize}

    By Lemma~\ref{lem:BSF}, we have 
    \begin{align}
        \sum_{\pi\in\mathrm{Orb}(\sigma)}f^{\lrmival(\pi)}h^{\widetilde{\dis}(\pi)}t^{\rlminda(\pi)}\alpha^{\mathrm{lrmin}(\pi)}=2^{\mathrm{V}(\hat{\sigma})}f^{\lrmival(\hat{\sigma})}h^{\widetilde{\dis}(\hat{\sigma})}t^{\rlminda(\hat{\sigma})}\alpha^{\mathrm{lrmin}(\hat{\sigma})}\\
        =2^{\des(\hat{\sigma})}f^{\rmival(\hat{\sigma})-1}h^{\widetilde{\dis}(\hat{\sigma})}t^{\rlminda(\hat{\sigma})}\alpha^{\rlmin(\hat{\sigma})-1},
    \end{align}
    The second identity follows from $|\mathrm{V}(\hat{\sigma})|=\des(\hat{\sigma})$, $\lrmival(\hat{\sigma})=\rmival(\hat{\sigma})-1$ and  $\mathrm{lrmin}(\hat{\sigma})=\rlmin(\hat{\sigma})-1$. Summing over all orbits of $\S_{n,\des=k}^{\dd=0}$ under the BFS-action gives
    \begin{equation}\label{equ:BSF-af}
\sum_{\sigma\in\S_{n,\des=k}^{\dd=0}}f^{\lrmival(\sigma)}h^{\widetilde{\dis}(\sigma)}t^{\rlminda(\sigma)}\alpha^{\mathrm{lrmin}(\sigma)}=2^k\,\sum_{\hat{\sigma}\in\widehat{\S}_{n,\des=k}^{\dd=0}}f^{\rmival(\hat{\sigma})-1}h^{\widetilde{\dis}(\hat{\sigma})}t^{\rlminda(\hat{\sigma})}\alpha^{\mathrm{rmin}(\hat{\sigma})-1},
    \end{equation}
    where $$\widehat{\S}_{n,\des=k}^{\dd=0}:=\{\hat{\sigma}\in\widehat{\S}_{n,\des=k}^{\dd=0}:\text{ bi-basic decomposition of $\hat{\sigma}$ has the form \eqref{equ:form} satisfying $(\star)$}\}.$$  
    For a word $w=w_1w_2\cdots w_n$, let $c(w)$ be the cyclic permutation $(w_1-1,w_2-1,\dots,w_n-1)$.
    There exists a natural one-to-one correspondence
    \begin{equation}\label{corresponding}
\hat{\sigma}=1\beta_1\beta_2\cdots\beta_l\mapsto c(\beta_1)c(\beta_2)\cdots c(\beta_l)=\tau.
    \end{equation}
    between $\widehat{\S}_{n,\des=k}^{\dd=0}$ and $\{\tau\in\Web_{n-1}:\drop(\tau)=k\}$. Then, 
    combining this and  the bijection $\phi$ given in Lemma~\ref{lem:phi-bi}, we see $\phi(\tau):=\tau'\in\mathcal{A}_n$. One can easily check that
    \begin{equation}\label{equ:prop}
        (\des,\rmival-1,\widetilde{\dis},\rlminda,\rlmin)\,\hat{\sigma}=(\des,\rmival,\dis,\rlminda,\rlmin)\,\tau'.
    \end{equation}
   We obtain Eq.~\eqref{equ:DLP} by combining  \eqref{equ:BSF-af} and \eqref{equ:prop}.
\end{proof}

\begin{example}
  Let $\sigma=
{\color{blue}5\,10\,7\,8\,11}\,|\,{\color{blue}4\,6}\,|\,1\,|\,{\color{magenta}3\,2}\,|\,{\color{magenta}9}
\in \S_{11}^{\dd=0}$. Then, we have
\begin{align}
    \psi_5(\sigma)&=\blue{4\,6}\,|\,1\,|\, {\color{magenta}3\,2}\,|\,{\color{magenta}{8\,11\,7\,10\,5}} \,|\,{\color{magenta}9};\nonumber\\
    \psi_{4,5}(\sigma)&=1\,|\,{\color{magenta}3\,2}\,|\,{\color{magenta}6\,4}\,|\,{\color{magenta}8\,11\,7\,10\,5} \,|\,{\color{magenta}9};\nonumber\\
   \hat{\sigma}:= \psi_{4,5,7}(\sigma)&=1\,|\,{\color{magenta}3\,2}\,|\,{\color{magenta}6\,4}\,|\,{\color{magenta}10\,7\,8\,11\,5} \,|\,{\color{magenta}9}.\nonumber
\end{align}
By \eqref{corresponding}, we obtain 
\begin{equation}
    \hat{\sigma}\rightarrow \tau:=(2\,1)(5\,3)(9\,6\,7\,10\,4)(8)\in\Web_{10}.
\end{equation}
Applying the $\phi$, we have
\begin{equation}
    \tau':=\phi(\tau)=2\,1\,5\,3\,9\,6\,7\,10\,4\,8\,11\in\A_{11}^1.
\end{equation} 
We now verify \eqref{equ:prop}.
\begin{itemize}
    \item On one hand, we have $\des(\hat{\sigma})=|\{2,4,6,9\}|=4$, $\rmival(\hat{\sigma})-1=|\{1,2,4,5\}|-1=3$, $\rlminda(\hat{\sigma})=|\{9\}|=1$, $\rlmin(\hat{\sigma})=|\{1,2,4,5,9\}|=5$ and 
    $\widetilde{\dis}(\hat{\sigma})=3+6+10+11-(2+4+5+7)=12$.
    \item On the other hand, we have $\des(\tau')=|\{1,3,5,8\}|=4$, $\rmival(\tau')=|\{1,3,4\}|=3$, $\rlminda(\tau')=|\{8\}|=1$, $\rlmin(\tau')=|\{1,3,4,8,11\}|=5$, and 
    $\dis(\tau')=2+5+9+10-(1+3+4+6)=12$.
\end{itemize}
\end{example}

\section{Application to Total-positivity}\label{sec:total-positivity}
\begin{definition}[Total positivity]
A matrix of real numbers is said to be totally positive (TP in short) if all its minors are nonnegative.
\end{definition}
\begin{definition}[Coefficientwise total positivity]\label{def:coefTP}
We say that a polynomial in $\R[\mathbf{X}]$, where $\mathbf{X}$ is the set of variables, 
is \emph{coefficientwise-positive} if all its coefficients are nonnegative.
A matrix whose entries are polynomials with real coefficients is said to be
\emph{coefficientwise-totally positive} (\emph{coefficientwise-TP}, in short)
if all its minors, which are polynomials themselves, are coefficientwise-positive in $\mathbf{X}$
\end{definition}

Let $\mathbf{a} := (a_n)_{n \ge 0}$ denote a sequence of real numbers (resp. polynomials in $\R[\mathbf{X}]$). To this sequence, we associate the
corresponding \emph{Hankel matrix}.
\begin{equation}\label{Hankel-matrix}
    H_\infty(\mathbf{a}):=(a_{i+j})_{i,j\ge 0}
=
\begin{pmatrix}
a_0 & a_1 & a_2 & \cdots\\
a_1 & a_2 & a_3 & \cdots\\
a_2 & a_3 & a_4 & \cdots\\
\vdots & \vdots & \vdots & \ddots
\end{pmatrix}.
\end{equation}

We say that the sequence $\mathbf{a}$ is \emph{Hankel-totally positive} (resp. \emph{coefficientwise Hankel-totally positive}) if the
Hankel matrix $H_\infty(\mathbf{a})$ is  totally positive (resp. coefficientwise-totally positive in $\mathbf{X}$).

\subsection{S-fraction and coefficientwise-TP}
For a sequence of real numbers $(a_n)_{n \ge 0}$, its Hankel matrix 
$(a_{i+j})_{i,j \ge 0}$ is \emph{totally positive} if and only if there exist 
nonnegative real numbers $\mathbf{\alpha}:=(\alpha_n)_{n\ge 0}$ such that the following 
identity holds (at the level of formal power series):
\begin{equation}
\mathcal{S}(z; \mathbf{\alpha}):
= 
\cfrac{\alpha_0}{1 - \cfrac{\alpha_1 z}{1 - \cfrac{\alpha_2 z}{1 - \cfrac{\alpha_3 z}{\ddots}}}}=\sum_{n=0}^{\infty} a_n z^n  .
\label{eq:Sfraction}
\end{equation}
This result is a combination of results due to Stieltjes~\cite{Sti94} 
and Gantmakher--Krein~\cite{GK37}. 
The continued fraction of the form on the right-hand side of 
equation~\eqref{eq:Sfraction} is called a \emph{Stieltjes-type continued fraction} 
(\emph{$S$-fraction} in short).

If $(a_n)_{n \ge 0}$ is instead a sequence of polynomials in one or several variables,
we no longer have any equivalent condition available to us.
However, if an $S$-fraction such as the one in equation~\eqref{eq:Sfraction} exists,
where the $\alpha_0, \alpha_1, \ldots$ are all polynomials with nonnegative coefficients,
then that is a \emph{sufficient} condition for the Hankel matrix
$(a_{i+j})_{i,j \ge 0}$ to be coefficientwise-totally positive. The following fact is folklore.

\begin{lemma}\label{lem:CoeffHankelTP}
Let $\mathbf{a} = (a_n)_{n \ge 0}$ and $\mathbf{\alpha}=(\alpha_n)_{n \ge 0}$
be two sequences of real coefficient polynomials in $\R[\mathbf{X}]$ satisfying the following power series identity~\eqref{eq:Sfraction}.
If the polynomials $\alpha_n$ ($n\ge 0$) are coefficientwise-positive, 
then $\mathbf{a}$ is \emph{coefficientwise-Hankel totally positive} in $\mathbf{X}$.
\end{lemma}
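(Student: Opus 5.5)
The plan is the standard combinatorial argument via Lindström--Gessel--Viennot, using Viennot's interpretation of $S$-fractions by Dyck paths. First expand $\mathcal{S}(z;\alpha)/\alpha_0$ by Flajolet's Lemma~\ref{lem:fla} in its Stieltjes form. The coefficient $a_n/\alpha_0$ is then the generating polynomial of Dyck paths of length $2n$ from $(0,0)$ to $(2n,0)$, where each fall step from height $i$ to height $i-1$ carries weight $\alpha_i$ and each rise carries weight $1$. Equivalently, $a_n$ is the weighted count of such paths multiplied by $\alpha_0$. Since $\alpha_0$ is coefficientwise-positive and every minor of $H_\infty(\mathbf{a})$ of order $r$ equals $\alpha_0^r$ times the corresponding minor for $\mathbf{a}/\alpha_0$, we may assume $\alpha_0=1$.

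Next, realize the Hankel entries as path counts between two families of points. Take sources $A_i=(-2i,0)$ and sinks $B_j=(2j,0)$ for $i,j\ge0$. Then the set of Dyck paths (paths staying weakly above the axis) from $A_i$ to $B_j$ has weighted count $a_{i+j}$, because the weights depend only on heights and not on horizontal position. The underlying graph is planar and acyclic, with all edge weights coefficientwise-positive polynomials.

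Now apply the Lindström--Gessel--Viennot lemma. For index sets $i_1<\dots<i_r$ and $j_1<\dots<j_r$, the minor $\det(a_{i_s+j_t})$ equals the signed sum over families of vertex-disjoint paths $A_{i_s}\to B_{j_{\pi(s)}}$. The main point to check is compatibility. The sources lie on the axis in order from right to left as $i$ increases, and the sinks lie from left to right, all on the boundary of the region $y\ge0$. Planarity then forces any vertex-disjoint family to use only the identity permutation, since the paths are nested: the outermost source must go to the outermost sink. Hence each minor is a sum of products of edge weights with positive sign, so it is coefficientwise-positive.

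This compatibility (nesting) step is the only delicate one. I would justify it by the standard crossing argument: if $\pi\ne\mathrm{id}$, two paths with interleaved endpoints on the boundary line must intersect at a lattice vertex. Since steps are $(1,\pm1)$ and every endpoint has even abscissa, all paths live on the same parity sublattice, so an intersection of curves is indeed a shared vertex. This concludes the proof.
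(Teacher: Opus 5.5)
Your proof is correct. Reducing to $\alpha_0=1$, reading $a_{i+j}$ as the weighted count of Dyck paths from $(-2i,0)$ to $(2j,0)$, and applying Lindstr\"om--Gessel--Viennot with the nesting (parity) argument is the standard Flajolet--Viennot proof, which is the route the paper relies on: it writes out no proof itself, calls the lemma folklore, and cites Sokal, Flajolet and Viennot.
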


\noindent
This fact was first stated in this form in~\cite{Sok14}, although
it follows straightforwardly from the work of \cite{Fla80}
and of~\cite[Section~3, Chapter~4]{Vie83}. Note that the converse of Lemma~\ref{lem:CoeffHankelTP} does not hold in general.



A permutation $\sigma=\sigma_1\sigma_2\cdots \sigma_n\in\S_{n}$ is called an \emph{alternating permutation} (or \emph{zigzag permutation}) if $\sigma_1>\sigma_2<\sigma_3>\sigma_4<\cdots$. As proved by André~\cite{And79}, the number of alternating permutations in $\S_n$ is given by the Euler number $E_n$. Let  $\mathcal{Z}_{n}$  denote the set of alternating permutations of length $n$.   A permutation of odd length is alternating if and only if it contains neither a double ascent nor a double descent. 

Define the polynomials
\begin{multline}   Z_n^o(x,f,g,h,\alpha,\beta\,|\,p,q):=\sum_{\sigma\in\mathcal{Z}_{2n+1}}x^{\des(\sigma)}f^{\mathrm{lmaxpk(\sigma)-1}}g^{\bmax(\s)-1}h^{\dis(\sigma)} \\\times\alpha^{{\lmax}(\sigma)-1}\beta^{\rmax(\sigma)-1}p^{\ldes(\sigma)}q^{\rasc(\sigma)}.
\end{multline}   

\begin{theorem}
    The sequence $\mathcal{Z}_o :=(Z_n(x,f,g,h,\alpha,\beta\,|\,p,q))_{n\ge 0}$  is
coeﬃcientwise-Hankel totally positive in $\mathbf{X}:=\{x,f,g,h,\alpha,\beta,p,q\}$.
\end{theorem}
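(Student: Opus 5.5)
We plan to derive an $S$-fraction for $\sum_{n\ge0} Z_n^o z^n$ with coefficientwise-positive coefficients, and then apply Lemma~\ref{lem:CoeffHankelTP}.

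\textbf{Step 1: isolate the alternating permutations.} A permutation of odd length $2n+1$ is alternating exactly when $\da=\dd=0$. So $Z_n^o$ is the coefficient of $z^{2n}$ in Theorem~\ref{thm1} after the specialization $u_3=u_4=0$, $u_1u_2=x$, $t=1$. For $\sigma\in\mathcal{Z}_{2n+1}$ we have $\des(\sigma)=\val(\sigma)=n$, so the factor $x^{\des}$ is recorded by $(u_1u_2)^{\val}$. With $u_3=u_4=0$ every $b_k$ vanishes, and Theorem~\ref{thm1} gives
\[
\sum_{m\ge0}A_m z^m=\mathcal{J}(z;\mathbf{0},\boldsymbol{\lambda}),\qquad
\lambda_k=x\,h^{2k-1}[\,k\,]_{p,q}\,[\,k+1;\beta g,\alpha f\,]_{p,q}.
\]
Only even powers of $z$ survive, and the coefficient of $z^{2n}$ equals $Z_n^o$.

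\textbf{Step 2: pass to an $S$-fraction.} A $J$-fraction with all $b_k=0$ is an even function of $z$. Writing $z^2=w$, it becomes the $S$-fraction
\[
\sum_{n\ge0}Z_n^o w^n=\cfrac{1}{1-\cfrac{\lambda_1 w}{1-\cfrac{\lambda_2 w}{1-\cdots}}},
\]
which is the form~\eqref{eq:Sfraction} with $\alpha_0=1$ and $\alpha_k=\lambda_k$. This step needs no contraction formula: Flajolet's Lemma~\ref{lem:fla}, applied to Motzkin paths with no east steps, is already the Dyck-path expansion of the $S$-fraction.

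\textbf{Step 3: coefficientwise positivity of the $\lambda_k$.} First, $[\,k\,]_{p,q}=\sum_{i=0}^{k-1}p^iq^{k-1-i}$ has nonnegative coefficients. Next,
\[
[\,k+1;\beta g,\alpha f\,]_{p,q}=(\beta g-1)p^{k}+(\alpha f-1)q^{k}+\sum_{i=0}^{k}p^iq^{k-i}.
\]
The terms $-p^k$ and $-q^k$ cancel against the two extreme terms of the sum, leaving
\[
\beta g\,p^k+\alpha f\,q^k+\sum_{i=1}^{k-1}p^iq^{k-i},
\]
which is coefficientwise nonnegative. The remaining factor $xh^{2k-1}$ is a monomial.

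\textbf{Step 4: conclude.} Every $\alpha_k$ is a polynomial in $\mathbf{X}$ with nonnegative coefficients, so Lemma~\ref{lem:CoeffHankelTP} yields coefficientwise Hankel total positivity.

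The main point needing care is Step 1. We must check that setting $u_3=u_4=0$ in the weight $\wt$ kills exactly the permutations with double ascents or double descents, including the contributions of $t$ attached to those steps. We must also check that the surviving weight matches the definition of $Z_n^o$, namely with $\dis$ and the boundary conventions $\sigma_0=\sigma_{2n+2}=0$.
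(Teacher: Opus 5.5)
Your proposal is correct and follows the paper's proof essentially verbatim: you specialize Theorem~\ref{thm1} at $\mathbf{u}=(1,x,0,0)$ so that every $b_k$ vanishes, read $\mathcal{J}(\sqrt{z};\mathbf{0},\boldsymbol{\lambda})$ as the $S$-fraction $\mathcal{S}(z;\boldsymbol{\lambda})$ with $\lambda_k=xh^{2k-1}[\,k\,]_{p,q}[\,k+1;\beta g,\alpha f\,]_{p,q}$, and apply Lemma~\ref{lem:CoeffHankelTP}. The paper only asserts that the $\lambda_k$ are coefficientwise positive, and your explicit cancellation $[\,k+1;\beta g,\alpha f\,]_{p,q}=\beta g\,p^k+\alpha f\,q^k+\sum_{i=1}^{k-1}p^iq^{k-i}$ supplies that check.
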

\begin{proof}
    By setting $\mathbf{u}=(1,x,0,0)$ in Theorem~\ref{thm1}, we derive 
    \begin{equation}\label{odd}      \sum_{n=0}^{\infty}Z_n(x,f,g,h,\alpha,\beta\,|\,p,q)z^{n}=\mathcal{J}(\sqrt{z};\mathbf{0},\boldsymbol{\lambda})=\mathcal{S}(z;\boldsymbol{\lambda}),
    \end{equation}
    where $\lambda_{k}=xh^{2k-1}[\,k\,]_{p,q}\,\bigl[\,k+1;\beta g , \alpha f\,\bigr]_{p,q}$.
    
    Hence, every $\lambda_k$ 	
  is coefficientwise-positive in the variables $x,f,g,h,\alpha,\beta,p,q$. By Lemma~\ref{lem:CoeffHankelTP}, the sequence $\mathcal{Z}_o$
  is therefore coefficientwise Hankel-totally positive.
\end{proof}

A permutation $\sigma$ of even length $2n$ is an alternating permutation if and only if $\sigma':=\sigma\star(2n+1)$ is an alternating permutation, where $\star$ denotes the concatenation of words.
\begin{lemma}\label{prop:relation}
     Let $\sigma':=\sigma\star(2n+1)\in\mathcal{Z}_{2n+1}$ with  $\sigma\in\mathcal{Z}_{2n}$. We have 
     \begin{equation}\label{sigma-sigma'}
         (\des,\pmax,\dis+2n-\sigma_{2n},\lmax)\,\sigma=(\des,\pmax-1,\dis,\lmax-1)\,\sigma'.
     \end{equation}     
     For each $x\in[2n]$, we have
    \[(31\!-\!2)_x(\sigma)=(31\!-\!2)_x(\sigma') \text{ and }
    (2\!-\!31)_x(\sigma)=\begin{cases}
        (2\!-\!13)_x(\sigma')\; &\text{if } \sigma^{-1}(x) \text{ is even;}\\[5pt]
        (2\!-\!13)_x(\sigma')-1\; &\text{if } \sigma^{-1}(x) \text{ is odd.} 
    \end{cases}\]
\end{lemma}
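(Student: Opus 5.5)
The plan is to compare the two permutations position by position, using the fact that the shapes of $\sigma$ and $\sigma'$ are completely rigid. Since $\sigma\in\mathcal{Z}_{2n}$ satisfies $\sigma_1>\sigma_2<\sigma_3>\cdots<\sigma_{2n-1}>\sigma_{2n}$, with the convention $\sigma_0=\sigma_{2n+1}=0$ the letters in odd positions are peaks of $\sigma$. The letters $\sigma_2,\sigma_4,\dots,\sigma_{2n-2}$ are valleys, and $\sigma_{2n}$ is a double descent. In $\sigma'=\sigma\star(2n+1)$, with the boundary $\sigma'_0=\sigma'_{2n+2}=0$, the odd positions are still peaks and the new letter $2n+1$ is also a peak. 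All even positions, including $\sigma_{2n}$, are now valleys. Once these facts are written down, each component of \eqref{sigma-sigma'} is a one-line check.

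\emph{First identity.} The appended letter creates an ascent $\sigma_{2n}<2n+1$ and no descent, so $\des(\sigma')=\des(\sigma)$. The letter $2n+1$ is a new left-to-right maximum, and it is a peak, so $\lmax$ and $\pmax$ both increase by one. No earlier left-to-right maximum or peak status changes, because adding a larger letter at the end affects only right-to-left notions and the status of $\sigma_{2n}$, which is not a left-to-right maximum ($\sigma_{2n-1}>\sigma_{2n}$).

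For $\dis$, I will use \eqref{eq:distance}. In $\sigma\in\S_{2n}$ the largest letter $2n$ cannot sit in position $2n$, since $\sigma_{2n-1}>\sigma_{2n}$, so $2n$ is a peak and is the excluded term $\Pk(\sigma)\setminus\{2n\}$. In $\sigma'\in\S_{2n+1}$ the excluded peak is $2n+1$. The letter $2n$ is still a peak of $\sigma'$ and is now counted. The letter $\sigma_{2n}$ has changed from a double descent to a valley and is subtracted. Hence
\[
\dis(\sigma')=\dis(\sigma)+2n-\sigma_{2n},
\]
which is exactly the third component of \eqref{sigma-sigma'}.

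\emph{Pattern identities.} These are an instance of the argument in the proof of \eqref{equ:h_X-restricted-1}, where I compared $\sigma\in\S_n$ with $\sigma\star(n+1)$ through the $x$-decomposition and obtained \eqref{equ:1-2}. That argument gives $(31\!-\!2)_x(\sigma)=(31\!-\!2)_x(\sigma')$, because a maximal letter appended at the end never serves as the ``2'' of a $31\!-\!2$ and creates no new descent. It also gives $(2\!-\!31)_x(\sigma)=(2\!-\!13)_x(\sigma')$ or $(2\!-\!13)_x(\sigma')-1$, according to whether $x$ is a valley/double ascent or a peak/double descent of $\sigma$ under the boundary condition $\sigma_0=0$, $\sigma_{2n+1}=2n+1$. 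For alternating $\sigma$ under this boundary, a letter in an even position (including $\sigma_{2n}$, since $\sigma_{2n}<2n+1$) is a valley, and a letter in an odd position is a peak. So the case split reduces to the parity of $\sigma^{-1}(x)$, which yields the stated formula.

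The only step needing care is this last translation. I need to confirm that the dichotomy in \eqref{equ:1-2} is taken with the boundary $\sigma_{n+1}=n+1$ rather than $0$; otherwise $\sigma_{2n}$ would be misclassified as a double descent. I will also re-check the lost-contribution case directly for $x$ in an odd position: there $x$ is the last letter of its $v$-block in $\sigma'$, so the following $v\to u$ descent is counted in $2\!-\!31$ but the matching $u\to v$ ascent in $2\!-\!13$ after it is shifted by one block. If the earlier lemma's convention does not match, I would redo that short $x$-decomposition count by hand for the alternating case.
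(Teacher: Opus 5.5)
Your proposal is correct and is essentially the paper's proof. The first identity is a direct check from the definitions, and your accounting for $\dis$ is exactly right: the peak $2n$ becomes counted, and $\sigma_{2n}$ turns from a double descent into a valley. The pattern identities are the $x$-decomposition argument behind \eqref{equ:1-2} specialized to alternating permutations; the paper re-runs that argument, while you simply cite it.

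Your boundary worry is unfounded. The dichotomy in \eqref{equ:1-2} is taken in $\sigma'$, and it only asks whether the right neighbour of $x$ there exceeds $x$. So even positions, including $2n$, give equality, and odd positions give the $-1$. Your fallback wording of the odd case is slightly off: the descent leaving $x$ starts at $x$ itself and so is not a $2\!-\!31$ occurrence, while the following ascent is still a $2\!-\!13$ occurrence.
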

\begin{proof}
Eq.~\eqref{sigma-sigma'}  follows directly from the Definitions of these permutation statistics.  
    Let  $\sigma'=\sigma\star(n+1)\in\mathcal{Z}_{2n+1}$, where $\sigma\in\mathcal{Z}_{2n}$. By the $x$-decomposition of $\sigma'$, the first equality is obvious. Suppose that for $x\in[2n]$,  the $x$-decomposition of $\sigma'$ is 
 \[\sigma' = u_1 v_1 \cdots u_k v_k, \quad k \geq 1,\]   
 where $x$ lies in   $v_j$, see \eqref{x-decomposition}.
    It is straightforward to verify the following:
    \begin{itemize}
        \item word $v_k$ has $2n+1$ as its last letter, possibly together with other entries preceding $2n+1$.
        \item If $x$ is a peak of $\sigma'$, then $x$ cannot lie in  $v_k$.   
    \end{itemize}
    When $\sigma^{-1}(x)$ is even, i.e., $x$ is a valley of $\sigma'$, the word $v_j$  must contain elements that are larger than $x$ on both sides. Consequently, each contribution to $(2\!-\!13)_x(\sigma')$  corresponds to a contribution to $(2\!-\!31)_x(\sigma)$.
    When $\sigma^{-1}(x)$ is odd, the letter $x$ is a peak of $\sigma'$, and the word $v_j$ contains only $x$. In this case, one contribution to $(2\!-\!31)_x(\sigma)$ is lost. See the corresponding illustration below.
\begin{center}
    \begin{tikzpicture}[scale=0.65]
\draw[very thick] (-3.4,1) -- (15,1);
\draw[very thick] (-3.4,0)   -- (15,0);
\draw[red, very thick](-3.1,0) .. controls (-2.6,-1) .. (-2.1,0);
\draw[blue, very thick](-1.9,1) .. controls (-1.4,2) .. (-0.9,1);
\draw[red, very thick](-0.7,0) .. controls (-0.2,-1) .. (0.3,0);
\draw[blue, very thick](0.5,1) .. controls (1.0,2) .. (1.5,1);
\draw[red, very thick](1.7,0) .. controls (2.2,-1) .. (2.7,0);

\draw[blue, very thick](3.6,1) .. controls (4.1,2) .. (4.6,1);
\draw[red,  very thick] (4.8,0) .. controls (5.3,-1) .. (5.8,0);
\draw[blue, very thick] (6.0,1) .. controls (6.5, 2) .. (7.0,1);
\draw[red,  very thick] (7.2,0) .. controls (7.7,-1) .. (8.2,0);
\draw[blue, very thick] (8.4,1) .. controls (8.9, 2) .. (9.4,1);

\draw[red,  very thick] (10.1,0) .. controls (10.6,-1) .. (11.1,0);
\draw[blue, very thick] (11.3,1) .. controls (11.8,2) .. (12.3,1);
\draw[red,  very thick] (12.5,0) .. controls (13.0,-1) .. (13.5,0);
\draw[blue, very thick] (13.7,1) .. controls (14.2,2) .. (14.7,1);

\node at (-2.6,-1.1) {$u_1$};
\node at (-0.2,-1.1) {$u_2$};
\node at (2.2,-1.1) {$u_3$};
\node at (5.3,-1.1)   {$u_j$};
\node at (7.7,-1.1)   {$u_{j+1}$};
\node at (10.6,-1.1)  {$u_{k-1}$};
\node at (13.0,-1.1)  {$u_{k}$};

\node at (6.5,0.5)   {$x$};
\node at (3.1,0.5) {$\cdots$};
\node at (10,0.5) {$\cdots$};
\node at (-1.4,2.1) {$v_1$};
\node at (1,2.1) {$v_2$};
\node at (4.1,2.1) {$v_{j-1}$};
\node at (6.5,2.1)   {$v_j$};
\node at (8.9,2.1)   {$v_{j+1}$};
\node at (11.8,2.1)  {$v_{k-1}$};
\node at (14.2,2.1)  {$v_k$};

\draw[thick, decorate, decoration={snake, amplitude=1mm, segment length=5mm}]
      (-0.9,1) -- (-0.7,0);
\draw[thick, decorate, decoration={snake, amplitude=1mm, segment length=5mm}]
      (1.5,1) -- (1.7,0);
\draw[thick, decorate, decoration={snake, amplitude=1mm, segment length=5mm}]
      (4.6,1) -- (4.8,0);


\draw[dashed, thick] (8.2,0) -- (8.4,1);
\draw[dashed, thick] (11.1,0) -- (11.3,1);
\draw[dashed, thick] (13.5,0) -- (13.7,1);

\draw[dashed, thick] (7.0,1) -- (7.2,0);
\draw[dashed, thick] (12.3,1) -- (12.5,0);
\node at (7.7,0.5) {$\leftrightarrow$
};
\node at (13,0.5) {$\leftrightarrow$};
\end{tikzpicture}
\end{center}
This completes the proof.
\end{proof}

 Define the polynomials 
\begin{equation}   Z_n^e(x,f,h,\alpha\,|\,p,q):=\sum_{\sigma\in\mathcal{Z}_{2n}}x^{\des(\sigma)}f^{\mathrm{lmaxpk(\sigma)}}h^{\dis(\sigma)+2n-\sigma_{2n}}\alpha^{{\lmax}(\sigma)}p^{\ldes(\sigma)}q^{2\!-\!31(\sigma)}.
\end{equation}  
\begin{theorem}
    The sequence $\mathcal{Z}_e :=(Z_n^e(x,f,h,\alpha\,|\,p,q))_{n\ge 0}$  is
coeﬃcientwise-Hankel totally positive in $\mathbf{X}:=\{x,f,h,\alpha,p,q\}$.
\end{theorem}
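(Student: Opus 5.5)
The plan is to reduce $\mathcal{Z}_e$ to the odd case through the padding map $\sigma\mapsto\sigma':=\sigma\star(2n+1)$, and then read off an $S$-fraction from Theorem~\ref{thm1}. Since $\sigma\in\mathcal{Z}_{2n}$ if and only if $\sigma'\in\mathcal{Z}_{2n+1}$, this map is a bijection $\mathcal{Z}_{2n}\to\{\tau\in\mathcal{Z}_{2n+1}:\tau_{2n+1}=2n+1\}$. By Lemma~\ref{prop:relation}, $\des$ is preserved, $\lmaxpk(\sigma)=\lmaxpk(\sigma')-1$, $\dis(\sigma)+2n-\sigma_{2n}=\dis(\sigma')$, $\lmax(\sigma)=\lmax(\sigma')-1$, and $(31\!-\!2)(\sigma)=(31\!-\!2)(\sigma')$.

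For the pattern $2\!-\!31$ we sum the coordinatewise relation over $x\in[2n]$. Since $\sigma\in\mathcal{Z}_{2n}$ has exactly $n$ odd positions, this gives $(2\!-\!31)(\sigma)=(2\!-\!13)(\sigma')-n$. The extra letter $2n+1$ is a peak that is both a left-to-right and a right-to-left maximum, and it contributes nothing to either pattern count.

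The condition $\tau_{2n+1}=2n+1$ is exactly $\rmax(\tau)=1$, that is, $\beta^{\rmax(\tau)-1}$ restricted to $\beta=0$. On this set the right-to-left maximum peaks reduce to the single peak $2n+1$, so $g$ becomes irrelevant. Therefore
\[
Z_n^e(x,f,h,\alpha\,|\,p,q)=(f\alpha)\,q^{-n}\,Z^o_n(x,f,g,h,\alpha,0\,|\,p,q).
\]
Here the powers of $f$ and $\alpha$ shift by one: the weight $f^{\lmaxpk(\sigma)}=f^{\lmaxpk(\sigma')-1}$ matches the odd weight exactly, and likewise for $\alpha$. So in fact $Z_n^e=q^{-n}Z^o_n|_{\beta=0}$, and I will verify the exact constant shift carefully.

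Next apply \eqref{odd}, i.e.\ Theorem~\ref{thm1} with $\mathbf{u}=(1,x,0,0)$ and $\beta=0$. This gives $\sum_n Z^o_n z^n=\mathcal{S}(z;\boldsymbol\lambda)$ with
\[
\lambda_k=xh^{2k-1}[k]_{p,q}\,[k+1;0,\alpha f]_{p,q},\qquad [k+1;0,\alpha f]_{p,q}=-p^{k}+(\alpha f-1)q^{k}+[k+1]_{p,q}=[k]_{p,q}\,q+\alpha f q^{k}.
\]
This last quantity is divisible by $q$: it equals $q\bigl([k]_{p,q}+\alpha f q^{k-1}\bigr)=q\,[k;1,\alpha f]_{p,q}$. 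Replacing $z$ by $z/q$ divides each $\lambda_k$ by $q$, so
\[
\sum_n Z_n^e z^n=\mathcal{S}(z;\boldsymbol\lambda'),\qquad \lambda'_k=xh^{2k-1}[k]_{p,q}\,[k;1,\alpha f]_{p,q}.
\]
Each $\lambda'_k$ is coefficientwise nonnegative in $x,f,h,\alpha,p,q$, since $[k;1,\alpha f]_{p,q}=[k]_{p,q}+(\alpha f-1)q^{k-1}=\alpha f q^{k-1}+\sum_{i=0}^{k-2}p^{k-1-i}q^{i}$. Lemma~\ref{lem:CoeffHankelTP} then finishes the proof.

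The main obstacle is the bookkeeping. First, the global identity $(2\!-\!31)(\sigma)=(2\!-\!13)(\sigma')-n$ needs checking; in particular I must confirm that $x=2n+1$ contributes no $2\!-\!13$ occurrence, and that the correction in Lemma~\ref{prop:relation} applies to exactly the $n$ letters in odd positions. Second, the exponent shifts in $f$ and $\alpha$ must be matched exactly to the normalization $f^{\lmaxpk-1}\alpha^{\lmax-1}$ in $Z^o_n$. If a stray constant factor appears, it only rescales $\alpha_0$ of the $S$-fraction and does not affect Hankel total positivity.
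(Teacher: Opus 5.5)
Your proposal is correct and follows the paper's route: pad $\sigma\mapsto\sigma\star(2n+1)$, apply Lemma~\ref{prop:relation}, specialize Theorem~\ref{thm1} at $\mathbf{u}=(1,x,0,0)$ and $\beta=0$, then conclude with Lemma~\ref{lem:CoeffHankelTP}. You are in fact more careful than the paper. Its display \eqref{even} drops the factor $q^{-n}$ coming from $(2\!-\!31)(\sigma)=(2\!-\!13)(\sigma')-n$: its $\lambda_1=xh\alpha f q$ would give $Z_1^e=xfh\alpha q$, whereas $\sigma=21$ gives $Z_1^e=xfh\alpha$. Your rescaled $\lambda'_k=xh^{2k-1}[k]_{p,q}[k;1,\alpha f]_{p,q}$ is the correct $S$-fraction.

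Do tidy the intermediate algebra; your final $\lambda'_k$ is right, but the steps leading to it are not. First, $[k+1;0,\alpha f]_{p,q}=q[k]_{p,q}+(\alpha f-1)q^{k}$, not $q[k]_{p,q}+\alpha f q^{k}$. Second, $q\bigl([k]_{p,q}+\alpha f q^{k-1}\bigr)\neq q[k;1,\alpha f]_{p,q}$. Finally, simply delete the provisional factor $f\alpha$ in your first identity for $Z_n^e$.
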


\begin{proof}
    Combining Lemma~\ref{prop:relation} and Theorem~\ref{thm1} with $\mathbf{u}=(1,x,0,0)$ and $\beta=0$ we derive 
    \begin{equation}\label{even}      \sum_{n=0}^{\infty}Z_n^e(x,f,h,\alpha\,|\,p,q)z^{n}=\mathcal{J}(\sqrt{z};\mathbf{0},\boldsymbol{\lambda})=\mathcal{S}(z;\boldsymbol{\lambda}),
    \end{equation}
    where $\lambda_{k}=xh^{2k-1}[\,k\,]_{p,q}\,\bigl[\,k+1;0, \alpha f\,\bigr]_{p,q}$. Hence, every $\lambda_k$ 	
  is coefficientwise-positive in the variables $x,f,h,\alpha,p,q$. By Lemma~\ref{lem:CoeffHankelTP}, the sequence $\mathcal{Z}_e$
  is therefore coefficientwise Hankel-totally positive.
\end{proof}

\subsection{J-fraction and coefficientwise-TP}

Let   $\mathbf{a}:=(a_n)_{n\ge 0}$ be a sequence of polynomials in $\R[\mathbf{X}]$, where $\mathbf{X}$ is the set of variables. Assume that there exists a $J$-fraction for which the following identity is satisfied:
\begin{equation}
    \sum_{n\ge 0}a_nt^n
= \mathcal{J}(t;\mathbf{b},\boldsymbol{\lambda})=\cfrac{1}{1-b_0 t \;-\;
   \cfrac{\lambda_1 t^2}{1-b_1 t \;-\;
   \cfrac{\lambda_2 t^2}{1-b_2 t \;-\; \ddots}}}.
\end{equation}
Define the associated tridiagonal matrix, called \emph{Jacobi matrix} or \emph{production matrix} see~\cite{PSZ23},
\begin{equation}\label{Jacobi-matrix}
    M_{\infty}(\mathbf{b},\boldsymbol{\lambda})
=\begin{pmatrix}
    b_0&1&0&0&\ldots\\
    \lambda_1&b_1&1&0&\ldots\\
    0&\lambda_2&b_2&1&\ldots\\
    0&0&\lambda_3&b_3&\ldots\\
    \vdots&\vdots&\vdots&\vdots&\ddots
\end{pmatrix}.
\end{equation}
The following characterization for coeﬃcientwise-TP is due to Sokal \cite{Sok14}, see also \cite[Theorem~9.7]{PSZ23}.
\begin{theorem}[\cite{Sok14}]\label{thm:Coeﬃcientwise-TP}
Let   $\mathbf{a}:=(a_n)_{n\ge 0}$ be a sequence of polynomials in $\R[\mathbf{X}]$ and  the associated Hankel matrix $H_\infty(\mathbf{a})$.
  If $M_\infty(\mathbf{b},\boldsymbol{\lambda})$ is coefficientwise-TP in $\mathbf{X}$, 
  then so is $H_\infty(\mathbf{a})$.
\end{theorem}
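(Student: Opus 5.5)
Since the statement is the classical result of Sokal (Theorem~\ref{thm:Coeﬃcientwise-TP}), the plan is to reproduce the standard production-matrix argument of \cite{Sok14,PSZ23}. The strategy is to factor the Hankel matrix through a lower-triangular matrix that is itself coefficientwise-TP.

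\textbf{Step 1: the path matrix.} Define the infinite lower-triangular matrix $L=(\ell_{n,k})_{n,k\ge0}$. Here $\ell_{n,k}$ is the total weight of Motzkin-type paths from height $0$ to height $k$ in $n$ steps, with the same step weights as in Lemma~\ref{lem:fla}:
\begin{itemize}
\item up steps weighted $1$;
\item level steps at height $i$ weighted $b_i$;
\item down steps from height $i$ weighted $\lambda_i$.
\end{itemize}
Decomposing a path according to its last step gives the recursion
\begin{equation*}
\ell_{n+1,k}=\ell_{n,k-1}+b_k\ell_{n,k}+\lambda_{k+1}\ell_{n,k+1}.
\end{equation*}
This is exactly the production-matrix relation $\Delta L = L\,M_\infty(\mathbf b,\boldsymbol\lambda)$, where $\Delta L$ denotes $L$ with its first row removed. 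Since $\ell_{0,k}=\delta_{0k}$, row $n$ of $L$ equals $e_0^{T}M_\infty^{\,n}$.

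\textbf{Step 2: the Hankel factorization.} Cut a path from $0$ to $0$ of length $n+m$ at time $n$. Reversing the second part turns each down step into an up step. Weights then transfer if we use the diagonal matrix $D=\mathrm{diag}(\lambda_1\lambda_2\cdots\lambda_k)_{k\ge0}$, which gives
\begin{equation*}
a_{n+m}=\sum_k \ell_{n,k}\,\lambda_1\cdots\lambda_k\,\ell_{m,k},\qquad H_\infty(\mathbf a)=L\,D\,L^{T}.
\end{equation*}
By Lemma~\ref{lem:fla}, $a_{n+m}$ is the total weight of all such paths.

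\textbf{Step 3: positivity of the diagonal factor.} If $M_\infty$ is coefficientwise-TP, then its $1\times1$ minors $\lambda_k$ are coefficientwise nonnegative. Hence $D$ is coefficientwise-TP.

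\textbf{Step 4: positivity of $L$ (the main obstacle).} By Cauchy--Binet, every minor of $LDL^T$ is a sum of products of minors of $L$, $D$ and $L^T$. It therefore suffices to show that $L$ is coefficientwise-TP. This is the heart of the argument and the step I expect to be hardest.

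I would argue by induction on the number of rows, using the block identity
\begin{equation*}
L=\begin{pmatrix}1&0\\0&L\end{pmatrix}\begin{pmatrix}1&0\\0&?\end{pmatrix}\cdots
\end{equation*}
More precisely, I would prove that the truncation $L_{\le N}$ (rows $0,\dots,N$) is TP. Its rows $1,\dots,N$ form $L_{\le N-1}M_\infty$, and row $0$ is $e_0^T$. Writing
\begin{equation*}
L_{\le N}=\begin{pmatrix}1&0\\0&L_{\le N-1}\end{pmatrix}\begin{pmatrix}e_0^T\\ M_\infty\end{pmatrix},
\end{equation*}
both factors are TP: the first by induction, and the second because prepending the row $e_0^T=(1,0,0,\dots)$ to a TP matrix whose first column sits to the left of everything preserves total positivity. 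A minor using that row expands along it to a minor of $M_\infty$ with the first column deleted. Cauchy--Binet then concludes the induction, since only finitely many terms appear because $M_\infty$ is banded.

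Combining Steps 2--4 gives that $H_\infty(\mathbf a)$ is coefficientwise-TP. The delicate point is verifying that the augmented matrix in Step 4 is TP. If the direct expansion argument gets messy, the fallback is the Lindström--Gessel--Viennot lemma on the planar path network encoded by $M_\infty$. That route needs only the $2\times2$ structure of the tridiagonal matrix, and it reproduces \cite[Theorem~9.4]{PSZ23}.
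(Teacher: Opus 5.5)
Your proof is correct. It is the standard argument behind the cited result; the paper gives no proof of its own and defers to \cite{Sok14} and \cite[Theorem~9.7]{PSZ23}. That argument factors $H_\infty(\mathbf a)=LDL^{T}$ with $L$ the output matrix of $M_\infty(\mathbf b,\boldsymbol\lambda)$, and gets total positivity of $L$ from $L_{\le N}=(1\oplus L_{\le N-1})\begin{pmatrix} e_0^{T}\\ M_\infty\end{pmatrix}$ together with Cauchy--Binet, exactly as you do. Drop the garbled first block identity in Step~4 (the one containing ``?''); the truncated identity you write next is the one that does the work.
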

The classical criterion in \cite[Theorem 4.3]{Pin10} for tridiagonal (Jacobi) matrices over 
$\mathbb{R}$ extends verbatim to matrices with entries in an arbitrary partially ordered commutative ring; we shall therefore work in this generality throughout.
\begin{lemma}\label{lem:total-jacobi}
A Jacobi matrix  is coefficientwise-totally positive in $\mathbf{X}$ if and only if all its off-diagonal elements and all its principal minors containing consecutive rows and columns are coefficientwise-positive in $\mathbf{X}$.

\end{lemma}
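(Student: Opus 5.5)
The statement is the tridiagonal analogue of the classical criterion \cite[Theorem~4.3]{Pin10}. I would prove both directions directly for $M=M_\infty(\mathbf{b},\boldsymbol{\lambda})$ (or any finite truncation), with entries in the partially ordered ring $\R[\mathbf{X}]$.

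\emph{Necessity.} This direction is immediate. The off-diagonal entries are $1\times 1$ minors, and principal minors on consecutive rows and columns are minors. So coefficientwise-TP forces all of them to be coefficientwise-positive.

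\emph{Sufficiency.} The plan is to reduce an arbitrary minor to products of off-diagonal entries and consecutive principal minors, using only the banded structure. No division is involved, so positivity is preserved formally.

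Take rows $i_1<\dots<i_r$ and columns $j_1<\dots<j_r$ of $M$.

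First, I would show that the submatrix decomposes. Since $M$ is tridiagonal, suppose some index $s$ has $|i_s-j_s|\ge 2$, or has the pattern $i_s<j_s$ with $i_{s+1}>j_{s}+\dots$ (a ``gap''). Then the submatrix is block triangular with a zero block, as in the standard argument:
\begin{itemize}
\item if $j_s\ge i_s+2$, all entries in rows $i_1,\dots,i_s$ and columns $j_s,\dots,j_r$ vanish, so the minor is $0$;
\item symmetrically if $i_s\ge j_s+2$.
\end{itemize}
Thus we may assume $|i_s-j_s|\le1$ for all $s$.

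Next, I would split at indices where the row set or column set is nonconsecutive. At such a split, the submatrix becomes block upper- or lower-triangular, because entries linking the two parts lie outside the band. The determinant then factors as a product of minors of the same kind, each having consecutive row set and consecutive column set.

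For a minor with consecutive rows $a,\dots,a+r-1$ and consecutive columns $c,\dots,c+r-1$ with $|a-c|\le1$, there are two cases:
\begin{itemize}
\item if $a=c$, it is a consecutive principal minor, which is positive by hypothesis;
\item if $c=a+1$, the submatrix is lower triangular with diagonal entries all equal to $1$ (the superdiagonal), so the minor is $1$;
\item if $c=a-1$, it is upper triangular with diagonal $\lambda_a,\dots,\lambda_{a+r-1}$, a product of off-diagonal entries, hence positive.
\end{itemize}
Products of coefficientwise-positive polynomials are coefficientwise-positive, which gives sufficiency.

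\emph{Main obstacle.} The step needing most care is the combinatorial bookkeeping in the splitting. One must check that every pattern of indices with $|i_s-j_s|\le 1$ genuinely yields a block-triangular factorization into consecutive blocks, including mixed configurations where the offset $j_s-i_s$ changes between $-1$, $0$ and $1$ along the sequence. I would handle this by induction on $r$: find the first $s$ where $(i_{s+1},j_{s+1})\neq(i_s+1,j_s+1)$. The block on indices $1,\dots,s$ then has no nonzero entries in one of the off-diagonal corner blocks, because the band forces $M_{i_t,j_u}=0$ for $t\le s<u$ (or for $u\le s<t$). Laplace factorization then applies, and induction finishes the argument.

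Every step uses only ring operations, with no signs or division. This is why the real-field proof transfers verbatim to $\R[\mathbf{X}]$ with the coefficientwise order.
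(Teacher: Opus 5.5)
Your proposal is correct and follows essentially the same route as the paper: you discard minors with some $|i_s-j_s|\ge 2$, use the tridiagonal band to obtain block-triangular factorizations, and reduce every minor to consecutive principal minors times off-diagonal entries. The differences are only bookkeeping. The paper peels off each off-diagonal entry $1$ or $\lambda_{i_\ell}$ individually and then splits principal minors at gaps, whereas you group maximal runs of constant offset and evaluate the offset-$\pm1$ runs as triangular determinants. You also state the trivial necessity direction, which the paper leaves implicit.
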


\begin{proof}
Let $A=M_\infty(\mathbf b,\boldsymbol\lambda)=(a_{i,j})_{i,j\geq 0}$. Then $a_{i,j}=0$ if $|i-j|> 1$.  Fix $r\ge1$ and index sets
$I=(i_1<\cdots<i_r)$ and $J=(j_1<\cdots<j_r)$, and consider the minor
\[
A\binom{I}{J}:=\det\bigl(a_{i_p, j_q}\bigr)_{1\le p,q\le r}.
\]
If $|i_k-j_k|\ge2$ for some $k$, then $A\binom{I}{J}=0$ since $A$ is tridiagonal. Thus assume $|i_k-j_k|\le1$ for all $k$. Whenever $|i_\ell-j_\ell|=1$, the tridiagonal form forces a block-triangular
decomposition of the corresponding submatrix, yielding the factorization
\begin{equation}\label{eq:block-factor}
A\binom{i_1,\dots,i_r}{j_1,\dots,j_r}
=
A\binom{i_1,\dots,i_{\ell-1}}{j_1,\dots,j_{\ell-1}}\;
A\binom{i_\ell}{j_\ell}\;
A\binom{i_{\ell+1},\dots,i_r}{j_{\ell+1},\dots,j_r}.
\end{equation}
Iterating \eqref{eq:block-factor} we can factor any minor of $A$, namely, if
\[
i_1=j_1, \ldots,i_{k_1}=j_{k_1},\quad
i_{k_1+1}\neq j_{k_1+1},\ldots,i_{k_2}\neq j_{k_2},\quad
i_{k_2+1}=j_{k_2+1},\ldots,\ i_{k_3}=j_{k_3},\ldots,
\]
then
\begin{equation}\label{factorization-matrix}
A\binom{i_1,\ldots,i_r}{j_1,\ldots,j_r}
=
A\binom{i_1,\ldots,i_{k_1}}{j_1,\ldots,j_{k_1}}\,
A\binom{i_{k_1+1}}{j_{k_1+1}}\cdots
A\binom{i_{k_2}}{j_{k_2}}\,
A\binom{i_{k_2+1},\ldots,i_{k_3}}{j_{k_2+1},\ldots,j_{k_3}}\cdots, 
\end{equation}
where 
\[
A\binom{i_k}{j_k}=
\begin{cases}
1, & \text{if } j_k=i_k+1,\\[2pt]
\lambda_{i_k}, & \text{if } j_k=i_k-1.
\end{cases}
\]

Finally, if $k_t+1<k_{t+1}$ for some $t\in[r-1]$, then
\begin{equation}\label{eq:principal-split}
A\binom{k_1,\dots,k_s}{k_1,\dots,k_s}
=
A\binom{k_1,\dots,k_t}{k_1,\dots,k_t}\;
A\binom{k_{t+1},\dots,k_s}{k_{t+1},\dots,k_s}.
\end{equation}
Iterating \eqref{eq:principal-split}, every principal minor factors into a product
of consecutive principal minors.
Since coefficientwise-positive polynomials are closed under multiplication, all
factors on the right-hand side of \eqref{factorization-matrix} are coefficientwise-positive,
and therefore $A\binom{I}{J}$ is coefficientwise-positive for all $I,J$.
Hence $A$ is coefficientwise-totally positive.
\end{proof}

Define the  multivariable Eulerian polynomials
\begin{equation}
    A_n(x,h\,|\,p,q):=\sum_{\sigma\in\S_{n+1}}x^{\des(\sigma)}h^{\dis(\sigma)}p^{\ldes(\sigma)}q^{\rasc(\sigma)}.
\end{equation}

\begin{theorem}
    The sequence $\mathcal{A}:=(A_n(x,h\,|\,p,q))_{n\ge 0}$ is coefficientwise-Hankel totally positive in $\mathbf{X}:=\{x,h,p,q\}$.
\end{theorem}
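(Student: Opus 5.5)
The plan is to obtain a $J$-fraction for $A_n(x,h\,|\,p,q)$ by specializing Theorem~\ref{thm1}, then to prove that the associated Jacobi matrix is coefficientwise-TP, and finally to invoke Theorem~\ref{thm:Coeﬃcientwise-TP}.

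First, set $\mathbf{u}=(1,x,1,x)$ and $f=g=t=\alpha=\beta=1$ in Theorem~\ref{thm1}. Since $x^{\des}=x^{\val+\dd}$ by \eqref{eq:refinement of asc-des}, the left-hand side becomes $\sum_n A_n(x,h\,|\,p,q)z^n$. Because $[\,k;1,1\,]_{p,q}=[\,k\,]_{p,q}$, the coefficients simplify to
\[
b_k=h^k(1+x)[\,k+1\,]_{p,q},\qquad \lambda_k=x\,h^{2k-1}[\,k\,]_{p,q}[\,k+1\,]_{p,q}\quad(k\ge1).
\]
All of these are coefficientwise-positive polynomials in $x,h,p,q$.

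Next, rather than checking all consecutive principal minors through Lemma~\ref{lem:total-jacobi} directly, I will factor the Jacobi matrix as $M_\infty(\mathbf b,\boldsymbol\lambda)=LU$. Here $L$ is lower bidiagonal with $1$ on the diagonal and subdiagonal entries $l_k$ in position $(k,k-1)$, and $U$ is upper bidiagonal with diagonal entries $d_k$ and $1$ on the superdiagonal. Multiplying out gives
\[
(LU)_{k,k+1}=1,\qquad (LU)_{k,k}=d_k+l_k,\qquad (LU)_{k,k-1}=l_k d_{k-1},
\]
with the convention $l_0=0$. I choose
\[
d_k=h^k[\,k+1\,]_{p,q},\qquad l_k=x\,h^k[\,k+1\,]_{p,q}\ (k\ge1).
\]
Then $l_kd_{k-1}=x h^{2k-1}[\,k+1\,]_{p,q}[\,k\,]_{p,q}=\lambda_k$. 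Also $d_k+l_k=b_k$ for $k\ge1$, and $d_0=1=b_0$ since $b_0=(1+x)\cdot 1$ must be matched as $d_0+l_0$. This last point needs care: it works provided the $k=0$ entry uses $l_0=x$ rather than $0$. I will therefore adjust by prepending an extra row to $L$, i.e.\ use the standard shifted factorization in which $L$ carries $l_k=xh^k[\,k+1\,]_{p,q}$ for all $k\ge0$ indexed one step down. Checking the exact index shift so that both the diagonal identity at $k=0$ and the subdiagonal identity hold is the main bookkeeping obstacle. If it fails, the fallback is to verify the consecutive principal minors via the recurrence $D_{m}=b_{m}D_{m-1}-\lambda_m D_{m-2}$, showing inductively that each minor equals a sum of products of $d$'s and $l$'s, which is positive.

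Finally, every minor of a bidiagonal matrix whose entries are coefficientwise-positive is either $0$ or a product of entries, hence coefficientwise-positive. By Cauchy--Binet, the product $LU$ is therefore coefficientwise-TP. Theorem~\ref{thm:Coeﬃcientwise-TP} then shows that $H_\infty(\mathcal A)$ is coefficientwise-TP in $\{x,h,p,q\}$.
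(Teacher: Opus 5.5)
You specialize Theorem~\ref{thm1} correctly, and $b_k=h^k(1+x)[\,k+1\,]_{p,q}$, $\lambda_k=xh^{2k-1}[\,k\,]_{p,q}[\,k+1\,]_{p,q}$ are right. The gap is that the factorization you write down is false, and the trouble is not just bookkeeping. In the product $LU$ (unit lower bidiagonal times upper bidiagonal with unit superdiagonal), row $0$ of $L$ has no subdiagonal entry, so $(LU)_{0,0}=d_0$. Your ``$d_0=1=b_0$'' is a slip, since $b_0=1+x$. Worse, with square factors indexed by $\N$ the $LU$ factorization is forced. You must take $d_0=b_0=1+x$, and then $l_1=\lambda_1/d_0=xh(p+q)/(1+x)$, which is not a polynomial. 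So the $LU$ route fails outright. Your ``extra row'' remedy is never actually carried out, and the fallback (``each minor is a sum of products of $d$'s and $l$'s'') is only asserted.

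The fix is to reverse the order of the factors. Take $U$ upper bidiagonal with $U_{k,k}=xh^k[\,k+1\,]_{p,q}$ (your $l_k$, now legitimately including $l_0=x$) and $U_{k,k+1}=1$. Take $L$ lower bidiagonal with $L_{k,k}=1$ and $L_{k,k-1}=h^{k-1}[\,k\,]_{p,q}$ (your $d_{k-1}$). Then:
\begin{itemize}
\item $(UL)_{k,k+1}=1$;
\item $(UL)_{k,k}=U_{k,k}+L_{k+1,k}=(1+x)h^k[\,k+1\,]_{p,q}=b_k$ for all $k\ge0$, including $k=0$;
\item $(UL)_{k,k-1}=U_{k,k}L_{k,k-1}=\lambda_k$.
\end{itemize}
This is exactly what your padding becomes. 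If you give $L$ a column $-1$ and $U$ a row $-1$, reindexing turns the padded $L$ into this upper bidiagonal $U$ and the padded $U$ into this lower bidiagonal $L$.

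With $M=UL$ the rest of your argument goes through: minors of bidiagonal matrices are $0$ or monomials in the entries, Cauchy--Binet has only finitely many nonzero terms, and then Sokal's production-matrix theorem applies. This is a genuinely different route from the paper's. The paper checks the criterion of Lemma~\ref{lem:total-jacobi} by proving the closed form \eqref{cal:det} for the contiguous principal minors via the continuant recurrence. Your corrected factorization controls all minors at once, so it does not need the tridiagonal criterion. It also explains the paper's formula: Cauchy--Binet gives $D_{k,n}=\sum_{j=0}^{n}\prod_{i=0}^{j-1}U_{k+i,k+i}\prod_{i=j}^{n-1}L_{k+i+1,k+i}$. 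Since $U_{m,m}=x\,L_{m+1,m}$, this equals $(1+x+\cdots+x^n)\,h^{nk+\binom n2}\prod_{j=1}^{n}[\,k+j\,]_{p,q}$.
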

\begin{proof}  
By Theorem \ref{thm1} with $u_2=u_4=x$ and $u_1=u_3=f=g=t=\alpha=\beta=1$, we have
\begin{equation}\label{j-fracion}
    \sum_{n\ge 0}^{\infty}A_n(x,h\,|\,p,q)z^n=\mathcal{J}(z;\mathbf{b},\boldsymbol{\lambda}),
\end{equation}
where $b_k=h^k(1+x)[k+1]_{p,q}$ and $\lambda_{k}=xh^{2k-1}[k]_{p,q}[k+1]_{p,q}$.

Let $A=M_\infty(\mathbf b,\boldsymbol\lambda)=(a_{i,j})_{i,j\geq 0}$. For $k\ge 0$ and $n\ge 1$, define the contiguous principal minors
\begin{equation}
    D_{k,n} :=A\binom{I}{J}\, \text{ with } I=J=\{k,\dots ,k+n-1\}.
\end{equation}
Then we have  the standard continuant recurrence
\begin{equation}\label{recurrence-D}
    D_{k,n}=b_{k+n-1}D_{k,n-1}-\lambda_{k+n-1}D_{k,n-2},
\end{equation}
where $D_{k,0}=1$ and $D_{k,1}=b_k$.

We prove by induction on $n\geq 1$ that 
\begin{equation}\label{cal:det}
    D_{k,n}
=
h^{\,nk+\binom{n}{2}}(1+x+x^2+\cdots+x^n)
\left(\prod_{j=1}^{n}[k+j]_{p,q}\right).
\end{equation}
Set $[n]_x:=1+x+\cdots+x^{n-1}$ and
\[
F_{k,n}:=h^{\,nk+\binom{n}{2}}[n+1]_x\bigg(\prod_{j=1}^n [k+j]_{p,q}\bigg).
\]
Clearly $F_{k,0}=1=D_{k,0}$ and $F_{k,1}=(1+x)h^k[k+1]_{p,q}=b_k=D_{k,1}$.
 By \eqref{j-fracion}, for $n\ge2$, we have 
\begin{align*}
    b_{k+n-1}&=(1+x)h^{k+n-1}[k+n]_{p,q},\\
\lambda_{k+n-1}&=x\,h^{2k+2n-3}[k+n-1]_{p,q}[k+n]_{p,q},
\end{align*}
and a direct computation gives
\[
b_{k+n-1}F_{k,n-1}-\lambda_{k+n-1}F_{k,n-2}
=
h^{\,nk+\binom{n}{2}}
\bigl((1+x)[n]_x-x[n-1]_x
\bigr)\biggl(\prod_{j=1}^n [k+j]_{p,q}\biggr).
\]
Since $(1+x)[n]_x-x[n-1]_x=[n+1]_x$, we obtain
\[
b_{k+n-1}F_{k,n-1}-\lambda_{k+n-1}F_{k,n-2}=F_{k,n}.
\]
Thus $F_{k,n}$ satisfies the same recurrence and initial conditions as $D_{k,n}$ for all $k\ge0$ and $n\ge1$. This establishes \eqref{cal:det}.

By Lemma \ref{lem:total-jacobi} and Theorem \ref{thm:Coeﬃcientwise-TP}, the sequence $\mathcal{A}$ is therefore coefficientwise Hankel-totally positive.

\end{proof}

\section{Concluding remarks}\label{sec:special-cases}

The following identities are direct consequences of the continued fraction: specializing one of the parameters to $-1$ makes the expression collapse into a simple closed form.

 \begin{proposition}\label{prop:simple-formula}  The following identities hold.
      \begin{subequations}
      \begin{align}
        &\sum_{\sigma\in\S_n}(-1)^{\inv(\sigma)}x^{\exc(\sigma)}h^{\depth(\sigma)}=(1-xh)^{n-1}\label{special-case1}\\
        &\sum_{\sigma\in\S_n}(-1)^{\nest(\sigma)}x^{\exc(\sigma)}h^{\depth(\sigma)}=(1+xh)^{n-1}\label{special-case2}\\
        &\sum_{\sigma\in\S_n}(-1)^{\rlmin(\sigma)}x^{\exc(\sigma)}h^{\depth(\sigma)}=(-1)^n(1-xh)^{n-1}\label{special-case3}\\
        &\sum_{\sigma\in\S_n}(-1)^{\lmax(\sigma)}h^{\depth(\sigma)}=(-1)^n(1-h)^{n-1}.
        \end{align}
 \end{subequations}   
  \end{proposition}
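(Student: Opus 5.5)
All four identities come from specializing Theorem~\ref{thm2}. In each case I choose the parameters so that one factor of the form $[\,2;\cdot,\cdot\,]_{p,qs}$ vanishes. Then $\lambda_2=0$, the $J$-fraction truncates after its first level, and the generating function is rational of degree one. Throughout I take $\mathbf{u}=(x,1,x,1)$, so that $x^{\exc}$ is recorded because $\exc=\cval+\cda$. I set $t=1$, and every parameter not mentioned equals $1$.

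For \eqref{special-case1} I take $s=-1$. Then $[\,k;1,1\,]_{1,-1}=(1-(-1)^k)/2$ vanishes at $k=2$. This gives $b_0=1$, $b_1=-h(-1+1+x)=-xh$ and $\lambda_1=-xh$. For \eqref{special-case2} I take $q=-1$ and $s=1$. The same vanishing gives $b_0=1$, $b_1=xh$ and $\lambda_1=xh$. For \eqref{special-case3} I take $\beta=-1$. Then $[\,k;-1,1\,]_{1,1}=k-2$ vanishes at $k=2$, and I get $b_0=-1$, $b_1=h(1-1+x)=xh$ and $\lambda_1=-xh$. For the last identity I take $\alpha=-1$ and $x=1$, with $\beta=1$. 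By the symmetric vanishing, this is the case $x=1$ of the third computation: $b_0=-1$, $b_1=h$, $\lambda_1=-h$.

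In each case the series $\sum_{n\ge0} C_n z^n$ (with $C_0=1$) equals
\[
\cfrac{1}{1-b_0z-\cfrac{\lambda_1z^2}{1-b_1z}}=\frac{1-b_1z}{(1-b_0z)(1-b_1z)-\lambda_1z^2}.
\]
The denominator collapses to $1-(b_0+b_1)z$ because $\lambda_1=-b_0b_1$ in every case. So the coefficient of $z^n$ for $n\ge1$ is
\[
(b_0+b_1)^n-b_1(b_0+b_1)^{n-1}=b_0\,(b_0+b_1)^{n-1}.
\]
This yields $(1-xh)^{n-1}$, $(1+xh)^{n-1}$, $(-1)^n(1-xh)^{n-1}$ and $(-1)^n(1-h)^{n-1}$, respectively.

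The only real care point is applying the convention $[\,k;\alpha,\beta\,]_{p,q}=(\alpha-1)p^{k-1}+(\beta-1)q^{k-1}+[k]_{p,q}$ correctly at the specialized values, with second argument $qs$. The $b_1$ formula also has the extra factor $p$ on $u_3$ and the term $tq^ks^k$, and these must be tracked. Checking that the identity $\lambda_1=-b_0b_1$ holds in each case is the decisive verification; the rest is routine.
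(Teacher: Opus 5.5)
Your proposal is the paper's proof: specialize Theorem~\ref{thm2} so that $\lambda_2=0$, truncate the $J$-fraction, and read off the coefficients. The paper writes out only the case $s=-1$ and says the rest follow similarly, and your values of $b_0,b_1,\lambda_1$ are correct in all four cases. There is one sign slip: the denominator $(1-b_0z)(1-b_1z)-\lambda_1z^2$ collapses to $1-(b_0+b_1)z$ exactly when $\lambda_1=b_0b_1$, not $\lambda_1=-b_0b_1$. Your computed values do satisfy $\lambda_1=b_0b_1$ in every case (e.g.\ $b_0b_1=1\cdot(-xh)=\lambda_1$ in the first), so the conclusion $b_0(b_0+b_1)^{n-1}$ stands.
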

     \begin{proof}
     If $\lambda_2=0$, the $J$-fraction~\eqref{J-frac}  reduces to the rational fraction:
\begin{equation}\label{equ:truncate}
  \mathcal{J}(z;\mathbf{b},\boldsymbol{\lambda})=\frac{1}{1-b_0z-\dfrac{\lambda_1 z^2}{1-b_1z}}
=\frac{1-b_1z}{(1-b_0z)(1-b_1z)-\lambda_1 z^2}.  
\end{equation}
   Substituting  $\mathbf{u}=(x,1,x,1)$, $p=q=t=\alpha=\beta=1$, and  $s=-1$ in  Theorem~\ref{thm2}, we obtain 
   $b_0=1,\; b_1=-xh,\; \lambda_1=-xh,\; \lambda_2=0,$
and 
\begin{equation}   
\sum_{n\geq 0}\sum_{\sigma\in\S_n}(-1)^{\inv(\sigma)}x^{\exc(\sigma)}h^{\depth(\sigma)}z^n
=1+\frac{z}{1-(1-xh)z},
\end{equation}
which implies~\eqref{special-case1}. The remaining identities follow similarly.
\end{proof}
 
\begin{remark}
Eqs.~\eqref{special-case1}--\eqref{special-case3} were proved  by Eu et al.~\cite{Eu26} without using continued fractions.
\end{remark}

Eu et al.~\cite{Eu26} considered a variant of the notions of crossing and nesting for permutations as follows:
\begin{subequations}
    \begin{align}
\cros^*(\sigma)
= \#\{(i,j)\in [n]\times[n] : (i<j< \sigma_i<\sigma_j)\ \vee\ (\sigma_i<\sigma_j\le i<j)\};\\
\nest^*(\sigma)
= \#\{(i,j)\in [n]\times[n] : (i<j< \sigma_j<\sigma_i)\ \vee\ (\sigma_j<\sigma_i\le i<j)\}.
\end{align}
\end{subequations}
We note that these statistics are closely related to $\cros$ and $\nest$ (see~\eqref{def:cros-nest}) by the inverse mapping. More precisely, we have the following result.
\begin{figure}[htp]
\centering
\begin{picture}(400,80)(0,-3)
\setlength{\unitlength}{1.5mm}
\linethickness{.5mm}

\put(2,10){\line(1,0){28}}
\put(5,10){\circle*{1.3}}\put(5,10){\makebox(0,-6)[c]{\small $l$}}
\put(12,10){\circle*{1.3}}\put(12,10){\makebox(0,-6)[c]{\small $k$}}
\put(19,10){\circle*{1.3}}\put(19,10){\makebox(0,-6)[c]{\small $\sigma_{l}$}}
\put(26,10){\circle*{1.3}}\put(26,10){\makebox(0,-6)[c]{\small $\sigma_k$}}
\red{\qbezier(5,10)(12,20)(19,10)}
\blue{\qbezier(11,10)(18,20)(25,10)}
\put(16,0){\makebox(0,0)[c]{\small Upper $\mathrm{crossing}^*$}}

\put(36,10){\line(1,0){28}}
\put(39,10){\circle*{1.3}}\put(39,10){\makebox(0,-6)[c]{\small $l$}}
\put(50,10){\circle*{1.3}}\put(50,10){\makebox(0,6)[c]{\small $k=\sigma_{l}$}}
\put(60,10){\circle*{1.3}}\put(60,10){\makebox(0,-6)[c]{\small $\sigma_k$}}
\red{\qbezier(39,10)(44,0)(50,10)}
\blue{\qbezier(49.5,10)(53.5,0)(59,10)}

\put(70,10){\line(1,0){28}}
\put(74,10){\circle*{1.3}}\put(74,10){\makebox(0,-6)[c]{\small $\sigma_k$}}
\put(81,10){\circle*{1.3}}\put(81,10){\makebox(0,-6)[c]{\small $\sigma_{l}$}}
\put(88,10){\circle*{1.3}}\put(88,10){\makebox(0,-6)[c]{\small $k$}}
\put(95,10){\circle*{1.3}}\put(95,10){\makebox(0,-6)[c]{\small $l$}}
\red{\qbezier(74,10)(81,0)(88,10)}
\blue{\qbezier(80,10)(87,0)(94,10)}
\put(64,0){\makebox(0,0)[c]{\small Lower $\mathrm{crossing}^*$ $k\le \sigma_l$}}

\end{picture}
\caption{Upper $\mathrm{crossing}^*$ and lower $\mathrm{crossing}^*$ indexed at $k$.}\label{fig:cros^*}
\end{figure}

\begin{figure}[htp]
\centering
\begin{picture}(400,80)(0,-3)
\setlength{\unitlength}{1.5mm}
\linethickness{.5mm}

\put(2,10){\line(1,0){28}}
\put(5,10){\circle*{1.3}}\put(5,10){\makebox(0,-6)[c]{\small $l$}}
\put(12,10){\circle*{1.3}}\put(12,10){\makebox(0,-6)[c]{\small $k$}}
\put(19,10){\circle*{1.3}}\put(19,10){\makebox(0,-6)[c]{\small $\sigma_k$}}
\put(26,10){\circle*{1.3}}\put(26,10){\makebox(0,-6)[c]{\small $\sigma_l$}}
\red{\qbezier(5,10)(15.5,20)(26,10)}
\blue{\qbezier(11,10)(14.5,15)(18,10)}
\put(16,0){\makebox(0,0)[c]{\small Upper $\mathrm{nesting}^*$}}

\put(36,10){\line(1,0){28}}
\put(39,10){\circle*{1.3}}\put(39,10){\makebox(0,-6)[c]{\small $l$}}
\put(49,10){\circle*{1.3}}\put(49,10){\makebox(0,6)[c]{\small $k=\sigma_k$}}
\put(60,10){\circle*{1.3}}\put(60,10){\makebox(0,-6)[c]{\small $\sigma_l$}}
\red{\qbezier(39,10)(49.5,0)(60,10)}
\blue{%
  \qbezier(48.3,10)(50.5,7)(47.5,7)
  \qbezier(48.3,10)(46,9)(47.5,7)
}

\put(70,10){\line(1,0){28}}
\put(74,10){\circle*{1.3}}\put(74,10){\makebox(0,-6)[c]{\small $\sigma_l$}}
\put(81,10){\circle*{1.3}}\put(81,10){\makebox(0,-6)[c]{\small $\sigma_k$}}
\put(88,10){\circle*{1.3}}\put(88,10){\makebox(0,-6)[c]{\small $k$}}
\put(95,10){\circle*{1.3}}\put(95,10){\makebox(0,-6)[c]{\small $l$}}
\red{\qbezier(74,10)(84,1)(95,10)}
\blue{\qbezier(80,10)(83.5,6)(87.5,10)}
\put(64,0){\makebox(0,0)[c]{\small Lower $\mathrm{nesting}^*$ $k\le \sigma_k$}}

\end{picture}
\caption{Upper $\mathrm{nesting}^*$ and lower $\mathrm{nesting}^*$ indexed at $k$}\label{fig:nest^*}
\end{figure}

\begin{lemma}\label{final lemma}
    The inverse mapping $\iota: \S_n\to \S_n,\; \iota(\sigma)=\sigma^{-1}$  is a bijection   and satisfies
    \begin{equation}\label{property}(\exc,\drop,\fix,\depth,\cros,\nest)\,\sigma=(\drop,\exc,\fix,\depth,\cros^*,\nest^*)\,
    \iota(\sigma).
    \end{equation}
\end{lemma}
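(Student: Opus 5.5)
Let $\tau=\sigma^{-1}$. The plan is to describe the arc diagram of $\tau$ directly from that of $\sigma$ and then check each statistic in \eqref{property}. The arc set of $\tau$ is $\{(\sigma_i,i): i\in[n]\}$, so every arc $i\to\sigma_i$ of $\sigma$ is replaced by the reversed arc $\sigma_i\to i$.

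\textbf{Excedances, drops, fixed points.} The index $\sigma_i$ is an excedance of $\tau$ exactly when $\sigma_i<i$, that is, when $i$ is a drop of $\sigma$. Symmetrically, drops of $\tau$ correspond to excedances of $\sigma$, and fixed points are preserved. This gives $(\exc,\drop,\fix)\sigma=(\drop,\exc,\fix)\tau$.

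\textbf{Depth.} Use \eqref{def:depth} and the identity $\sum_i(\sigma(i)-i)=0$. Together these give $\depth(\sigma)=\sum_{\sigma(i)<i}(i-\sigma(i))$. For $\tau$, the excedances are the pairs $(\sigma_i,i)$ with $i>\sigma_i$, so $\depth(\tau)=\sum_{\sigma_i<i}(i-\sigma_i)=\depth(\sigma)$.

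\textbf{Crossings and nestings.} This is the main bookkeeping step. I would rewrite the defining conditions of $\cros^*(\tau)$ and $\nest^*(\tau)$ in the variables $a=\tau_i$, $b=\tau_j$, so that $i=\sigma_a$ and $j=\sigma_b$. The two cases of $\cros^*(\tau)$ become the following.
\begin{itemize}
\item The first case, $i<j<\tau_i<\tau_j$, becomes $\sigma_a<\sigma_b<a<b$. This is the second clause of $\cros(\sigma)$ with the roles of the pair exchanged, namely $b>a>\sigma_b>\sigma_a$.
\item The second case, $\tau_i<\tau_j\le i<j$, becomes $a<b\le\sigma_a<\sigma_b$, which is the first clause of $\cros(\sigma)$.
\end{itemize}
Since both statistics are counts over ordered pairs, relabelling $(a,b)$ or $(b,a)$ gives a bijection between the two sets, so $\cros^*(\tau)=\cros(\sigma)$.

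The same substitution handles nestings.
\begin{itemize}
\item The first case, $i<j<\tau_j<\tau_i$, becomes $\sigma_a<\sigma_b<b<a$, which is the second nesting clause for the pair $(a,b)$, namely $a>b>\sigma_b>\sigma_a$.
\item The second case, $\tau_j<\tau_i\le i<j$, becomes $b<a\le\sigma_a<\sigma_b$, which is the first clause for the pair $(b,a)$.
\end{itemize}
Hence $\nest^*(\tau)=\nest(\sigma)$.

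\textbf{Obstacle and conclusion.} The only delicate point is the equality cases. In $\cros,\nest$ the weak inequality sits on the upper (weak-excedance) arcs, whereas in $\cros^*,\nest^*$ it sits on the lower arcs. Inversion moves each fixed point and each weak excedance to the other side of the line, and I would check case by case that the $\le$ signs match exactly as above. Finally, $\iota$ is an involution, hence a bijection, which completes the proof of Lemma~\ref{final lemma}.
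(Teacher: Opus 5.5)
Your proposal is correct and follows the paper's route: the depth equality is obtained exactly as in the paper, by reindexing with $i=\sigma^{-1}(k)$ and using $\sum_i(\sigma(i)-i)=0$. The paper dismisses the remaining coordinates as straightforward, and your substitution $a=\tau_i$, $b=\tau_j$ carries out those checks correctly; since each strict or weak inequality transfers verbatim under it, the ``obstacle'' you flag is already settled by your bullet computations.
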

\begin{proof}
We  only  verify the statistic $\depth$; the remaining correspondences are straightforward. 
By Definition \ref{def:depth}, we see that
    \[
\depth(\sigma^{-1})
=\sum_{\sigma^{-1}(k)>k}\big(\sigma^{-1}(k)-k\big)=\sum_{i>\sigma(i)}\big(i-\sigma(i)\big),
\]
where the second equality follows by setting \(i=\sigma^{-1}(k)\). 
Moreover,
\[
0=\sum_{i=1}^n(\sigma(i)-i)=\sum_{\sigma(i)>i}\big(\sigma(i)-i\big)+\sum_{i>\sigma(i)}\big(i-\sigma(i)\big).
\]
Combining the above two identities we obtain  \(\depth(\sigma^{-1})=\depth(\sigma)\).

\end{proof}

    Substituting $\mathbf{u}=(x,1,x,1)$ and $s=\alpha=\beta=1$ in Theorem~\ref{thm2} we obtain
    \begin{equation}\label{equ:continud-fractions-depth}
        1+\sum_{n\ge 1}\bigg(\sum_{\sigma\in\S_n}x^{\exc(\sigma)}t^{\fix(\sigma)}h^{\depth(\sigma)}p^{\cros(\sigma)}q^{\nest(\sigma)}\bigg)z^n=\mathcal{J}(z;\mathbf{b}, \boldsymbol{\lambda}),
    \end{equation}
    where
    $b_k=h^k\big(tq^k+(1+xp)[k]_{p,q}\big)$ and $\lambda_{k}=xh^{2k-1}[k]_{p,q}^2$.

 By Lemma~\ref{final lemma}, since  $n=\exc(\sigma)+\fix(\sigma)+\drop(\sigma)$ for $\sigma\in \S_n$,  substituting 
 $(x,t)\to (\frac{1}{x},\frac{t}{x})$ and then $z\to xz$ in \eqref{equ:continud-fractions-depth},
 we obtain 
\begin{equation}\label{T-Eu-equ}
    \sum_{n\ge 0}\bigg(\sum_{\sigma\in\S_n}x^{\exc(\sigma)}t^{\fix(\sigma)}p^{\cros^*(\sigma)}q^{\nest^*(\sigma)}h^{\depth(\sigma)}\bigg)z^n=\mathcal{J}(z;\mathbf{b},\boldsymbol{\lambda}),
\end{equation}   
where $b_k=h^k(x[k]_{p,q}+tq^k+p[k]_{p,q})$ and $\lambda_{k}=xh^{2k-1}[k]^2_{p,q}$.

When $t=1$, the above $J$-fraction reduces to
\begin{equation}\label{Eu-equ}
\sum_{n\ge 0}\bigg(\sum_{\sigma\in\S_n}
x^{\exc(\sigma)}
p^{\cros^*(\sigma)}
q^{\nest^*(\sigma)}
h^{\depth(\sigma)}\bigg)z^n
=\mathcal{J}(z;\mathbf{b},\boldsymbol{\lambda}),
\end{equation}
where
$b_k=h^k\!\left(x[k]_{p,q}+[k+1]_{p,q}\right)$ and
$\lambda_k=xh^{2k-1}[k]_{p,q}^2$.
This $J$-fraction was obtained by Eu et al.~\cite[Theorem~1.2]{Eu26}.
They further observed that it exhibits a symmetry
$\nest^*\leftrightarrow\cros^*$ for fixed values of $\exc$ and $\depth$,
and asked for a bijective proof of this symmetry.
The following theorem provides such a bijection.
\begin{theorem}
Let $\iota: \S_n\to \S_n,\; \iota(\sigma)=\sigma^{-1}$ and $\Phi^*=\Phi_{SZ}\circ\Phi_{C}^{-1}$. Then the  mapping  $\Theta:=\iota\circ\Phi^*\circ\iota:\mathfrak{S}_n\to\mathfrak{S}_n$ is a bijection such that for any $\sigma\in \S_n$,
 \begin{align}\label{Bijection}    (\exc,\nest^*,\cros^*,\depth)\,\sigma=(\exc,\cros^*,\nest^*,\depth)\,\Theta(\sigma)
\end{align}
\end{theorem}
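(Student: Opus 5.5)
The plan is to compose the three known correspondences and track each statistic through them. Fix $\sigma\in\S_n$ and set $\sigma_1:=\iota(\sigma)=\sigma^{-1}$, $\sigma_2:=\Phi^*(\sigma_1)$ and $\Theta(\sigma)=\iota(\sigma_2)=\sigma_2^{-1}$. Since $\iota$ is an involution and $\Phi^*=\Phi_{SZ}\circ\Phi_C^{-1}$ is a composition of bijections on $\S_n$ (Theorem~\ref{thm3}), $\Theta$ is a bijection. It remains to verify \eqref{Bijection}, which we do in three steps.

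First, Lemma~\ref{final lemma} applied to $\sigma_1$, with $\iota(\sigma_1)=\sigma$, gives
\[
(\exc,\drop,\fix,\depth,\cros,\nest)\,\sigma_1=(\drop,\exc,\fix,\depth,\cros^*,\nest^*)\,\sigma.
\]
Hence $\drop(\sigma_1)=\exc(\sigma)$, $\fix(\sigma_1)=\fix(\sigma)$, $\depth(\sigma_1)=\depth(\sigma)$, $\cros(\sigma_1)=\cros^*(\sigma)$ and $\nest(\sigma_1)=\nest^*(\sigma)$.

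Second, \eqref{equ:p_i+q_i-2'} gives $(\wex,\nest,\cros,\depth)\,\sigma_1=(\wex,\cros,\nest,\depth)\,\sigma_2$. Since $\drop=n-\wex$, this yields $\drop(\sigma_2)=\drop(\sigma_1)=\exc(\sigma)$, $\cros(\sigma_2)=\nest(\sigma_1)=\nest^*(\sigma)$, $\nest(\sigma_2)=\cros(\sigma_1)=\cros^*(\sigma)$ and $\depth(\sigma_2)=\depth(\sigma)$.

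Third, Lemma~\ref{final lemma} applied to $\sigma_2$ gives $\exc(\Theta(\sigma))=\drop(\sigma_2)=\exc(\sigma)$, $\cros^*(\Theta(\sigma))=\cros(\sigma_2)=\nest^*(\sigma)$, $\nest^*(\Theta(\sigma))=\nest(\sigma_2)=\cros^*(\sigma)$ and $\depth(\Theta(\sigma))=\depth(\sigma)$. This is exactly \eqref{Bijection}.

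The main obstacle is that the argument depends entirely on two cited facts. The first is \eqref{equ:p_i+q_i-2'} of Theorem~\ref{thm3}. It follows by composing \eqref{equ:dis-Corteel} and \eqref{equ:dis}: for $\pi=\Phi_C^{-1}(\sigma_1)$, both give $\ndes(\pi)$ and $\NDdif(\pi)$ as wex and depth, and they give $(31\!-\!2,2\!-\!31)(\pi)$ as $(\nest,\cros)(\sigma_1)$ and as $(\cros,\nest)(\sigma_2)$ respectively. The second is the crossing/nesting part of Lemma~\ref{final lemma}, which the paper calls straightforward. If I need to check it, I would inspect the arc diagram. Inversion reverses every arc $i\to\sigma_i$, so upper arcs become lower arcs and vice versa, except that fixed-point loops stay fixed points. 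The strict/weak inequality conventions in $\cros^*$ and $\nest^*$ (the $\le i$ in the lower conditions) are set up precisely so that the boundary cases $j=\sigma_i$ in $\cros$ and $\nest$ match. I would confirm this case by case on the upper and lower pairs.
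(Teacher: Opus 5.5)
Your proof is correct and follows the paper's route. The paper conjugates $\Phi^*$ by inversion, using Lemma~\ref{final lemma} (which swaps $\exc\leftrightarrow\drop$, turns $(\cros,\nest)$ into $(\cros^*,\nest^*)$ and fixes $\depth$) together with \eqref{equ:p_i+q_i-2'} rewritten via $\drop=n-\wex$. Your step-by-step tracking of each statistic through $\sigma\mapsto\sigma^{-1}\mapsto\Phi^*(\sigma^{-1})\mapsto\Theta(\sigma)$ simply spells out what the paper states in one line.
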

\begin{proof}
By \eqref{equ:p_i+q_i-2'} of Theorem \ref{thm3} and $n=\wex+\drop$, the bijection $\Phi^*$ satisfies
\begin{equation}\label{equ:p_i+q_i-2}(\drop,\nest,\cros,\depth)\,\sigma=(\drop,\cros,\nest,\depth)\,
\Phi^*(\sigma).
    \end{equation}
Combining \eqref{property} and \eqref{equ:p_i+q_i-2}, we obtain \eqref{Bijection}.
\end{proof}

    From the construction of the bijections $\Phi_{SZ}$ and $\Phi_{C}$, we see that the two bijections satisfy the  following identity 
\begin{align}     \Phi_{SZ}\circ\Phi_{C}^{-1}&=\widetilde{\Psi}_{FZ}^{-1}\circ\rho\circ\widetilde{\Psi}_{FV}\circ(\widetilde{\Psi}_{FZ}^{-1}\circ\widetilde{\Psi}_{FV})^{-1}\nonumber\\
     &=\widetilde{\Psi}_{FZ}^{-1}\circ\rho\circ\widetilde{\Psi}_{FZ},
\end{align}
  where $\widetilde{\Psi}_{FZ}$ and $\rho$ are defined in \eqref{equ:corresponding-FZ*} and \eqref{def:rho}. 
  
  Besides the bijections $\Phi_{C}$ and $\Phi_{SZ}$, Steingrímsson and Williams~\cite{SW07} constructed another  variant  $\Phi_{SW}$ of $\Phi_{CSZ}$ satisfying 
\begin{equation}
  (\ndes,\ldes,2\!-\!31)\,\sigma=(\drop+1,\nest,\cros)\,\Phi_{SW}(\sigma)\quad \text{for}\quad \sigma\in \S_n.
\end{equation}

Let $\mathrm{Ddif}(\sigma):=\mathrm{Destop}(\sigma)-\mathrm{Desbot}(\sigma)$ for $\sigma\in \S_n$.   
It is known \cite[Proposition~3]{CSZ97}  that 
the mapping $\Phi_{CSZ}$ implies that the two bi-statistics
\[
(\des,\mathrm{Ddif})\; \textrm{ and }\; (\exc,\depth)
\]
are equidistributed on $\S_n$. Two alternative proofs of this result were given  by 
Reifegerste \cite{Rei02} 
and Petersen and Tenner~\cite{PT15} using variants of the \emph{fundamental transformation}.


 
\subsection*{Acknowledgement}
 The first author was supported by the China Scholarship Council (No. 202206220034).


\bibliographystyle{plain}
\bibliography{refs}
\end{document}